\numberwithin{equation}{section}     
\setlist[enumerate,1]{label={\upshape(\roman*)},ref=\roman*}
\setlist[enumerate,2]{label={\upshape(\alph*)},ref=\alph*}
\newtheorem{theorem}{Theorem}[section]
\newtheorem{teo}[theorem]{Theorem}
\newtheorem*{teoA}{Theorem A}
\newtheorem{coro}[theorem]{Corollary}
\newtheorem*{coroB}{Corollary B}
\newtheorem{lemma}[theorem]{Lemma}
\newtheorem{lema}[theorem]{Lemma}
\newtheorem{addendum}[theorem]{Addendum}
\newtheorem{prop}[theorem]{Proposition}
\newtheorem*{question}{Question}
\theoremstyle{definition}
\newtheorem{definition}[theorem]{Definition}
\newtheorem{remark}[theorem]{Remark}
\newcommand{\eps}{\varepsilon}
\newcommand{\wt}[1]{\widetilde{#1}}
\newcommand{\R}{\mathbb R}
\newcommand{\RR}{\R}
\newcommand{\ZZ}{\mathbb Z}
\newcommand{\HH}{\mathbb{H}}
\newcommand{\cI}{\mathcal{I}}
\newcommand{\cW}{\mathcal{W}}
\newcommand{\cF}{\mathcal{F}}
\newcommand{\cG}{\mathcal{G}}
\newcommand{\cB}{\mathcal{B}}
\newcommand{\cJ}{\mathcal{J}}
\newcommand{\wcF}{\widetilde \cF}
\newcommand{\wcG}{\widetilde \cG}
\newcommand{\cL}{\mathcal{L}}
\newcommand{\cC}{\mathcal{C}}
\newcommand{\cO}{\mathcal{O}}
\newcommand{\cS}{\mathcal{S}}
\newcommand{\cV}{\mathcal{V}}
\newcommand{\cU}{\mathcal{U}}
\newcommand{\mt}{\widetilde M}
\newcommand{\wihat}{\widehat}
\newcommand{\wihatD}{\widehat{D}}
\newcommand{\wihatM}{\widehat{M}}
\title[Transverse minimal foliations on unit tangent bundles]{Transverse minimal foliations on unit tangent bundles and applications}
\author{Sergio R.\ Fenley} 
\address{Florida State University, Tallahassee, FL 32306}
\email{fenley@math.fsu.edu}
\author{Rafael Potrie} 
\address{Centro de Matem\'atica, Universidad de la Rep\'ublica, Uruguay \&  IRL-IFUMI, Laboratorio del Plata, CNRS}
\email{rpotrie@cmat.edu.uy}
\urladdr{http://www.cmat.edu.uy/~rpotrie/}
\thanks{S.F. was partially supported by Simons Foundation grant 280429 and 637554., National Science Foundation grant 
DMS-2054909, and by the Institute for Advanced Study.
 R. P. was partially supported by CSIC I+D project `Estructuras Topol\'ogicas de sistemas parcialmente hiperb\'olicos y aplicaciones'.} 
\begin{document}

\begin{abstract}
We show that if $\cF_1$ and $\cF_2$ are two transverse minimal foliations on $M = T^1S$ then either they intersect in an Anosov foliation or there exists a Reeb surface in the intersection foliation. The existence of a Reeb surface is incompatible with partially hyperbolic foliations so we deduce from this that certain partially hyperbolic diffeomorphisms in unit tangent bundles are collapsed Anosov flows. We also conclude that every volume preserving partially hyperbolic diffeomorphism of a unit tangent bundle is ergodic. 
\bigskip

\noindent {\bf Keywords: } Foliations, 3-manifold topology, Anosov flows, partial hyperbolicity, ergodicity. 
%
\medskip

\noindent {\bf MSC 2010:} Primary:  57R30, 37C86, 37D30; 
Secondary: 53C12, 57K30, 37C15.
\end{abstract}

\maketitle

\medskip
\medskip


\section{Introduction} 
This article studies geometric and dynamical properties of one dimensional subfoliations obtained as the intersection of two transverse minimal foliations on unit tangent bundles of higher genus surfaces. We prove some strong geometric properties that imply that under certain conditions, the foliation must be homeomorphic to the orbit foliation of the geodesic flow for a hyperbolic metric on the surface. 

One big motivating example for us comes from partially hyperbolic dynamics in dimension $3$: under very general orientability conditions there is a pair of 
two dimensional branching foliations, which are approximated by regular foliations. The pair of foliations are transverse to each other, yielding a one dimensional subfoliation of both. Suppose that one proves that the subfoliation is the flow foliation of a topological Anosov flow. Then the partially hyperbolic diffeomorphism is what is called a collapsed Anosov flow \cite{BFP,FP2}. In particular this has some important consequences, such as accessibility and ergodicity in the volume preserving case \cite{FP,FP3}.

This naturally leads to the following very general question
which was the
initial goal of this project:

\begin{question}
Let $\cF_1$ and $\cF_2$ be two transverse minimal foliations in a closed 3-manifold. Are there simple conditions that guarantee that the intersection foliation is homeomorphic to an Anosov foliation? 
\end{question}

Here, by \emph{Anosov foliation} we mean the orbit foliation of a (topological) Anosov flow\footnote{A short definition of a topological Anosov flow is an expansive flow preserving a foliation, see \cite{BM} for a nice introduction. In this paper we work in unit tangent bundles, where every topological Anosov flow is orbit equivalent to the geodesic flow of some constant curvature metric (which is a smooth Anosov flow) so we will not differentiate between them.}. It is worth pointing out that the scope of the question is quite general, and that the conditions are somewhat more restrictive than studying vector fields tangent to a foliation (for instance, the horocycle flow is a subfoliation of the weak stable foliation of an Anosov flow but it is rarely obtained as the intersection of two foliations). Having a vector field tangent to a foliation already imposes some obstructions on the foliation and the manifold (see for instance \cite{BH,HPS}), so we expect here to have even more.  This very general question can be extended to the problem of understanding in general the one-dimensional foliations induced by intersecting two general transverse foliations in a closed 3-manifold; we have included minimality to simplify certain formulations\footnote{One can always blow up one of the foliations and the intersection will no longer be an Anosov foliation. Other phenomena can also arise, see for instance the examples constructed in \cite{BBP}.}  but it certainly makes sense to ask this question in general (in fact, a similar question for three transverse taut foliations is suggested in \cite[\S 7.1]{Thurston}). 

We stress that the question of analyzing general transverse intersections of foliations in $3$-manifolds is very natural, interesting in itself, and has appeared in  other contexts, such as 
Anosov and pseudo-Anosov flows transverse to foliations \cite{Fenley1.5,Fenley2,Fenley4,Thurston}.

In this article we start the general study of geometric properties of one dimensional subfoliations of a pair of transverse foliations in $3$-manifolds. In this generality the problem is at this point a
fairly complex one (see \S~\ref{ss.newresults} for recent progress). Here we restrict to a class of $3-$manifolds: unit tangent bundles of surfaces of negative Euler characteristic. The main reason is that in this case we have a strong rigidity for single foliationsand this substantially helps to study this problem. It is also relevant for us since many people working in partially hyperbolic dynamics are more familiar with this family of 3-manifolds. On the other hand many of the techniques introduced in this paper should be useful for the general problem. In fact, since this paper was released, much progress has been made in the problem that we survey in \S~\ref{ss.newresults} and show the impact it has had. 


In spite of the fact that in unit tangent bundles minimal foliations are homeomorphic to the weak stable foliation of an Anosov flow  (by a result of \cite{Matsumoto}, see also \cite{Ghys2}) , T. Barbot pointed out to us the paper \cite{MatsumotoTsuboi} which gives a beautiful example showing that even in these manifolds,
there may be obstructions to the intersection being an Anosov foliation (in fact in \cite{MatsumotoTsuboi} they produce a triple of pairwise transverse minimal foliations). 

In this paper we show that, in unit tangent bundles,  the conditions that obstruct the intersection to be homeomorphic to an Anosov foliation, are similar to those appearing in the example of \cite{MatsumotoTsuboi}. In \S~\ref{ss.MatsumotoTsuboi} we describe the example of \cite{MatsumotoTsuboi} from the point of view of this paper as well as discuss some possible extensions, and in \S~\ref{s.consReeb} we prove that the obstruction to intersect in a foliation homeomorphic to an Anosov foliation can be explained by behavior identical to the ones discussed in \S~\ref{ss.MatsumotoTsuboi}. 

The two dimensional foliations $\cF_1, \cF_2$ we deal with have Gromov hyperbolic leaves. The strategy employed here to prove the Anosov behavior of the intersection foliation $\cG = \cF_1 \cap \cF_2$ in certain cases is the following: show geometric properties of the subfoliations inside the leaves of the two dimensional foliations. More specifically we try to show that the subfoliations are by uniform quasigeodesics inside these two dimensional leaves. We obtain a structure that is enough to identify the obstruction for this to happen: \emph{Reeb surfaces}. These are surfaces in some leaf of $\cF_1$ or $\cF_2$ which are finitely covered by a two dimensional annulus whose boundary circles are leaves of the foliation $\cG$ and the leaves of $\cG$ in the interior of the annulus spiral towards the boundary components in opposite directions (see figure~\ref{fig-reebsurface}).

\begin{figure}[ht]
\begin{center}
\includegraphics[scale=0.78]{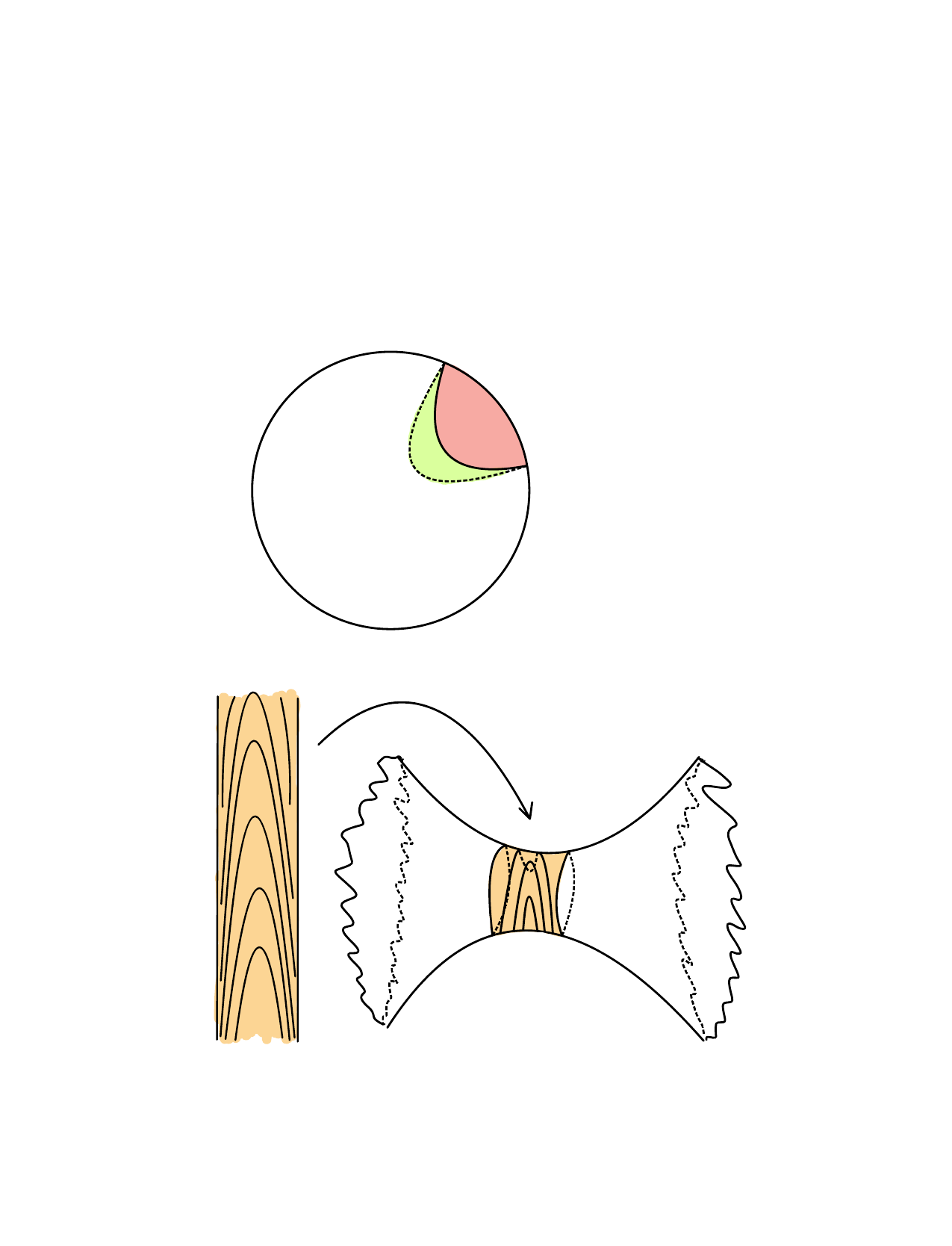}
\end{center}
\vspace{-0.5cm}
\caption{{\small A Reeb surface and its lift to the universal cover.}}\label{fig-reebsurface}
\end{figure}

Our main result is the following:

\begin{teoA}\label{teoA}
Let $S$ be a closed orientable hyperbolic surface and let $M = T^1 S$. Let $\cF_1, \cF_2$ be minimal, two dimensional foliations in $M$ which are transverse to each other. Let $\cG$ be the intersection of $\cF_1$ with $\cF_2$. Then either $\cG$ is homeomorphic to the orbit foliation of the geodesic flow of a hyperbolic metric on $S$ or $\cG$ contains a Reeb surface. 
\end{teoA}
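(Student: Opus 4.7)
The plan is to combine rigidity for single minimal foliations in $T^1S$ with a careful leafwise analysis of the one-dimensional intersection foliation. First I would invoke the result of Matsumoto cited above: every minimal foliation of $M = T^1S$ is topologically equivalent to the weak stable foliation of a topological Anosov flow, and by Ghys' classification this flow is topologically equivalent to the geodesic flow of some hyperbolic metric $g_i$ on $S$. In particular, the leaves of $\cF_i$ are planes or cylinders, and their lifts to the universal cover $\tilde M$ are topological planes carrying an intrinsic metric quasi-isometric to $\HH^2$; each such leaf $L$ has a Gromov ideal boundary $\pin L \cong S^1$.

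Next I would work leafwise. Fix a leaf $L$ of $\tilde \cF_1$. The lifted foliation $\tilde \cG|_L$ is a one-dimensional foliation of the plane $L$. The heart of the theorem is the following dichotomy to be established: either every leaf of $\tilde \cG|_L$ is a uniform quasigeodesic in the Gromov hyperbolic metric on $L$, with two distinct well-defined endpoints in $\pin L$, or there is a Reeb surface in $\cG$ lying inside some leaf of $\cF_1$; and the symmetric dichotomy holds inside leaves of $\tilde \cF_2$.

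Assuming the dichotomy and the absence of Reeb surfaces, the uniform quasigeodesicity inside leaves of both foliations assigns to each leaf $\gamma$ of $\tilde \cG$ a pair of distinct endpoints on the circle at infinity of $\HH^2$. The transverse minimality of $\cF_1$ and $\cF_2$, together with the $\pi_1(M)$-equivariance of the endpoint map, should force the parametrization of the leaves of $\tilde \cG$ by ordered pairs of endpoints on $S^1_\infty$ to match that of the geodesic flow of $g_1$. Promoting this identification to $M$ via the rigidity of foliations of $\HH^2$ by geodesics yields the desired leaf-conjugacy from $\cG$ onto the geodesic-flow foliation on $T^1S$.

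The main obstacle is the dichotomy step. One must rule out that leaves of $\tilde \cG|_L$ fail to be uniformly quasigeodesic in some subtle way without producing an honest Reeb surface in $M$. The strategy combines the minimality of $\cF_1, \cF_2$ (which makes every local pathology recurrent throughout $M$), the action of $\pi_1(M)$ on $\tilde M$ (so Hausdorff limits of non-quasigeodesic configurations acquire non-trivial stabilizers), and a limit argument promoting a non-quasigeodesic family of leaves to an invariant annular region in some leaf of $\cF_1$ whose boundary consists of leaves of $\cG$ spiralled upon by nearby leaves from opposite sides. Closing up this limit configuration to an actual, finitely covered Reeb surface in $M$---rather than merely a non-quasigeodesic picture in the universal cover---is where I expect the technical bulk of the argument to lie, and is presumably where the hypothesis that the ambient manifold is $T^1S$ (rather than a general $3$-manifold) is essential, since the single-foliation rigidity from the first paragraph gives very strong a priori control on how leaves of $\cF_i$ limit on each other.
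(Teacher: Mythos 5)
Your high-level plan matches the paper's: use Matsumoto's rigidity, reduce to a leafwise problem in Gromov-hyperbolic leaves, and show that failure of quasigeodesicity produces a Reeb surface via a recurrence-plus-limit argument. But as stated, your "direct" dichotomy (uniform quasigeodesic vs.\ Reeb surface) skips the two intermediate properties that actually make the limit argument work, and these are precisely where the hypothesis that $\cG$ is the intersection of \emph{two} transverse foliations enters in an essential way. First, one must show that every ray of every leaf of $\wt\cG$ \emph{lands} at a single ideal point of $S^1(L)$; this is a nontrivial step (Theorem~\ref{prop.landing}) and is needed before there is even a well-defined endpoint map. Second, and more fundamentally, one needs the \emph{small visual measure} property (Proposition~\ref{prop.visual}), which guarantees that the geodesic joining the ideal endpoints of a ray stays in a uniform neighborhood of that ray. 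This property genuinely fails for subfoliations of a single foliation --- the horocycle subfoliation of a weak stable foliation is the standard counterexample --- so your compactness/recurrence argument cannot get off the ground without first establishing it, and the proof of it uses the second foliation to ``push'' configurations to nearby leaves (Proposition~\ref{prop-pushing}). Finally, the paper does not prove your dichotomy in one stroke: it first shows ``Hausdorff leaf space of $\cG_L$ vs.\ Reeb surface'' (Theorem~\ref{prop.dichotomy}) and then separately promotes Hausdorffness \emph{plus} small visual measure to quasigeodesicity and Anosov structure (Theorem~\ref{teo.hsdffcase}). This factorization is not cosmetic: the non-Hausdorff configuration is what has a chance to close up into a $\gamma$-invariant bigon under limits, whereas ``non-quasigeodesic'' is too diffuse a hypothesis to propagate directly. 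So your outline is right in spirit, but you should flag landing, small visual measure, and the intermediate Hausdorffness reduction as load-bearing steps rather than technicalities.
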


We remark that minimality is necessary as one can easily obtain counterexamples by blowing up weak stable and weak unstable foliations of Anosov flows.  We note that we get a description of $\cG$ when there are Reeb surfaces: we refer the reader to \S~\ref{s.consReeb} for precise formulations (see in particular, Corollary \ref{coro.consReeb}). The description says that if there are Reeb surfaces, then some structure
very similar to the Matsumoto-Tsuboi example has to occur. This is a rigidity
result in the sense that counterexamples to Anosov behavior for $\cG$ have very few possibilities. 

An important point is that in Theorem A there is no (a priori) dynamical hypothesis, but there is a strong dynamical consequence. On the two dimensional level, there is an a  priori result which is the starting point of our study here and which explains the choice to work with unit tangent bundles rather than general Seifert fibered manifolds: as mentioned earlier, Matsumoto \cite{Matsumoto} proved that each $\cF_i$ individually is topologically equivalent to the weak stable foliation\footnote{In contrast, for general Seifert manifolds, it was already known that minimal foliations come from representations of surface groups into $\mathrm{Homeo}(S^1)$ by \cite{Thu,Brit} (see also \cite{Calegari-book}), but what we use here is the rigidity result of Matsumoto that gives conjugacy to a Fuchsian representation. This is also based on previous work of Ghys \cite{Ghys2}.} of the geodesic flow in $M$. While many arguments in our proof hold in more generality, we have chosen to avoid using too much terminology and background on foliations so that someone willing to believe Theorem \ref{teo.matsumoto} could in principle follow the proof of the main theorem. It should also be helpful to convey the main strategy and ideas that we are trying to push. See \S~\ref{ss.newresults} for updates on recent results. 

As a consequence we prove the following result for partially hyperbolic diffeomorphisms (we refer the reader to \S~\ref{s.partiallyhyperbolic} for precise definitions and more general results).

\begin{coroB} Let $f: M \to M$ be a volume preserving partially hyperbolic
diffeomorphism in $M = T^1 S$ with $S$ a closed orientable
surface of genus $g \geq 2$.  Then, $f$ is a collapsed Anosov flow. 
Hence $f$ is accessible and if $f$ is $C^2$, then $f$ is ergodic.
\end{coroB}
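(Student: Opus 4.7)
The plan is to apply Theorem A to the pair of two-dimensional approximating foliations coming from the partial hyperbolicity, rule out Reeb surfaces using the uniform hyperbolicity transverse to them, conclude that $f$ is a collapsed Anosov flow, and then invoke the quoted machinery of \cite{FP,FP3} to obtain accessibility and ergodicity.

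First I would set up the relevant foliations. After passing to an iterate and a finite cover (which preserves the volume-preserving hypothesis and the property of being a collapsed Anosov flow), I may assume the bundles $E^s, E^c, E^u$ are orientable and $f$-orientation-preserving. The Burago--Ivanov construction then produces $f$-invariant branching foliations $\fbs$ and $\fbu$ tangent to $E^{cs}$ and $E^{cu}$, together with true transverse foliations $\fes$ and $\feu$ that approximate them in the $C^0$ sense and can be taken transverse to each other. The intersection $\fes \cap \feu$ is a one-dimensional foliation that is the \emph{candidate} for the orbit foliation of an Anosov flow up to the collapsing map.

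Second, I need minimality of $\fes$ and $\feu$ to apply Theorem A. The classification result of Matsumoto quoted in the introduction says that any codimension one foliation on $M = T^1S$ without torus leaves is topologically conjugate to the weak stable foliation of the geodesic flow, and in particular is minimal. The absence of torus leaves in the partially hyperbolic branching foliations on $T^1S$ with $g(S)\geq 2$ is by now standard: a torus leaf would force an incompressible torus in $T^1S$, which does not exist for hyperbolic $S$, or produce a periodic center annulus incompatible with the hyperbolic expansion along $E^u$. Absence of torus leaves passes to the $\epsilon$-approximations $\fes,\feu$, which are therefore minimal.

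Third, I apply Theorem A with $\cF_1 = \fes$ and $\cF_2 = \feu$: the intersection $\cG$ is either homeomorphic to the orbit foliation of a geodesic flow, or it contains a Reeb surface. I would then rule out the Reeb surface alternative via the argument that will be developed in \S\ref{s.consReeb}: a Reeb surface sits inside a leaf of $\fes$ (say) and its boundary circles, lifted to the approximation of $\fbs$, carry closed curves that are limits of center leaves spiraling in opposite directions from inside the Reeb annulus; the uniform contraction along $E^s$ (hence uniform expansion of $E^u$ transverse to $\fes$) is inconsistent with leaves of $\cG = \fes\cap\feu$ accumulating on such closed curves from both sides. Hence $\cG$ is an Anosov foliation, and combined with the semiconjugacy of the branching foliations to $\fes,\feu$ this gives precisely that $f$ is a collapsed Anosov flow in the sense of \cite{BFP,FP2}. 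Accessibility is then a direct consequence of the structure of collapsed Anosov flows on $T^1S$ proved in \cite{FP}, and ergodicity in the $C^2$ volume-preserving case follows from the Pugh--Shub-type argument for accessible partially hyperbolic diffeomorphisms, as implemented for collapsed Anosov flows in \cite{FP3}.

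The main obstacle is the Reeb-surface exclusion: the geometric argument sketched above is clean in spirit but requires a delicate control of how the one-dimensional subfoliation $\cG$ approaches the boundary circles of a hypothetical Reeb annulus, and of how the branching foliation behaves along these circles. Establishing minimality of $\fes,\feu$, while standard in the current partial hyperbolicity toolkit, also needs to be stated carefully since Theorem A requires minimality of the true foliations rather than the branching ones.
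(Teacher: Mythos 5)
Your overall architecture matches the paper's: pass to the Burago--Ivanov branching foliations and their true approximations $\fes,\feu$, apply Theorem A to the intersection foliation, rule out Reeb surfaces using partial hyperbolicity, conclude collapsed Anosov flow, and invoke \cite{FP3} for accessibility and ergodicity. The Reeb-surface exclusion you sketch is in the right spirit; the paper's version is more crisp (and does not route through \S\ref{s.consReeb} at all): a Reeb surface in a leaf of $\fes$ forces, by Poincar\'e--Bendixson applied to the flow of a unit vector field in $E^s$ restricted to the annulus, a closed curve tangent to $E^s$, which is immediately incompatible with uniform contraction of $E^s$.

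However, there is a genuine gap in your minimality argument. You claim that Matsumoto's theorem shows that any codimension one foliation of $T^1 S$ without torus leaves is conjugate to $\cF_{ws}$ and hence minimal, and that this yields minimality of $\fes,\feu$. This is not what Matsumoto proves: Theorem \ref{teo.matsumoto} takes minimality as a \emph{hypothesis} and concludes conjugacy to $\cF_{ws}$, not the other way around. Indeed, blowing up a non-compact leaf of $\cF_{ws}$ produces a foliation with no compact (hence no torus) leaves that is not minimal, so ``no torus leaves'' cannot imply minimality. The paper establishes minimality of the branching foliations by a genuine dynamical argument (as in \cite{BFFP2,BFFP}): volume preservation forces any complementary region of a proper minimal sublamination to contain a full unstable leaf; horizontality and Matsumoto then give only plane and annulus leaves; an octopus decomposition produces a periodic solid torus; and a volume versus length estimate yields the contradiction. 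You would need to reproduce some such argument. A second, smaller point: you assert in passing that passing to a finite cover ``preserves the property of being a collapsed Anosov flow,'' but the needed direction is the descent from the cover to $M$; the paper handles this by showing (via \cite{HPS}) that the deck transformations of the orientation cover preserve the orientations of the bundles, so that the orientability hypotheses already hold on $M$ and Theorem~\ref{teo.coroB} applies directly to $f$ on $M$ rather than only to a lift.
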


\subsection{Outline of the strategy and organization of the paper}\label{ss.outline}

A way to detect that a one dimensional foliation is an Anosov foliation is to show that when seen as the orbits of a flow, this flow is expansive. This is well known to imply that the flow is pseudo-Anosov (\cite{IM,Paternain}), and since our one-dimensional foliation preserves a two dimensional foliation, being pseudo-Anosov is enough to show it is an Anosov foliation (see \cite[\S 5]{BFP}).

In this article we will instead consider a more geometric point of view.
As in \cite{FP2,BFP}, the main strategy to show that the flow generated by the foliation is a topological Anosov flow, is to show that its flow foliation  subfoliates the two dimensional leaves by quasigeodesics. We say that the foliation is \emph{leafwise quasigeodesic} when in each leaf of say $\wt{\cF_1}$ the leaves of the intersected foliation $\wt{\cG}$ are quasigeodesics (note that we do not ask the one dimensional leaves to be quasigeodesics of $\mt$). 

The key idea is to try extend arguments that work in a compact surface (see \cite{Levitt} or \cite[Appendix A]{HPS}). Note however that in closed surfaces, compactness gives many deck transformations that preserve the universal cover, while here, leaves of the foliation in the universal cover may typically have small stabilizer. 
Here, the key point is to use minimality and compactness of $M$ to bring leaves  together. Transversality of the foliations allows to push some behavior to nearby leaves, and then, arguments like in \cite{FPmin} can help partially reproducing the arguments in the case of closed surfaces. However, there are some subtleties when trying to push behavior to nearby leaves associated with the transverse geometry of leaves in the universal cover. The fact that our two dimensional foliations are \emph{Anosov} (in the sense that they are homeomorphic to the weak stable foliation of an Anosov flow when 
$M = T^1 S$) is extremely useful. This allows for a very strong and precise form of `pushing' at most points in the leaf approaching the boundary at infinity (see Proposition \ref{prop-pushing}). By this, we mean that if two leaves are closeby in the leafspace, then, we can sort of copy the intersected foliation $\widetilde{\cG}$ in one leaf to nearby leaves by following the intersection with leaves of the other foliation. Note that pushing requires having two transverse foliations and this is crucial in our arguments. In fact, for flows tangent to a foliation, more diverse behavior is possible as it is shown in \cite{FP2} for the strong stable foliation of the examples constructed in \cite{BGHP}.

The general organization for showing that the foliation $\wt{\cG}$ is leafwise quasigeodesic follows the rough outline that was used in \cite{FP2} for foliations that come from some special dynamical systems. Not all steps hold in full generality as examples show, but we still describe here the main steps and explain under which assumptions they work.

\begin{itemize}
\item {\bf Landing:} In each leaf $L \in \wt{\cF_1}$ we look at the restriction of $\wt{\cG}$ to the leaf $L$. Then every ray of any given leaf of $\wt{\cG}$ (which is always properly embedded in $L$) has a well defined unique limit point in the compactification $L \cup S^1(L)$ by the Gromov boundary of $L$. In Theorem \ref{prop.landing} we show that in our setting, this holds in full generality. 

\item {\bf Small visual measure:} There are many ways a properly embedded ray in a hyperbolic plane can be extended to the boundary, in particular, we wish to rule out the possibility that it lands like horocycles do. For this, a technical property that we call small visual measure is relevant. An important consequence of small visual measure is that geodesic rays starting at a point of a ray $r$ of a leaf $c \in \wt{\cG}$ and landing at the same point as the ray $r$ must be contained in a uniform neighborhood of $r$. This is also something that holds always in our setting as we prove in \S\ref{s.smallvisual}. We point out here that the fact that $\cG$ is obtained as the intersection of two transverse foliations is crucial, as the horocyclic flow of an Anosov flow subfoliates a minimal foliation of $T^1S$ but its leaves do not satisfy the small visual measure property. 

\item {\bf No bubble leaves:} To get a quasigeodesic foliation we need to rule out the existence of leaves $c$ of $\wt{\cG}$ so that both rays of $c$ land in the same point of $S^1(L)$ (where $L$ is the two dimensional leaf containing $c$).
We call such a leaf $c$ a \emph{bubble leaf}. We note that the example in \cite{MatsumotoTsuboi} contains bubble leaves, so we cannot expect to prove the non-existence of bubble leaves in general. But the existence of some non-bubble leaves is important in our analysis of the small visual measure and the construction of Reeb surfaces. 

\item {\bf Hausdorff leaf space:} Another consequence of a one dimensional foliation subfoliating a two dimensional foliation by Gromov hyperbolic leaves and being leafwise quasigeodesic is that the leaf space of the foliation $\wt{\cG}$ in each leaf $L \in \wt{\cF_i}$ must be Hausdorff. A priori, the non existence of bubble leaves is not enough to rule out non-Hausdorffness, so more analysis is needed. This is the content of \S \ref{s.dichotomy} where we show that non-Hausdorfness of the leaf space leads to Reeb surfaces. 

\item {\bf Getting the quasigeodesic property:} Having Hausdorff leaf space is not enough to deduce the leafwise quasigeodesic property as the horocycle flow shows. In our context, we can show that it is enough and we do so in \S \ref{s.Hsdff}. 

\end{itemize}

The paper is organized as follows: In \S \ref{s.minimalfol} we study general properties of minimal foliations of $T^1S$, in particular getting the consequences of \cite{Matsumoto} that we will use. In \S \ref{s.transverse} we show some properties of pairs of transverse foliations, in particular defining and describing the landing property and showing that in some settings it is possible to \emph{push} behavior to nearby leaves. In \S \ref{s.Landing}, \S \ref{s.smallvisual} we address landing and the small visual measure property. When the leaf space of $\wcG$ is leafwise Hausdorff we show in \S \ref{s.Hsdff} that the foliation $\cG$ must be an Anosov foliation. In \S \ref{ss.MatsumotoTsuboi} we revisit the examples from \cite{MatsumotoTsuboi} and describe their properties from the point of view of this paper as well as some possible extensions. Finally, in \S\ref{s.dichotomy} we complete the proof of Theorem A. In \S\ref{s.consReeb} we explore further properties that in some sense show that the examples of \cite{MatsumotoTsuboi} are in some sense the only possible way to introduce Reeb surfaces. In \S\ref{s.partiallyhyperbolic} we study the applications of our result to classification and ergodicity of partially hyperbolic dynamics (we note that the applications to partial hyperbolicity do not use \S\ref{ss.MatsumotoTsuboi} and \S\ref{s.consReeb}, which can be skipped by the reader interested only in the applications to partial hyperbolicity). 

\subsection{Recent developments}\label{ss.newresults}
Since this paper was released in early 2023, several new developments have been obtained that we explain here, trying to emphasize the influence of this particular paper. Let us comment on the papers \cite{BaFP,FP4,FP6}. 

In \cite{BaFP} we extended part of the results of this paper to a more general setting, in particular, we showed that if $\cF_1, \cF_2$ are transverse $\RR$-covered Anosov foliations in a 3-manifold which are uniformly equivalent, then, they either intersect in the orbit foliation of an Anosov flow, or they contain a Reeb surface. The goal of \cite{BaFP} was to present a completely different approach and also present results in higher dimensions (and some results in dimension 3 that are important in \cite{FP6}), and while it reproves Theorem A of this paper, the proof is completely different and the techniques do not allow to obtain the more precise results that we obtain here in \S~\ref{s.consReeb}, and which we consider to be one of the main contributions of this paper (and also, that up to now do not have any counterpart in any context). 

The paper \cite{FP4} that was released later than this paper also has some intersection, though this and that paper are somewhat independent. In \cite{FP4} we reversed the strategy presented in \S~\ref{ss.outline} by assuming from the start a very strong property on the intersected foliation: that the leafspace of the intersected foliation is Hausdorff. Under that assumption, we were able to work out the full outline of \S~\ref{ss.outline} by showing landing, small visual measure (assuming that the manifold has fundamental group which is not virtually solvable) and finally the quasigeodesic property of leaves. While \cite{FP4} works in a much wider generality than this paper (no assumptions on the topology of $M$), it would not result in a major reduction of the current paper, in particular, it would only serve in reducing \S~\ref{s.Hsdff}, to  which one could apply directly the results in \cite{FP4}, but it would seem unnatural, as here we have a more direct proof. 

Finally, let us mention that recently, we have announced that we can prove a general statement needed for the classification of partially hyperbolic dynamics (see \cite{FP6}). We note that the general statement in \cite{FP6} owes to the ideas that had been developed here, but \cite{FP6} does not use or depend on this paper. The main result in \cite{FP6} gives a general result about transverse foliations with Gromov hyperbolic leaves and shows that the only obstruction to the intersected foliation being leafwise quasigeodesic is to have what we called \emph{generalized Reeb surfaces}, which extend the concept of Reeb surfaces used here. We note that \cite{FP6} gives hope in progressing in the understanding of general transverse foliations, and in our opinion makes \S~\ref{s.consReeb} of this paper even more relevant, since no analogue of this has been shown, and the existence of Reeb surfaces in some setting could be combined with the techniques in \cite{FP6} to see if one can produce some incompressible torus in $M$. This could for instance be relevant to address the following question which we believe to have positive chance to be true (see also the question at the end of \S~\ref{s.consReeb}):

\begin{question}
Let $\cF_1$ and $\cF_2$ be two transverse minimal foliations in a closed hyperbolic 3-manifold. Then, they intersect in the orbit foliation of a (topological) Anosov flow. 
\end{question}

\medskip

{\small \emph{Acknowledgements:} We would like to thank Thierry Barbot for pointing out \cite{MatsumotoTsuboi} which was crucial for this work (and particularly for \S \ref{s.consReeb}). We would also like to thank several referees that have given important feedback to the paper. }

\section{Minimal foliations on unit tangent bundles}\label{s.minimalfol}
We consider $M=T^1S$ where $S$ is a closed, orientable genus $g \geq 2$ surface. Consider $\cF$ a minimal foliation on $T^1S$. Such foliations have been completely classified by Matsumoto \cite{Matsumoto} showing that they are homeomorphic to the weak stable foliation of the geodesic flow on $S$ for a hyperbolic metric. We will expand on this as well as on previous results in \cite{Thu,Brit} to describe the foliations in a way that is useful for our purposes. 

We will assume throughout the article that the foliations we consider are $C^{0,1+}$. This means that leaves
are $C^1$ surfaces, see for instance \cite{CanCon}. This assumption is mostly for convenience, as having smooth leaves simplifies the definition of distances and lengths inside leaves. Note that in  \cite{Calegari-smoothing} it is shown that for each foliation there is a smooth structure in $M$ which makes it $C^{0,\infty+}$, but it is unclear that this can be done simultaneously for both foliations. To avoid discretizations of distances and local problems, we will keep this assumption throughout. 


\subsection{Unit tangent bundles}\label{ss.unittangentbundles} 
Consider the universal covering map $\pi: \tilde S \to S$ in which one can identify $\tilde S \cong \HH^2$ as follows: fix a discrete subgroup 
$\Gamma$ of $\mathrm{Isom}_+(\HH^2) \cong \mathrm{PSL}(2,\RR)$ (acting on the right) where $\Gamma \cong \pi_1(S)$.  We can identify $\pi$ with the quotient map from $\HH^2 \to \HH^2/_{\Gamma}$. 

Since $\mathrm{PSL}(2,\RR)$ naturally identifies with $T^1 \HH^2$ we obtain that $T^1S$ is identified by this action with $T^1 \HH^2/_{\Gamma}$. We parametrize $T^1 \HH^2$ by $\HH^2 \times \partial \HH^2$ by identifing a unit vector $v \in T_x \HH^2$ with the pair $(x, v_+) \in \{x\} \times \partial \HH^2$ where $v_+$ is the limit point of the geodesic in $\HH^2$ starting at $x$ with speed $v$. The action of $\mathrm{PSL}(2,\RR)$ on $\partial \HH^2 \cong S^1$ is given by extending the action of isometries on geodesic rays (if one uses the upper half model of $\HH^2$ this action corresponds to the standard action by rational transformations on $\RR \cup \{\infty\}$). 

We will denote by $M=T^1S$, and by $\wihat M= T^1\tilde S$ its intermediate cover with deck transformations identified with the action of $\Gamma$ in the coordinates given by the identification of $T^1 \tilde S \cong \HH^2 \times \partial \HH^2$. We will denote by $\wt M$ the universal cover of $M$ which covers $\wihat M$ with deck transformation group associated with the center of $\pi_1(M)$ (which corresponds to the deck transformation associated to the circle fiber of the circle bundle over $S$).  

\subsection{Horizontal foliations}\label{ss.horizontal}

Consider the foliation $\wihat\cF_{ws}$ of  $T^1 \tilde S \cong \HH^2 \times \partial \HH^2$  given by $\wihat\cF_{ws} = \{ \HH^2 \times \{\xi\} \}_{\xi \in \partial \HH^2}$.  This foliation is $\Gamma$-invariant and since the $\Gamma$ action on $\partial \HH^2$ is minimal it descends to a minimal foliation $\cF_{ws}$ of $M$ which is exactly the weak stable foliation for the geodesic flow associated to the metric on $S$ induced by the choice of $\Gamma \subset \mathrm{Isom}_+(\HH^2)$.  

The following result from \cite{Matsumoto} will be very important in our study and says that $\cF_{ws}$ is the unique minimal foliation of $M$ up to homeomorphisms isotopic to the identity on the base: 

\begin{theorem}[Matsumoto \cite{Matsumoto}]\label{teo.matsumoto} 
Let $\cF$ be a minimal foliation of $M$, then, there exists a homeomorphism $h: M \to M$ inducing the identity on the base such that $h(\cF) = \cF_{ws}$. 
\end{theorem}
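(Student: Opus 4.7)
The plan is to reduce the classification of minimal foliations on $M=T^1S$ to a classification of certain group actions on the circle, exploiting the Seifert fibration $\pi: M \to S$. The strategy proceeds in three broad stages: (i) show that $\cF$ may be isotoped to be transverse to the circle fibers of $\pi$, so that it is described by a monodromy representation $\rho: \pi_1(S) \to \mathrm{Homeo}_+(S^1)$; (ii) compute the Euler class of this representation and show it realizes the maximum permitted by the Milnor--Wood inequality; (iii) apply a rigidity theorem to conclude that $\rho$ is topologically conjugate to the Fuchsian action coming from $\Gamma \subset \mathrm{PSL}(2,\RR)$, and then transport this conjugacy fiberwise to obtain the desired homeomorphism.

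For step (i), a minimal foliation on a closed 3-manifold has no compact leaves (a compact leaf would be a closed saturated set and violate minimality) and is taut. On a circle bundle over a hyperbolic surface, results of Thurston and Brittenham on foliations of Seifert fibered manifolds allow one to deform such a $\cF$ into a horizontal position. Minimality is the crucial simplification here: the locus where $\cF$ is tangent to a fiber, or the union of vertical leaves, is closed and saturated, so minimality forces these bad loci to disappear after a suitable isotopy, leaving a foliation transverse to every fiber.

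For step (ii), once $\cF$ is horizontal, the circle fiber over a basepoint $x_0 \in S$ is a transversal meeting every leaf, and $\pi_1(S,x_0)$ acts on it by holonomy, giving the monodromy $\rho$. A Milnor-type computation identifies the Euler number of $\rho$ with (the negative of) the Euler number of the circle bundle $T^1S \to S$, which equals $\pm\chi(S) = \pm(2-2g)$; this is the extremal value allowed by Milnor--Wood. For step (iii), minimality of $\cF$ translates to the action $\rho$ having no finite orbits and in fact every orbit dense. One then invokes the Matsumoto--Ghys rigidity theorem: any representation of a closed surface group into $\mathrm{Homeo}_+(S^1)$ with maximal Euler number is semi-conjugate to the Fuchsian representation into $\mathrm{PSL}(2,\RR)$, and under the no-wandering-interval hypothesis provided by minimality this semi-conjugacy upgrades to a genuine topological conjugacy $\varphi: S^1 \to \partial\HH^2$. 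Transporting $\varphi$ fiberwise produces the homeomorphism $h: M \to M$ sending $\cF$ to $\cF_{ws}$; since $h$ moves points only within fibers of $\pi$, it covers the identity on $S$, as required.

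The hardest part is step (i). In general, taut foliations on Seifert fibered manifolds need not be horizontal, and ruling out vertical pieces requires delicate surgery and normal form arguments in the spirit of Thurston, Brittenham, and Eisenbud--Hirsch--Neumann. The saving grace is the global minimality assumption, which kills any invariant vertical annulus, torus, or half-vertical component before the isotopy begins. Once horizontality is in hand, steps (ii) and (iii) are essentially Ghys' rigidity theorem in the maximal Euler number regime, together with the straightforward upgrade of semi-conjugacy to conjugacy when all orbits are dense.
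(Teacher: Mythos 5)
The paper does not prove this theorem; it cites Matsumoto's original article, adding only the remark immediately after the statement that the horizontality hypothesis in Matsumoto's result is automatic here because, by Brittenham, vertical leaves form a proper sublamination and hence cannot exist in a minimal foliation. Your outline correctly reconstructs the content of that citation---Brittenham/Thurston for the reduction to a horizontal foliation, a Milnor--Wood computation identifying the monodromy Euler number as extremal, and the Matsumoto--Ghys rigidity of maximal-Euler-class surface group actions on $S^1$, promoted from semi-conjugacy to genuine conjugacy by minimality---so it follows essentially the same route the paper is invoking, with the deep step (iii) cited rather than reproved.
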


We note that the result of \cite{Matsumoto} is stated for $C^2$ foliations without compact leaves,
but all that is needed is that $\cF$ is \emph{horizontal}, see also
\cite{Ghys2,Thu}. 

To get the horizontal property for $\cF$ we
use the fact that $\cF$ is minimal and apply Brittenham's theorem  \cite{Brit}. There is a slightly technical issue in Brittenham's result: in
\cite{Brit} a lamination is one that is carried by a branched surface,
so technically a foliation is not a lamination and must first be
split along a finite set of leaves to produce an essential lamination $\cL$ which, due to \cite{Brit}, has a minimal sublamination
which is either horizontal or vertical, which since $\cF$ is minimal this is $\cL$ itself. But $\cF$ cannot be vertical\footnote{If it were, it would induce a non singular foliation in the base surface which has non-zero Euler characteristic.}. It follows that $\cF$ is horizontal
and then one can apply Matsumoto's result. See also \cite{Calegari-book} for generalities on foliations on circle bundles. 

That $h$ induces the identity on the base means that if $p: T^1S \to S$ is the projection, then the induced actions on the fundamental group verify that $p_\ast \circ h_\ast = p_\ast$. Note however, that $h$ may not be homotopic to identity as a map of $M$ and therefore two minimal foliations $\cF_1$ and $\cF_2$ may not be \emph{uniformly equivalent} in the sense that leaves in the universal cover may not be bounded distance away from a leaf of the other foliation (see \cite{Thurston} for discussion on this notion, we note that this notion is different from the notion of having homotopic plane fields which is also used sometimes). 

We will use some other properties of minimal foliations on $M$.  Some of these hold more generally for \emph{Reebless foliations} (due to Novikov's theorem, see \cite{Calegari-book}). We state the properties we need in the setting we will use where the proofs are a direct consequence of the corresponding properties for $\cF_{ws}$ and Theorem \ref{teo.matsumoto} (the last point also uses \cite{Brit} for smoothness):  

\begin{coro}\label{coro.foliations}
Let $\cF$ be a minimal foliation on $M = T^1S$ and denote by $\wihat\cF$ and $\wt \cF$ the lifts to the covers $\wihatM$ and $\mt$ respectively. Then: 
\begin{enumerate}
\item if $\tau$ is a curve transverse to $\wt \cF$ then it intersects each leaf at most once, 
\item the leaf space $\wihat\cL = \wihatM/_{\wihat\cF}$ of $\wihat\cF$ is homeomorphic to $S^1$ and the leaf space $\wt \cL$ of $\wt \cF$ is homeomorphic to $\RR$,
\item\label{it.britenham} there is a (smooth) isotopy of $\cF$ which makes every leaf transverse to the circle fibers of $T^1S$. 
\end{enumerate}
\end{coro}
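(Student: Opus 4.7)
The plan is to verify each statement first for the model foliation $\cF_{ws}$ and then to transfer it to an arbitrary minimal $\cF$ using the homeomorphism supplied by Theorem~\ref{teo.matsumoto}.

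For the model foliation, in the coordinates $\wihatM \cong \HH^2 \times \partial\HH^2$ described in \S\ref{ss.unittangentbundles}, the lift $\wihat\cF_{ws}$ is literally the product foliation with leaves $\HH^2 \times \{\xi\}$. Its leaf space is the factor $\partial\HH^2 \cong S^1$, which already gives the $S^1$ half of (ii). The universal cover $\mt$ is a $\ZZ$-cover of $\wihatM$ associated to the center of $\pi_1(M)$ (the fiber direction), and in this trivialization it unwraps the $S^1$ factor to $\RR$ while keeping each leaf simply connected; hence the leaf space of $\wt \cF_{ws}$ is $\RR$, proving the second half of (ii) in the model. For (i) in the model case, a curve transverse to the product foliation $\wt\cF_{ws}$ projects to a locally injective curve in the linearly ordered leaf space $\RR$, and on $\RR$ local injectivity forces global injectivity.

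To pass these statements to a general minimal $\cF$, I would apply Theorem~\ref{teo.matsumoto} to obtain a homeomorphism $h : M \to M$ with $h(\cF) = \cF_{ws}$. Any homeomorphism of $M$ lifts to $\wihatM$ and to $\mt$, yielding homeomorphisms $\wihat h$ and $\wt h$ carrying $\wihat\cF$ to $\wihat\cF_{ws}$ and $\wt\cF$ to $\wt\cF_{ws}$, equivariantly with respect to some automorphism of the corresponding deck groups. Since (i) and (ii) are statements about leaf spaces and transversality, and both are preserved under homeomorphisms, they transfer directly from the model to $\cF$.

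For (iii) the statement is independent of Matsumoto's theorem and relies instead on the results of Britenham in \cite{Brit}: on a Seifert fibered 3-manifold, the set of points where a Reebless foliation is tangent to the circle fibers assembles into a \emph{proper} sublamination. A minimal foliation on $M$ is Reebless by Novikov's theorem, and minimality forbids the existence of any proper sublamination, so this tangential set must be empty. Britenham's smoothing statement then allows one to realize a (smooth) isotopy of $\cF$ after which every leaf is transverse to the fibers. The only delicate point is verifying that the isotopy provided by \cite{Brit} can indeed be taken smooth in the desired sense; this is precisely the improvement supplied by that reference, so I expect the main effort to be bookkeeping rather than any new argument.
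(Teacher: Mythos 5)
Your proof is correct and follows the same route the paper intends: deduce (i) and (ii) for $\cF_{ws}$ directly from the product structure of $\wihatM \cong \HH^2 \times \partial\HH^2$, transfer them to a general minimal $\cF$ via a lift of the homeomorphism from Theorem~\ref{teo.matsumoto} (which exists because the center of $\pi_1(M)$ is characteristic), and obtain (iii) independently from Brittenham's vertical/horizontal dichotomy combined with minimality. One small caveat worth noting: Brittenham's theorem states that the lamination can be isotoped so that its vertical \emph{leaves} form a sublamination (rather than asserting outright that the fiber-tangency locus of an arbitrary Reebless foliation is a sublamination), and minimality of $\cF$ alone rules out Reeb components without needing Novikov; neither of these affects the correctness of your argument.
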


Note that a transversal to $\wt \cF$ or $\cF$ is a continuous curve $\tau: I \to M$ where $I$ is some interval verifying that for every $t \in I$, there is $\eps>0$ so that the curve $\tau|_{(t-\eps,t+\eps)}$ is monotone in the leaf space of a foliation chart of $\wt \cF$ or $\cF$ (which is an interval) around $\tau(t)$.  We will many times abuse notation and denote by $\tau$ the image of a transversal.

\subsection{Universal circle}\label{ss.universalcircle}
One consequence of Matsumoto's result (Theorem \ref{teo.matsumoto}) is that every minimal foliation in $M$ is homeomorphic to our model and so we can compare the geometry of leaves with that of hyperbolic disks simultaneously. This allows to make natural projections into $\HH^2$ of lifts of leaves of a minimal foliation $\cF$ in $M$ to the intermediate cover $\wihatM$ or universal cover $\wt M$. 

We consider $p: M \to S$ to be the projection of the fiber bundle $S^1 \to M=T^1S \to S$. By our coordinate choices, the map $p$ lifts to a projection $\hat p : \wihat M \to \HH^2$, that, given the identification $\wihatM \cong \HH^2 \times \partial \HH^2$ corresponds to the map $\hat p(x,\xi) = x$. We also denote by $\tilde{p}: \wt M \to \HH^2$ the lift to the universal cover. 

For a minimal foliation $\cF$ on $M$ we will denote by $\wihat\cF$ and $\wt \cF$ the lifts to $\wihatM$ and $\wt M$ respectively. For concreteness we will consider the following metric on $\wihatM \cong \HH^2 \times \partial \HH^2$ given by the fact that the metric on $\HH^2 \times \{\xi\}$ is the one that makes projection in the first coordinate an isometry, makes the sets $\HH^2 \times \{\xi\}$ and $\{x\} \times \partial \HH^2$ orthogonal and measures distances in $\{x\} \times \partial \HH^2$ via the visual metric, that is, given an interval $[\xi_1, \xi_2] \subset \partial \HH^2$ we define its length as the angle between the geodesic rays starting from $x$ and landing on $\xi_1$ and $\xi_2$ respectively (and measured in the direction on which rays land in the interior of the interval). This metric is invariant under the action of $\Gamma$ and thus produces a Riemannian metric on the quotient $M$ which makes the projection $p: M \to S$ to be a Riemannian submersion on $S$ when given the metric induced by the action of $\Gamma$ on $\HH^2$. In coordinates $\wihatM \cong \HH^2 \times \partial \HH^2$ the Riemannian metric is:

\begin{equation}\label{eq:metricMhat}
\langle v , w \rangle_{(x,\xi)} = \left(\langle v_{\HH^2} , w_{\HH^2} \rangle^2_{T_x\HH^2} + \langle v_{\partial \HH^2} , w_{\partial \HH^2} \rangle_{(x,\xi)}^2 \right)^{1/2}.
\end{equation}

\noindent where $v_{\HH^2}, w_{\HH^2}, v_{\partial \HH^2}, w_{\partial \HH^2}$ are the projections of the vectors on the first and second coordinates respectively, the inner product $\langle \cdot, \cdot \rangle_{T_x\HH^2}$ is the standard inner product in the hyperbolic plane and $\langle \cdot, \cdot \rangle_{(x,\xi)}$ is an inner product on $T_{(x,\xi)} (\{x\} \times \partial \HH^2)$ given by the identification of the landing of geodesic rays of $T^1_{x}\HH^2$ with $\partial \HH^2$ at the point $\xi \in \partial \HH^2$. 

The metric on $\wt M$ will be the path metric associated with
the pull back of the Riemannian metric above by the universal cover projection. 
This metric in $\wt M$ will be denoted by $d: \wt M \times \wt M
\to \mathbb{R}_{\geq 0}$.

Given a leaf $L$ of a foliation $\cF$ we consider the path distance in $L$ induced by the restriction of the ambient Riemannian metric on $L$. The choices we have made are not important if we consider objects up to  \emph{quasi-isometry}: if one chooses another metric, one obtains a distance that is quasi-isometric to the first one. Recall that a map (not necessarily continuous) $q: (X_1,d_1) \to (X_2,d_2)$ is a $Q$-quasi-isometry if for every $x,y \in X_1$ one has:

\begin{equation}\label{eq:QI}
\frac{1}{Q} d_1(x,y) - Q \leq d_2(q(x),q(y)) \leq Qd_1(x,y) + Q, 
\end{equation} 

\noindent and the image of $q$ is $Q$-dense in $X_2$. Being quasi-isometric (meaning there exists a quasi-isometry between the spaces) is an equivalence relation between metric spaces which is particularly relevant for Gromov hyperbolic spaces such as $\HH^2$. We will use several basic properties of Gromov hyperbolic spaces and refer to \cite{BridsonHaefliger} for proofs of those facts. 

Theorem \ref{teo.matsumoto} implies the following which in particular shows that leaves of a minimal foliation are Gromov hyperbolic: 

\begin{prop}\label{prop.QI}
There is a uniform constant $Q_0:= Q_0(\cF)$ such that for every $L \in \wihat\cF$ (or $\in \wt \cF$) it follows that the restriction $\hat p|_{L}: L \to \HH^2$ (resp. $\tilde p|_{L}: L \to \HH^2$) is a $Q_0$-quasi-isometry. 
\end{prop}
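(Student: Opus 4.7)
The plan is to reduce the statement to the model weak stable foliation $\cF_{ws}$, for which each lifted leaf $\HH^2\times\{\xi\}$ projects \emph{isometrically} onto $\HH^2$ under $\hat p$. By Theorem~\ref{teo.matsumoto}, fix a homeomorphism $h\colon M\to M$ inducing the identity on the base and sending $\cF$ to $\cF_{ws}$; pick a lift $\hat h\colon \wihat M\to\wihat M$. Since $h$ induces the identity on $\pi_1(S)$ and $S$ is aspherical, $p$ and $p\circ h$ are homotopic as maps $M\to S$; lifting a homotopy to $\wihat M$ and adjusting the choice of $\hat h$ if necessary, compactness of $M$ furnishes a uniform bound
\[
C_0 := \sup_{z\in\wihat M} d_{\HH^2}\bigl(\hat p\circ\hat h(z),\,\hat p(z)\bigr) < \infty.
\]

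The main step is to show that $\hat h$ restricts to a quasi-isometry on every leaf, with constants depending only on $h$. Cover $M$ by finitely many foliation charts adapted to $\cF$ and to $\cF_{ws}$, with a common Lebesgue number $\epsilon>0$; inside each chart, leafwise distance between two points of a common plaque is comparable to ambient distance. Uniform continuity of $h^{\pm 1}$ on the compact manifold $M$ then provides a constant $K\ge 1$ such that $h^{\pm 1}$ sends any plaque of one cover into a union of at most $K$ plaques of the other, all lying in the target image leaf. A leafwise path of length $\ell$ in $L\in\wihat\cF$ is subdivided into at most $\lceil \ell/\epsilon\rceil+1$ segments in plaques; applying $\hat h$ and concatenating gives a leafwise path in $\hat h(L)$ of length bounded by a linear function of $\ell$. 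Running the same argument with $\hat h^{-1}$ yields the matching lower estimate, so $\hat h|_L\colon L\to \hat h(L)\in\wihat\cF_{ws}$ is a uniform quasi-isometry of path metrics.

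Composing with the isometry $\hat p|_{\hat h(L)}\colon \hat h(L)\to\HH^2$ and absorbing the additive error $C_0$ yields the desired uniform quasi-isometric embedding $\hat p|_L\colon L\to \HH^2$; density of the image follows from surjectivity of $\hat p|_{\hat h(L)}$. For the universal cover statement, Corollary~\ref{coro.foliations}(\ref{it.britenham}) allows us to isotope $\cF$ so that its leaves are transverse to the circle fibers, which forces the $\ZZ$-action of deck transformations of $\wt M\to\wihat M$ to preserve no leaf of $\wt\cF$; consequently every leaf of $\wt\cF$ maps isometrically onto a leaf of $\wihat\cF$ and the bound transfers. The only delicate point is upgrading local control supplied by uniform continuity of $h$ to a multiplicative-plus-additive bound along leaf paths of arbitrary length, and this is precisely what the plaque-by-plaque subdivision handles via the equivalence of leafwise and ambient distance inside a single chart.
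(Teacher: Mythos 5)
Your proof is correct, but it takes a genuinely different route from the paper's. The paper's argument is a two-line soft one: using Corollary \ref{coro.foliations} (\ref{it.britenham}) it isotopes $\cF$ to be transverse to the Seifert fibers, chooses a metric for which the fiber projection is a leafwise isometry, invokes compactness of $M$ to get uniformity, and notes that the statement (up to changing $Q_0$) is invariant under the isotopy and the change of metric. You instead keep $\cF$ and the metric fixed and transport the model computation for $\cF_{ws}$ through Matsumoto's homeomorphism $h$: first the standard plaque-chain lemma that a leaf-preserving homeomorphism of a compact foliated manifold induces quasi-isometries between leaves with uniform constants, then asphericity of $S$ (so $p$ and $p\circ h$ are homotopic) to bound $\sup_z d_{\HH^2}(\hat p\,\hat h(z),\hat p(z))$ --- equivalently, since $h$ induces the identity on the base, $\hat h$ is $\Gamma$-equivariant and the displacement function descends to the compact $M$. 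Both are compactness arguments reducing to the model foliation; the paper's is shorter, while yours avoids isotoping the foliation and the metric-independence remark at the price of the plaque-chain lemma and the lifted-homotopy bound. One small remark on your universal-cover step: instead of isotoping to transversality with the fibers and asserting that the $\ZZ$-deck action preserves no leaf of $\wt\cF$ (true, but it needs Corollary \ref{coro.foliations}(i) applied to the invariant lifted fiber lines), it is cleaner to observe that leaves of $\wihat\cF$ are planes (they are carried by $\hat h$ homeomorphically to the leaves $\HH^2\times\{\xi\}$), hence simply connected, so the covering $\wt M\to \wihat M$ restricts on each leaf of $\wt\cF$ to a homeomorphism which is a local isometry, hence an isometry of the induced path metrics, and the bound transfers directly.
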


\begin{proof}
After isotopy one can assume the foliation is horizontal (cf. Corollary \ref{coro.foliations} (\ref{it.britenham})), 
and thus, by choosing an appropriate metric we have that the projection is an isometry. Since $M$ is compact it follows that the lift of either $\hat p$ or $\tilde p$ restricted to any leaf is a uniform quasi-isometry. Since the statement (up to changing the constants) is invariant under change of metric on $M$, we conclude. 
\end{proof}

\begin{remark} A far reaching generalization is Candel's uniformization theorem (see \cite[Chapter 7]{Calegari-book}). It can be used to show that every minimal foliation on a 3-manifold with fundamental group of exponential growth admits a metric which makes every leaf of negative curvature (see \cite[\S 5]{FP} and \cite[Appendix A]{BFP}). 
\end{remark}

We can identify the Gromov boundary of $L$
or, equivalently,  the \emph{circle at infinity} $S^1(L)$ of each leaf $L \in \wihat\cF$ (or $\wt \cF$) with $\partial \HH^2$ in a canonical way.  Notice that the \emph{universal circle} of $\cF$ as defined in \cite{Thurston} is also canonically identified with $\partial \HH^2$ in this case.

\subsection{Non-marker points}\label{ss.nonmarker}

Here we introduce the notion of marker and non-marker points for the foliations we are interested in. More general definitions can be found in \cite{Calegari-book}. We start by analyzing $\cF_{ws}$ using the metric given by \eqref{eq:metricMhat} and then in the next subsection we show similar properties for every minimal foliation (because by Theorem \ref{teo.matsumoto}, they  are all homeomorphic to $\cF_{ws}$). 

An orientation will be fixed on $\HH^2$. Then given an oriented geodesic $\ell \in \HH^2$ we denote $H_+(\ell)$ and $H_-(\ell)$ the half spaces determined by $\ell$ (that is, the closure in
$\HH^2$ of the connected components of $\HH^2 \setminus \ell$) with respect to the chosen orientations. Specifically $H_+(\ell)$ is the half space to the
left of $\ell$ and $H_-(\ell)$ is the half space to the 
right of $\ell$, with respect to the orientation in $\mathbb{H}^2$. For $X \subset \HH^2$ and $C>0$ denote $B_C(X)$ to be the set of points $x \in \HH^2$ whose distance to $X$ is less than or equal to $C$.

\begin{prop}\label{prop.elementaryhyperbolic}
Given $\eps \in (0,\pi)$ there exists $C:=C(\eps)$ so that for every interval $[\eta,\xi] \in \partial \HH^2$ the set of points $x \in \HH^2$ so that the visual length from $x$ of the interval $[\eta,\xi]$ is less than $\eps$ is $H_-(\ell) \setminus B_C(\ell)$ where $\ell$ is a geodesic joining $\eta, \xi$ oriented so that the interval $[\eta, \xi]$ is contained in the closure of $H_+(\ell)$ in the compactification $\HH^2 \cup \partial \HH^2$. \end{prop}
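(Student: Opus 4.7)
The plan is to reduce to a computation involving only the signed distance of $x$ to the geodesic $\ell$. First, I would fix the pair $(\xi,\eta)\in\partial\HH^2\times\partial\HH^2$ and note that the stabilizer of this pair in $\mathrm{Isom}_+(\HH^2)$ is the one-parameter group of hyperbolic translations along $\ell$. These isometries preserve $\ell$ (and thus the orientation and the half-spaces $H_\pm(\ell)$), preserve the interval $[\xi,\eta]\subset\partial\HH^2$, and act by isometries on $\HH^2$, so they preserve the visual measure $\theta_x([\xi,\eta])$ of $[\xi,\eta]$ seen from $x$. Since this subgroup acts transitively on every equidistant curve from $\ell$, the function $x\mapsto\theta_x([\xi,\eta])$ depends only on the signed distance $s(x)$ from $x$ to $\ell$, with the sign positive on $H_+(\ell)$ and negative on $H_-(\ell)$.

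Next I would study the function $\theta(s):=\theta_x([\xi,\eta])$ as a function of $s\in\R$. Three qualitative facts are needed: (a) $\theta(0)=\pi$, because for $x\in\ell$ the points $\xi,\eta$ are the two boundary endpoints of the complete geodesic through $x$, hence are in antipodal directions in $T^1_x\HH^2$; (b) $\theta(s)\to 2\pi$ as $s\to+\infty$, since $x$ approaches the boundary inside $H_+(\ell)$ and the arc $[\xi,\eta]$ eventually fills the visual field; (c) $\theta(s)\to 0$ as $s\to-\infty$, since $x$ escapes to infinity in $H_-(\ell)$ and both endpoints $\xi,\eta$ are seen from a single direction. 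It then remains to show strict monotonicity of $\theta$; for this it suffices to work in any concrete model, e.g.\ the upper half-plane with $\ell$ the imaginary axis and $\xi=0,\eta=\infty$, where the visual angle from a point $re^{i\alpha}$ with $\alpha\in(0,\pi)$ can be written explicitly as a smooth strictly monotone function of $\alpha$, and where $\alpha$ parametrises the equidistant curves from $\ell$.

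Finally, combining continuity, monotonicity, and the limits above, for every $\varepsilon\in(0,\pi)$ there is a unique value $s_0=s_0(\varepsilon)<0$ with $\theta(s_0)=\varepsilon$, and $\theta(s)<\varepsilon$ if and only if $s<s_0$. Setting $C:=C(\varepsilon):=|s_0|$ we obtain
\[
\{x\in\HH^2:\theta_x([\xi,\eta])<\varepsilon\}=\{x:s(x)<-C\}=H_-(\ell)\setminus B_C(\ell),
\]
which is the claimed identity. Since $\mathrm{Isom}_+(\HH^2)$ acts transitively on pairs of distinct boundary points, the constant $C(\varepsilon)$ so obtained is independent of the particular pair $(\xi,\eta)$, yielding the uniform statement.

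I do not expect any substantial obstacle. The only slightly delicate point is the strict monotonicity of $\theta(s)$, but this is a one-variable exercise in hyperbolic trigonometry in any standard model and can be carried out directly, or argued purely geometrically from the fact that moving from $H_-$ towards $H_+$ along a geodesic orthogonal to $\ell$ strictly increases each of the two angles that the rays to $\xi$ and $\eta$ make with that orthogonal.
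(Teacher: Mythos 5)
Your proof is correct and takes essentially the same route as the paper's: both reduce, via the one-parameter group of hyperbolic translations fixing $\xi,\eta$ (so preserving $\ell$ and the visual measure), to showing that the visual angle of $[\xi,\eta]$ is a strictly monotone function of the signed distance to $\ell$, equal to $\pi$ on $\ell$ and tending to $0$ deep in $H_-(\ell)$, and then define $C(\eps)$ by the intermediate value theorem. The only cosmetic difference is the monotonicity step: the paper gets it from Gauss--Bonnet applied to the nested ideal triangles with vertices $r(t),\xi,\eta$, while you defer it to an explicit upper half-plane computation; either argument works.
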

\begin{proof}
For every $x \in H_+(\ell)$ the visual length from $x$ of $[\eta,\xi]$ is greater than or equal  to   $\pi>\eps$, so the set we wish to describe does not intersect $H_+(\ell)$. 

Now, fix a geodesic ray $r:[0,\infty) \to \HH^2$, parametrized
by unit speed,  starting at some point $r(0)$ in $\ell$, orthogonal to it and contained in $H_-(\ell)$. For $t >t'$ the ideal geodesic triangle joining $r(t), \xi,\eta$ contains the triangle joining $r(t'), \xi,\eta$ so the inner angle at the point $r(t)$ is smaller than the one at the point $r(t')$ by Gauss-Bonnet formula. As it varies continuously, and verifies that the angle is $\pi$ at $r(0)$ and tends to $0$ at $r(\infty)$ it follows that every such geodesic ray has a unique point $r(t_0)$ on which the angle is exactly $\eps$ and it follows also that $d(r(t_0),\ell) = t_0 := C(\eps)$.  To show that this does not depend on $r(0)$ consider the one parameter family of isometries of $\mathbb{H}^2$ fixing $\xi, \eta$; these isometries map $r(t_0)$ transitively
along the boundary of $B_{t_0}(\ell)$ preserving angles. Hence this proves 
that $t_0$ does not depend on $r(0)$, and depends only
on $\eps$. This proves the proposition. 
\end{proof}

\begin{figure}[ht]
\begin{center}
\includegraphics[scale=0.98]{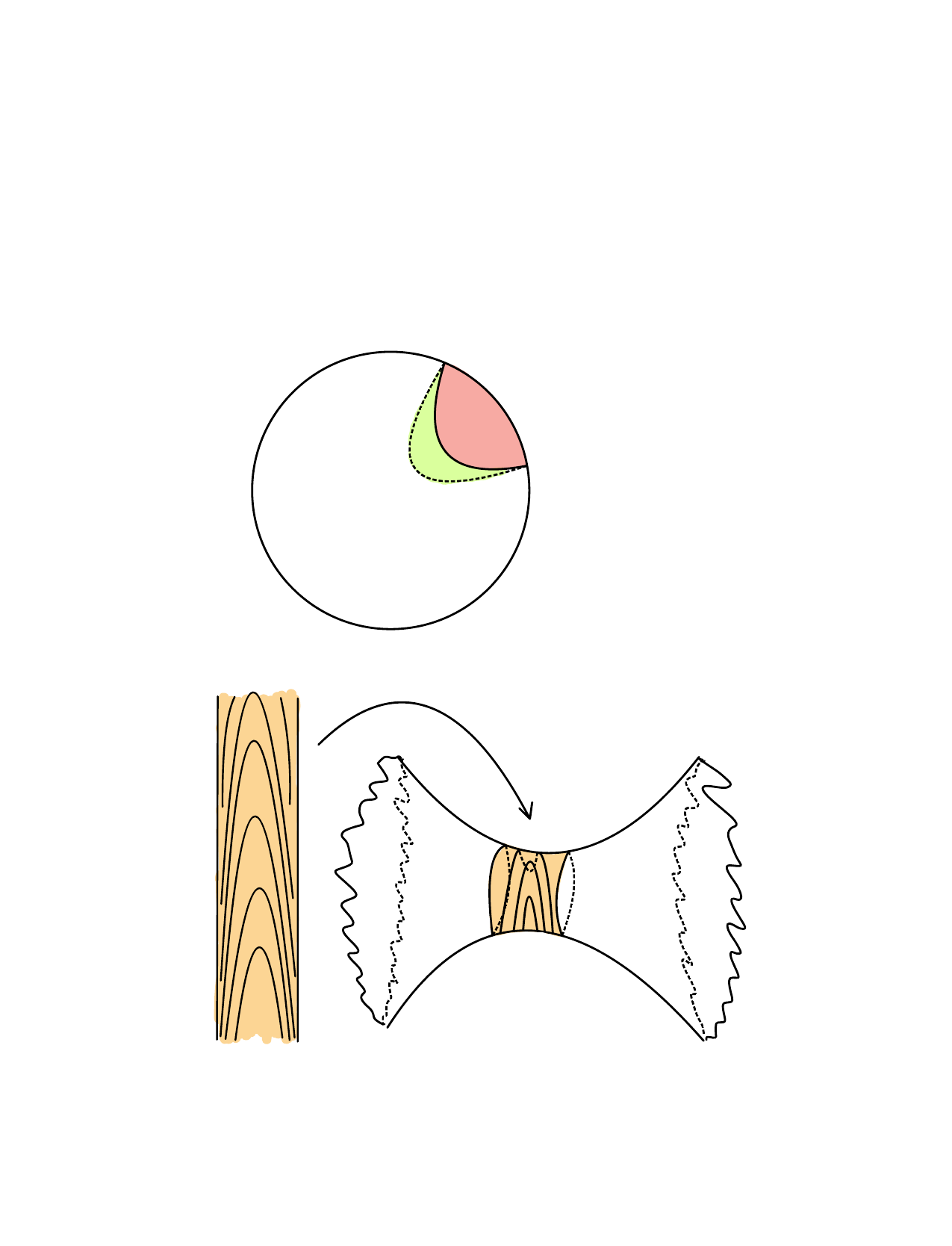}
\begin{picture}(0,0)
\put(-82,191){$\xi$}
\put(-20,123){$\eta$}
\put(-73,144){$H_+(\ell)$}
\put(-97,119){$B_C(\ell)$}
\end{picture}
\end{center}
\vspace{-0.5cm}
\caption{{\small The set $D_\eps(I)$ for $I = [\xi,\eta]$ is 
the complement of the union of $H_+(\ell)$ and $B_C(\ell)$. If $\eps$
is very small, than $C$ is very large.}}\label{fig-setDeps}
\end{figure}

We can parametrize leaves of $\wihat\cF_{ws}$ by $\partial \HH^2$ as these are of the form $\HH^2 \times \{\xi\}$ with $\xi \in \partial \HH^2$. We denote by $L_\xi = \HH^2 \times \{\xi\}$ and call $\xi$ the \emph{non-marker} point of $L_\xi$. This will be denoted as $\alpha(L_\xi) = \xi$. The key point is that the choices of coordinates make this point in $\partial \HH^2$ special with respect to the leaf $L_\xi$ as it will now be explained. 

Given $\eps>0$ and an interval $I \subset \partial \HH^2$  we define the set (see figure \ref{fig-setDeps})

\begin{equation}\label{eq:Deps}
D_{\eps}(I) = \{ x \in \HH^2 \ : \ \forall \xi,\eta \in I \: \ d_{\wihat{M}}((x,\xi), L_{\eta}) < \eps \} 
\end{equation}

\noindent
where $d_{\wihat{M}}$ is the distance in $\wihat{M}$.
Note that the point $(x,\xi)$ belongs to $L_\xi$ by definition. We will always assume that $I$ is not $\partial {\mathbb{H}}^2$ nor a single point. For any interval $I=[\xi_-,\xi_+]$, the points in $D_\eps(I)$ form
a subset of $\mathbb{H}^2$ and the points in $\partial \HH^2$ for which the biggest distance between the corresponding leaves is achieved is when $\{\eta, \xi\}=\{ \xi_-, \xi_+ \}$. 
So it is enough to look at the distance of points of the form $(x,\xi_-)$ to $L_{\xi_+}$ (or $(x,\xi_+) \in L_{\xi_-}$). From the previous proposition we deduce:   

\begin{coro}\label{coro.setD}
Given $I \subset \partial \HH^2$ an  interval and some constant  $\eps \in (0,\pi)$ there exists $C:=C(I, \eps)$ such that $D_{\eps}(I)$ is equal to the set $H_-(\ell) \setminus B_C(\ell)$. Here $\ell$ is the geodesic joining the endpoints of $I$ oriented so that $I$ is contained in the closure of $H_+(\ell)$ in $\HH^2 \cup \partial \HH^2$. 
\end{coro}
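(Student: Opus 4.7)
The plan is to read off the corollary directly from the explicit form of the metric \eqref{eq:metricMhat} on $\wihatM\cong\HH^2\times\partial\HH^2$, combined with Proposition \ref{prop.elementaryhyperbolic}. The formula for the metric says that moving in the $\{x\}\times\partial\HH^2$ direction is measured exactly by the visual metric from the point $x$. So for any pair $\xi,\eta\in I$, consider the fiber curve $\gamma:[0,1]\to\wihatM$ with $\gamma(t)=(x,\xi_t)$, where $\xi_t$ parametrizes the sub-arc of $I$ from $\xi$ to $\eta$. Its endpoint $(x,\eta)$ lies in $L_\eta$, and its length is exactly the visual angle subtended at $x$ by the sub-interval $[\xi,\eta]\subset I$. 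Hence
\[
d\bigl((x,\xi),L_\eta\bigr)\ \le\ \text{visual length of }[\xi,\eta]\text{ seen from }x\ \le\ \text{visual length of }I\text{ seen from }x.
\]

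Next I would apply Proposition \ref{prop.elementaryhyperbolic} to the interval $I$ and the constant $\eps\in(0,\pi)$. That proposition yields a constant $C=C(I,\eps)$ with the property that for every $x\in H_-(\ell)\setminus B_C(\ell)$ the visual length of $I$ seen from $x$ is strictly less than $\eps$ (the orientation convention on $\ell$ matches the one in the statement of the corollary). Chaining this with the previous estimate gives $d((x,\xi),L_\eta)<\eps$ uniformly in $\xi,\eta\in I$, which is precisely the condition in the definition \eqref{eq:Deps} of $D_\eps(I)$. Therefore $H_-(\ell)\setminus B_C(\ell)\subset D_\eps(I)$.

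There is essentially no serious obstacle: the whole argument is a one-line consequence of the product-like form of the metric on $\wihatM$ together with the preceding proposition. The only point worth being slightly careful about is the interpretation of $[\xi,\eta]$ as a sub-arc of $I$ (so that monotonicity of visual length on nested intervals applies), which is implicit in choosing the fiber curve $\gamma$ inside $\{x\}\times I$. No quantitative estimate beyond what is already in Proposition \ref{prop.elementaryhyperbolic} is needed.
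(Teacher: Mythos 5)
Your proof is correct and follows the argument the paper intends: use the product-like form of the metric \eqref{eq:metricMhat} to bound $d((x,\xi),L_\eta)$ by the visual length of the sub-arc of $I$ joining $\xi$ to $\eta$, note this is at most the visual length of $I$ itself, and then invoke Proposition~\ref{prop.elementaryhyperbolic}. Note only that Proposition~\ref{prop.elementaryhyperbolic} actually gives $C$ depending on $\eps$ alone (uniformly over intervals), so your argument in fact proves the slightly stronger statement $C=C(\eps)$, which subsumes the $C(I,\eps)$ claimed in the corollary.
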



We will denote the lift of the points of $D_\eps(I)$ in $L_\xi$ as 

\begin{equation}\label{eq:Depsxi} 
D_{\eps}(\xi, I) = \{ (x,\xi) \in \HH^2 \times \{\xi\} \ : \ x \in D_\eps(I) \} \subset L_\xi. 
\end{equation}

\subsection{Minimal foliations in $M = T^1 S$}
\label{ss.minimal}

Now, let $\cF$ be an arbitrary minimal foliation on $M$. 
By Theorem \ref{teo.matsumoto} there is a homeomorphism $h: M \to M$ 
mapping $\cF$ to $\cF_{ws}$. 

We will produce maps $\Phi_L$ for $L$ in $\wcF$
which are uniform quasi-isometries (cf. Proposition \ref{prop.QI}).
First fix an isotopy of $M$, which
restricted to each individual leaf of 
$\cF$ is as regular as the leaves of $\cF$ are, to 
get to a foliation $\cF'$ which is transverse to the standard
Seifert fibration of $M = T^1 S$ (cf. Corollary \ref{coro.foliations} (\ref{it.britenham})). This is the
Seifert fibration whose fibers are the
unit vectors over a given point in $S$.
The lift of this isotopy to $\wt M$ is denoted by $\nu$.
For any leaf of $\wcF'$ project to $\HH^2$ using the lift of
this Seifert fibration, this projection is $\tilde{p}$. 
Since the angle between $\cF'$ and
the Seifert fibration is bounded below, this projection
is a uniform quasi-isometry.
The composition of the initial isotopy (lifted to $\mt$)
and the projection is well defined up to a bounded distortion
(depending on $\cF$).
We denote this composition by $\Phi_L$.
The conclusion is that we obtain an equivariant collection
of uniform quasi-isometries $\Phi_L$ from leaves $L \in \wcF$
to $\HH^2$. Note also that if $\tilde p: \mt \to \HH^2$ is the standard projection, there is a uniform constant $C_0>0$ so that $d_{\HH^2}(\Phi_L(x), \tilde p(x))< C_0$. 
Note that $\Phi_L = \tilde{p} \circ \nu$ restricted to $L$.

This allows to: 

\begin{itemize}
\item Associate to each $L \in \wcF$ a point,
which we will denote throughout the article by  $\alpha(L) \in \partial \HH^2$,
and  that we will call the \emph{non-marker point} of $L$. It also identifies the leaf space of $\wihat\cF$ with $\partial \HH^2 \cong S^1$. 
This uses both $h$ and the quasi-isometries above. First the map $h$
shows that for any leaf $F$ of $\cF$ there is one ideal
direction which is transversely non contracting (the non marker
direction), and all other directions are contracting.
In particular, by lifting to appropriate covers, this is
also true for leaves in $\wcF$ or $\wihat\cF$.
The uniform quasi-isometries then allow for $L \in \wcF$ to
obtain the unique non marker point $\alpha(L)$ in $\partial \HH^2$.
Finally using $h$ again one can show that the map $\alpha$
is a homeomorphism when considered as a map from the leaf
space of $\wihat\cF$ and $\partial \HH^2 \cong S^1$.

\item For each $L$ in $\wcF$ (or $\wihat\cF$)
Identify the (Gromov) boundary at infinity $S^1(L)$ of $L$ with $\partial \HH^2$ using the fact that $\Phi_L$ is a quasi-isometry. Note 
the very important fact that the induced map $\Phi_L : S^1(L) \to \partial \HH^2$ is independent on the choice of the isotopy 
of $\cF$ to a horizontal foliation.
\item We have maps
\begin{equation}\label{eq:PhiL} 
\Phi_L : L \cup S^1(L) \to \HH^2 \cup \partial \HH^2, 
\end{equation} 
As explained before each such map, when restricted to $L$  is a diffeomorphism, for each $L$;  and the 
diffeomorphisms have uniformly bounded derivatives (thus, it is a uniform quasi-isometry, independent on $L$). 
\end{itemize} 

Note that $\alpha(L)$ belongs to $\partial \HH^2$ but we can identify it canonically with a point in $S^1(L)$ via $\Phi_L$ and sometimes we will go back and forth with these identifications. Also, these facts hold for leaves $L \in \wihat\cF$ and we will abuse notation and denote by $\Phi_L: L \cup S^1(L) \to \HH^2 \cup \partial \HH^2$ all such maps.

\subsection{Nearby sets in distinct leaves}

For $\eps>0$, a leaf $L \in \wt{\cF}$ (or $\hat{\cF}$ using the
metric $d_{\wihat M}$)  and an interval $I$ of the leaf space of $\wt{\cF}$ (or $\wihat\cF$) we define

\begin{equation}\label{eq:setDfol}
\wihatD_\eps(L, I) = \{ y \in L \ : \ \forall E \in I \ , \  d(y, E) < \eps \}
\end{equation}

Using the map $\Phi_L$ and the fact that $\cF$ is homeomorphic to $\cF_{ws}$ and Corollary \ref{coro.setD} we deduce: 

\begin{prop}\label{prop.setDfol}
There is a constant $C>0$ independent of $L$ such that the set $\wihatD_\eps(L,I)$ verifies that it is at Hausdorff distance less than $C$ from $\Phi_L^{-1}(D_\eps(I))$. In particular, since $\Phi_L$ is a quasi-isometry, there is another constant $C'>0$ such that the Hausdorff distance between $\Phi_L(\wihatD_\eps(L,I))$ and $D_\eps(I)$ is less than $C'$. Moreover, if $L_1 \in I$ and $x_n \in \wihatD_\eps(L,I)$ is a sequence of points converging to some point $\xi \in S^1(L)$ which is not $\alpha(L')$ for some $L' \in I$ then we have that $d(x_n, L_1) \to 0$. 
\end{prop}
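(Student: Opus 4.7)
The strategy is to transport the statement to the model foliation $\cF_{ws}$ via Theorem \ref{teo.matsumoto}, verify it there by direct computation with the metric \eqref{eq:metricMhat}, and pull the conclusion back using the uniform continuity of the transfer.

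First I would fix a homeomorphism $h:M\to M$ with $h(\cF)=\cF_{ws}$ given by Matsumoto's theorem and lift it to a $\Gamma$-equivariant homeomorphism $\hat h:\wihatM\to\wihatM$ sending $\wihat\cF$ to $\wihat\cF_{ws}$, which by construction is compatible with the non-marker identifications of both leaf spaces with $\partial\HH^2$. Since $h$ is a homeomorphism of the compact manifold $M$, it is uniformly continuous in both directions, so there are moduli $\omega,\omega':[0,\infty)\to[0,\infty)$ continuous at $0$ with $\omega(0)=\omega'(0)=0$ such that $d(x,y)\le\eps$ implies $d(\hat h(x),\hat h(y))\le\omega(\eps)$ and symmetrically for $\hat h^{-1}$. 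This immediately yields
\[
\hat h^{-1}\bigl(\wihatD^{ws}_{(\omega')^{-1}(\eps)}(\hat h(L),\hat h(I))\bigr)\ \subset\ \wihatD_\eps(L,I)\ \subset\ \hat h^{-1}\bigl(\wihatD^{ws}_{\omega(\eps)}(\hat h(L),\hat h(I))\bigr),
\]
where $\wihatD^{ws}$ denotes the set defined by \eqref{eq:setDfol} with respect to $\wihat\cF_{ws}$.

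Next I would verify the statement in the model. For $L_\xi\in\wihat\cF_{ws}$ and an interval $J\subset\partial\HH^2$ of the leaf space, the definitions unfold directly: $\wihatD^{ws}_\delta(L_\xi,J)=D_\delta(\xi,J)$ as in \eqref{eq:Depsxi}, and the projection $\hat p$ sends this set bijectively onto $D_\delta(J)\subset\HH^2$. By Corollary \ref{coro.setD} together with Proposition \ref{prop.elementaryhyperbolic}, variations in $\delta$ change $D_\delta(J)$ only by a bounded amount (the defining geodesic $\ell$ is fixed and only the half-tube radius $C(\delta)$ moves), so $D_{\omega(\eps)}(J)$ and $D_{(\omega')^{-1}(\eps)}(J)$ are uniformly Hausdorff-close to $D_\eps(J)$. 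To pass from $\hat p\circ\hat h|_L$ back to $\Phi_L$, I would observe that both are uniform quasi-isometries from $L$ onto $\HH^2$ that are equivariant with respect to the stabilizer of $L$ and induce the canonical boundary identification $S^1(L)\cong\partial\HH^2$ coming from the non-marker structure; hence by the Morse lemma in $\HH^2$ they are at uniformly bounded distance, the uniformity following from compactness of $M$ and from Proposition \ref{prop.QI}. Chaining these comparisons yields that $\Phi_L(\wihatD_\eps(L,I))$ is uniformly Hausdorff-close to $D_\eps(I)$, proving the first two assertions with a uniform constant.

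For the convergence claim, I would transfer to the model by setting $y_n=\hat h(x_n)\in L_{\xi_L}$ with $\xi_L=\alpha(L)$, and letting $\zeta\in\partial\HH^2$ correspond to $\xi\in S^1(L)$ under the canonical boundary identification. The hypothesis $\xi\ne\alpha(L')$ for every $L'\in I$ gives $\zeta\ne\alpha(L_1)$ and $\zeta\ne\xi_L$. In the metric \eqref{eq:metricMhat} the distance from $y_n=(\hat p(y_n),\xi_L)$ to $L_{\alpha(L_1)}$ is bounded above by the visual angle at $\hat p(y_n)$ between the rays to $\xi_L$ and $\alpha(L_1)$, realized by the purely vertical path from $y_n$ to $(\hat p(y_n),\alpha(L_1))\in L_{\alpha(L_1)}$. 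Since $\hat p(y_n)\to\zeta$ with $\zeta$ distinct from both $\xi_L$ and $\alpha(L_1)$, Lemma \ref{lema.zoomin} applied to a small interval around $\zeta$ forces the visual measure of its complement, which contains $\{\xi_L,\alpha(L_1)\}$, seen from $\hat p(y_n)$ to tend to $0$. Pulling back through $\hat h^{-1}$ with the modulus $\omega'$ gives $d(x_n,L_1)\to 0$.

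The hardest part will be ensuring that all these constants — the moduli $\omega,\omega'$, the quasi-isometry constants for $\Phi_L$ from Proposition \ref{prop.QI}, and the Morse-lemma additive constant comparing $\Phi_L$ with $\hat p\circ\hat h|_L$ — are uniform across all leaves $L$; each follows from compactness of $M$ together with $\Gamma$-equivariance, but weaving them into a single uniform Hausdorff bound is the delicate bookkeeping.
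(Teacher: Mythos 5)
Your overall scheme is the one the paper intends (it states the proposition with only the one-line justification ``use $\Phi_L$, the homeomorphism to $\cF_{ws}$, and Corollary \ref{coro.setD}''): transfer everything to the model via a $\Gamma$-equivariant lift of Matsumoto's homeomorphism, use the explicit metric \eqref{eq:metricMhat} there, and get uniform constants from compactness and equivariance. Your treatment of the ``moreover'' clause is correct: with $L\in I$ (the intended reading, so that $\xi\neq\alpha(L)$ as well), the purely vertical path bounds $d(y_n,L_{\alpha(L_1)})$ by the visual measure of the arc between $\alpha(L)$ and $\alpha(L_1)$ not containing $\zeta$, which tends to $0$ as $\hat p(y_n)\to\zeta$, and uniform continuity of $\hat h^{-1}$ pulls this back. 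The sandwich of $\wihatD_\eps(L,I)$ between model regions via the moduli $\omega,\omega'$ is also the right move (note that an additive displacement bound for $\hat h$ would be useless here, since the fibre has finite total length; the modulus of continuity is what preserves ``$\eps$-small'').

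The genuine gap is in the step where you conclude the two-sided Hausdorff bound. After the sandwich, you still must show that $D_{\omega(\eps)}(I)$, $D_{(\omega')^{-1}(\eps)}(I)$ and $D_\eps(I)$ are at uniformly bounded Hausdorff distance from one another, and you justify this by saying that ``the defining geodesic $\ell$ is fixed and only the half-tube radius $C(\delta)$ moves''. That reads Corollary \ref{coro.setD} as a two-sided description of $D_\delta(I)$, but the corollary (and Proposition \ref{prop.elementaryhyperbolic}) only give the \emph{inner} bound $D_\delta(I)\supset H_-(\ell)\setminus B_{C}(\ell)$; nothing in the paper describes $D_\delta(I)$ from outside, and in fact it is strictly larger than the half-space-minus-tube region. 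For instance, if $x\to\zeta_0$ radially for an interior point $\zeta_0$ of $I$, then for any $\xi,\eta\in I$ one of the two arcs from $\xi$ to $\eta$ has visual measure from $x$ tending to $0$, so the purely vertical path through that arc shows $x\in D_\delta(I)$ eventually; thus $D_\delta(I)$ also contains points deep on the $H_+(\ell)$ side. So the claim that varying $\delta$ only slides the tube radius is not available, and the Hausdorff comparison of the sublevel regions $\{x:\sup_{\xi,\eta\in I} d((x,\xi),L_\eta)<\delta\}$ at different levels $\delta$ (with constants independent of $I$) needs an actual argument about the outer shape of these sets — this is precisely the content of the proposition's first assertion, so as written your proof of that assertion is circularly resting on an unproved description. (Your minor identification $\wihatD^{ws}_\delta(L_\xi,J)=D_\delta(\xi,J)$ is likewise only one inclusion on the nose, though for short intervals the paper's remark after \eqref{eq:Deps} covers it.) The containment direction you do obtain, together with the ``moreover'' clause, is what the paper actually uses later, but to prove the proposition as stated you must either supply the two-sided estimate on $D_\delta(I)$ or compare $\wihatD_\eps(L,I)$ with $\Phi_L^{-1}(D_\eps(I))$ directly.
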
 

\begin{proof}
Recall that we start with an isotopy from $\cF$ to a horizontal foliation,
then project using the Seifert fibration. These are the maps $\Phi_L$.
Choose $\eps'$ depending on $\eps$ and so that if points are within
$\eps'$ then after undoing the isotopy the points are at most $\eps$
from each other.
This shows that 

$$\Phi^{-1}_L(D_{\eps'}(I)) \ \ \subset \ \ \hat D_\eps(L,I).$$

\noindent
Since $D_{\eps'}(I)$ and $D_\eps(I)$ are a bounded Hausdorff distance from
each other, then there is $C_1$ with $B_{C_1}(\Phi^{-1}_L(D_\eps(I))
\subset \hat D_\eps(L,I)$.

Conversely, given $\eps$, since the isotopy is the lift of a
compact isotopy there is $\eps_1 = \eps_1(\eps)$ so that 
$\Phi_L(\hat D_\eps(L,I)) \subset D_{\eps_1}(I)$.
But $D_{\eps_1}(I)$ is a bounded Hausdorff distance
from $D_\eps(I)$, so there is $C_2 > 0$ so that 

$$\Phi_L(\hat D_\eps(L,I)) \ \ \subset \ \ B_{C_2}(D_\eps(I)).$$

\noindent
Taking $\Phi^{-1}_L$, and noticing that it is a quasi-isometry,
produces $C_3 > 0$ so that $\hat D_\eps(L,I) \subset 
B_{C_3}(\Phi^{-1}_L(D_\eps(I))$.
This finishes the proof of the proposition.
\end{proof}

This proposition will combine well with Corollary \ref{coro.setD} to control the geometry of the sets $D_\eps(L,I)$. 

\subsection{Minimality of the action in the universal circle}\label{s.minimality} 

In the next proposition we collect some facts about the action of the fundamental group of $S$ in the boundary $\partial \HH^2$. Recall that we have chosen a fixed hyperbolic metric on $S$ which is induced by a subgroup $\Gamma \cong \pi_1(S)$ of isometries of $\HH^2$ that induces an action on $\partial \HH^2$ (recall that for such groups every non identity element acts as a hyperbolic isometry, so it has exactly two fixed points in $\partial \HH^2$, one attracting and one repelling). 

Note that the fundamental group $\pi_1(M)$ of $M$ is a central extension of $\pi_1(S)$ (that is, there is a surjective morphism $\pi_1(M) \to \pi_1(S)$ so that the preimage of the identity is the center of the group $\pi_1(M)$ and is generated by the homotopy class of the fibers) and its action on $\partial \HH^2$ is induced by the action of its projection on $\pi_1(S)$. 

We have:

\begin{prop}\label{prop.minimality}
The action of $\Gamma \cong \pi_1(S)$ in $\partial \HH^2$ is minimal (i.e. every orbit is dense). Moreover, given $U,V$ open sets in $\partial \HH^2$ there exists an element $\gamma \in \Gamma$ such that: 
\begin{itemize}
\item $\gamma(U) \cap V \neq \emptyset$, 
\item the fixed points of $\gamma$ are one contained in $U$ and one contained in $V$.  
\end{itemize}
\end{prop}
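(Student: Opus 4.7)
The plan is to leverage that $\Gamma \cong \pi_1(S)$ is a cocompact Fuchsian subgroup of $\mathrm{Isom}_+(\HH^2)$, so that every nontrivial element of $\Gamma$ acts on $\partial \HH^2$ with the classical north--south dynamics: if $\gamma \in \Gamma$ is hyperbolic with attracting fixed point $p_+$ and repelling $p_-$, then for every compact $K \subset \partial \HH^2 \setminus \{p_-\}$ and every neighborhood $W$ of $p_+$ one has $\gamma^n(K) \subset W$ for all large $n$.

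For minimality, cocompactness of $\Gamma$ implies that the limit set $\Lambda(\Gamma)$ equals all of $\partial \HH^2$, which is equivalent to the statement that the set of attracting fixed points of hyperbolic elements of $\Gamma$ is dense in $\partial \HH^2$. Given $\xi \in \partial \HH^2$ and a nonempty open $W \subset \partial \HH^2$, I would pick a hyperbolic $\gamma \in \Gamma$ whose attracting fixed point lies in $W$ and whose repelling fixed point differs from $\xi$ (possible by density, since only one point has to be avoided). Then $\gamma^n(\xi) \to p_+ \in W$, so $\Gamma\xi$ meets $W$, hence is dense. The first bullet is then immediate: for any $u \in U$ minimality gives $\gamma_1 \in \Gamma$ with $\gamma_1(u) \in V$, so $\gamma_1(U) \cap V \neq \emptyset$.

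For the second bullet I would use a ping-pong / composition argument. Using density of attracting fixed points, pick hyperbolic $\alpha, \tilde\beta \in \Gamma$ with attracting fixed points $u \in U$ and $v \in V$ respectively, and set $\beta := \tilde\beta^{-1}$ so that $\beta$ has $v$ as its repelling fixed point. By replacing one of them with a further conjugate $\delta \alpha \delta^{-1}$ for a $\delta$ small enough to keep the attracting fixed point in $U$ but move the other fixed point off the fixed set of $\beta$, we may assume $\alpha$ and $\beta$ share no fixed point. The standard observation is that for large $n$ the element $\gamma_2 := \alpha^n \beta^n$ is hyperbolic, with attracting fixed point converging to $u \in U$ and repelling fixed point converging to $v \in V$ as $n \to \infty$, so for $n$ large enough both fixed points lie in $U$ and $V$.

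The main obstacle is precisely the last step: verifying that $\gamma_2$ is hyperbolic and that its two fixed points actually land near $u$ and $v$. This is done by the classical ping-pong argument, nesting small disjoint closed arcs around the four fixed points of $\alpha$ and $\beta$, iterating the north--south dynamics to see that $\gamma_2$ maps a small closed neighborhood of $u$ into its interior (yielding a fixed point of $\gamma_2$ by a Brouwer-type argument), and then running the same analysis for $\gamma_2^{-1} = \beta^{-n}\alpha^{-n}$ near $v$.
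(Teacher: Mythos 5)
The paper does not actually prove Proposition~\ref{prop.minimality}: the text immediately after the statement says ``This result is classical so we will not give a proof'' and refers to \cite{FPmin} for related statements in a more general $\RR$-covered setting. So you are supplying an argument the authors deliberately omitted, and your route (density of the limit set, north--south dynamics, and a ping-pong composition) is the standard one and is essentially correct.

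Two small remarks. First, for the second bullet there is a shorter classical route that avoids ping-pong altogether: for a cocompact Fuchsian group $\Gamma$, closed geodesics are dense in $T^1 S$, which is equivalent to density of the set of pairs $(p_-(\gamma),p_+(\gamma))$ of fixed points of hyperbolic $\gamma \in \Gamma$ in $\partial\HH^2\times\partial\HH^2\setminus\Delta$; one may then simply choose $\gamma_2$ with one fixed point in $U$ and the other in $V$. Second, the step where you ``replace one of them with a further conjugate $\delta\alpha\delta^{-1}$ for a $\delta$ small enough'' needs rephrasing: $\Gamma$ is discrete, so there is no nontrivial $\delta$ that is ``small.'' What is true (and what makes your ping-pong run) is that if $\alpha$ and $\beta$ shared a fixed point, then in a discrete Fuchsian group they would share both, i.e.\ have a common axis; one then either observes directly that $\alpha$ already has its two fixed points in $U$ and $V$ (when $u'=v$), or, using minimality of the $\Gamma$-action, chooses $\delta\in\Gamma$ with $\delta u \in U$ and $\delta u' \notin \{v,v'\}$ (possible since the stabilizer of $u'$ in $\Gamma$ is cyclic, so almost every $\delta$ moving $u$ into $U$ moves $u'$ off $\{v,v'\}$). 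With that correction, the remainder of your ping-pong argument producing the hyperbolic $\gamma_2=\alpha^n\beta^n$ with attracting fixed point in $U$ and repelling fixed point in $V$ is sound.
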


This result is classical, a proof can be found for instance in \cite{FPmin} where we prove an extension to general uniform $\RR$-covered foliations (\cite[Proposition 5.3]{FPmin} applied to the foliation by compact surfaces in $S \times S^1$ gives the previous result). Note that since we can identify the leaf space of $\wihat\cF$ with $\partial \HH^2$ this also gives information about the action of $\Gamma$ acting on the leaf space of $\wihat\cF$ on $T^1\HH^2$ which is a circle identified with $\partial \HH^2$ via the map $\alpha$ sending a leaf into its non-marker point. If we go to the universal cover, then, the central extension of $\Gamma$ that is $\pi_1(M)$ provides all lifts of the action of $\Gamma$ in $\partial \HH^2$ to the universal cover (in particular, there are always lifts with fixed points and the action is minimal).

\subsection{Some plane topology} 

We will use the following standard consequence of the classical Schoenflies theorem (see e.g. \cite[\S 9]{Moise}). Note that we will always be using piecewise smooth curves, so the proof is simpler.

\begin{prop}\label{prop.plane}
Let $c$ be a properly embedded curve in the plane. Then, the complement of $c$ is the union of two topological open disks whose boundary in the plane is exactly $c$. 
\end{prop}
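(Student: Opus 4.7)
The plan is to reduce the statement to the classical Jordan--Schoenflies theorem in $S^2$ via one-point compactification. I take $c$ to be a properly embedded non-compact curve, i.e.\ the image of a proper topological embedding $\gamma: \RR \to \RR^2$; the conclusion (two open disks) can only hold in this case, since the unbounded complementary component of an embedded circle in $\RR^2$ is an annulus rather than a disk. Let $S^2 = \RR^2 \cup \{\infty\}$ be the standard one-point compactification.

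The first step is to extend $\gamma$ to a topological embedding $\hat\gamma: S^1 \to S^2$ by sending both ends of $\RR$ to $\infty$. Viewing $S^1 = \RR \cup \{*\}$ as the one-point compactification of $\RR$ and setting $\hat\gamma(*) = \infty$, continuity at $*$ is precisely the content of properness: a neighborhood of $\infty$ in $S^2$ is the complement of some compact $K \subset \RR^2$, and $\gamma^{-1}(K)$ is compact and hence bounded in $\RR$, so outside a sufficiently large interval the curve $\gamma$ lies in the chosen neighborhood of $\infty$. Injectivity of $\hat\gamma$ is immediate, and since $S^1$ is compact and $S^2$ is Hausdorff, $\hat\gamma$ is a topological embedding with image $\hat c = c \cup \{\infty\}$, a Jordan curve in $S^2$.

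Applying Jordan--Schoenflies in $S^2$ to $\hat c$ yields a self-homeomorphism of $S^2$ sending $\hat c$ to a round equator, so $S^2 \setminus \hat c = D_1 \sqcup D_2$ is a disjoint union of two open topological disks with $\partial_{S^2} D_i = \hat c$ for each $i$. Since $\infty \in \hat c$ lies on the boundary of each $D_i$ rather than in its interior, both $D_i$ are contained in $\RR^2$, and consequently $\RR^2 \setminus c = D_1 \sqcup D_2$. The boundary of $D_i$ in $\RR^2$ equals $\hat c \setminus \{\infty\} = c$, finishing the argument. The one point requiring genuine care is the first step, namely that $\hat c$ is actually an embedded circle in $S^2$; this is precisely where the properness hypothesis on $\gamma$ is essential. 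Under the piecewise smooth assumption noted by the authors, one can even bypass the general topological Jordan--Schoenflies theorem by using a transverse tubular neighborhood of $c$ to construct the two disk components of $\RR^2 \setminus c$ directly.
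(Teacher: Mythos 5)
Your proof is correct and is precisely the argument the paper is pointing to: the paper gives no proof of this proposition, instead citing it as a standard consequence of the Schoenflies theorem via \cite[\S 9]{Moise}, and your one-point compactification reduction to Jordan--Schoenflies in $S^2$ is exactly that standard argument. Your closing remark about using a transverse tubular neighborhood in the piecewise-smooth case also matches the paper's comment that the curves it uses are piecewise smooth, making the proof simpler.
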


Since we will always be working on $\HH^2$ where we have a natural compactification $\HH^2 \cup \partial \HH^2$ homeomorphic to a disk, we want to understand the complements of properly embedded curves in this compactification. For a set $K \subset \HH^2$ the \emph{limit set} of $K$ is the closure of $K$ in $\HH^2 \cup \partial \HH^2$ intersected with $\partial \HH^2$. We consider a circular order in $\partial \HH^2$.
Let $c$ be a properly embedded curve in $\HH^2$ and denote by $I_1$ and $I_2$ the limit sets of the two rays of $c$ (i.e. consider $x \in c$ and denote by $c_1, c_2$ the connected components of $c \setminus \{x\}$, then $I_i$ is limit set of $c_i$).
Notice that both $I_1, I_2$ are connected subsets of $\partial \HH^2$
and are either $\partial \HH^2$ or an interval. In
the case that they (or one of them) are not $\partial \HH^2$ we denote
them by: $I_1 := [a, b], \ I_2 := [c,d]$ in the circular order
of $\partial \HH^2$\footnote{We allow $a=b$ or $c=d$ in which case the interval is a point.}.

\begin{coro}\label{coro.separationH2}
Let $c$ be a properly embedded curve in $\HH^2$ as above. 
Let $D^+$ and $D^-$ denote the connected components of $\HH^2 \setminus c$,
and let $J^+, J^-$ be their respective limit sets. Then 
\begin{itemize}
\item 
If $I_1 \cup I_2  = \partial \HH^2$ then both $J^+$ and $J^-$ coincide with $\partial \HH^2$ (this includes the case where one of the intervals 
$I_1$ or $I_2$ coincides with $\partial \HH^2$). 

\item Assume that $[a,b] \cap [c,d] = \emptyset$, and suppose
that $a, b, c, d$ are circularly ordered in $\partial \HH^2$.
Then one of $J^+, J^-$ is $[a,d]$
and the other is $[c,b]$.
\item Finally suppose $[a,b], [c,d]$ intersect (and their union is not $\partial 
\HH^2$). Then one of $J^+, J^-$ is $\partial \HH^2$ and the other
is $[a,b] \cup [c,d]$.
\end{itemize}
\end{coro}

\begin{proof}
Note that the limit set $J^\pm$ of $D^{\pm}$ is a compact connected subset of the boundary $\partial \HH^2$. Since $c$ is the boundary in $\HH^2$ of both $D^{+}$ and $D^-$ it follows that $I_1 \cup I_2$ is contained in both $J^{+}$ and $J^-$. This already proves the first point. 

For the second and third items, it is implicitly assumed that
none of $I_1, I_2$ are $\partial \HH^2$.
Since $D^+ \cup D^-$ must accumulate in all of $\partial \HH^2$, it follows that if one considers a point  $\xi \in \partial \HH^2 \setminus (I_1 \cup I_2)$ then it has a neighborhood $N$ in the compactification $\HH^2 \cup \partial \HH^2$, so that
$N \cap \HH^2$ which is contained in either $D^+$ or $D^-$. 
To finish we must show that $(b,c)$ is contained in one of 
$J^+, J^-$ and $(d,a)$ is contained in the other.
To do that consider a geodesic $\mu$ in $L$ with one ideal point in
$(b,c)$ and the other in $(d,a)$. The ideal points are disjoint
from $I_1 \cup I_2$ so $\mu$ has rays contained in $D_1 \cup D_2$.
If both rays are contained in say $D_1$ it follows that
both rays of $c$ are also contained in the same complementary
component of $\mu$. This contradicts that $c$ limits on both $[a,b]$ and
$[c,d]$. Hence only one ideal point is in $J^+$ and the
other is in $J^-$, and consequently one of $(b,c), (d,a)$ is contained
in $J^+$ and the other in $J^-$. 
This proves the second statement. 

For the third statement: since $I_1$ intersects $I_2$, then
the above fact implies that
$\partial \HH^2 \setminus
(I_1 \cup I_2)$ is contained in one and only one of $J^+$ or $J^-$.
Since $I_1 \cup I_2 \subset J^+ \cap J^-$, the third statement follows.
\end{proof}

We will also need the following consequence of Proposition \ref{prop.plane}

\begin{prop}\label{prop.disjointrays}
Let $r_1,r_2$ be two disjoint properly embedded rays in $\HH^2$ which limit in intervals $I_1$ and $I_2$ respectively. Assume that $I_1 \neq \partial \HH^2$, then $I_2$ cannot be contained in the interior of $I_1$. 
\end{prop}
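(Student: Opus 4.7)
The plan is to reduce the statement to Corollary \ref{coro.separationH2} by gluing the two disjoint rays into a single properly embedded line in $\HH^2$ whose two ends are precisely $r_1$ and $r_2$.

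Let $x_i = r_i(0)$ denote the basepoints of the two rays. The first step is to produce a simple arc $\alpha\colon [0,1]\to \HH^2$ with $\alpha(0)=x_1$, $\alpha(1)=x_2$, and $\alpha((0,1))\subset \HH^2\setminus(r_1\cup r_2)$. The open set $\HH^2\setminus(r_1\cup r_2)$ is connected: after one-point compactification $S^2=\HH^2\cup\{\infty\}$, the union $r_1\cup r_2\cup\{\infty\}$ is a tree (two arcs meeting only at $\infty$), which does not separate $S^2$; alternatively one applies Proposition \ref{prop.plane} to each ray in turn. Any path in this connected complement joining a point close to $x_1$ to a point close to $x_2$ extends across the basepoints using local half-disk neighborhoods at each $x_i$ (since each basepoint is incident to only one of the rays), and standard plane-topology surgery removes self-intersections to yield the desired simple arc.

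The concatenation $c := r_1^{-1}\cdot\alpha\cdot r_2$ is then a properly embedded line in $\HH^2$: properness holds because both $r_1$ and $r_2$ are proper, and injectivity holds because $r_1,r_2$ are disjoint and $\alpha$ meets $r_1\cup r_2$ only at its endpoints. The two ends of $c$ are reparametrizations of $r_1$ and $r_2$, so the limit sets of its two rays are exactly $I_1$ and $I_2$. Applying the final ``moreover'' clause of Corollary \ref{coro.separationH2} to $c$ immediately yields $I_2\not\subset \mathrm{int}(I_1)$.

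The main obstacle is the construction of the simple arc $\alpha$: while all ingredients are classical planar topology, some care is needed near the basepoints and in producing an embedded connector disjoint from both proper rays simultaneously. Once the concatenation is in place, the proposition reduces to the separation properties of properly embedded lines in $\HH^2$ already recorded in Corollary \ref{coro.separationH2}.
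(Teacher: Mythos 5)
Your gluing construction is correct and, on its own, a nice observation: in the one-point compactification $S^2 = \HH^2 \cup \{\infty\}$ the set $r_1 \cup r_2 \cup \{\infty\}$ is a single arc, hence non-separating, so $\HH^2 \setminus (r_1 \cup r_2)$ is connected and the standard surgery does produce a simple connecting arc $\alpha$; the resulting $c = r_1^{-1}\cdot\alpha\cdot r_2$ is a properly embedded line whose two ray classes have limit sets exactly $I_1$ and $I_2$. The problem is the final step, where you invoke the ``moreover'' clause of Corollary~\ref{coro.separationH2} as a black box. Re-read the proof of that corollary: it explicitly treats ``the first point,'' ``the second statement,'' and ``the third statement,'' but never separately establishes the moreover clause. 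Moreover, the configuration $I_1 \neq \partial\HH^2$, $I_2 \subset \mathrm{int}(I_1)$ falls squarely into the third alternative ($I_1 \cap I_2 \neq \emptyset$, $I_1 \cup I_2 = I_1 \neq \partial\HH^2$), and that alternative merely \emph{describes} $J^+,J^-$ --- it asserts no impossibility. So the three enumerated cases do not imply the moreover clause, and the paper does not actually supply an independent proof of it there.

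In fact the moreover clause of Corollary~\ref{coro.separationH2} is precisely the special case of Proposition~\ref{prop.disjointrays} in which $r_1, r_2$ are the two rays of a single line. Your argument therefore shows the two statements are equivalent but transfers, rather than supplies, the actual content. What is genuinely needed --- and what the paper's proof of Proposition~\ref{prop.disjointrays} does --- is the direct nested-regions argument: pick geodesic rays $g_1,g_2$ from a basepoint $o$ to points $\xi_1,\xi_2 \in \mathrm{int}(I_1)$ and a ray $g_3$ to a point $\xi_3 \notin I_1$; use the proper accumulation of $r_1$ on all of $I_1$ together with the fact that arcs of $r_1$ far from $o$ cannot cross $g_3$ to extract arcs $\ell_n \subset r_1$ joining $g_1$ to $g_2$ outside $B_n(o)$, bounding nested disks $D_n$ with $\bigcap D_n = [\xi_1,\xi_2]$; then use connectedness of $r_2$ and its disjointness from $r_1$ to force $r_2$ to run out to $g_1$ or $g_2$, hence to accumulate on $\xi_1$ or $\xi_2$, and finally vary $\xi_1,\xi_2$ to push them outside $I_2$. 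That is the missing core of the proof. If you want to keep your reduction, you should first prove the moreover clause of Corollary~\ref{coro.separationH2} directly (for a single properly embedded line) by this argument, and then your gluing cleanly upgrades it to the disjoint-ray case.
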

\begin{proof}
Consider a point $o \in \HH^2$ and two geodesic rays $g_1, g_2$ from $o$ landing at points $\xi_1, \xi_2$ in the interior of $I_1$ and a ray $g_3$ landing at a point $\xi_3 \notin I_1$. We claim that there are arcs $v_n$ of $r_1$ joining $g_1,g_2$ which converge in the topology of $\HH^2 \cup \partial \HH^2$ to the interval $[\xi_1, \xi_2]$ contained in $I_1$. This is because given $R>0$ we have that outside the disk $B_R(o)$ of radius $R$ centered in $o$ there are sequences of points $x_n,y_n$ of $r_1$ converging to points in $I_1 \setminus [\xi_1,\xi_2]$ and so that the arc $J_n$ joining $x_n$ to $y_n$ is completely outside $B_R(o)$ and cannot intersect $g_3$. Thus, there must be an arc inside $J_n$ that we call $v_n$ joining $g_1$ and $g_2$ and which is outside $B_R(o)$ and contained in the region bounded by $\partial B_R(o) \cup g_1 \cup g_2$ which accumulates in $[\xi_1,\xi_2]$. This proves our claim.

\begin{figure}[ht]
\begin{center}
\includegraphics[scale=0.98]{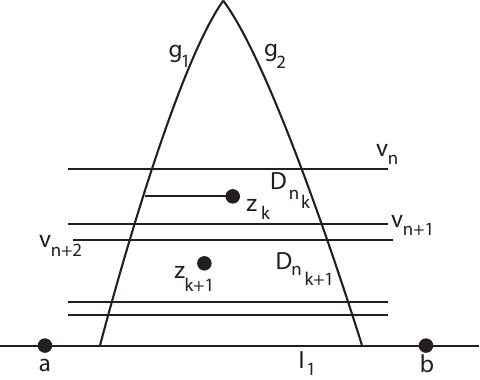}
\begin{picture}(0,0)
\end{picture}
\end{center}
\vspace{0.0cm}
\caption{{\small The limit set of $r_1$ is the interval $I_1$ with
endpoints $a, b$. $g_1, g_2$ are geodesic rays with ideal
points in the interior of $I_1$. The segments $v_n$ are segments in 
$r_1$ which limit to $[a,b]$. The points $z_k$ are in $r_2$
and have to connect outside the $v_n$ to one of $g_1$ or $g_2$ in $D_{n_k}$.
So the intersections with (say) $g_1$ limit to an arbitrary point in 
the interior of $I_1$.}}
\label{fig-t3}
\end{figure}

We refer to Figure \ref{fig-t3} for the situation in this Lemma.

Let $g'_i$ be the subray of $g_i$ starting in $v_n \cap g_i$.
Now, if we consider the regions $D_n$ obtained as the closure in $\HH^2 \cup \partial \HH^2$ bounded by $v_n \cup g_1 \cup g_2$ and accumulating in $[\xi_1, \xi_2]$ we get that after subsequence we can assume it is a nested sequence $D_{n+1} \subset D_n$ of disks so that $\bigcap D_n = [\xi_1,\xi_2]$. If $r_2$ is a properly embedded ray disjoint from $r_1$ which accumulates in a point $\xi \in (\xi_1,\xi_2)$ we can see that there is a sequence of points $z_k \in r_2$ converging to $\xi$ and thus must be contained in some sets of the form $D_{n_k} \setminus D_{n_{k+1}}$. Since $r_2$ is connected, to join $z_k$ with $z_{k+1}$ it must exit $D_{n_k}$ without intersecting $D_{n_{k+1}}$ thus there must be an arc $t_k$ of $r_2$ that joins $z_k$ to some point $w_k$ in either $g_1$ or $g_2$. Up to subsequence, we can assume without loss of generality that $w_k$ is in $g_1$ and we deduce that $r_2$ must limit in $[\xi_1,\xi]$. Since $\xi_1,\xi_2$ were arbitrary, we deduce that $r_2$ cannot limit in an interval completely contained in the interior of $I_1$ as desired. 
\end{proof}

\begin{remark}\label{rem.disjointrays}
Note that in the previous proposition, if $I_1 = \partial \HH^2$, then we cannot consider the ray $g_3$ which forces the curves $v_n$ to join $g_1,g_2$ in the 'correct' side. It is possible however to show that there are some restrictions on the possible landing regions of disjoint properly embedded rays in this setting that will be used later, but we will not give a precise general statement. 
\end{remark}

\section{Pairs of transverse foliations}\label{s.transverse}

In this section we will consider two transverse minimal foliations $\cF_1$ and $\cF_2$ in $M=T^1S$. We will call $\cG:= \cF_1 \cap \cF_2$ the one-dimensional foliation by the connected components of intersections of  leaves of $\cF_1$ and $\cF_2$. Note that since $S$ is orientable it induces orientations on $M$, $\cF_1$ and $\cF_2$ and thus also on $\cG$, we fix one such orientation and lift it to $\mt$. 

 We consider the maps $\Phi_L^1$ and $\Phi_E^2$ from the compactification of leaves $L \in \wihat\cF_1$ (or $\widetilde \cF_1$) and $E \in \wihat\cF_2$ (or $\widetilde \cF_2$) as defined in equation \eqref{eq:PhiL}. Since the foliation will be implicit from the leaf, we will usually omit the supraindex and denote $\Phi^1_L$ by $\Phi_L$ when it is clear that $L \in \wt{\cF_1}$ or $\hat{\cF_1}$. 

\subsection{Some general properties}\label{ss.landing}

Given a leaf $c \in \cG$ and $x \in c$ we will denote by: 

\begin{equation}
c_x^+ \cup c_x^- = c \setminus \{x\}, 
\end{equation}

\noindent the two rays of $c$ where $c_x^+$ is the one in the positive orientation starting at $x$. We will denote by $c^+$ and $c^-$ the \emph{ray class} meaning the equivalence class of rays of $c$ so that given two such rays,
one of them is contained in the other. 

Given a curve $c \in \wihat\cG$ (or $\widetilde \cG$) we know that $c$ is a connected component of $L \cap E$ where $L \in \wihat\cF_1$ and $E \in \wihat\cF_2$ (or in the corresponding lifts to the universal cover\footnote{For the remainder of this section we will omit saying that things hold both in the cover $\wihatM$ and the universal cover $\mt$.}). The following is a direct consequence of the quasi-isometric properties of $\Phi_L^1$ and $\Phi_E^2$:

\begin{lemma}\label{lema.proyection}
The curves $\Phi_L^1(c)$ and $\Phi_E^2(c)$ are proper and a bounded distance away from each other in $\HH^2$. \end{lemma}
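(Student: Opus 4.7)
The plan is to split the statement into two independent claims---properness of each projection and a uniform Hausdorff bound between them---and to dispatch each directly from the construction of the quasi-isometries $\Phi_L^1$ and $\Phi_E^2$ recalled in \S\ref{ss.minimal}.

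For properness, I would first observe that since $\cF_1$ and $\cF_2$ are transverse foliations of the compact manifold $M$, their lifts to the relevant cover have properly embedded leaves; consequently $L \cap E$ is closed in each of $L$ and $E$, and transversality forces every connected component to be a properly embedded $1$-submanifold of both parent leaves. Thus $c$ is a properly embedded line in $L$ (and in $E$). Since $\Phi_L^1: L \to \HH^2$ is a uniform quasi-isometry (and in particular a proper map) by the construction of \S\ref{ss.minimal}, its image $\Phi_L^1(c)$ is a properly embedded subset of $\HH^2$. The identical argument applied to $E$ and $\Phi_E^2$ handles the other curve.

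For the bounded-distance claim, I would invoke the comparison recorded just before equation~\eqref{eq:PhiL}: there is a uniform constant $C_0 > 0$, depending only on $\cF_1$, such that
\[
d_{\HH^2}\bigl(\Phi_L^1(x),\, \hat p(x)\bigr) < C_0 \quad \text{for every } x \in L,\ L \in \widehat{\cF_1},
\]
and an analogous constant $C_0'$ for $\cF_2$. For any $x \in c = L \cap E$, the triangle inequality then yields $d_{\HH^2}(\Phi_L^1(x), \Phi_E^2(x)) \leq C_0 + C_0'$, and since this estimate is pointwise on all of $c$, it translates directly into the same bound on the Hausdorff distance between $\Phi_L^1(c)$ and $\Phi_E^2(c)$.

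The only subtle point---and really the reason the lemma is not tautological---is that $\Phi_L^1$ and $\Phi_E^2$ are built from \emph{a priori} different isotopies (one making $\cF_1$ horizontal, one making $\cF_2$ horizontal), so there is no direct leafwise comparison between the two maps. The resolution is that each is a bounded perturbation of the common Seifert projection $\hat p$, which is precisely what allows the triangle-inequality estimate above to go through. No sharper geometric input is required beyond what is already developed in \S\ref{ss.minimal}.
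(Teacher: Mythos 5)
Your proof is correct and matches the route the paper intends: properness follows because $c$ is a closed $1$-submanifold of $L$ (and of $E$) and $\Phi^1_L$, $\Phi^2_E$ restrict to diffeomorphisms onto $\HH^2$, while the bounded-distance claim follows from the triangle inequality through the common Seifert projection using the constant $C_0$ recorded in \S\ref{ss.minimal}. The paper leaves the argument implicit (``a direct consequence of the properties of $\Phi^1_L$ and $\Phi^2_E$''), and you have correctly identified and supplied exactly the two ingredients it relies on.
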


In particular, if we denote by $\overline{c}_L$ the closure of $c$ in $L \cup S^1(L)$ and by $\overline{c}_E$ the respective closure in $E \cup S^1(E)$ we get the following important property:

\begin{coro} \label{coincide}
The sets $\Phi_L(\overline{c}_L \setminus c)$ and $\Phi_E(\overline{c}_E \setminus c)$ are contained in $\partial \HH^2$ and coincide. 
\end{coro}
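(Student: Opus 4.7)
The plan is to reduce the statement to a well-known fact about Gromov hyperbolic geometry: two subsets of $\HH^2$ at bounded Hausdorff distance have the same accumulation set in $\partial \HH^2$. The map $\Phi_L$ is a diffeomorphism of $L$ onto $\HH^2$ which, being a uniform quasi-isometry between Gromov hyperbolic spaces, extends to a homeomorphism of the compactifications $L \cup S^1(L) \to \HH^2 \cup \partial \HH^2$; in particular it sends $S^1(L)$ to $\partial \HH^2$. Since $c$ is properly embedded in $L$, the set $\overline{c}_L \setminus c$ sits inside $S^1(L)$, so the containment $\Phi_L(\overline{c}_L \setminus c) \subset \partial \HH^2$ is automatic, and likewise for $E$.

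The next step is to identify $\Phi_L(\overline{c}_L \setminus c)$ with the limit set of the curve $\Phi_L(c) \subset \HH^2$ inside $\partial \HH^2$. Since $\Phi_L$ is a homeomorphism of the compactifications, $\xi \in \overline{c}_L \setminus c$ holds iff there exists $x_n \in c$ with $x_n \to \xi$ inside $L \cup S^1(L)$, which is equivalent to $\Phi_L(x_n) \to \Phi_L(\xi)$ inside $\HH^2 \cup \partial \HH^2$. Therefore $\Phi_L(\overline{c}_L \setminus c)$ is precisely the set of accumulation points of $\Phi_L(c)$ in $\partial \HH^2$. The identical argument applied to $\Phi_E$ identifies $\Phi_E(\overline{c}_E \setminus c)$ with the accumulation set of $\Phi_E(c)$ in $\partial \HH^2$.

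Finally, by Lemma \ref{lema.proyection} the two properly embedded curves $\Phi_L(c)$ and $\Phi_E(c)$ in $\HH^2$ are a bounded distance apart. A direct verification shows that their accumulation sets in $\partial \HH^2$ agree: if $x_n \in \Phi_L(c)$ converges to some $\xi \in \partial \HH^2$, pick $y_n \in \Phi_E(c)$ with $d_{\HH^2}(x_n,y_n)$ uniformly bounded; since $x_n$ exits every compact set, so does $y_n$, and $y_n \to \xi$ as well. Exchanging the roles of $L$ and $E$ gives the reverse inclusion, and the two sets coincide. The only subtle ingredient is the extension of the $\Phi_L$'s to homeomorphisms of the compactifications, but this is the standard extension of a quasi-isometry of Gromov hyperbolic spaces to the Gromov boundary and is precisely what was built into the maps in \S\ref{ss.minimal}.
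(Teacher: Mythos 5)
Your proof is correct and follows the same route the paper has in mind: the corollary is stated immediately after Lemma \ref{lema.proyection} as an ``in particular'' with no further argument, precisely because the intended proof is the one you spelled out, namely that $\Phi_L|_L$ and $\Phi_E|_E$ extend to homeomorphisms of the compactifications (by construction of $S^1(L)$ via the uniform quasi-isometry $\Phi_L$), so $\Phi_L(\overline{c}_L \setminus c)$ is exactly the accumulation set of the proper curve $\Phi_L(c)$ in $\partial\HH^2$, and two proper curves a bounded Hausdorff distance apart in $\HH^2$ accumulate on the same subset of $\partial\HH^2$. You have made explicit what the paper leaves implicit, and the reasoning is sound.
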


We will then denote for $c \in \wihat\cG$: 

\begin{equation}\label{eq:indepfol}
\partial \Phi(c) := \Phi_L(\overline{c}_L \setminus c)=\Phi_E(\overline{c}_E \setminus c) \subset \partial \HH^2.
\end{equation}

\begin{figure}[ht]
\begin{center}
\includegraphics[scale=0.78]{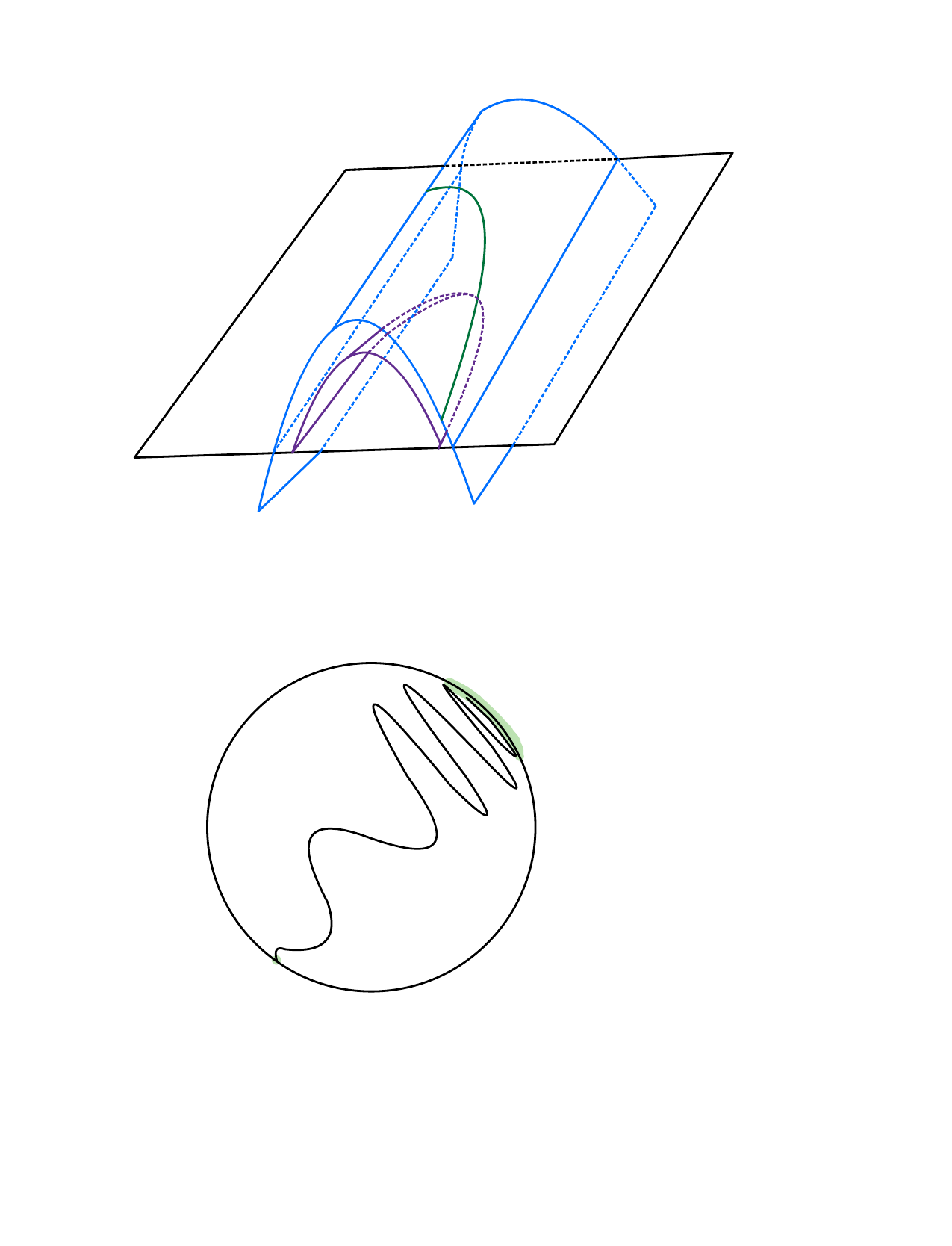}
\begin{picture}(0,0)
\put(-133,113){$c$}
\put(-190,23){$\partial^{-}\Phi(c)$}
\put(-56,164){$\partial^+\Phi(c)$}
\end{picture}
\end{center}
\vspace{-0.5cm}
\caption{{\small An example where the negative ray of $c$ lands and 
the positive ray of $c$ accumulates in an interval which is not a point.}}\label{fig-landing}
\end{figure}

Since $c$ is properly embedded we can write $\partial \Phi(c) = \partial^+ \Phi(c) \cup \partial^- \Phi(c)$ where each denotes the accumulation points of the positive and negative rays of $c$ once a point is removed (it is easy to see that this is independent on the removed point and so it is a property of the ray class). Note that each of $\partial^+\Phi(c)$ and $\partial^- \Phi(c)$ are compact connected and non empty subsets of $\partial \HH^2$ (see figure \ref{fig-landing}).
Now we define a fundamental property to be analyzed in this
article:

\begin{definition} \label{landdefinition}
Given $c \in \wihat\cG$ we say that the \emph{positive ray lands} (resp. the negative ray lands) if $\partial^+ \Phi(c)$ is a point (resp. $\partial^-\Phi(c)$ is a point). 
\end{definition}

For notational simplicity, we also denote by $\partial \Phi(r)$ the accumulating set of a ray when $r$ is a ray of a leaf $c \in \cG_L$. 
In addition if $r$ lands, we say that $\partial \Phi(r)$ is the landing
point of $r$.
If $r$ in $L$ lands, then $r$ also limits in
a single point $b$ in $S^1(L)$ (in particular
$\Phi_L(b) = \partial \Phi(r)$). We also say that $r$ lands
in $b$.

\subsection{Pushing to nearby leaves} 
If $\eps$ is sufficiently small it is smaller than charts on which $\cF_1$ is horizontal and $\cF_2$ vertical and this means that
the whole structure of $\cG_L$ that
one sees in a leaf $L$ of (say) $\wt{\cF_1}$ can be pushed
isotopically to nearby leaves $L'$ of $\wt{\cF_1}$ in
the set $\wihatD_\eps(L,I)$. The roles of the foliations
can be reversed.

\begin{prop}\label{prop-pushing}
There exists $\eps>0$ and $C>0$ such that if $L,E$ are leaves of $\wt{\cF_1}$ which are contained in an interval $I$ of the leaf space of $\wt{\cF_1}$ and $r$ is a segment of a leaf of $\wt \cG$ which is contained in $\wihatD_\eps(L, I)$ (cf. equation \eqref{eq:setDfol}) such that $r \subset F \cap L$ where $F \in \wt{\cF_2}$, then, there exists a segment $r'$ of a leaf in $\wt \cG$ contained in $F \cap E$ such that $\Phi^1_L(r)$ and $\Phi^1_E(r')$ are at distance less than $C$. In particular, if $r$ is a ray, then the landing sets $\partial \Phi(r)$, $\partial \Phi(r')$ of $r$ and $r'$ coincide. 
\end{prop}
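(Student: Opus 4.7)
The plan is to construct $r'$ by exploiting the uniform local product structure coming from the transversality of $\cF_1$ and $\cF_2$. First I would use compactness of $M$ to fix a uniform $\eps_0>0$ such that every point of $\mt$ admits a joint foliation chart of radius $\eps_0$ for $\wt{\cF_1}$ and $\wt{\cF_2}$ in which both foliations look like parallel coordinate planes with transversality angle bounded away from $0$. The constant $\eps$ in the statement will be taken as, say, $\eps_0/2$.

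The core step is to associate to each $x\in r$ a nearby point $\rho(x)\in F\cap E$. By the hypothesis $r\subset \wihatD_\eps(L,I)$, there is a point of $E$ within $\eps$ of $x$, so $E$ meets the foliation chart $U_x$ of radius $\eps_0$ centered at $x$ in a single plaque; the same is true of $L$ and $F$, and inside $U_x$ the plaques of $F$ and $E$ meet transversely in a unique point $\rho(x)$, which therefore lies in $F\cap E$. The smoothness of the local foliation charts makes $\rho\colon r\to F\cap E$ continuous, so $r':=\rho(r)$ is a connected subset of $F\cap E$ and hence lies inside a single leaf of $\wt{\cG}$. By construction $d_{\mt}(x,\rho(x))\le c_1\eps$ for a constant $c_1$ depending only on a lower bound for the transversality angle between $\cF_1$ and $\cF_2$.

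The comparison of $\Phi^1_L(r)$ and $\Phi^1_E(r')$ in $\HH^2$ then becomes straightforward. From \S\ref{ss.minimal} each map $\Phi_N$ agrees with the lifted Seifert projection $\tilde\pi\colon\mt\to\HH^2$ up to a uniformly bounded additive error $C_0$, and the Seifert projection is distance-nonincreasing with respect to the metric (\ref{eq:metricMhat}). Hence
\[
d_{\HH^2}\!\bigl(\Phi_L(x),\Phi_E(\rho(x))\bigr)\ \le\ 2C_0+d_{\HH^2}\!\bigl(\tilde\pi(x),\tilde\pi(\rho(x))\bigr)\ \le\ 2C_0+c_1\eps\ =:\ C,
\]
uniformly in $x\in r$, which yields the claimed Hausdorff bound. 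For the ``in particular'' landing statement, the uniform bound on $d(x,\rho(x))$ forces $\rho$ to be proper, so if $r$ is a ray then $r'$ is a proper ray whose $\Phi_E$-image sits at Hausdorff distance at most $C$ from $\Phi_L(r)$ in $\HH^2$; properly embedded curves in $\HH^2$ at bounded Hausdorff distance share their accumulation sets on $\partial\HH^2$, and by Corollary \ref{coincide} this landing set is independent of whether it is computed via $\cF_1$ or $\cF_2$, so $\partial\Phi(r)=\partial\Phi(r')$.

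The main obstacle, beyond bookkeeping, is securing the uniform choice of $\eps$ so that the pointwise correspondence $\rho$ glues continuously all along $r$ at once; this is exactly what compactness of $M$, combined with the uniform transversality of the pair $(\cF_1,\cF_2)$, provides, and is the only quantitative input needed beyond the local product structure. The symmetric statement (with the roles of $\cF_1$ and $\cF_2$ swapped) follows by exchanging the roles of the two foliations in the argument above.
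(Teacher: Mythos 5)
Your argument is correct and is essentially the same as the paper's (the paper's proof is a two-sentence sketch; yours supplies the details). One small wording slip: in a three-manifold the plaques of $F\in\wt{\cF_2}$ and $E\in\wt{\cF_1}$ in a joint foliation chart meet in a one-dimensional arc, not a unique point, so $\rho(x)$ should be taken as the point of $F\cap E$ obtained by projecting $x$ along the third product coordinate (equivalently, along a local transversal to $\wt{\cG}$ inside the $F$-plaque); with that correction the construction and the resulting bounds go through exactly as you state.
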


\begin{proof}
This is a direct consequence of transversality of the foliations. If $\eps$ is small enough, then as long as points remain at distance less than $\eps$ the intersection of $F$ with $L$ and $E$ will happen in a closeby set. Since the maps $\Phi^1_L$ and $\Phi_E^1$ are quasi-isometries and a bounded distance away from the projection to $\HH^2$, the uniform bound $C$ is obtained. 
\end{proof}

\subsection{Non separated leaves} 

Given a leaf $L \in \widetilde{\cF_i}$ ($i=1,2$) we defined the foliation $\cG_L=\widetilde{\cG}|_{L}$. Denote by $\cL_L= L/_{\cG_{L}}$ its leaf space which is a one dimensional (possibly) non-Hausdorff manifold.  

When $\cL_L$ is non-Hausdorff one has \emph{non-separated leaves}, that is, distinct leaves $c_1,c_2 \in \cG_L$ which are accumulated by a sequence $d_n$ of leaves of $\cG_L$ (since these are foliations, this is equivalent to having sequences $x_n, y_n \in d_n$ so that $x_n \to x_\infty \in c_1$ and $y_n \to y_\infty \in c_2$).  

\begin{remark}\label{rem.nonseparated}
Note that two leaves $c_1, c_2 \in \cG_L$ may be separated while there is no transversal to $\cG_L$ which intersects both, in that case, there must be leaves $e_1,e_2$ (one of which could coincide with $c_1$ or $c_2$) which are non-separated and separate $c_1$ from $c_2$ in the sense that they lie in different connected components of $L \setminus \{e_i\}$. 
\end{remark}

In principle, there is no a priori relation between $\cL_L$ and $\cL_E$ for different leaves $L, E$. However, we can show:

\begin{prop}\label{p.dichotomyHsdff}
Assume that there is some $L \in \widetilde{\cF_i}$ ($i=1,2$) such that the leaf space $\cL_L$ of $\cG_L$ is Hausdorff. Then, for every $E \in \widetilde{\cF_1}$ and $F \in \widetilde{\cF_2}$ the leaf spaces $\cL_E$ and $\cL_F$ are Hausdorff. 
\end{prop}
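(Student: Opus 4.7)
The plan is to first reduce non-Hausdorffness of the restricted leaf spaces to a topological condition on the number of connected components of intersections, and then propagate Hausdorffness across leaves using minimality combined with Proposition \ref{prop-pushing}.

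I would begin with a key lemma: for any $E\in\wt{\cF_1}$ and $F\in\wt{\cF_2}$, there exist two non-separated leaves of $\cG_E$ both contained in $E\cap F$ if and only if $E\cap F$ has at least two connected components, and symmetrically for $\cG_F$. For the forward direction, if $c_1,c_2\in\cG_E$ are non-separated witnessed by $d_n\subset E\cap F_n$ converging pointwise to both $c_1$ and $c_2$, Hausdorffness of $\wt{\cL_2}\cong\RR$ (Corollary \ref{coro.foliations}(ii)) forces $F_n$ to converge to a unique $F$, so $c_1,c_2$ are distinct components of $E\cap F$. Conversely, if $c_1,c_2$ are two components of $E\cap F$ separated in $\cL_E$, connectedness of $E$ together with the separation produces a transversal $\sigma\subset E$ to $\cG_E$ joining a point of $c_1$ to a point of $c_2$. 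Such $\sigma$ is simultaneously a transversal to $\wt{\cF_2}$ in $\wt M$ by transversality of $\cF_1,\cF_2$, so by Corollary \ref{coro.foliations}(i) it meets each leaf of $\wt{\cF_2}$ at most once, contradicting that $\sigma$ has both endpoints on $F$.

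Given this lemma, and WLOG $L\in\wt{\cF_1}$, I would tackle step one: every $E\in\wt{\cF_1}$ is Hausdorff. Suppose otherwise; by the lemma, $E\cap F$ has two components $c_1,c_2$ for some $F$. By Proposition \ref{prop.minimality} combined with the covering $\wt{\cL_1}\to\wihat\cL_1\cong\partial\HH^2$ (and closure of the action under the central $\ZZ$ translations), the $\pi_1(M)$-orbit of $L$ is dense in $\wt{\cL_1}$, so I may pick $\gamma\in\pi_1(M)$ with $\gamma L$ arbitrarily close to $E$ in $\wt{\cL_1}$. As $I=[\gamma L,E]$ shrinks, Corollary \ref{coro.setD} and Proposition \ref{prop.setDfol} ensure $\wihatD_\eps(E,I)$ exhausts arbitrarily large compact subsets of $E$. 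Choose transversals $\sigma_1,\sigma_2\subset F$ transverse to $\wt{\cF_1}$ through points $p_1\in c_1,p_2\in c_2$; for $\gamma L$ close enough, each $\sigma_i$ meets $\gamma L$ at a unique point $p'_i$ (Corollary \ref{coro.foliations}(i)), giving leaves $c'_1,c'_2\in\cG_{\gamma L}$ in $\gamma L\cap F$. Similarly, applying Proposition \ref{prop-pushing} to the non-separation witnesses $d_n$ near $\sigma_1$ and near $\sigma_2$ gives components $d'_{n,1},d'_{n,2}\in\gamma L\cap F_n$. If $c'_1\ne c'_2$, or if $d'_{n,1}\ne d'_{n,2}$ for some large $n$, the corresponding intersection has two components; by the lemma $\cL_{\gamma L}$ is non-Hausdorff, contradicting the deck-invariance of Hausdorffness from $L$.

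Step two (across foliations) is then short: if some $F\in\wt{\cF_2}$ is non-Hausdorff, the lemma gives $E\in\wt{\cF_1}$ with $E\cap F$ having at least two components, and the symmetric form of the lemma makes $\cL_E$ non-Hausdorff, contradicting step one. The main obstacle I expect is the residual \emph{merged} configuration in step one in which simultaneously $c'_1=c'_2$ and $d'_{n,1}=d'_{n,2}$ for all large $n$: a single component of $\gamma L\cap F$ (respectively $\gamma L\cap F_n$) may pass close to both $c_1$ and $c_2$ without splitting, which is internally consistent with $\gamma L$ being Hausdorff. Ruling this out should require either exploiting the freedom afforded by minimality to place $\gamma L$ on a side of $E$ in $\wt{\cL_1}$ where intersection components do not merge, or invoking the landing and small visual measure properties of $\wt\cG$ from earlier sections to obtain geometric constraints on such a long merged leaf that are incompatible with the non-separation of $c_1,c_2$ in $E$.
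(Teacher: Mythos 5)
Your key lemma is exactly the paper's Lemma~\ref{lem.nonHsdfftwointersect}, proved the same way, and your ``step two'' is the paper's first reduction: by that lemma, non-Hausdorff behavior passes between the two foliations, so it suffices to propagate within one. The genuine issue is the one you flag in step one: when you bring $\gamma L$ close to $E$ (using minimality), you cannot a priori exclude that the two components $c_1, c_2 \subset E\cap F$ ``merge'' into a single component of $\gamma L \cap F$.

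This gap is real, but the resolution is elementary plane topology, not landing or small visual measure (which would also risk circularity, since those are developed later). The paper avoids the merged configuration by using the separation of a leaf by $c_1$. Concretely (in the paper's notation, with $c_1,c_2\subset L\cap E$ distinct components, $L\in\wt{\cF_1}$, $E\in\wt{\cF_2}$): take a transversal $\tau:(-\eps,\eps)\to E$ to $\cG_E$ inside $E$ with $\tau(0)\in c_1$. For each small $t>0$, the point $\tau(t)$ sits in a leaf $L_t$ of $\wt{\cF_1}$ and on one fixed side of $c_1$ in $E$; at the same time, by the local product structure of $\wt{\cF_1}$ near a point $y\in c_2$, $L_t$ also meets $E$ close to $y$, i.e.\ on the opposite side of $c_1$. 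Since $L_t\ne L$ are distinct leaves of $\wt{\cF_1}$, the curve $L_t\cap E$ is disjoint from $c_1\subset L\cap E$, and $c_1$ separates $E$ (Proposition~\ref{prop.plane}); hence the two intersection points lie in different components of $L_t\cap E$, so $\cG_{L_t}$ is non-Hausdorff. This exhibits an open interval of non-Hausdorff leaves, and then minimality plus deck-invariance finishes, as you note. Your own setup can be repaired the same way: $c_1$ separates $F$ into two disks $A$ (containing $c_2$) and $B$; choosing $\gamma L$ on the side of $E$ that puts $p'_1$ into $B$ forces $p'_1, p'_2$ into opposite sides of $c_1$ in $F$, while $\gamma L\cap F$ is disjoint from $c_1\subset E\cap F$, so $c'_1\ne c'_2$.

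The overall difference is that the paper works with the one-parameter family $L_t$ inside $E$ directly, invoking minimality only at the end, whereas you bring in a far-away deck translate upfront; both can be made to work, but the paper's route avoids the merging worry by construction and stays entirely within Corollary~\ref{coro.foliations} and Proposition~\ref{prop.plane}.
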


To show this, we first need the following useful property (see figure \ref{fig-doubleintersect}):

\begin{lema}\label{lem.nonHsdfftwointersect}
Fix $L \in \wt{\cF_1}$. The leaf space $\cL_L$ of $\cG_L$ is non-Hausdorff if and only if there is $E \in \wt{\cF_2}$ such that $L \cap E$ is not connected.  Moreover, if two distinct leaves $c_1,c_2 \in \cG_L$ are non-separated then, there is $E \in \wt{\cF_2}$ such that $c_1 \cup c_2 \subset L \cap E$. 
\end{lema}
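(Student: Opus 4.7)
The plan is to derive both implications from two elementary facts in Corollary \ref{coro.foliations}: (i) every transversal to $\wt{\cF_2}$ meets each leaf of $\wt{\cF_2}$ at most once, and (ii) the leaf space of $\wt{\cF_2}$ is homeomorphic to $\RR$, in particular Hausdorff. The ``moreover'' clause contains the substantive content of the forward implication and the converse is a direct contradiction using (i).

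For the moreover clause, let $c_1, c_2 \in \cG_L$ be distinct non-separated leaves, so pick a sequence $d_n \in \cG_L$ together with points $y_n, z_n \in d_n$ such that $y_n \to y_\infty \in c_1$ and $z_n \to z_\infty \in c_2$. Write $c_i$ as a connected component of $L \cap E^{(i)}$ with $E^{(i)} \in \wt{\cF_2}$, and $d_n$ as a component of $L \cap E_n$ with $E_n \in \wt{\cF_2}$. Continuity of the foliation $\wt{\cF_2}$ forces $E_n \to E^{(1)}$ (from $y_n \to y_\infty$) and also $E_n \to E^{(2)}$ (from $z_n \to z_\infty$) in the leaf space of $\wt{\cF_2}$, which by (ii) is Hausdorff. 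Therefore $E^{(1)} = E^{(2)} =: E$ and $c_1 \cup c_2 \subset L \cap E$. This at once yields the forward direction of the lemma: if $\cL_L$ is non-Hausdorff then some non-separated pair exists, so some $L \cap E$ is disconnected.

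For the converse, suppose $c_1, c_2$ are two distinct connected components of $L \cap E$ for some $E \in \wt{\cF_2}$; then $c_1, c_2$ are distinct leaves of $\cG_L$. Assume for contradiction that $\cL_L$ is Hausdorff. Then $\cL_L$ is a connected Hausdorff $1$-manifold, so the points $[c_1], [c_2]$ can be joined by a continuous path $\alpha : [0,1] \to \cL_L$. Covering $\alpha([0,1])$ by finitely many foliated charts of $\cG_L$ in $L$ and concatenating the resulting local transversals, this path lifts to a curve $\tau \subset L$ from $x_1 \in c_1$ to $x_2 \in c_2$ that is transverse to $\cG_L$. Because $\cF_1$ is transverse to $\cF_2$ and $\cG_L$ is, locally in $L$, exactly the trace on $L$ of the leaves of $\wt{\cF_2}$, any curve in $L$ transverse to $\cG_L$ is automatically transverse to $\wt{\cF_2}$ in $\mt$. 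Hence $\tau$ is a transversal to $\wt{\cF_2}$ meeting $E$ at both $x_1$ and $x_2$, contradicting (i).

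The only step that is not purely formal is the lift of the path $\alpha$ in $\cL_L$ to an honest transversal in $L$; this is the main point I would present carefully, but it is a standard property of one-dimensional foliations of surfaces with Hausdorff leaf space (the leaf space is locally modelled on an interval and the foliation admits a local product structure over it). Everything else is bookkeeping with the continuity of $\wt{\cF_2}$ and its monotonicity on transversals, both supplied by Corollary \ref{coro.foliations}.
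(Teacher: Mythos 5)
Your proof is correct and follows essentially the same two-step argument as the paper: the ``moreover'' clause is proved by noting that the $\wt{\cF_2}$-leaves $E_n$ through the approximating curves $d_n$ converge to both $E^{(1)}$ and $E^{(2)}$ in the Hausdorff leaf space of $\wt{\cF_2}$, forcing $E^{(1)}=E^{(2)}$; and the converse uses that a Hausdorff leaf space yields a transversal to $\cG_L$ from $c_1$ to $c_2$, which is also a transversal to $\wt{\cF_2}$ meeting $E$ twice, contradicting Corollary \ref{coro.foliations}. The only difference is that you spell out why Hausdorffness of $\cL_L$ produces the transversal (lifting a path through foliated charts), whereas the paper takes this as immediate; this is a reasonable level of detail to add and the step is indeed standard for one-dimensional foliations of planes.
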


\begin{figure}[ht]
\begin{center}
\includegraphics[scale=0.78]{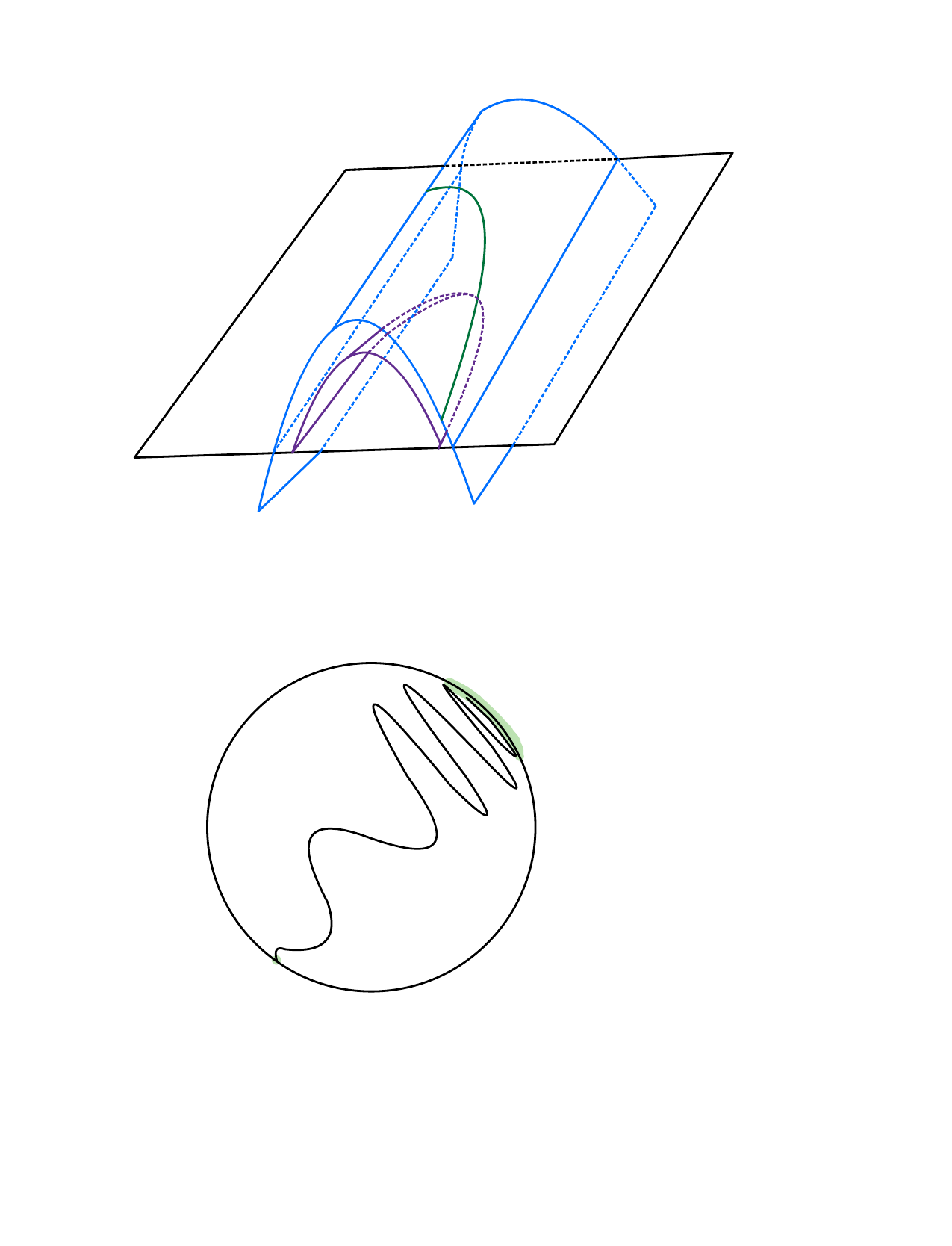}
\end{center}
\vspace{-0.5cm}
\caption{{\small Intersection in more than one connected component forces non-Hausdorff leaf space of the induced one dimensional foliation.}}\label{fig-doubleintersect}
\end{figure}

\begin{proof}
Assume first that $L \cap E$ is not connected, therefore, there are leaves $c_1, c_2 \in \cG_L$ which belong to $E$. Assume that the leaf space $\cL_L$ of $\cG_L$ is Hausdorff, this means that there exists a transversal to $\cG_L$ joining $c_1$ and $c_2$. This transversal is a transversal to $\wt{\cF_2}$ intersecting $E$ twice, a contradiction to Corollary \ref{coro.foliations}. 

Now, assume that $c_1$ and $c_2$ are non-separated leaves in $\cG_L$.  Let $d_n \in \cG_L$ be a sequence of leaves converging to both $c_1$ and $c_2$. It follows that $d_n \in E_n \cap L$ where $E_n$ is a sequence of leaves of $\wt{\cF_2}$. Consider $x_n, y_n \in d_n$ so that $x_n \to x_\infty \in c_1$ and $y_n \to y_\infty \in c_2$. It follows that $E_n$ converges to both the leaf through $x_\infty$ and the leaf through $y_\infty$. Since the leaf space of $\wt{\cF_2}$ is Hausdorff (see Corollary \ref{coro.foliations}), these leaves must coincide, and thus $c_1 \cup c_2 \subset E \cap L$ where $E \in \wt{\cF_2}$ is the leaf containing $x_\infty$ (and $y_\infty$). 
\end{proof}

\begin{proof}[Proof of Proposition \ref{p.dichotomyHsdff}]
Note first that by the previous lemma, if there is a leaf $L \in \widetilde{\cF_1}$ for which $\cL_L$ is non-Hausdorff, then the same holds for some leaf $E \in \widetilde{\cF_2}$ and vice-versa. 

This implies that it is enough to show that if there is a leaf $L \in \widetilde{\cF_1}$ for which $\cL_L$ is non-Hausdorff, then the same holds for every $L' \in \widetilde{\cF_1}$. 

Since $\cF_1$ is minimal, given an open interval $I$ in the leaf space $\cL_1 = \mt/_{\wt{\cF_1}}$ and a leaf $L' \in \wt{\cF_1}$ there is some deck transformation $\gamma \in \pi_1(M)$ so that $\gamma L' \in I$. Thus, since having non-Hausdorff leaf space is invariant under deck transformations, it is enough to show that there is an interval in the leaf space where every leaf there has non-Hausdorff leaf space. 

Take $L \in \wt{\cF_1}$ and $E \in \wt{\cF_2}$ such that $L \cap E$ is not connected and let $c_1, c_2 \in \cG_L$ be two different connected components. Note that $c_1, c_2$ also belong to $\cG_E$. Consider a transversal $\tau: (-\eps,\eps) \to E$ to $\cG_E$ through
some point $x \in c_1$  (i.e. $\tau(0)=x$) and denote by $c_t$ the leaf of $\cG_E$ through the point $\tau(t)$. Up to changing orientation in $\tau$, we can assume that for $t >0$ the leaves $c_t$ and $c_2$ belong to different connected components of $E \setminus c_1$. 
This is because $c_2$ 
is in one component of $E \setminus c_1$.
Denote $L_t$ the leaf of $\wt{\cF_1}$ through $\tau(t)$. Given a point $y \in c_2$ it follows that for small enough $t \in (-\eps,\eps)$ the leaf $L_t$ intersects $E$ close to $y$, thus, in the same connected component of $E \setminus c_1$ as $c_2$. It follows that for small $t>0$ the leaf $L_t$ intersects $E$ in two connected components and thus $\cG_{L_t}$ has non-Hausdorff leaf space as we wanted to show. 
\end{proof}

\subsection{Non separated rays and landing}\label{ss.nonseparated}

If $c \in \cG_L$, recall that a ray of $c$ is a connected component of $c \setminus \{x\}$ for some $x \in c$ and a ray class is one equivalence class of rays by the relation of one of them
being contained in the other; there are exactly two ray classes for each $c \in \cG_L$. 

\begin{lema}\label{lema.nonseparated}
Let $c_1, c_2 \in \cG_L$ be two non-separated leaves, then, there is exactly one ray class $r$ of $c_1$ and transversals $\tau_1, \tau_2:[0,\eps)\to L$ with $\tau_i(0) \in c_i$ and an increasing homeomorphism $\varphi: (0, t_1] \to (0,t_2]$ for some small $t_1,t_2$ with the property that if $0<t\leq t_1$ and $e_t$ is the ray class of the leaf of $\cG_L$ through $\tau_1(t)$ oriented as $r$ then $e_t$ intersects $\tau_2$ at the point $\tau_2(\varphi(t))$. 
\end{lema}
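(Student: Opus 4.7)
The plan is to use Lemma~\ref{lem.nonHsdfftwointersect} to pass to a common leaf $E \in \wt{\cF_2}$ with $c_1 \cup c_2 \subset L \cap E$, then extract $r$, the transversals, and $\varphi$ from the approximating sequence combined with a foliation-box argument.

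First I fix an approximating sequence $d_n \in \cG_L$ with $x_n \in d_n \to x \in c_1$ and $y_n \in d_n \to y \in c_2$; writing $d_n \subset L \cap E_n$, the Hausdorffness of the leaf space of $\wt{\cF_2}$ (Corollary~\ref{coro.foliations}) forces $E_n \to E$. Next I pick transversals at $x$ and $y$ transverse to $\cG_L$, hence to $\wt{\cF_2}$, so that by Corollary~\ref{coro.foliations}(i) each of them meets any given leaf of $\wt{\cF_2}$ at most once. This yields a local homeomorphism $\psi$ near $0$ defined by: the leaf $E'_t \in \wt{\cF_2}$ through $\tau_1(t)$ meets $\tau_2$ at $\tau_2(\psi(t))$. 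Orienting $\tau_1,\tau_2$ as $[0,\eps) \to L$ with $\tau_i(0) \in c_i$ and passing to a subsequence, I may assume $x_n = \tau_1(\sigma_n)$ and $y_n = \tau_2(\mu_n)$ with $0 < \sigma_n,\mu_n \to 0$, so that $\psi(\sigma_n) = \mu_n$ and $\psi$ is increasing.

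The ray class $r$ of $c_1$ is then read off from the orientation of $d_n$ at $x_n$ toward $y_n$: in a small foliation chart of $\cG_L$ at $x$, this orientation of the plaque of $d_n$ converges (for large $n$) to one of the two orientations of the plaque of $c_1$, which extends to the desired ray class. I set $\varphi = \psi|_{(0,t_1]}$ and must check that for each $t \in (0,t_1]$ the points $\tau_1(t)$ and $\tau_2(\psi(t))$ lie on a common leaf $e_t$ of $\cG_L$, with the $r$-ray of $e_t$ from $\tau_1(t)$ reaching $\tau_2(\psi(t))$. For $t = \sigma_n$ this is automatic since both points lie on $d_n$. For $t$ in a small neighborhood of each $\sigma_n$, I take a foliation box $N_n$ of $\cG_L$ around the compact arc of $d_n$ from $x_n$ to $y_n$, with $\tau_1 \cap N_n$ and $\tau_2 \cap N_n$ as transverse sections; uniqueness of $\tau_i \cap E'_t$ then forces the plaque through $\tau_1(t)$ in $N_n$ to emerge exactly at $\tau_2(\psi(t))$. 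Uniqueness of $r$ is automatic: since $e_t \subset E'_t$ and $\tau_2$ meets $E'_t$ at a single point, only one ray of $e_t$ can intersect $\tau_2$.

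The main obstacle will be verifying the covering step: ensuring that the open neighborhoods of the $\sigma_n$ on which the foliation-box argument applies genuinely cover a full interval $(0,t_1]$, with no gap where the leaf through $\tau_1(t)$ fails to reach $\tau_2$ along the $r$-ray. This should follow from uniform control of the transverse widths of the boxes $N_n$ near $x$ and $y$, coming from the continuity of $\cG_L$ near these two points and the convergence $x_n \to x$, $y_n \to y$, but it is the only place where a careful technical argument is required.
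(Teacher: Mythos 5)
Your construction of $\psi$ via the two–dimensional foliation (the leaf $E'_t\in\wt{\cF_2}$ through $\tau_1(t)$ meets $\tau_2$ exactly once) and the local foliation-box argument near each $\sigma_n$ are fine, but the step you flag as the ``main obstacle'' is a genuine gap, and the fix you sketch does not work. The arcs of $d_n$ from $x_n$ to $y_n$ have length going to infinity, so there is no uniform lower bound on the transverse width of a product neighborhood $N_n$ of such an arc; continuity of $\cG_L$ near the two endpoints $x$ and $y$ controls nothing in the middle of the arc, which is exactly where the problem sits. Knowing that $E'_t$ meets $\tau_2$ once does not tell you that this intersection point lies on the \emph{same connected component} of $E'_t\cap L$ as $\tau_1(t)$ (this is the whole point: $L\cap E'_t$ may well be disconnected, cf.\ Lemma \ref{lem.nonHsdfftwointersect}), so a priori the $\cG_L$-leaf $e_t$ could fail to reach $\tau_2$ for $t$ in the gaps between the $\sigma_n$, and your argument leaves this open.

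The paper closes precisely this gap by a purely planar argument inside $L$, with no appeal to $\wt{\cF_2}$ or to box widths: two segments $d_n, d_m$, together with the subarcs of $\tau_1$ and $\tau_2$ joining $x_n$ to $x_m$ and $y_n$ to $y_m$, form a Jordan curve bounding a compact disk in $L$. A leaf of $\cG_L$ through $\tau_1(t)$ with $t$ between $\sigma_n$ and $\sigma_m$ enters this disk; it cannot cross $d_n$ or $d_m$ (distinct leaves of a foliation), it cannot meet $\tau_1$ a second time (each leaf of $\cG_L$ lies in a single leaf of $\wt{\cF_2}$, so Corollary \ref{coro.foliations} gives at most one intersection with each transversal), and being properly embedded it cannot remain in the compact disk; hence it must exit through the subarc of $\tau_2$. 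The same Jordan-curve argument, applied first to a pair $d_n,d_m$ with incoherent sides or orientations, is what pins down the unique ray class $r$ of $c_1$ and the monotonicity of $\varphi$ — a point your proposal only addresses for a subsequence. So the overall scheme can be salvaged, but the decisive step must be replaced by this Jordan curve/Poincar\'e--Bendixson argument rather than a width estimate on foliation boxes.
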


\begin{proof}
The existence of such segments is because being non separated implies that there are leaves $e_n \in \cG_L$ accumulating in both, so if we fix two transversals $\tau$ and $\eta$ to $c_1$ and $c_2$ we have segments of $d_n$ of $e_n$ intersecting $\tau$ and $\eta$ in points $x_n$ and $y_n$ and so that $x_n \to c_1$ and $y_n\to c_2$. The segments $d_n$ have length going to infinity. 

Assume first that there are segments $d_n$ and $d_m$ with opposite orientation or which cut $\tau$ or $\eta$ in different connected components of $\tau \setminus c_1$ or $\eta \setminus c_2$. We get a Jordan curve $\cJ$ made by $d_n \cup d_m$ and two segments of $\tau$ and $\eta$ joining the points $x_n$ and $x_m$ and $y_n$ and $y_m$. Assume without loss of generality that $x_n$ is closer to $c_1$ than $x_m$. Consider the ray $r_m$ of $e_m$ starting from $x_m$ which does not contain $d_m$. By construction, $r_m$ enters the interior of the curve $\cJ$ and since it cannot intersect $d_n$ or $d_m$ it must exit $J$ intersecting either $\tau$ or $\eta$ in some point different from $x_m$ or $y_m$ which is a contradiction with Corollary \ref{coro.foliations} (in this case, it is just Poincar\'e-Bendixon's theorem that is being used since we are arguing inside $L$). 

Now, the same argument implies that between two segments $d_n$ and $d_m$ every leaf intersecting the transversal $\tau$ must exit through $\eta$ thus completing the proof of the lemma by choosing convenient parametrizations $\tau_1$ and $\tau_2$ of $\tau$ and $\eta$ respectively. 
\end{proof}

Define a \emph{non-separated ray class} of a leaf $c \in \cG_L$ to be one
which is non separated from some other leaf in $\cG_L$ as
in the previous Lemma. Now we can prove:

\begin{prop}\label{prop-nonseplands}
Let $c_1, c_2 \in \cG_L$ be non separated leaves of $\cG_L$ so that $c_1 \cup c_2 \subset L \cap E$ with $L \in \wt{\cF_1}$ and $E \in \wt{\cF_2}$. Then, if $c_1^+$ is the non-separated ray class of $c_1$ with $c_2$ then one has that the positive ray lands and $\partial^+ \Phi (c_1) = \alpha(E)$. 
\end{prop}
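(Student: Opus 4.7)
First, I apply Lemma~\ref{lema.nonseparated} to produce leaves $e_n \in \cG_L$ and segments $d_n \subset e_n$ joining $\tau_1(t_n)$ to $\tau_2(\varphi(t_n))$ with $t_n \to 0^+$, where $d_n$ lies in the ray class of $e_n$ corresponding to $c_1^+$. Each $e_n$ is a connected component of $L \cap E_n$ for some $E_n \in \wt{\cF_2}$, and since both endpoints of $d_n$ converge to points of $E$ (namely $\tau_1(0) \in c_1$ and $\tau_2(0) \in c_2$), the leaves $E_n$ converge to $E$ in the leaf space of $\wt{\cF_2}$. Note also that the length of $d_n$ in $L$ tends to infinity as $n \to \infty$, while its endpoints remain bounded.

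The first key step is to argue that $d_n$ cannot be entirely contained in $\wihatD_\eps(E_n, I_n)$, where $\eps$ is as in Proposition~\ref{prop-pushing} and $I_n$ is the interval of the leaf space of $\wt{\cF_2}$ with endpoints $E_n$ and $E$. Indeed, if it were, the symmetric version of Proposition~\ref{prop-pushing} applied to $d_n \subset L \cap E_n$ would produce a connected segment $d_n'$ of a leaf of $\wt\cG$ contained in $L \cap E$ with $\Phi_E(d_n')$ at bounded distance from $\Phi_{E_n}(d_n)$. The endpoints of $d_n'$ would lie in small fixed neighborhoods of $\tau_1(0)$ and $\tau_2(0)$ respectively; choosing $\tau_1, \tau_2$ from the start at points of $c_1, c_2$ that are sufficiently far apart in $L$, transversality of $L$ and $E$ forces these neighborhoods to contain only pieces of $c_1$ and $c_2$ within $L \cap E$. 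Since $d_n'$ is connected and must lie in a single component of $L \cap E$, this is a contradiction. Hence we can choose $m_n \in d_n$ with $d(m_n, E) > \eps$.

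Using Theorem~\ref{teo.matsumoto} to identify $\cF_2$ with $\cF_{ws}$ up to a base-preserving homeomorphism, Proposition~\ref{prop.elementaryhyperbolic} shows that the condition $d(m_n, E) > \eps$ places $\tilde p(m_n) \in \HH^2$ inside $H_+(\ell_n) \cup B_C(\ell_n)$, where $\ell_n$ is a geodesic joining $\alpha(E_n)$ and $\alpha(E)$ and $C = C(\eps)$. Since $\alpha(E_n) \to \alpha(E)$, the closure of this region in $\HH^2 \cup \partial \HH^2$ shrinks to $\{\alpha(E)\}$. Because $m_n$ exits every compact set of $L$ (as $d_n$ has diverging length with bounded endpoints), and since $\Phi_L$ is a quasi-isometry at bounded distance from $\tilde p|_L$, we conclude $\tilde p(m_n) \to \alpha(E)$ in $\partial \HH^2$.

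To deduce the landing of $c_1^+$ at $\alpha(E)$: the convergence $e_n \to c_1$ on compact subsets of $L$ implies that the initial portion of $d_n$ starting at $\tau_1(t_n)$ closely follows $c_1^+$ for lengths going to infinity. Combining this with $\tilde p(m_n) \to \alpha(E)$ and the connectedness of $d_n$, one shows $\alpha(E) \in \partial \Phi^+(c_1)$. For the uniqueness (so that the ray \emph{lands}), I would invoke Proposition~\ref{prop.disjointrays} together with a symmetric argument applied to the companion ray of $c_2$: the disjoint properly embedded rays $c_1^+$ and $c_2^+$ both accumulating at $\alpha(E)$, together with the topological constraints on limit sets of disjoint curves from Corollary~\ref{coro.separationH2}, force each of $\partial \Phi^+(c_1)$ and $\partial \Phi^+(c_2)$ to collapse to $\{\alpha(E)\}$. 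The main obstacle is exactly this last step: passing from the information $\alpha(E) \in \partial \Phi^+(c_1)$ to the collapse of $\partial \Phi^+(c_1)$ to a single point. Making rigorous the intuitive picture that the non-separation structure ``pinches'' both rays toward the single ideal point $\alpha(E)$, ruling out oscillation in the compactification, is where the most delicate geometric argument enters.
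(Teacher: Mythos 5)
There is a genuine gap, and it sits exactly at the heart of the statement. Your argument, even granting all intermediate steps, only aims to show $\alpha(E) \in \partial^+\Phi(c_1)$, and you explicitly leave open the passage from this to $\partial^+\Phi(c_1) = \{\alpha(E)\}$ — but that collapse \emph{is} the content of the proposition (that the non-separated ray lands, and lands at $\alpha(E)$). The tools you propose for it cannot do the job: Proposition~\ref{prop.disjointrays} and Corollary~\ref{coro.separationH2} only forbid one limit interval from sitting in the interior of another, and are perfectly consistent with $c_1^+$ and $c_2^+$ both accumulating on one and the same nondegenerate interval containing $\alpha(E)$. Moreover, the intermediate claim itself is not established by what precedes it: knowing that $d_n$ follows $c_1^+$ for a long time and that some later point $m_n$ of $d_n$ tends to $\alpha(E)$ in the compactification says nothing about where $c_1^+$ itself accumulates, since $d_n$ may leave a neighborhood of $c_1^+$ before reaching $m_n$. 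There are also smaller issues along the way: in your first step the pushed segment $d_n'$ lies in a single component of $L\cap E$, but ruling out that this single component passes $\eps$-close to both $\tau_1(0)$ and $\tau_2(0)$ is not achieved by taking $\tau_1(0),\tau_2(0)$ far apart in $L$ (leafwise distance does not control ambient recurrence of $c_1$); and "$m_n$ exits every compact set because $d_n$ has diverging length" is not a proof (it is true, but because any accumulation point of the $m_n$ would lie on a limit leaf of the $e_n$, which by Hausdorffness of the $\wt{\cF_2}$ leaf space must be contained in $E$, contradicting $d(m_n,E)>\eps$).

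The missing idea is the one the paper uses: argue by contradiction at a putative extra limit point $\xi\neq\alpha(E)$ and exploit that $\xi$ is a \emph{marker} point for $\wt{\cF_2}$. If $x_m\in c_1^+$ with $\Phi(x_m)\to\xi$, then every nearby leaf $E'\in\wt{\cF_2}$ through a point $\tau_1(t)$ close to $c_1$ contains (by Proposition~\ref{prop-pushing} and Proposition~\ref{prop.setDfol}) a curve staying $\eps/2$-close to $E$ and limiting on $\xi$, hence $E'$ must cross transversals $\tau_m$ based at $x_m$ for \emph{all} large $m$. On the other hand, the non-separation structure of Lemma~\ref{lema.nonseparated}, together with a Jordan-curve argument in $L$, forces the leaf of $\cG_L$ through $\tau_1(t)$ to close up along a \emph{compact} segment $d_t$ reaching the transversal at $c_2$, and every point of $\tau_m$ lies on such a compact $d_t$; since $x_m\to\infty$ and $c_1$ is properly embedded, $d_t$ cannot meet $\tau_m$ for large $m$. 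This contradiction kills every $\xi\neq\alpha(E)$ at once, yielding landing and the identification of the landing point simultaneously — precisely the "pinching" step your proposal identifies as the obstacle but does not supply.
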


For the following proof, a crucial fact\footnote{Such a property is true for general foliations homeomophic to weak stable foliations of Anosov flows, but not true for general minimal foliations.} will be that if $\xi \neq \alpha(L)$ and $\eps>0$ is small, then there is a neighborhood $I$ of $L$ in the leaf space and a neighborhood $J$ of $\xi$ in $S^1(L)$ so that in $L \cup S^1(L)$ the set $\wihatD_\eps(L,I)$ contains $N \cap L$, where
$N$ is a neighborhood of $J$ in $L \cup S^1(L)$ (see Proposition \ref{prop.setDfol}). 

Note that if we assume that $\alpha(E) \notin \partial^+ \Phi(c_1)$ then it is easier to obtain a contradiction, since we can fix a transversal $\tau$ to $c_1^+$ suficiently close to some $\xi \in \partial^+ \Phi(c_1)$ so that the full ray from this point is contained in $\wihatD_\eps(E,I)$ and thus, we can show that if $E'$ is a leaf nearby to $E$ intersecting $\tau$ in the direction of $c_2$, then the $E' \cap L$ must `follow' closely the curve $c_1$ all along the ray, but on the other hand, since $c_1$ and $c_2$ are non-separated the curve must joint a point close to $\tau$ to some point in a transversal to $c_2$, and since leaves of $\wcF_2$ cannot intersect a transversal twice, one gets a contradiction. The main dificulty in what follows is that $c_1^+$ could approach $\alpha(E)$ and then we need to argue more carefully to get control on the nearby intersection.

\begin{proof} 
Assume that $\partial^+ \Phi(c_1) \neq \{\alpha(E)\}$, this means that it contains some point $\xi \neq \alpha(E)$ in $\partial \HH^2$. 

Fix a sequence $x_k \in c_1^+$ so that $\Phi_L(x_k) \to \xi$ which also implies that $\Phi_E(x_k) \to \xi$ 
by Lemma \ref{lema.proyection}. We can assume by taking some subsequence that for some small $\eps$ smaller than the one given by Proposition \ref{prop-pushing} and some small intervals of leaves $I$ of $\wt{\cF_2}$ containing $E$ in the boundary we have that $x_k \in \wihatD_\eps(E,I)$.  

We fix a sequence of transversals $\tau_k: [0,\eps)\to L$ 
to $\cG_L$ in $L$,
through $c_1$ with $\tau_k(0)=x_k$ of length $\eps$ for $\eps$ as above. 
We assume that the transversals intersect the component
of $L \setminus c_1$ which contains $c_2$.
We also fix a transversal $\eta: [0,\eps) \to L$ so that $\eta(0) \in c_2$ as in Lemma \ref{lema.nonseparated}. 

Since $\xi$ is a marker point for $\wt{\cF_2}$ (i.e. $\xi \neq \alpha(E)$) by our choice of $I$ we know that if a leaf $E' \in I$, it intersects $\tau_1([0,\delta))$ for some small $\delta$, and by Proposition \ref{prop-pushing} it contains a curve  which remains at distance less than $\eps/2$ from $E$ and which limits in $\xi$
(in other words, whose image under $\Phi_{E'}$ limits
in $\xi$). Note that this curve has nothing to do with the ray $c_1^+$. 

In particular, we know that $E'$ will intersect $\tau_k$ for all sufficiently large $k$ (in fact, Proposition \ref{prop-pushing} also implies that $E'$ will intersect $\tau_k$ in points arbitrarily close to $x_k$). Note however that it could be that the intersection of $E'$ with $\tau_k$ happens in a different connected component of $E' \cap L$ since we do not know that $c_1^+$ stays far from $\alpha(E)$. Our goal in what follows is to show that this 'splitting' does not take place. 

Since $c_1^+$ is non separated from $c_2$ it follows from Lemma \ref{lema.nonseparated} that, up to reducing $\delta$, for every $t \in (0,\delta]$ we have that the leaf $e_t$ of $\cG_L$ through $\tau_1(t)$ intersects $\eta$ in some point defining a segment $d_t$ which joins both transversals. 

We claim that for every large $k$, every point in $\tau_k((0,\eps))$ must belong to some $d_t$: to see this, first take the segment $d_{t_1}$ and consider $k$ large so that $\tau_k((0,\eps))$ is completely contained in the region 
of $L$ bounded by $c_1^+$, the arc of the transversals $\tau_1$ joining $\tau_1(0)$ with $\tau_1(t_1)$, $d_{t_1}$, the arc of $\eta$ joining $\eta(0)$ with the intersection point with $d_{t_1}$ with $\eta$ and the corresponding ray of $c_2$ (this bounds a region by Proposition \ref{prop.plane} and by the choice of the transversals $\tau_k$ their image is contained in this region). Now, fix some point $\tau_k(s)$ with $s \in (0,\eps)$. By continuity of $\cG_L$ there is $t \in (0,\delta)$ such that $e_t$, the curve of $\cG_L$  through $\tau_1(t)$ intersects $\tau_k$ in some $\tau_k(s')$ with $s' < s$. Now consider the
region $\cJ$ bounded by the Jordan curve formed by $d_t, d_{t_1}$ and the arcs of $\eta$ and $\tau_1$ joining their endpoints. Then we get that the curve $e \in \cG_L$ passing through $\tau_m(s)$ must escape $\cJ$ intersecting once the transversal $\tau_1$ and once the transversal $\eta$, and the intersection point with $\tau_1$ is in a point $\tau_1(t')$ with $t < t' < t_1$, which shows the claim. 

Note that for different values of $t$ the segments $d_t$ belong to distinct leaves of $\wt{\cF_2}$ since each leaf intersects a transversal at most once (see Corollary \ref{coro.foliations}). On the other hand, given some $t$, since $d_t$ is compact and the points $x_k$ escape to infinity, it follows that $d_t$ cannot intersect $\tau_k$ for very large $m$ because $c_1$ is
properly embedded.
This  contradicts the fact that the leaf $E' \in \wt{\cF_2}$ through 
some point $\tau_1(t)$ described before
must intersect $\tau_k$ for all $k$. This contradiction then implies that 
$c_1$ cannot limit on $\xi \not = \alpha(E)$, or in other words
$\partial^+ \Phi(c_1) =\{ \alpha(E)\}$. 
\end{proof} 

\begin{remark}
We note that if $L \in \wt{\cF_1}$ and two curves $c_1,c_2 \in \cG_L$ are non-separated, then, as proved in Lemma \ref{lem.nonHsdfftwointersect}, they belong to $L \cap E$ with some $E \in \wt{\cF_2}$. However, it should be noted that it does not follow from Lemma \ref{lem.nonHsdfftwointersect} that $c_1$ and $c_2$ must be non-separated in the leaf space of $\cG_E$ (only that there is no transversal in $E$ from $c_1$ to $c_2$, recall Remark \ref{rem.nonseparated}) and so it does not follow that $\alpha(L)=\alpha(E)$.  See \S~\ref{ss.MatsumotoTsuboi} for concrete examples. 

\end{remark}

\section{Landing of rays}\label{s.Landing}

In this section we will show: 

\begin{teo}\label{prop.landing}
Let $\cF_1,\cF_2$ be two transverse foliations on $M=T^1S$ and let $\cG$ denote the foliation obtained by intersection. Then, every ray of a leaf of $\wt{\cG}$ lands (cf. Definition \ref{landdefinition}).  
\end{teo}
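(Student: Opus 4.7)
The plan is to argue by contradiction, reducing non-landing to the non-separated situation handled by Proposition \ref{prop-nonseplands}. Suppose there exists $c \in \wt{\cG}$ with $c \subset L \cap E$ (where $L \in \wt{\cF_1}$ and $E \in \wt{\cF_2}$) whose positive ray $r = c^+$ fails to land. Then $I := \partial^+\Phi(c) \subset \partial \HH^2$ is a non-degenerate closed interval. The aim is to produce a distinct leaf $c' \in \cG_L$ with $c \cup c' \subset L \cap E$, non-separated from $c$ in $\cL_L$, whose non-separated ray class of $c$ is that of $r$. Proposition \ref{prop-nonseplands} will then give $\partial^+\Phi(c) = \{\alpha(E)\}$, contradicting the non-degeneracy of $I$. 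A symmetric argument handles the negative ray.

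To set up, I would choose $\xi$ in the interior of $I$ with $\xi \neq \alpha(L)$ and $\xi \neq \alpha(E)$ (possible since the interior of $I$ is a nonempty open arc and we exclude only two points), and fix a sequence $x_n \in r$ with $\Phi_L(x_n) \to \xi$. Since $\xi$ is a marker direction for both $L$ and $E$, Proposition \ref{prop.setDfol} combined with Corollary \ref{coro.setD} provides a uniform $\eps > 0$ and intervals $I_L$, $I_E$ of leaves of $\wt{\cF_1}$, $\wt{\cF_2}$ around $L$, $E$ respectively, so that for $n$ large the point $x_n$ lies in $\wihatD_\eps(L,I_L) \cap \wihatD_\eps(E,I_E)$. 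Proposition \ref{prop-pushing} is therefore applicable near each $x_n$, in either transverse direction.

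The core of the argument is a compactness/limit construction. Using compactness of $M$, choose deck transformations $\gamma_n \in \pi_1(M)$ with $\gamma_n x_n$ lying in a fixed compact fundamental domain, and extract subsequences so that $\gamma_n x_n \to y \in \mt$, $\gamma_n L \to L^*$, $\gamma_n E \to E^*$, and $\gamma_n c$ converges (Hausdorff on compact sets) to a limit set $\Lambda \subset L^* \cap E^*$ containing $y$. The crucial claim is that $\Lambda$ contains at least two distinct leaves of $\cG_{L^*}$ in $L^* \cap E^*$ that are non-separated in $\cL_{L^*}$. The intuition is that the hypothesis of non-landing forces $r$ to return infinitely often to a neighborhood of $\xi$ along infinitely many geometrically disjoint pieces of $c$ that drift apart in $L$; after translation by $\gamma_n$, these returns converge to (at least) two distinct leaves in $L^* \cap E^*$ near $y$ that cannot be joined by a transversal in $L^*$, by a Jordan curve / Poincar\'e--Bendixson argument inside $L^*$ in the spirit of Lemma \ref{lema.nonseparated}. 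Translating back by $\gamma_n^{-1}$, and invoking Lemma \ref{lem.nonHsdfftwointersect} (which forces a pair of non-separated leaves of $\cG_L$ to share a common leaf of $\wt{\cF_2}$, necessarily $E$), yields non-separated leaves $c, c' \in \cG_L$ with $c \cup c' \subset L \cap E$. By construction, the non-separated ray class of $c$ is the one accumulating at $\xi$, namely $r$. Proposition \ref{prop-nonseplands} then closes the contradiction.

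The main obstacle is the key claim inside the compactness argument: justifying that the Hausdorff limit $\Lambda$ truly contains two non-separated leaves rather than collapsing onto a single leaf through $y$. This rests on delicately tracking that the successive returns of $r$ to a neighborhood of $\xi$ occur on arbitrarily large compact pieces of $c$ that separate in $L$, and that after translation these pieces remain geometrically distinct near $y$ — an assertion relying on the proper embedding of $c$ in $L$, the uniform constants in Proposition \ref{prop-pushing}, and the description of the sets $\wihatD_\eps(L,I_L)$ via Proposition \ref{prop.setDfol} and Corollary \ref{coro.setD}.
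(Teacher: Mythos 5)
Your plan is to reduce non-landing directly to the non-separated configuration of Proposition \ref{prop-nonseplands} by a single compactness/limit argument. This is a shorter route than the paper takes, but as written it has two genuine gaps, one of which you yourself flag as ``the main obstacle''; I do not think it can be repaired without essentially re-introducing the paper's intermediate steps.

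The first gap is the key claim that the Hausdorff limit $\Lambda$ of $\gamma_n c$ contains two distinct non-separated leaves of $\cG_{L^*}$. Non-landing only says that the limit set $\partial^+\Phi(c)$ is a non-degenerate interval $I$; it gives a sequence $x_n \in r$ with $\Phi_L(x_n) \to \xi$, but it does \emph{not} give a paired sequence $y_n$ on $r$ at bounded $L$-distance from $x_n$ and on a ``different branch'' of $c$, which is what one needs to extract two distinct leaves in the limit rather than a single leaf through $y$. In the paper's Proposition \ref{prop.someraylands} that pairing is exactly what the $\gamma$-invariance of $L$ supplies: the invariant curve $\ell$, the compactness of $B_K(\ell)/\langle\gamma\rangle$, Lemma \ref{lema-escape1} controlling the length of arcs of $\cG_L$ inside $B_K(\ell)$, and the Jordan curve argument showing $d(x_n,y_n)$ is bounded are all keyed to that invariance. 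Without some analogous control, nothing prevents $\gamma_n c$ from converging to a single leaf through $y$ (with $\Lambda$ a single leaf), so the non-separated pair simply may not be there. The step ``translating back by $\gamma_n^{-1}$'' is also not a legitimate move: the $\gamma_n$ differ for each $n$ and the non-separated pair, if it exists, lives in the limit leaf $L^*$, not in $L$.

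The second gap is that even if you successfully produced non-separated leaves $c_1, c_2$ in $L^*\cap E^*$ with non-separated ray landing (via Proposition \ref{prop-nonseplands}) at $\alpha(E^*)$, you would have a single landing ray in \emph{some} leaf, not yet a contradiction for $c$ in $L$. To propagate this you would need Proposition \ref{prop-onelandsallland}, which only applies if the landing point is different from the non-marker point of the ambient leaf; but $\alpha(E^*)$ may well equal $\alpha(L^*)$, and that is precisely the problematic case the paper handles separately through Lemmas \ref{lema-bubblecenter} and \ref{lema-bubblezigzag} (and the final case split in the proof of Theorem \ref{prop.landing}). Your proposal ends at ``Proposition \ref{prop-nonseplands} then closes the contradiction,'' but the contradiction does not close at that point. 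In short: the paper's architecture (special case with $\gamma$-invariance, then careful propagation handling the marker-point cases) is not a detour you can skip; the compactness argument alone does not yield the non-separated pair, and even when it does yield \emph{a} landing ray it is in the wrong leaf and possibly at the wrong ideal point.
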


The strategy of the proof is as follows. We will first show that if enough rays land, then every ray should land. In Proposition \ref{prop-nonseplands} we have shown that if rays are non-separated then they should land in the non-marker point of one of the foliations. We will use this to reduce to the case where leaves have Hausdorff leaf space where landing can also be shown rather directly by `pushing' to close-by leaves.

\subsection{Some rays land then every ray lands}

We will first prove the following: 

\begin{prop}\label{prop-onelandsallland} 
Assume that there is a leaf $c \in \cG_L$ for some $L \in \wt{\cF_i}$ ($i=1$ or $2$) which has one ray which lands in a point which is not $\alpha(L)$ (i.e. $\partial^+\Phi(c)= \{\xi\} \neq \{\alpha(L)\}$ or $\partial^- \Phi(c)= \{\xi\} \neq \{ \alpha(L)\})$. Then, every ray of any leaf of $\wt{\cG}$ lands in both foliations. 
\end{prop}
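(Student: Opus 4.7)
Let $c_0 \in \wt\cG$ be the leaf provided by hypothesis, with ray $r_0 \subset c_0 = L_0 \cap F_0$ landing at $\xi_0 \neq \alpha(L_0)$; without loss of generality $L_0 \in \wt{\cF_1}$ and $F_0 \in \wt{\cF_2}$. By Corollary \ref{coincide}, the landing set of a ray of a leaf of $\wt\cG$ is intrinsic (independent of which of the two foliations one views it in), so it suffices to show that every ray of every leaf of $\cG_{L'}$ lands, for each $L' \in \wt{\cF_1}$. The first step is to spread the landing data via pushing: since $\xi_0 \neq \alpha(L_0)$, I fix a small interval $I_0$ in the leaf space of $\wt{\cF_1}$ (identified with $\partial\HH^2$ via the non-marker map $\alpha$) containing $\alpha(L_0)$ but with $\xi_0 \notin \overline{I_0}$. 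Corollary \ref{coro.setD} together with Proposition \ref{prop.setDfol} then show that a tail of $r_0$ is contained in $\wihatD_\eps(L_0, I_0)$ for sufficiently small $\eps > 0$, and Proposition \ref{prop-pushing} yields, for every $L' \in I_0$, a ray of the leaf $L' \cap F_0 \in \wt\cG$ landing at the same point $\xi_0$.

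Next, applying any deck transformation $\gamma \in \pi_1(M)$ to the preceding configuration gives: for every $L'' \in \gamma(I_0)$, the leaf $L'' \cap \gamma F_0$ has a ray landing at $\gamma\xi_0$. Fixing an arbitrary $L' \in \wt{\cF_1}$, I claim the landing points of rays of leaves of $\cG_{L'}$ form a dense subset of $\partial\HH^2$. Given any open $V \subset \partial\HH^2$, the required $\gamma$ must satisfy $L' \in \gamma(I_0)$ and $\gamma\xi_0 \in V$, or equivalently (writing $\delta = \gamma^{-1}$) $\delta\alpha(L') \in I_0$ and $\delta^{-1}\xi_0 \in V$. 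Proposition \ref{prop.minimality} furnishes an element of $\pi_1(M)$ acting on $\partial\HH^2$ as a hyperbolic isometry with attracting fixed point in $I_0$ and repelling fixed point in $V$; sufficiently high iterates of this element provide a valid $\delta$, so $L' \cap \delta^{-1}F_0$ is a leaf of $\cG_{L'}$ with a ray landing in $V$.

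Now suppose by contradiction that some ray $r'$ of $c' \in \cG_{L'}$ does not land, so $\partial\Phi(r') = J$ is a non-degenerate closed subset of $\partial\HH^2$. In the generic case $J$ is a proper sub-arc of $\partial\HH^2$: pick $\xi \in \mathrm{int}(J)$ and use the density just established to find a leaf $c'' \in \cG_{L'}$ distinct from $c'$ carrying a ray $r''$ landing at some $\xi' \in \mathrm{int}(J)$. Since $c' \neq c''$ lie in the same foliation $\cG_{L'}$, the rays $r'$ and $r''$ are disjoint; their images under the diffeomorphism $\Phi_{L'}$ are disjoint properly embedded rays in $\HH^2$ with limit sets $J$ and $\{\xi'\}$, with $J \neq \partial\HH^2$ and $\{\xi'\} \subset \mathrm{int}(J)$, contradicting Proposition \ref{prop.disjointrays}.

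The remaining and most delicate case is $J = \partial\HH^2$, where Proposition \ref{prop.disjointrays} does not apply directly (see Remark \ref{rem.disjointrays}); this is the main technical obstacle. After the previous paragraph one knows that every ray of every leaf of $\cG_{L'}$ has limit set equal either to a single point or to all of $\partial\HH^2$. The plan to eliminate the latter possibility is to produce, among the landing rays furnished above, one whose containing leaf $c''$ has its other ray also landing at some point $\zeta \neq \xi$. Such a $c''$ is then a properly embedded line in $L'$ with two distinct landing endpoints, and by Corollary \ref{coro.separationH2} the two components of $L' \setminus c''$ have proper sub-arcs as limit sets; since $c'$ is disjoint from $c''$ it would be confined to one of these components, forcing $J$ to lie in a proper sub-arc and contradicting $J = \partial\HH^2$. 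Guaranteeing the existence of such a two-sided-landing leaf $c''$ — likely by combining the pushing mechanism of Paragraph 1 with Proposition \ref{prop-nonseplands} on non-separated leaves — is the subtle bookkeeping step where the proof requires the most care.
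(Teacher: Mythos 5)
Paragraphs 1--3 follow the same outline as the paper's proof (density of landing points via minimality and pushing, then case analysis on the limit set $J$, with Proposition~\ref{prop.disjointrays} handling the case $J\neq\partial\HH^2$), and they are essentially correct. The gap is in Paragraph~4, where you acknowledge it yourself: the case $J=\partial\HH^2$ is left as a ``plan'' whose key step --- producing a leaf $c''\in\cG_{L'}$ with \emph{both} rays landing at distinct points --- is neither carried out nor obviously achievable at this stage. Invoking Proposition~\ref{prop-nonseplands} does not help by itself: non-separated pairs need not exist (indeed the Hausdorff case is the main setting where Theorem~\ref{prop.landing} is applied later), and even when they do, that proposition only controls one ray of each non-separated leaf. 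Moreover, the thing you are trying to produce is dangerously close to what the eventual landing theorem delivers, so making this route non-circular would require real work.

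The fix is a simpler observation, which is exactly what the paper does: you do not need one leaf with both rays landing. Density gives you two \emph{rays} $r_1, r_2$ (possibly in different leaves of $\cG_{L'}$) landing at distinct points $\xi_1\neq\xi_2$. Join their starting points by a compact arc $a$ in $L'$; then $r_1\cup a\cup r_2$ is a properly embedded line in $L'$, and by Corollary~\ref{coro.separationH2} its two complementary components $D^{\pm}$ have limit sets the two closed arcs $[\xi_1,\xi_2]$ and $[\xi_2,\xi_1]$. The hypothetical non-landing ray $r'$ is disjoint from $r_1\cup r_2$ (different leaves of $\cG_{L'}$, or disjoint rays of the same leaf), and $a$ is compact, so a tail of $r'$ lies entirely in $D^+$ or in $D^-$; hence its limit set is contained in a proper sub-arc of $\partial\HH^2$, contradicting $J=\partial\HH^2$. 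With this replacing Paragraph~4, your proof is complete and matches the paper's argument.
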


We will use minimality of the foliation to show that there is a dense set of points in $S^1(L)$ which are landing points of leaves of $\cG_L$. 

\begin{lema}\label{lema.landmarkerthendense}
Under the assumptions of Proposition \ref{prop-onelandsallland}, 
for every leaf $E \in \wcF_i$ and 
for every $I \subset \partial \HH^2$ non trivial interval,
 there is some leaf $e \in \cG_E$ so that either $\partial^+\Phi(e)= \{\eta\}$ or $\partial^-\Phi(e) = \{\eta\}$ for some $\eta \in I$. 
\end{lema}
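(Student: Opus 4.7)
The plan is to combine two ingredients: the \emph{pushing} argument of Proposition \ref{prop-pushing}, which propagates the landing of a ray at a marker point to all nearby leaves of the same two dimensional foliation, together with the minimality of the $\Gamma=\pi_1(S)$ action on pairs of distinct points in $\partial\HH^2$, which allows a single deck transformation to transport the configuration $(L,\xi)$ to any target $(E,\eta^*)$ with $\eta^*\neq\alpha(E)$.

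First, let $r$ denote the landing ray of $c$ and let $\xi=\partial\Phi(r)\neq\alpha(L)$ be its landing point. Because $\xi$ is a marker point of $L$, Corollary \ref{coro.setD} combined with Proposition \ref{prop.setDfol} furnishes $\eps>0$ and an open interval $J_0$ in the leaf space of $\widetilde{\cF_i}$ containing $L$ such that a tail of $r$ is contained in $\wihatD_\eps(L,J_0)$: a neighborhood of $\xi$ in $L\cup S^1(L)$ is sent by $\Phi_L$ into a region at uniformly bounded distance from every leaf $L'\in J_0$. Applying Proposition \ref{prop-pushing} to this tail, for each $L'\in J_0$ one obtains a ray of a leaf of $\cG_{L'}$ whose $\Phi_{L'}$-image lies within bounded Hausdorff distance of $\Phi_L(r)$, and hence also lands at $\xi$.

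Now fix a target $E\in\widetilde{\cF_i}$ and a non-empty open interval $I\subset\partial\HH^2$. Pick $\eta^*\in I$ with $\eta^*\neq\alpha(E)$, which is possible since $I$ is an open interval. Because $\Gamma$ is a cocompact Fuchsian group, its diagonal action on $(\partial\HH^2)^2\setminus\Delta$ is minimal; this is a standard consequence of Proposition \ref{prop.minimality}, derived by combining the minimality of single orbits with the north--south dynamics of a hyperbolic element whose attracting and repelling fixed points are placed, using the second clause of Proposition \ref{prop.minimality}, in arbitrarily prescribed small neighborhoods of $\alpha(E)$ and $\eta^*$. With the approximation taken fine enough, I obtain a deck transformation $\gamma$ of $\mt$ whose image in $\Gamma$ satisfies $\alpha(E)\in\gamma(J_0)$ and $\gamma\xi\in I$. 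Applying the first step to the leaf $\gamma c\in\cG_{\gamma L}$, whose ray $\gamma r$ lands at $\gamma\xi$ with a tail in $\wihatD_\eps(\gamma L,\gamma J_0)$, Proposition \ref{prop-pushing} then produces, using $E\in\gamma J_0$, a leaf $e\in\cG_E$ with a ray whose landing point is $\eta:=\gamma\xi\in I$, as required.

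The main obstacle I expect in writing this out cleanly is the simultaneous control exercised in the second step: forcing a single deck transformation $\gamma$ to bring $\gamma\alpha(L)$ arbitrarily close to $\alpha(E)$ while placing $\gamma\xi$ inside $I$. This rests essentially on the diagonal minimality of $\Gamma$ on $(\partial\HH^2)^2\setminus\Delta$, which is classical for cocompact Fuchsian groups but must be extracted from Proposition \ref{prop.minimality}, most expediently via a short ping-pong type argument using hyperbolic elements whose attracting and repelling fixed point pairs can be prescribed to lie in any prescribed pair of disjoint open sets of $\partial\HH^2$.
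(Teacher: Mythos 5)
There is a genuine gap in your argument. You assert that the diagonal action of $\Gamma$ on $(\partial\HH^2)^2\setminus\Delta$ is minimal, and the whole construction of $\gamma$ rests on this. But that action is \emph{not} minimal (only topologically transitive). Concretely, if $\delta\in\Gamma$ is hyperbolic and $(p,q)=(\delta^+,\delta^-)$ is its fixed-point pair, then $\Gamma\cdot(p,q)$ consists exactly of the endpoint pairs of the lifts of the corresponding closed geodesic in $S$; this is a countable, proper, closed subset of $(\partial\HH^2)^2\setminus\Delta$, because a sequence of such lifts can only converge to another lift (the closed geodesic is a closed subset of $T^1S$). So the $\Gamma$-orbit of a particular pair $(\alpha(L),\xi)$ need not come close to $(\alpha(E),\eta^*)$, and nothing in the hypothesis of Proposition~\ref{prop-onelandsallland} rules out that $\alpha(L)$ and $\xi$ are a fixed-point pair. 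The sketched ``ping-pong'' derivation cannot work either: for any sequence $\gamma_n\to\infty$ with attracting/repelling pair $(a,b)$, both $\gamma_n\alpha(L)$ and $\gamma_n\xi$ converge to $a$ unless one of them equals $b$, so one cannot steer the two images toward two distinct targets unless $b$ is pre-positioned, which is not possible for a given pair $(\alpha(L),\xi)$. There is also a secondary problem even in the cases where the orbit happens to be dense: density gives $\gamma\alpha(L)$ close to $\alpha(E)$ and $\gamma\xi\in I$, but it does not give $\alpha(E)\in\gamma(\alpha(J_0))$, since the interval $\gamma(\alpha(J_0))$ can shrink faster than $\gamma\alpha(L)$ approaches $\alpha(E)$.

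The paper's proof sidesteps all of this by using only the single-orbit minimality of $\Gamma$ on $\partial\HH^2$ (the first part of Proposition~\ref{prop.minimality}): pick $\gamma$ with $\gamma\xi\in I$, then take any interval $J$ of the leaf space containing both $\gamma\alpha(L)$ and $\alpha(E)$ with $\overline{J}\cap\overline{I}=\emptyset$. Since $\gamma\xi\notin\overline{J}$, a tail of $\gamma r$ lies in $\wihatD_\eps(\gamma L,J)$, and Proposition~\ref{prop-pushing} pushes it to $E$, producing a ray in $\cG_E$ landing at $\gamma\xi\in I$. The insight you missed is that pushing works across an interval $J$ of arbitrary length; there is no need to bring $\gamma L$ uniformly close to $E$, only to guarantee the ray's ideal point lies outside $\overline{J}$. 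This is exactly what makes single-orbit minimality sufficient.
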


\begin{proof}
Let us work in $\wihatM$, else, we can apply some deck transformations in the center of $\pi_1(M)$ so that everything makes sense. 
Let $\Gamma = \pi_1(M)/{\mathbb{Z}}$ where $\mathbb{Z}$ is the center,
notice that $\Gamma \cong \pi_1(S)$ (cf. \S~\ref{ss.unittangentbundles}).

Up to considering a subinterval of $I$ we can assume that the closure of $I$ is disjoint from $\alpha(E)$.  Consider a  deck transformation $\gamma \in \Gamma$ of
$\wihatM \to M$
such that $\gamma \xi \in I$ (see Proposition \ref{prop.minimality}). 
Again, up to reducing $I$ we can assume that $\gamma \alpha(L) \notin I$.  

Fix some open interval $J$, such that the closures of $J, I$ are 
disjoint, and in addition such that $\alpha(E),
\gamma \alpha(L)$ are 
in the interior of $J$.

Fix $\eps>0$ given by Proposition \ref{prop-pushing}. 
There is a subray $r \subset c$ landing in $\xi$ such that the projection of $\gamma r$ is completely contained in $\wihatD_\eps(E, J)$.
This is  because $\gamma r$ lands in $\gamma \xi \in I \subset J^c$ (cf. equation \eqref{eq:setDfol}) and $\alpha(E), \gamma \alpha(L)$
are in $J$.
Here $\gamma r \subset \gamma L$ and the pushed through 
ray is in $E$ and has ideal  point in $\gamma \xi \in I$. 
This finishes the proof.
\end{proof}

\begin{proof}[Proof of Proposition \ref{prop-onelandsallland}] 
Let $E \in \wcF_i$.
Suppose that some ray $c$ in $\cG_E$ does not land, and let $I$
be its limit set, which is then not a point. 

Let $J$ be a closed non-degenerate interval whose closure is strictly contained in the
interior of $I$ (possibly $I = \partial \HH^2$).

By Lemma \ref{lema.landmarkerthendense} one can find in $E$ at least two rays $e_1, e_2$ of $\cG_E$  landing in different points $\xi_1 \neq \xi_2 $ in the interior of $J$. We can further assume that $e_1, e_2$ are disjoint.
Fix a curve joining the starting points of $e_1$ and $e_2$ and this defines two regions $D^+$ and $D^-$ whose limit sets in $\partial \HH^2$ are the two closed intervals $[\xi_1,\xi_2]$ and $[\xi_2,\xi_1]$ in $\partial \HH^2$ with respect to the circular order (cf. Corollary \ref{coro.separationH2}). Since every ray  of $\cG_E$ is properly embedded and rays are disjoint or contained in one another, it follows that up to removing a compact piece, then the
ray $c$  must belong to either $D^+$ or $D^-$, and so
can only accumulate on $[\xi_1,\xi_2]$ or $[\xi_2,\xi_1]$. 
This contradicts the fact that $c$ accumulates on
all of $I$, and $I$ is an interval intersecting the interior
of both $[\xi_1,\xi_2]$  and $[\xi_2,\xi_1]$. 
This proves the Proposition.
\end{proof}

We also prove the following stronger statement that will be used later:

\begin{lema}\label{lema-alphainlimit}
Let $e \subset L \cap E$ so that $\partial^+ \Phi(e)=I$ where $I$ is a not a point
(in other words the positive ray of $e$ does not
land). Then $\alpha(L) \in I$. In addition 
if $I \neq \partial \HH^2$, then
for every $\xi$ in the interior of $I$ there is no ray of a leaf of $\cG_E$ which lands in $\xi$. The same holds for $\partial^-\Phi(e)$. 
\end{lema}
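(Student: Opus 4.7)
The plan is to prove the two assertions with related but distinct tools: the second assertion will follow from a planar separation argument after projecting to $\HH^2$ via the quasi-isometry $\Phi_E$ of Proposition~\ref{prop.QI}, while the first will come from combining the pushing mechanism of Proposition~\ref{prop-pushing} with the geometry of the sets $\wihatD_\eps$ described in Proposition~\ref{prop.setDfol} and Corollary~\ref{coro.setD}.

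For the second assertion I would argue by contradiction. Suppose some ray $r'$ of a leaf of $\cG_E$ lands at a point $\xi$ in the interior of $I$. Then $r'$ is either a ray of a leaf $e' \in \cG_E$ distinct from $e$, or the opposite ray of $e$ itself; in either case $r'$ and the positive ray $r^+$ of $e$ are disjoint properly embedded rays in $E$, since $\cG_E$ is a foliation. Their $\Phi_E$-images in $\HH^2$ inherit this disjointness up to bounded distortion and have the prescribed limit sets $I$ and $\{\xi\}$ in $\partial \HH^2$; since $\Phi_E$ extends to a homeomorphism on the circle at infinity and the proof of Proposition~\ref{prop.disjointrays} is purely planar, the hypothesis $I \neq \partial \HH^2$ together with $\xi \in \mathrm{int}(I)$ yields the desired contradiction.

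For the first assertion I would argue by contradiction and assume $\alpha(E) \notin I$. Since $\alpha$ is a homeomorphism from the leaf space of $\wt{\cF_i}$ to $\partial \HH^2$, there is an open interval $J$ of leaves containing $E$ with $\overline{\alpha(J)} \cap I = \emptyset$. Applying Proposition~\ref{prop.setDfol} and Corollary~\ref{coro.setD} shows that a cofinite subray $r^\ast \subset r$ lies inside $\wihatD_\eps(E, J)$, where $\eps$ is small enough for Proposition~\ref{prop-pushing}. Pushing $r^\ast$ to each $E' \in J$ then produces a ray $r_{E'}$ of a leaf $e_{E'}$ of $\wt{\cG}$ contained in $F \cap E'$ (with $F \in \wt{\cF_j}$ the leaf containing $e$), such that $\Phi_{E'}(r_{E'})$ is at bounded Hausdorff distance from $\Phi_E(r^\ast)$; Corollary~\ref{coincide} then forces the limit set of $r_{E'}$ in $\partial \HH^2$ to equal $I$.

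Now pick $E'_n \to E$ in $J \setminus \{E\}$. The leaves $e_{E'_n}$, all distinct from $e$, must accumulate on $e$ in $\cG_F$. If the accumulation is not unique in $\cL_F$, then $e$ has a non-separated partner in $\cL_F$ on the side of $r^+$, and Proposition~\ref{prop-nonseplands} forces $r^+$ to land, contradicting the hypothesis that $I$ is not a point. The main obstacle is the Hausdorff case: here the $r_{E'_n}$ are pairwise disjoint properly embedded rays in $F$, all with limit set exactly $I \subsetneq \partial \HH^2$, that accumulate only on $r^+$; together with $r^+$ they bound nested regions in $F \cup S^1(F)$ that shrink onto $I$ while each still contains a tail of the leaf $e_{E'_n}$. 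Showing that this configuration is incompatible with proper embeddedness of the $e_{E'_n}$ — essentially a one-parameter refinement of Proposition~\ref{prop.disjointrays} — is the step that will require the most care.
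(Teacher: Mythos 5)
Your treatment of the second assertion is fine and is essentially the paper's: the two rays are disjoint in $E$, $\Phi_E$ carries them to disjoint properly embedded rays in $\HH^2$, and Proposition~\ref{prop.disjointrays} applies directly. The problem is the first assertion. The step you defer ("a one-parameter refinement of Proposition~\ref{prop.disjointrays}") is not a technical detail that extra care will fix: the configuration you arrive at in your Hausdorff case --- countably many pairwise disjoint properly embedded rays in a plane, each with limit set \emph{exactly} the nondegenerate interval $I \subsetneq \partial \HH^2$ --- is not contradictory as a statement of plane topology. Proposition~\ref{prop.disjointrays} only forbids a disjoint ray whose limit set is contained in the \emph{interior} of $I$, and $I$ itself is not contained in its interior. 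Indeed one can build, inside the region of the disk subtended by $I$, infinitely many disjoint rays that each oscillate between the two endpoints of $I$ along arcs converging to the circle, interleaved so as never to cross; all of them have limit set exactly $I$. So no purely planar argument can finish your Hausdorff case; some equivariance or minimality input is unavoidable. (There is also a smaller unproved claim in your non-Hausdorff case: Proposition~\ref{prop-nonseplands} gives landing of the \emph{non-separated} ray class, and you assert without argument that this ray class is the $r^+$ side of $e$.)

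This is exactly where the paper's argument diverges from yours, and it shows what the missing input is. Instead of pushing to a one-parameter family of nearby leaves and passing to the other foliation, the paper uses Proposition~\ref{prop.minimality} to choose a deck transformation $\gamma$ whose repelling fixed point $\gamma^-$ lies outside $I$ (close to $\alpha(E)$) and whose attracting fixed point $\gamma^+$ lies in the interior of $I$, and pushes the ray (via Proposition~\ref{prop-pushing}, using the same containment in $\wihatD_\eps(E,J)$ that you establish) to the single $\gamma$-invariant leaf $E'$ with $\alpha(E')=\gamma^-$. One then has two disjoint rays of $\cG_{E'}$ in the \emph{same} leaf, namely the pushed ray $r'$ with limit set $I$ and its image $\gamma r'$ with limit set $\gamma(I)$, and the choice of fixed points forces $\gamma(I)$ to be contained in the interior of $I$; now Proposition~\ref{prop.disjointrays} does apply and gives the contradiction. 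If you want to salvage your scheme, you would have to inject a comparable use of the group action (e.g.\ a contraction of $I$ by a deck transformation preserving one of the leaves you produce); as written, the key step is a gap.
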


\begin{proof}
In this lemma we also work in $\wihatM$, but arguments can be easily adapted to do the same in $\mt$. 
All leaves in this lemma are in the same foliation.
First notice that if $I = \partial \HH^2$ then there is nothing
to prove.

First assume that $\alpha(L) \notin I$. Consider $\gamma \in \pi_1(M)$ so that $\gamma$ acting on $\partial \HH^2$ has a repelling fixed point $\gamma^- \notin I$ (and close to $\alpha(L)$) and an attracting fixed point $\gamma^+ \in I$ (cf. Proposition \ref{prop.minimality}).

Now, consider the leaf $E'$ associated to the repelling fixed point, that is, such that $\alpha(E')= \gamma^-$. It follows that this leaf is invariant under $\gamma$. Consider a ray $r$ of $e$, such that $\partial \Phi(r) = I$ which is contained in $\wihatD_\eps(E, J)$ where $J$ is some interval of the leaf space containing $E'$ and $E$ and $\eps>0$ given by Proposition \ref{prop-pushing}. Using Proposition \ref{prop-pushing} we deduce that there is a ray $r'$ of a leaf of $\cG_{E'}$ so that
$\partial \Phi(r) = I$. Moreover, $\gamma r'$ is a ray in $\cG_{E'}$ with
$\partial \Phi(\gamma(r)) = \gamma(I)$, which is strictly contained in $I$. This contradicts Proposition \ref{prop.disjointrays}. 

The second statement follows immediately from Proposition \ref{prop.disjointrays}.
\end{proof}

\subsection{Finding landing rays} 

In this section we will show the following: 

\begin{prop}\label{prop.someraylands}
Let $L \in \wt{\cF_1}$ and $\gamma \in \pi_1(M) \setminus \{\mathrm{id}\}$ such that $\gamma L = L$. Then, there is some $c \in \cG_L$ which has at least one ray which lands. 
\end{prop}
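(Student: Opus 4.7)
I plan to split the argument according to whether the leaf space $\cL_L = L/\cG_L$ is Hausdorff. If $\cL_L$ is non-Hausdorff, Lemma \ref{lem.nonHsdfftwointersect} furnishes a pair of non-separated leaves $c_1, c_2$ in $\cG_L$, and Proposition \ref{prop-nonseplands} then produces a landing ray on $c_1$ (landing at $\alpha(E)$, where $E \in \wt{\cF_2}$ is the leaf with $c_1 \cup c_2 \subset L \cap E$).

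Assume henceforth that $\cL_L$ is Hausdorff, so $\cL_L \cong \RR$. The heart of the argument is to produce a $\gamma$-invariant leaf $c \in \cG_L$; once this is done, the conclusion follows from a standard quasi-geodesic argument. Indeed, $\gamma$ acts freely on $L$ (being a non-trivial deck transformation of $\mt$), so a $\gamma$-invariant leaf $c$ descends to a closed loop $\bar c$ in the topological cylinder $L/\langle \gamma \rangle$. Via the quasi-isometry $\Phi_L : L \to \HH^2$ of Proposition \ref{prop.QI}, $\gamma$ corresponds to a hyperbolic-type quasi-isometry of $\HH^2$ with attracting and repelling fixed points $\gamma^\pm \in \partial \HH^2$, and the axis $A$ of $\gamma$ in $L$ projects to the core closed geodesic of the quotient cylinder. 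Any closed loop in such a cylinder is compact and hence lies within bounded distance of the core, so $\Phi_L(c)$ is within bounded distance of $\Phi_L(A)$ in $\HH^2$. It follows that $\Phi_L(c)$ is a quasi-geodesic joining $\gamma^-$ to $\gamma^+$, and by the Morse stability lemma for quasi-geodesics in $\HH^2$ both rays of $c$ land, at $\gamma^-$ and $\gamma^+$ respectively.

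To produce the $\gamma$-invariant leaf, observe that $\gamma$ induces an orientation-preserving homeomorphism of $\cL_L \cong \RR$. The easy sub-case is when this homeomorphism has a fixed point, which is precisely a $\gamma$-invariant leaf of $\cG_L$. The main obstacle is the opposite sub-case, where $\gamma$ acts on $\cL_L$ as a fixed-point-free translation. To rule this out, I would argue as follows: using Proposition \ref{prop-pushing} to push the behavior of leaves of $\cG_L$ encountered along a fundamental domain $[p,\gamma p]$ of $\gamma$ on the axis $A$, and combining with the precise geometric descriptions of $\wihatD_\eps(L,I)$ supplied by Proposition \ref{prop.setDfol} and Corollary \ref{coro.setD}, one expects to produce a pair of leaves of $\cG_L$ which are non-separated (the translation action of $\gamma$ on $\cL_L$ forcing asymptotic coincidences near the fixed points $\gamma^\pm$), contradicting the Hausdorffness of $\cL_L$. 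This last step is the most delicate part of the proof, and is where I expect the full strength of the transverse minimal foliations setting and Matsumoto's Theorem \ref{teo.matsumoto} to be essential.
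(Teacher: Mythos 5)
Your case split (Hausdorff vs.\ non-Hausdorff leaf space) differs from the paper's (whether $\gamma$ fixes a leaf of $\cG_L$ or not), and this leads to a real gap. The non-Hausdorff branch and the ``Hausdorff with $\gamma$-fixed leaf'' branch of your proposal are fine and close in spirit to what the paper does. But the branch you flag as ``the most delicate part'' --- ruling out that $\gamma$ acts freely on $\cL_L \cong \RR$ when $\cL_L$ is Hausdorff --- is not a side issue: it \emph{is} the content of the proposition, and your proposal contains no actual argument for it, only the expectation that pushing plus Matsumoto's theorem should somehow produce non-separated leaves. Moreover, the intermediate claim you are aiming for (``Hausdorff $\Rightarrow \gamma$ fixes a leaf'') is stronger than what is needed, and is genuinely hard to establish directly at this stage of the paper without landing or small visual measure already in hand, both of which would be circular here.

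The paper's route is different and more economical. After disposing of the case where $\gamma$ fixes a leaf of $\cG_L$ (where landing at $\gamma^\pm$ is immediate, as in your proposal), it assumes $\gamma$ fixes no leaf and then proves two preparatory lemmas about the $\gamma$-invariant neighborhood $B_K(\ell)$: segments of $\cG_L$-leaves inside $B_K(\ell)$ have uniformly bounded length (Lemma \ref{lema-escape1}), and any non-landing ray must re-enter $B_K(\ell)$ indefinitely (Lemma \ref{lema-returns}, via Proposition \ref{prop-pushing} and Lemma \ref{lema-alphainlimit}). Taking a putative non-landing ray, the proof then extracts arbitrarily long excursions $d_n$ outside $B_K(\ell)$ with endpoints $x_n, y_n$ on $\partial B_K(\ell)$, shows $d(x_n, y_n)$ is bounded by a Jordan-curve argument in $L$, and translates by powers of $\gamma$ to obtain in the limit two points $x_\infty, y_\infty$ lying on non-separated leaves of $\cG_L$; Proposition \ref{prop-nonseplands} then produces a landing ray. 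Note this does not presuppose or conclude anything about Hausdorffness of $\cL_L$; it directly extracts non-separated leaves from a hypothetical non-landing ray. To make your proposal work you would essentially need to reproduce these two lemmas and the Jordan-curve argument, at which point the Hausdorff/non-Hausdorff dichotomy you introduce becomes unnecessary scaffolding.
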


We consider a closed curve in the annulus $A = L/_{\langle \gamma \rangle}$ not homotopically trivial (for instance take a closed geodesic in $A$ with some metric in $L$ induced by $M$) and denote by $\ell$ a lift to $L$ of this closed curve, which is thus $\gamma$ invariant, that is $\gamma \ell =\ell$. For $K>0$ we denote as $B_K(\ell)$ the closed tubular neighborhood of radius $K$ around $\ell$ for some $\gamma$-invariant metric on $L$; note that the quotient $B_K(\ell)/_{\langle \gamma \rangle}$ is compact. 

We first show: 

\begin{lema}\label{lema-escape1}
Assume that $\gamma$ does not fix any leaf of $\cG_L$. For every $K>0$ there is $K'>0$ so that if $c \in \cG_L$ then any connected component of $c \cap B_K(\ell)$ has length less than $K'$. 
\end{lema}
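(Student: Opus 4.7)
The plan is a contradiction argument: a sufficiently long component of $c\cap B_K(\ell)$ will produce, via compactness, a ray of some leaf $c_\infty \in \cG_L$ contained entirely in $B_K(\ell)$; projecting to the quotient annulus $A=L/\langle\gamma\rangle$ and applying Poincar\'e--Bendixson will yield a closed leaf of the induced one-dimensional foliation; the topology of the annulus will then force a lift of this closed leaf to be $\gamma$-invariant, contradicting the hypothesis.

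More concretely, suppose, for contradiction, that there exist $c_n\in\cG_L$ and components $\sigma_n$ of $c_n\cap B_K(\ell)$ with $\mathrm{length}(\sigma_n)\to\infty$. Since $\gamma$ acts cocompactly on $B_K(\ell)$, after replacing each $\sigma_n$ by an appropriate translate $\gamma^{k_n}\sigma_n$ we may assume that every $\sigma_n$ meets a fixed compact fundamental domain $D\subset B_K(\ell)$. Parametrize $\sigma_n$ by arc length with $\sigma_n(0)\in\bar D$, extract a subsequence with $\sigma_n(0)\to p_\infty$, and use the local product structure of $\cG_L$ together with a standard diagonal argument to produce a leaf $c_\infty\in\cG_L$ through $p_\infty$ containing a ray $r$ entirely inside $B_K(\ell)$; here we use that leaves of $\cG_L$ are proper topological lines.

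Now let $\pi\colon L\to A:=L/\langle\gamma\rangle$: the space $A$ is a topological annulus with core $\ell':=\pi(\ell)$, and $B_K(\ell)$ projects to a compact sub-annulus $B_K(\ell')$. The $\gamma$-invariant foliation $\cG_L$ descends to an orientable one-dimensional foliation $\mathcal{H}$ of $A$ (orientability inherited from the chosen orientation of $\wt\cG$), and $r$ projects to a half-leaf $r'$ of $\mathcal{H}$ contained in the compact set $B_K(\ell')$. Applying Poincar\'e--Bendixson to an orienting flow tangent to $\mathcal{H}$ on $A$ (which has no singularities), the $\omega$-limit of $r'$ is a closed leaf $\tau\subset B_K(\ell')$ of $\mathcal{H}$; in the exceptional case that $r'$ is itself closed, set $\tau=r'$.

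The embedded simple closed curve $\tau$ must be essential in $A$: otherwise it would bound a disk in $A$, and any connected component of $\pi^{-1}(\tau)$ would be a compact leaf of $\cG_L$, impossible. But an embedded essential simple closed curve in the annulus $A$ has winding number $\pm 1$ around $\ell'$ and is isotopic to it, so any lift $\tilde\tau\subset L$ of $\tau$ is a $\gamma$-invariant leaf of $\cG_L$, contradicting the hypothesis that $\gamma$ fixes no leaf of $\cG_L$. The main obstacle is the Poincar\'e--Bendixson step in $A$, and in particular handling the case where $r'$ may fail to be injective (which itself forces $c_\infty$ to be $\gamma^k$-invariant for some $k$, giving the closed leaf more directly); the winding-number observation for the annulus $A$ is the geometric punchline that upgrades any $\gamma^k$-invariant closed leaf to a genuinely $\gamma$-invariant one.
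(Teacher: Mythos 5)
Your proof is correct and takes essentially the same approach as the paper's: translate long arcs of $c_n \cap B_K(\ell)$ to extract a limit leaf trapped in $B_K(\ell)$, project to the quotient annulus $L/\langle\gamma\rangle$, and apply Poincar\'e--Bendixson there to produce a closed leaf. You also spell out the winding-number argument (an essential embedded circle in an annulus is isotopic to the core) that the paper leaves implicit when it passes from a closed curve in the quotient to a genuinely $\gamma$-invariant, rather than merely $\gamma^k$-invariant, leaf of $\cG_L$.
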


\begin{proof}
Assume by contradiction that there are sequences of arcs $d_n$ of leaves $c_n \in \cG_L$ of length going to infinity such that $d_n \subset B_K(\ell)$. There is a sequence $\gamma^{k_n}$ of  iterates of $\gamma$ with $k_n \in \ZZ$ so that $\gamma^{k_n} d_n$ has its midpoint in a compact fundamental domain of $B_K(\ell)$ which is $\gamma$-invariant by choice. It follows that in the limit, $\gamma^{k_n}d_n$ converges to at least one leaf $c_\infty \in \cG_L$ which is completely contained in $B_K(\ell)$. 

Let $\pi_\gamma: L \to L/_{\langle \gamma \rangle}$ be the projection.
Then $\pi_\gamma(c_\infty)$ is contained in a compact annulus.
It follows that each ray of $c_\infty$ either projects to
a closed curve or to a curve asymptotic to a closed curve. 
This is a contradiction to hypothesis.
\end{proof}

%

Note that if there is a leaf $c \in \cG_L$ which is fixed by $\gamma$ then immediately we have that the landing points of $c$ are $\partial^{\pm}\Phi_L(c) = \{ \gamma^+, \gamma^-\}$ the attractor and repeller of $\gamma$ acting at infinity. 

Now we show that under the assumptions of the previous lemma, rays must keep intersecting $B_K(\ell)$ indefinitely. 

\begin{lema}\label{lema-returns}
Under the assumptions of Lemma \ref{lema-escape1} we have that there is $K>0$ such that if $c \in \cG_L$ and $r$ is a ray of $c$ that
does not land, then $r$ must intersect $B_K(\ell)$.  
\end{lema}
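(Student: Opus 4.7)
The plan is to argue by contradiction: suppose that for arbitrarily large $K$ there is a non-landing ray $r$ of some leaf $c \in \cG_L$ disjoint from $B_K(\ell)$. Since $\ell$ is properly embedded with ideal endpoints $\gamma^+, \gamma^-$, it separates $L$ into two half-planes $H^+, H^-$, each $\gamma$-invariant (as $\gamma$ is orientation-preserving and fixes $\gamma^\pm$). After relabeling one may take $r \subset V_K^+ := H^+ \setminus B_K(\ell)$, whose quotient $V_K^+/\langle \gamma \rangle$ is an open annulus with one compact boundary (the image of the equidistant curve at distance $K$ from $\ell$) and one non-compact end limiting to the arc $A_+ \subset \partial \HH^2$ bounding $H^+$ at infinity.

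I split according to the behavior of $\pi_\gamma(r)$ in this annulus. \emph{First case:} $\pi_\gamma(r)$ stays in a compact subset. I then pick points $x_n \in r$ with $n \to \infty$ and integers $k_n$ so that $\gamma^{-k_n}x_n$ lies in a fixed compact fundamental domain; the arcs of $\gamma^{-k_n}c$ around these basepoints have length tending to infinity and, by Arzel\`a--Ascoli as in the proof of Lemma \ref{lema-escape1}, subconverge to a limit leaf $c_\infty \in \cG_L$ projecting into a compact subset of $A = L/\langle \gamma \rangle$. The same dichotomy as in Lemma \ref{lema-escape1} then forces $c_\infty$ to project to a closed curve or to a curve asymptotic to one, giving a leaf fixed by some power of $\gamma$, contradicting the standing hypothesis.

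\emph{Second case:} $\pi_\gamma(r)$ escapes every compact subset of $V_K^+/\langle \gamma \rangle$, so the distance $d(r(t), \ell)$ tends to infinity modulo the $\gamma$-action. Since $r$ does not land, $I := \partial \Phi(r)$ is a non-trivial closed sub-arc of $A_+$, and Lemma \ref{lema-alphainlimit} gives $\alpha(L) \in I$; since $\gamma L = L$, necessarily $\alpha(L) \in \{\gamma^+, \gamma^-\}$, so one endpoint of $I$ coincides with an endpoint of $A_+$. The rays $\gamma^n r \subset V_K^+$ are then pairwise disjoint properly embedded rays (because $\gamma$ fixes no leaf of $\cG_L$) with limit sets $\gamma^n I$ nesting along $A_+$ and all sharing $\alpha(L)$ as an endpoint. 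Applying Proposition \ref{prop-pushing} at an interior marker point of $I$ distinct from $\alpha(L)$ to transport subrays of $r$ into nearby leaves of $\wt{\cF_1}$, and then comparing the resulting configuration of limit sets using Proposition \ref{prop.disjointrays} together with the refinement alluded to in Remark \ref{rem.disjointrays}, produces the required contradiction.

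The main obstacle is the second case: the Arzel\`a--Ascoli argument of the first case does not apply because $r$ escapes transversely to the $\gamma$-direction, and one does not have direct access to a $\gamma$-periodic limit leaf. The contradiction has to be drawn from the interplay of the hyperbolic dynamics of $\gamma$ on $\partial \HH^2$, the constraint $\alpha(L) \in \{\gamma^+, \gamma^-\}$, and the transverse pushing of Proposition \ref{prop-pushing}, exploiting that the limit set $I$ is forced to hug one endpoint of $A_+$ while the iterates $\gamma^n I$ sweep across the remaining ideal arc.
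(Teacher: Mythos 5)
Your case split is not the route the paper takes, and while the first case poses no problem, the second case—which is the whole content—is not actually closed.

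A few concrete observations. First, your \emph{Case 1} is vacuous: a properly embedded ray in $L$ has connected accumulation set in $S^1(L)$ (nested intersection of compact connected sets in the closed disk). If $\pi_\gamma(r)$ stays in a compact subannulus, $r$ lies between two equidistants of $\ell$, so its accumulation set is contained in $\{\gamma^+,\gamma^-\}$ and hence is a single point; then $r$ lands, contrary to hypothesis. There is no need for the Arzel\`a--Ascoli argument (which is also correct but superfluous here).

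Second, in \emph{Case 2} the contradiction is never derived. You correctly observe $\alpha(L)\in I$ with $\alpha(L)\in\{\gamma^+,\gamma^-\}$ an endpoint of $I$, and you correctly note the $\gamma^nI$ are nested. But since every $\gamma^nI$ shares the endpoint $\alpha(L)$, none of them is contained in the \emph{interior} of another, so Proposition \ref{prop.disjointrays} applied to the rays $\gamma^n r$ does not directly forbid the configuration; the ``refinement alluded to in Remark \ref{rem.disjointrays}'' is not stated in the paper and cannot be cited as a black box. The clean way to close the argument—which is what the paper does—is to use the non-landing assumption plus Lemma \ref{lema-alphainlimit} \emph{after} pushing. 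Since $r$ lies in a single complementary component $V^+$ of $B_K(\ell)$, its accumulation interval $J$ is disjoint from the interior of the accumulation interval of the other component $V^-$. Choose $K$ large and then a small half-interval $I'$ of the leaf space with $L$ as an endpoint, on the side where $\alpha(I')$ enters the $V^-$ interval; then $\wihatD_\eps(L,I')$ contains $V^+\supset r$, so Proposition \ref{prop-pushing} transports $r$ to a non-landing ray $r'$ in any $E\in I'\setminus\{L\}$ with $\partial\Phi(r')=J$. But $\alpha(E)\notin J$ for such $E$ (as $\alpha(E)$ lies just inside the $V^-$ interval next to the endpoint $\alpha(L)$), contradicting Lemma \ref{lema-alphainlimit}. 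Your sketch gestures at the same ingredients but does not carry out this step, and the $\gamma$-iterate bookkeeping you set up does not substitute for it.
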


We stress that $K$ is independent of $c$ and $r$. Note that since $r$ cannot be contained in $B_K(\ell)$ by the previous lemma, we deduce that every ray needs to enter and leave $B_K(\ell)$ 
infinitely many times.

\begin{proof} 
Fix $\eps>0$ given by Proposition \ref{prop-pushing}. Now fix some large $K>0$ so that if $I$ is a closed interval of the leaf space of $\wt{\cF_1}$ containing $L$ as an endpoint, then $\wihatD_\eps(L,I)$ contains the connected component of the complement of $B_K(\ell)$ which does not accumulate in $I$ (see Proposition \ref{prop.setDfol}). Note that this value of $K$ is independent of the interval $I$ as long as $L$ is an endpoint and $I$ is sufficiently small (so that it is contained in the closure of one of the components of the complement of $\ell$). 

Assume by contradiction that $r$ is completely contained in the complement of $B_K(\ell)$ which then must be contained in a unique connected component because $r$ is connected. Therefore, the limit set $I=\partial \Phi_L(r)$ is a proper interval contained in $\partial \HH^2$ which does not intersect the interior of the limit set  of the image by $\Phi_L$ of the connected component of $L \setminus B_K(\ell)$ which does not contain $r$. Now pick an interval $J$ of the leaf space with $L$ as an endpoint so that $\alpha(E) \in J$ only if $E=L$. For any $E \neq L$ in $I$ we can then push the ray $r$ to $E$ applying Proposition \ref{prop-pushing} and we deduce that there is a ray $r'$ in $E$ which lands in an interval which is disjoint from $\alpha(E)$ contradicting Lemma \ref{lema-alphainlimit}. 
\end{proof}

Now we can proceed with the proof of Proposition \ref{prop.someraylands}:

\begin{proof}[Proof of Proposition \ref{prop.someraylands}.]
As noted, if $\gamma$ fixes some leaf of $\cG_L$ then this leaf must land in the attractor and repeller of $\gamma$ so the proposition holds. Therefore, we can assume we are under the assumptions of Lemma \ref{lema-escape1}. 

Let $c$ be a leaf of $\cG_L$ and 
$r$ a ray of $c$. If $r$ lands, the Proposition is proved,
so assume that $r$ does not land.  Then we know that $r$ must keep intersecting $B_K(\ell)$ for $K$ as in Lemma \ref{lema-returns}. Consider a sequence $d_n$ of segments of $r$ completely contained in the complement of $B_K(\ell)$ except for the endpoints which belong to $\partial B_K(\ell)$. Using Lemma \ref{lema-escape1} we can choose the segments $d_n$ with arbitrarily large length, since otherwise we would get arbitrarily large segments of $r$ contained in $B_{K'}(\ell)$ for some larger $K'$. 

Let $x_n, y_n$ be the endpoints of $d_n$. We claim first that $d(x_n,y_n)$ must be bounded: if not, then up to considering an inverse we can assume that $\gamma x_n$ is contained in the segment $J$ of $\partial B_K(\ell)$ joining $x_n$ and $y_n$. Since $d_n \cup J$ is a Jordan curve and $\gamma y_n$ does not belong to $J$ we deduce that $\gamma d_n$ intersects $d_n$ which is not possible since they belong to different leaves of $\cG_L$.

There is a sequence $k_n \in \ZZ$ so that $\gamma^{k_n} x_n$ belongs to a compact set. 
Thus, up to taking subsequences both $\gamma^{k_n}x_n$ and $\gamma^{k_n}y_n$ converge to points $x_\infty$ and $y_\infty$ in $\partial B_K(\ell)$. Note that $x_\infty$ and $y_\infty$ cannot be in the same leaf of $\cG_L$ and being accumulated by $\gamma^{k_n}d_n$ their leaves must be non-separated. Therefore the proposition follows from Proposition \ref{prop-nonseplands}. 
\end{proof}

\subsection{Proof of Theorem \ref{prop.landing}} 

To prove Theorem \ref{prop.landing} we want to apply Proposition \ref{prop-onelandsallland}. For this, it will be useful to use both foliations. We first show that if there is a leaf of $\cG_L$ which has both rays landing in the same point, then after going to a closeby leaf of the other foliation, we will be able to apply Proposition \ref{prop-onelandsallland} and we will get landing for all leaves in both foliations:

\begin{lema}\label{lema-bubblecenter}
Let $L \in \wt{\cF_1}$ such that there is $c \in \cG_L$ which verifies that both rays land in the same point, i.e. $\partial^+ \Phi_L^1(c) = \partial^- \Phi_L^1(c)= \{\xi\}$. Then, there is some leaf $E \in \wt{\cF_2}$  with $\alpha(E) \neq \xi$ such that it contains a leaf $c' \in \cG_E$ so that $\partial^+ \Phi_E^1(c') = \partial^- \Phi_E^1(c')= \{\xi\}$. 
\end{lema}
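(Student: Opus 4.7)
My plan is the following. First observe that $c$ is, by definition, a connected component of $L \cap E_0$ for some unique $E_0 \in \wt{\cF_2}$. By Corollary \ref{coincide}, the boundary-at-infinity data of $c$ is intrinsic to the curve: whether computed via $\Phi_L^1$ in $L$ or via $\Phi_{E_0}^2$ in $E_0$, the result is the same. Consequently $c$ is also a bubble leaf of $\cG_{E_0}$ with both rays landing at $\xi$. If $\xi \neq \alpha(E_0)$ the lemma is immediate with $E = E_0$ and $c' = c$; we therefore assume from now on that $\xi = \alpha(E_0)$.

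In this remaining case the strategy is to perturb $c$ transversally inside $L$ and produce a nearby bubble leaf of $\cG_L$ that sits in a different leaf of $\wt{\cF_2}$. Fix a point $x_0 \in c$ and a short transversal $\tau : (-\eps_0,\eps_0) \to L$ to $\cG_L$ with $\tau(0)=x_0$. For each $t\neq 0$ small, the leaf $c_t\in\cG_L$ through $\tau(t)$ is a connected component of $L\cap E_t$ for a unique $E_t\in\wt{\cF_2}$; the transversality of $\tau$ to $\wt{\cF_2}|_L$ gives $E_t\neq E_0$. Since $\alpha$ identifies the leaf space of $\wt{\cF_2}$ with $\partial\HH^2$ injectively and continuously, $\alpha(E_t)\neq\alpha(E_0)=\xi$ for $t\neq 0$, while $\alpha(E_t)\to\xi$ as $t\to 0$. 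If, for small $t\neq 0$, the leaf $c_t$ is itself a bubble in $L$ landing at some $\xi_t$ with $\xi_t\to\xi$, then for small enough $t$ one can ensure $\xi_t\neq\alpha(E_t)$, and the lemma follows with $E=E_t$, $c'=c_t$.

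The core technical claim is therefore that, for all sufficiently small $t\neq 0$, the leaf $c_t$ is a bubble landing near $\xi$. The argument combines planar topology with the pushing proposition. By Proposition \ref{prop.plane} and Corollary \ref{coro.separationH2}, the Jordan curve $c\cup\{\xi\}$ bounds two open topological disks $R_\pm$ in the compactification $L\cup S^1(L)$, whose ideal boundaries are complementary arcs of $S^1(L)$ meeting only at $\xi$. Since distinct leaves of $\cG_L$ are pairwise disjoint, for small $t$ the entire leaf $c_t$ lies in one of these disks, and its properly embedded rays accumulate in the corresponding closed ideal arc, both of whose endpoints are $\xi$. To force these rays actually to converge to $\xi$ rather than to accumulate in the interior of that arc, one uses Proposition \ref{prop-pushing} in the sub-case $\xi\neq\alpha(L)$: the rays of $c$ can then be pushed in the $\wt{\cF_1}$-direction to nearby leaves, producing a family of nested bubble-like curves that confine $c_t$, while Lemma \ref{lema-alphainlimit} rules out the alternative of rays with a non-trivial limit interval.

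The principal obstacle is the fully degenerate sub-case $\xi=\alpha(L)=\alpha(E_0)$, in which $\xi$ is the non-marker direction for both foliations at the respective leaves and the pushing proposition fails on the tails of the rays of $c$ in either direction. Here deck transformations do not help, since $\alpha$ is equivariant, so the plan is to extract additional structure using Lemma \ref{lem.nonHsdfftwointersect} and Proposition \ref{prop-nonseplands}: a bubble at the common non-marker direction should produce non-separated leaves in the intersection $L\cap E_0$, from which the desired bubble in a different leaf of $\wt{\cF_2}$ can be constructed by a careful direct analysis of $\cG_{E_0}$ near $c$. Carrying out this last analysis is where the proof requires the most work.
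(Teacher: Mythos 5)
There is a genuine gap. Your reduction leaves the case you yourself call the principal obstacle ($\xi=\alpha(L)=\alpha(E_0)$) as an unproved plan: you propose to extract non-separated leaves via Lemma \ref{lem.nonHsdfftwointersect} and Proposition \ref{prop-nonseplands}, but a bubble leaf gives no a priori non-Hausdorff behaviour in $L\cap E_0$, and you give no argument for how a bubble in a different $\wt{\cF_2}$-leaf would be produced from it, so the proof is not complete. The difficulty is also partly self-inflicted: your description of the two complementary disks of the Jordan curve $c\cup\{\xi\}$ as bounded by ``complementary arcs of $S^1(L)$ meeting only at $\xi$'' is wrong. Since both rays of $c$ land at the same point, one complementary component $D$ has closure meeting $S^1(L)$ only in $\{\xi\}$, while the other meets all of $S^1(L)$ (this is the first case of Corollary \ref{coro.separationH2}). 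Once this is stated correctly, the lemma needs none of the machinery you invoke and no case distinction on whether $\xi$ equals $\alpha(L)$ or $\alpha(E_0)$.

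Indeed, the paper's argument is: take a transversal $\tau:[0,t_0)\to L$ to $\cG_L$ with $\tau(0)\in c$ entering $D$. Any leaf of $\cG_L$ meeting $D$ cannot cross $c$, so it is entirely contained in $D$; being properly embedded in $L$, its ideal accumulation set lies in $\overline{D}\cap S^1(L)=\{\xi\}$, hence it is a bubble leaf landing exactly at $\xi$ (no pushing via Proposition \ref{prop-pushing}, no appeal to Lemma \ref{lema-alphainlimit}, and no convergence statement $\xi_t\to\xi$ is needed, since the landing point does not move). Writing $c_t\subset L\cap E_t$, Corollary \ref{coincide} gives that $c_t$ also lands at $\xi$ when viewed in $E_t$; the leaves $E_t$ are pairwise distinct by Corollary \ref{coro.foliations}, and $\alpha(E_t)$ varies monotonically (locally injectively), so $\alpha(E_t)=\xi$ for at most one value of $t$, and any other small $t>0$ yields the desired $E=E_t$, $c'=c_t$. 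Your preliminary reduction (taking $E=E_0$ when $\xi\neq\alpha(E_0)$) is correct but unnecessary; the missing idea is precisely that leaves on the inner side of the bubble are trapped in the disk whose ideal boundary is the single point $\xi$, which makes the ``core technical claim'' immediate rather than the hard part.
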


\begin{proof}
Consider the Jordan curve obtained by $c \cup \{\xi\}$ in $L \cup S^1(L)$ and denote by $D$ the disk bounded by it. 

Every curve $\hat c \in \cG_L$ intersecting $D$ must be contained in $D$ and thus has both rays landing on $\xi$. Given a transversal $\tau: [0,t_0) \to L$ to $\cG_L$ with $\tau(0) \in c$ and $\tau(t) \in D$ for $t>0$ we denote $E_t \in \wt{\cF_2}$ to the leaf through $\tau(t)$. 

If we denote by $c_t$ the curve of $\cG_L$ containing $\tau(t)$ we get that $c_t \subset L \cap E_t$. 
Since in $L$ both rays of $c_t$ land and the landing point is $\xi$,
we get by Corollary \ref{coincide} that for every small $t$,
then $c_t$ lands in $E_t$ and the landing point is $\xi$. Since the non-marker point $\alpha(E_t)$ varies monotonically we can choose $t$ so that $\alpha(E_t) \neq \xi$. 
\end{proof}

The argument can be slightly extended to get: 

\begin{lema}\label{lema-bubblezigzag} 
Let $c \in \cG_L$ which verifies $\partial^+ \Phi_L(c) = \alpha(L)$ and $\partial^- \Phi_L (c) \neq \partial \HH^2$. Then all rays of $\wcG$ land.
\end{lema}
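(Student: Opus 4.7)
The strategy is to produce, in one of the foliations $\wt{\cF_1}$ or $\wt{\cF_2}$, a leaf of $\wt{\cG}$ with a ray landing at a non-marker point, so that Proposition \ref{prop-onelandsallland} forces every ray of $\wt{\cG}$ to land. Assume without loss of generality $L \in \wt{\cF_1}$ and write $c \subset L \cap E$ for the corresponding $E \in \wt{\cF_2}$. By Corollary \ref{coincide} the landing behavior of $c$ in $\cG_L$ and in $\cG_E$ coincides, so we may freely pass between the two viewpoints.

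\textbf{Case where the negative ray lands.} Suppose $\partial^{-}\Phi_L(c)=\{\xi\}$. If $\xi\neq\alpha(L)$, then $c$ itself has a ray landing at a non-marker point of $L$ and Proposition \ref{prop-onelandsallland} applies directly. Otherwise $\xi=\alpha(L)$ and both rays of $c$ land at $\alpha(L)$; then Lemma \ref{lema-bubblecenter} applied to $c$ produces $E'\in\wt{\cF_2}$ together with a bubble leaf $c'\in\cG_{E'}$ whose landing point is different from $\alpha(E')$, and Proposition \ref{prop-onelandsallland} concludes.

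\textbf{Case where the negative ray does not land.} Let $I_2=\partial^{-}\Phi_L(c)$, which by hypothesis is a non-trivial closed arc properly contained in $\partial\HH^2$. Lemma \ref{lema-alphainlimit} applied to $c$ gives $\alpha(L)\in I_2$; since the positive ray of $c$ lands at $\alpha(L)$ while that same lemma forbids any ray from landing at an interior point of $I_2$, the point $\alpha(L)$ must be an endpoint of $I_2$. If $\alpha(E)\neq\alpha(L)$, then viewing $c$ in $\cG_E$ gives a leaf with a ray landing at $\alpha(L)\neq\alpha(E)$, and Proposition \ref{prop-onelandsallland} in the foliation $\wt{\cF_2}$ finishes this sub-case.

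\textbf{Remaining case and main obstacle.} The only case left is $\alpha(E)=\alpha(L)$. The plan here is to pick $E'\in\wt{\cF_2}$ close to $E$ with $\alpha(E')$ in the open complementary arc $\partial\HH^2\setminus I_2$ (non-empty by hypothesis), and then push a long compact segment of $c$ including a tail of its positive ray from $E$ to $E'$ via the symmetric form of Proposition \ref{prop-pushing}. Proposition \ref{prop.setDfol} and Corollary \ref{coro.setD} describe $\wihatD_\eps(E,[E,E'])$ as essentially the complement of a neighborhood of the geodesic joining $\alpha(E)$ to $\alpha(E')$; choosing $\alpha(E')$ appropriately inside $\partial\HH^2\setminus I_2$ places this region so that the tail of the positive ray of $c$, which lands at $\alpha(L)$, lies within it. The push then produces a ray in a leaf $c'\subset L\cap E'$ still landing at $\alpha(L)$, and since $\alpha(E')\neq\alpha(L)$, Proposition \ref{prop-onelandsallland} applied in $E'$ finishes the proof. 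The main obstacle is exactly this push: because the positive ray of $c$ lands at the common non-marker direction $\alpha(L)=\alpha(E)$, where leaves of $\wt{\cF_2}$ separate, one must carefully exploit $I_2\neq\partial\HH^2$ to ensure the ray approaches $\alpha(L)$ from the half-plane on which $\wihatD_\eps(E,[E,E'])$ is concentrated, and to control which portion of the ray actually lies in that set.
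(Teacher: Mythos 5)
Your reductions in the first two cases are correct and match the paper, and your observation that $\alpha(L)$ must be an endpoint of $I_2 = \partial^-\Phi_L(c)$ (via Lemma \ref{lema-alphainlimit}) together with the sub-case $\alpha(E)\neq\alpha(L)$ handled via Proposition \ref{prop-onelandsallland} in $\wt{\cF_2}$ are fine. The trouble is the ``remaining case'' $\alpha(E)=\alpha(L)$, which is where the real content of the lemma lies and where your argument stops short of an actual proof. You propose to push the positive ray of $c$ from $E$ to a nearby $E'\in\wt{\cF_2}$ with $\alpha(E')\notin I_2$, which requires a tail of the ray to lie in $\wihatD_\eps(E,[E,E'])$. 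But the ray lands precisely at the non-marker point $\alpha(E)$, which is exactly the ideal direction in which nearby leaves of $\wt{\cF_2}$ do \emph{not} stay $\eps$-close to $E$ (this is what ``non-marker'' means, and it is why Proposition \ref{prop.setDfol} explicitly excludes $\alpha(L')$, $L'\in I$, in its last clause). The set $\wihatD_\eps(E,[E,E'])$ is, up to bounded error, a half-plane $H_-(\ell)\setminus B_C(\ell)$ with $\ell$ the geodesic from $\alpha(E)$ to $\alpha(E')$. A properly embedded ray can land at $\alpha(E)$ while crossing $\ell$ unboundedly often or while its horizontal coordinate (in upper-half-plane coordinates with $\alpha(E)=\infty$) oscillates without bound, in which case it is \emph{never} eventually contained in $\wihatD_\eps(E,[E,E'])$, no matter how $E'$ is chosen. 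So ``choosing $\alpha(E')$ appropriately'' does not obviously produce the containment you need, and you explicitly flag this as ``the main obstacle'' without overcoming it. The control you would want here is exactly the small visual measure property (Proposition \ref{prop.visualconsequence}), but that is established only in \S\ref{s.smallvisual}, after and depending on Theorem \ref{prop.landing}, so it is unavailable.

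The paper's argument avoids this issue entirely: rather than pushing the ray of $c$, it looks at the one-parameter family $c_t\in\cG_L$ on the $D^+$ side of $c$, uses Lemma \ref{lema-alphainlimit} to show $\alpha(E_t)\in I_t\subset I$ and hence that $\alpha(E_t)$ moves into the interior of $I$ as $t$ increases from $0$, selects a $\gamma$-invariant $E_{t_1}$ whose non-marker fixed point lies in the interior of $I_{t_1}$ while the other fixed point lies outside $I$, and then derives a contradiction from Proposition \ref{prop.disjointrays} by applying $\gamma$. This sidesteps any need to push a ray toward a non-marker direction. If you want to rescue your approach, you would need a self-contained argument (not using small visual measure) showing the positive ray of $c$ eventually stays in $H_-(\ell)\setminus B_C(\ell)$; as written, this is a genuine gap.
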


\begin{proof}
Let $I = \partial^- \Phi_L(c)$. 
Suppose first that $I = \alpha(L)$,
 then both rays of $c$ land in the
same point $\alpha(L)$. 
The previous lemma implies that
there is some ray of $\wcG$ in some leaf $E$ of $\wcF_2$
landing in a point different from $\alpha(E)$. Then 
Proposition \ref{prop-onelandsallland} implies that all
rays of $\wcG$ land and the lemma is proved. 

If $I$ is a single point $\xi \not = \alpha(L)$, then
again Proposition \ref{prop-onelandsallland} also
implies that all rays land, and 
the lemma is also proved.

\vskip .05in
Finally from now on assume that
$I$ is not a single point.
Since $I \not = \partial \HH^2$, 
Lemma \ref{lema-alphainlimit} implies that $\alpha(L) \in I$.

The curve $c$ separates $L$ in two connected components, 
each  diffeomorphic to a disk (see Proposition \ref{prop.plane}). Denote these components $D^+$ and $D^-$. It follows that up to relabeling we can assume that the closure of $\Phi_L(D^+)$ in $\HH^2 \cup \partial \HH^2$ is equal to $\Phi_L(c \cup D^+) \cup I$ while the closure of $\Phi_L(D^-)$ in $\HH^2 \cup \partial \HH^2$ is equal to $\Phi_L(c \cup D^-) \cup \partial \HH^2$. See figure~\ref{fig-bubblenoland}.

\begin{figure}[ht]
\begin{center}
\includegraphics[scale=0.88]{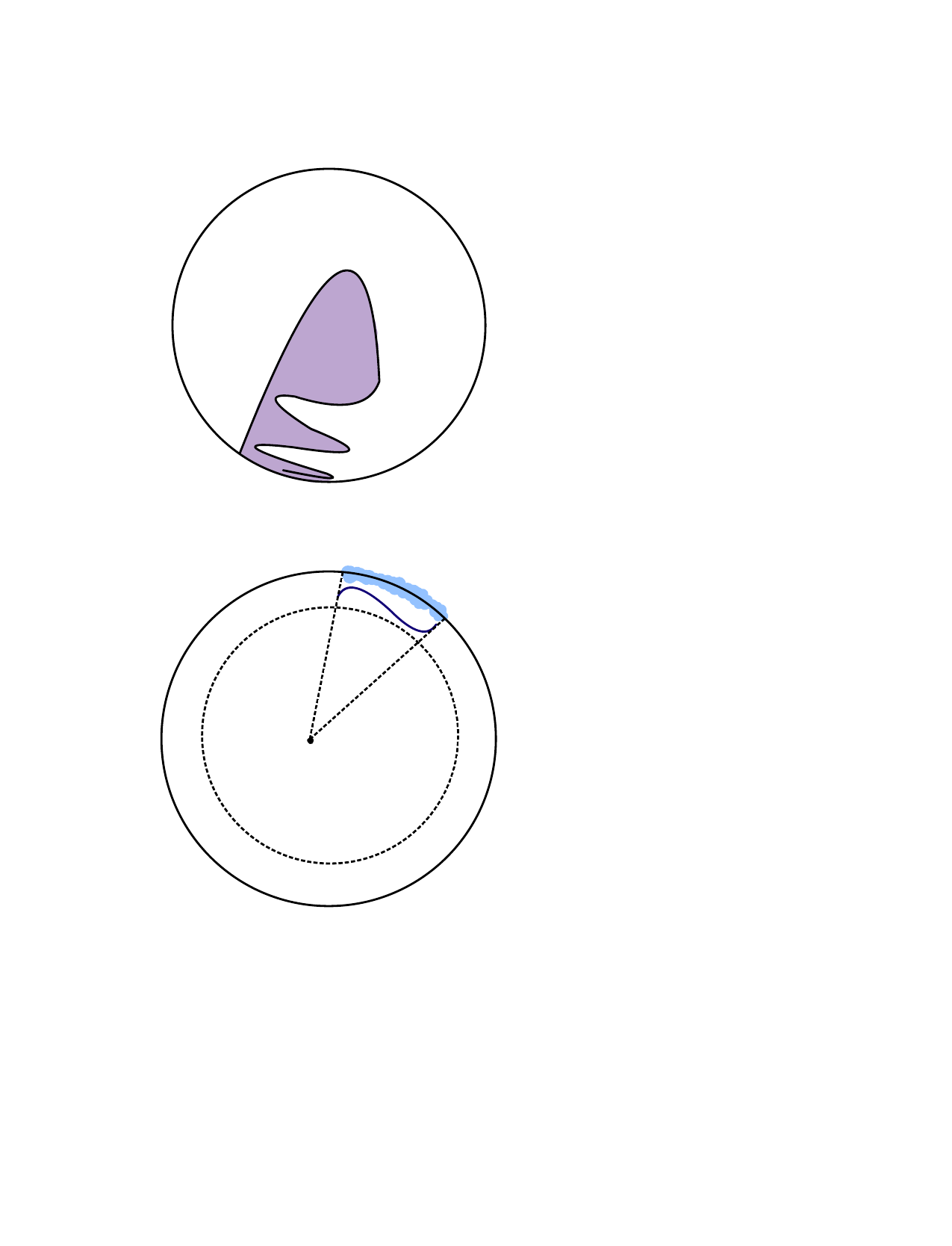}
\begin{picture}(0,0)
\put(-185,143){$D^-$}
\put(-123,104){$D^+$}
\end{picture}
\end{center}
\vspace{-0.5cm}
\caption{{\small A depiction of the objects in the proof of Lemma~\ref{lema-bubblezigzag}.}}\label{fig-bubblenoland}
\end{figure}

Consider a transversal $\tau: (-\eps,\eps) \to L$ to $\cG_L$ with $\tau(0) \in c$. Denote by $c_t$ the leaf of $\cG_L$ through the point $\tau(t)$. We can assume that if $t>0$ then $\tau(t) \in D^+$, thus, the limit set $\partial^{\pm}\Phi_L(c_t)=I_t$ of $c_t$ must be contained in $I$.
Again as in the beginning
of the proof we either finish the proof of the lemma
or we are in the case that, the interval $I_t$ is non-degenerate for 
any $t > 0$.
 Hence $I_t$ contains $\alpha(L)$ by Lemma \ref{lema-alphainlimit}. 
In addition $I_t$ is 
 weakly monotonically decreasing, meaning that if $t>t'$ then $I_t \subset I_{t'}$ (they could coincide). 

Denote by $E_t$ the leaf of $\wt{\cF_2}$ so that $c_t \subset L \cap E_t$. 
We first claim that $\alpha(E_0) = \alpha(L)$.
Otherwise again Proposition \ref{prop-onelandsallland} would imply that
all rays of $\wcG$ land in their appropriate leaves, but at
this point we are assuming that the negative ray of $c$ does not
land.
This contradiction shows that $\alpha(E_0) = \alpha(L)$.

In addition Lemma \ref{lema-alphainlimit}, applied to
$\cF_2$,  implies
that $\alpha(E_t)$ is contained in $I_t$. 
For small $t > 0$, $\alpha(E_t)$ is contained
in the interior of $I$. 
Notice that $\alpha(E_t)$ varies continuously with $t$.
We fix some small $t_0 > 0$.
Now fix  a $t_1$ with 
 $0<t_1<t_0$ and 
 so that $E_{t_1}$ is invariant under some deck transformation $\gamma$ which has a fixed point outside $I$ (such leaves are dense, so there must be one). The other fixed point of $\gamma$ is necessarily $\alpha(E_{t_1})$.
We can arrange that this point is
contained in the interior of $I_{t_0} \subset I_{t_1} \subset I$. 
This is because by choosing $t_1$ small we can ensure that $\alpha(E_{t_1})$
is in the interior of $I_{t_0}$, and since
$I_{t_0} \subset I_{t_1}$, the claim follows.

We get that $c_{t_1}$ has a ray converging to an interval $I_{t_1}$ which contains only one of the fixed points of $\gamma$. 
We proved before that $\alpha(L) = \alpha(E_0) \in I_t$ for any $t$.
In addition since $I_{t_0} \subset I_{t_1}$ we get that $\alpha(E_{t_0})$
is also in $I_{t_1}$, so the interval $[\alpha(E_0), \alpha(E_{t_0})]$ contained
in $I$ is contained in $I_{t_1}$. We conclude that $\alpha(E_{t_1})$ is
an interior point of $I_{t_1}$, and $I_{t_1}$ does not contain
the other fixed point of $\gamma$.
Applying $\gamma$ to $E_{t_1}$ we obtain two rays whose landing sets are proper intervals one contained in the interior of the other
($I_{t_1}$ and $\gamma(I_{t_1})$), contradicting  Proposition \ref{prop.disjointrays} and completing the proof.
\end{proof}

\begin{proof}[Proof of Theorem \ref{prop.landing}]
Using Proposition \ref{prop.someraylands} we know that there is at least one $L \in \wt{\cF_1}$ and one $c\in \cG_L$ so that it has one landing ray. Up to orientation we can assume that $\partial^+ \Phi_L(c) = \xi$. Using Proposition \ref{prop-onelandsallland}, either the result holds or 
$\xi = \alpha(L)$. Using Lemma \ref{lema-bubblezigzag} we can further assume that $\partial^-\Phi_L (c) = \partial \HH^2$ (it is the only possibility of that lemma that does
not yield that all rays land), in particular this assumption
means that there is a ray
of $\wcG$ that does not land.

Note that all these results apply to both $\wt{\cF_1}$ and $\wt{\cF_2}$ so we will now work with $\wt{\cF_2}$. Showing landing in both foliations is equivalent because of Lemma \ref{lema.proyection}. 

Let $\tau: (-\eps,\eps) \to L$ be a transversal to $\cG_L$ with $\tau(0) \in c$. Call $c_t$ the leaf of $\cG_L$ passing through $\tau(t)$ and $E_t \in \wt{\cF_2}$ so that $c_t \subset L \cap E_t$. Notice that the $\{ E_t \}$ are 
pairwise distinct leaves.

We know that $\alpha(E_0)=\alpha(L)$ because $\partial^+ \Phi_{E_0}(c) = \partial^+ \Phi_L(c) = \alpha(L)$ (else we can apply Proposition \ref{prop-onelandsallland} to $\wt{\cF_2}$). 
We choose the transversal $\tau$ small so that $\alpha(E_t)$ is
injective in $(-\eps, \eps)$ and $\alpha(E_t)$ is very close
to $\alpha(L)$. Let $I_t$ be the short closed interval in $\partial \HH^2$
from $\alpha(L)$ to $\alpha(E_t)$ and $J_t$ the closure of
the complementary interval in $\partial \HH^2$. Note that for $t>0$ since $c_t \in L \cap E_t$,  if it has a landing ray, then  Proposition \ref{prop-onelandsallland} applied to both foliations implies it must land $\alpha(L)$ and in $\alpha(E_t)$ which are different, so the limit set of both rays is a non-trivial connected set.  Lemma \ref{lema-alphainlimit} applied to both foliations implies that the limit set of each ray of $c_t$ must contain both $\alpha(L)$ and $\alpha(E_t)$ thus it is some non-trivial connected set containing $I_t$ or $J_t$. 

For any $E_t$ which is a deck translate of $E_0$ there is a ray of $\cG_{E_t}$ which lands (i.e. the image of the landing ray of $c_0$).
By Proposition \ref{prop-onelandsallland} again, it follows
that this ray must land in $\alpha(E_t)$.
Let $r_t$ be one such ray. Consider $\cB= \{ t \in (-\eps, \eps) \ : \  \exists \gamma \in \pi_1(M) \ E_t =\gamma E_0 \}$. Note that by minimality $\cB$ is dense in $(-\eps,\eps)$. We will consider $\cB^+ = \cB \cap (0,\eps)$. 

Consider $t \in \cB$ and let $g_t$ be a ray of $c_t$. Denote by $I_g$ its limit set (that is
$\partial \Phi_{E_t}(g_t)$) in $\partial \HH^2$. Recall that $I_g$ must contain $I_t$ or $J_t$ (in particular, it not a point). The ray $g_t$ is disjoint from
$r_t$, because $r_t$ lands and no ray of $c_t$ lands. If $I_g \neq \partial \HH^2$, then applying Lemma \ref{lema-alphainlimit} (first to $L$ and $c_0^+$ and then to $E_t$ and $r_t$) we deduce that $I_g$ coincides with either $I_t$ or $J_t$ because there cannot be a ray landing in its interior. If $I_g= \partial \HH^2$ let us show a contradiction with a similar argument: in  $L$  the ray $g_t$ has to be disjoint from the ray of $c_0$
which lands in $\alpha(L)$, in particular there cannot be
a sequence of segments in $g_t$ which limit to 
an interval in $S^1(L)$ with 
$\alpha(L)$ in its interior. 
When seen in $E_t$ then $g_t$ has to be disjoint from $r_t$,
so again it cannot have a sequence of segments with 
a limit which is an interval in $S^1(E_t)$ with $\alpha(E_t)$
in its interior. It follows that $I_g$ could not be all of $\partial \HH^2$ and thus $I_g$ is either $I_t$ or $J_t$.

\begin{figure}[ht]
\begin{center}
\includegraphics[scale=0.98]{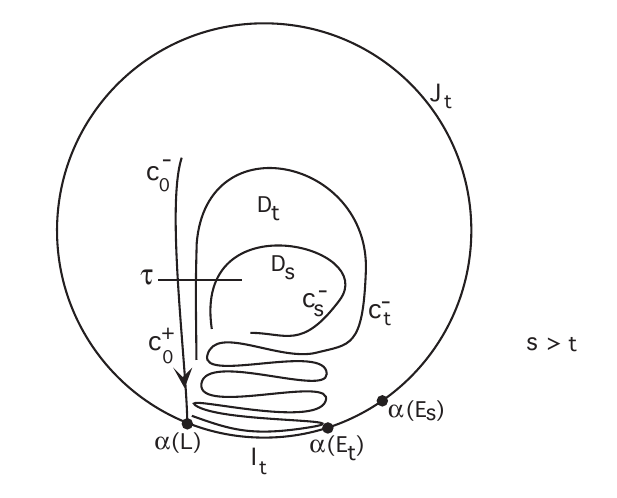}
\end{center}
\vspace{-0.5cm}
\caption{{\small A depiction of parts of the argument of the proof of Theorem \ref{prop.landing}. Inclusion of sets $D_s \subset D_t$ as well as the 'pairing' configuration for rays.}}\label{fig-t1}
\end{figure}

Recall that $c^+_0$ is a ray which
lands in $\alpha(L)$.
Let $D_t$ be the component of $L \setminus c_t$ which does not
contain $c_0 = c$, in particular $D_t$ is an open 
set in $L$ which has boundary (in $L$) equal to $c_t$.
 Notice that if $s > t > 0$, then $D_s \subset D_t$,
and in particular that $c_s$ is contained in $D_t$.
For each $t>0$ let $c^+_t, c^-_t$ be the subrays of $c_t$
determined by $\tau(t)$ and oriented coherently with $c_0$.

\begin{figure}[ht]
\begin{center}
\includegraphics[scale=0.98]{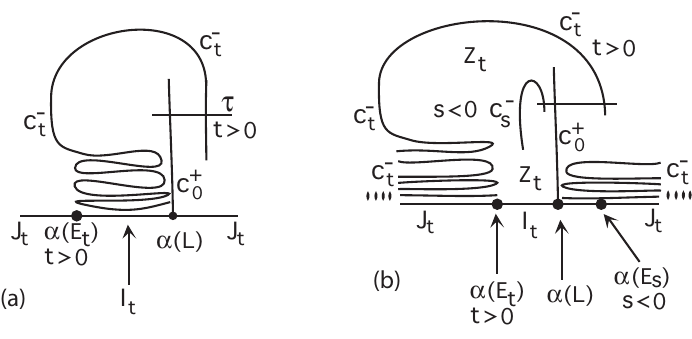}
\begin{picture}(0,0)
\end{picture}
\end{center}
\vspace{0.0cm}
\caption{{\small A depiction of the cases that $\tau(t), \alpha(t)$ cross.
In (a) we depict the case that the limit set of $c^-_t$ is $I_t$,
in (b) the case that the limit set is $J_t$.
For simplicity of viewing we depict $\partial \HH^2$ in 
a line segment, concentrating on what is happening near $\alpha(L)$.}}
\label{fig-t2}
\end{figure}

Consider $t \in \cB^+$.
We know that $\partial \Phi_L(c^-_t)$ can only be $I_t$ or
$J_t$ for each such $t$. There are also two possibilities depending on where
$\alpha(E_t)$ is (so in total, four possibilities). We say that $\alpha(E_t)$ and $\tau(t)$
\emph{cross} if we consider a ray $r$ from $\tau(0)$ to $\xi \in J_t$ avoiding $c_0^+ \cup \tau((0,t])$ and there for every curve in $L \cup S^1(L)$ joining $\tau(t)$ with $\alpha(E_t)$ the curve intersects $c_0^+ \cup r$. Note that this is independent on the choice of $r$, and it is also independent on $t>0$. We say they \emph{pair} otherwise (i.e. there is a curve joining $\tau(t)$ with $\alpha(E_t)$ and avoiding $c_0^+ \cup r$). 


We suppose first that $\tau(t), \alpha(E_t)$ pair, 
see this in Figure \ref{fig-t1}.
Suppose first that for some  $t \in \cB^+$ we have that $\partial \Phi_L(c^-_t) 
= I_t$. In this case, note that the limit of $\Phi_L(D_t)$ is also $I_t$.  
For any $s > t$ we have $D_s \subset D_t$, so we now obtain
that $\partial \Phi_L(c_s) \subset I_t$. 
But for such $s$ the 
limit set of $c^-_s$ has to contain $\alpha(E_s)$ which is not contained in $I_t$ a contradiction. See figure \ref{fig-t1}.

Now assume that  $\tau(t), \alpha(E_t)$ pair but $\partial \Phi_L(c^-_t)
= J_t$. In particular $c^-_t$ has points which limit
to $\alpha(E_t)$ but $c^-_t$ does not limit in any
point in the interior of $I_t$. 
For any $s > t$ in $\cB^+$
we cannot have that $\partial \Phi_L(c^-_s)$ is 
$J_s$. This is because the ray $c^-_t$ limiting
to $\alpha(E_t)$ forces the ray $c^-_s$ to limit
to some point in the interior of $I_s$. Thus, we can apply the previous analysis to $s$ to get a contradiction. This finishes the analysis in the case where the transversal $\tau$ pairs. 

We now consider the case where $\tau(t), \alpha(E_t)$ 
cross. 
Suppose first that for some $t \in \cB^+$ the ray $c^-_t$
limits to $I_t$. We depict this in Figure \ref{fig-t2}(a). Then for $s< 0$, it follows that the limit
sets of both $c^+_{s}, c^-_{s}$ have to be contained in 
$I_t$. This property is impossible, because for $s<0$ contained in $\cB$ the limit sets would have
to be either $I_{s}$ or $J_{s}$ and neither is contained
in $I_t$.

The remaining case is that for any $t \in \cB^+$ we have that $\tau(t), \alpha(E_t)$ cross and $\partial \Phi_L(c^-_t) = J_{t}$. 
Fix some $t\in \cB^+$. We will need to consider a different disk $Z_t$ in $L$ whose boundary is
made up of $c^-_t$, the arc  $\tau([0,t])$ and $c^+_0$ and chosen so that it contains $c_t^+$. Notice that this disk limits only on $J_t$ (see Figure
\ref{fig-t2}(b)). 
Now consider $s$ with $0 < s < t$, and $s \in \cB^+$. 
Then $c^+_s$ is contained in $Z_t$, so its
limit set has to be also contained in $J_t$. On the other hand
it has to be either $J_s$ or $I_s$ but neither is contained
in $J_t$, which is a contradiction.

This finishes the proof of Theorem \ref{prop.landing}.
\end{proof}


\section{Small visual measure}\label{s.smallvisual}

Let $\cF_1, \cF_2$ be two transverse minimal foliations on $M=T^1S$ and $\cG=\cF_1 \cap \cF_2$ the intersection foliation.

We define the \emph{shadow} of a subset $X \subset L$ seen from $x \in L$ as the set of points $\xi \in \partial \HH^2$ for which there is a geodesic ray in $\HH^2$ starting at $\Phi_L(x)$ and passing through some point $y \in \Phi_L(X)$ with ideal point $\xi$.

As in \cite[\S 4.3]{FP2} we will say that $\cG$ has the \emph{small visual measure} property if  for every $\eps>0$ there exists $R>0$ so that if $L \in \wt{\cF_i}$, $x \in L$ and $\ell$ a segment of some leaf of $\cG_L$ so that $\ell \cap B_R(x)=\emptyset$, then the shadow of $\ell$ seen from $x$ has length smaller than $\eps$ (meaning that the angle of the interval of vectors such that geodesic rays in $\HH^2$ from $\Phi_L(x)$ to the shadow of $\ell$ is less than $\eps$).

In this section we will show: 

\begin{prop}\label{prop.visual}
Let $\cF_1,\cF_2$ be two transverse minimal foliations on $M=T^1S$ and let $\cG$ denote the foliation obtained by intersection. Then, $\cG$ has the small visual measure property.  
\end{prop}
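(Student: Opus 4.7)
The plan is to argue by contradiction using Theorem \ref{prop.landing} as the source of contradiction. Suppose the small visual measure property fails, so there exist $\eps_0 > 0$, leaves $L_n \in \wt{\cF_{i_n}}$, points $x_n \in L_n$, and segments $\ell_n$ of leaves of $\cG_{L_n}$ with $R_n := d_{L_n}(x_n, \ell_n) \to \infty$ such that the shadow of $\ell_n$ from $x_n$ in $S^1(L_n) \cong \partial \HH^2$ has visual measure at least $\eps_0$. After passing to a subsequence I assume every $i_n = 1$, and pick $y_n \in \ell_n$ realizing $d_{L_n}(x_n, y_n) = R_n$.

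First I would normalize by choosing deck transformations $\gamma_n \in \pi_1(M)$ that bring $\gamma_n(y_n)$ into a fixed compact fundamental domain. Passing to subsequences, $\gamma_n(y_n) \to y_\infty$ in some leaf $L' \in \wt{\cF_1}$; the leaves $\gamma_n(L_n)$ accumulate on $L'$ in the leaf space of $\wt{\cF_1}$; and $\Phi_{\gamma_n(L_n)}(\gamma_n(x_n))$ converges to an ideal point $\eta_\infty \in \partial \HH^2$. A hyperbolic estimate (a segment of bounded length at distance $R$ subtends angle $O(e^{-R})$) combined with the uniform lower bound on the shadow forces the length of $\ell_n$ to tend to infinity, so the endpoints of $\gamma_n(\ell_n)$ escape in $\gamma_n(L_n)$ with ideal subsequential limits $\xi^a, \xi^b \in \partial \HH^2$. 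By Proposition \ref{prop-pushing}, long initial portions of $\gamma_n(\ell_n)$ can then be pushed from the nearby leaves $\gamma_n(L_n)$ into $L'$, producing segments $\widetilde{\ell}_n \subset L'$ that converge to a leaf $c_\infty \in \cG_{L'}$ through $y_\infty$ whose two rays accumulate on sets containing $\xi^a$ and $\xi^b$ respectively.

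The persistent angular bound from $\gamma_n(x_n)$ viewing $\gamma_n(\ell_n)$, passed to the limit, yields a geometric constraint on $\eta_\infty, \xi^a, \xi^b$ in $\partial \HH^2$. To upgrade this to a contradiction with Theorem \ref{prop.landing}, I would invoke Lemma \ref{lema.zoomin}: moving $x_n$ a bounded distance $K(\eps_0)$ along a geodesic ray through the shadow interval $I_n$ yields a new basepoint from which the visual measure of $I_n$ is at least $2\pi - \eps_0$. Repeating the normalization and limit procedure with these zoomed-in basepoints should produce a ray of a leaf of $\cG_{L'}$ whose accumulation set in $\partial \HH^2$ has visual measure close to $2\pi - \eps_0$, directly contradicting Theorem \ref{prop.landing}.

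The hard part will be making this last step rigorous: a priori the limit leaf $c_\infty$ might have both rays landing cleanly at distinct points $\xi^a \ne \xi^b$, with the angular constraint absorbed by the placement of $\eta_\infty$ on the arc between them, so merely extracting $c_\infty$ is not enough. To force a genuine violation of landing I would either track the zoomed-in basepoints carefully across the pushing argument so that the limit ray inherits an interval accumulation set, or alternatively exploit deck translates together with Proposition \ref{prop.minimality}: choosing group elements whose attracting fixed point lies in prescribed arcs of $\partial \HH^2$ and iterating them on $c_\infty$ should produce a pair of disjoint rays of leaves of $\wt\cG$ whose ideal intervals are strictly nested, a configuration ruled out by Proposition \ref{prop.disjointrays}.
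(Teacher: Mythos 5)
Your proposal shares some of the machinery the paper uses (Lemma \ref{lema.zoomin} to zoom in, deck-transformation normalization via minimality, Proposition \ref{prop-pushing} to transfer arcs between nearby leaves), but the skeleton of the contradiction is different from the paper's, and the gap you identify at the end is real and is not closed by either of the fallbacks you sketch.

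The essential missing ingredient is Proposition \ref{prop.somenonbubble} and the bubble-region analysis behind it (Lemmas \ref{lema-bubbleregions}, \ref{lema-openbubbleland}). The paper begins its proof by invoking Proposition \ref{prop.somenonbubble} to pin down a specific leaf $c \in \cG_L$ with $\partial^+\Phi(c) \neq \partial^-\Phi(c)$; this guarantees a ray $r$ of $c$ that lands at a \emph{marker} point $\xi \neq \alpha(L)$. That fixed ray $r$ is the object against which the contradiction is produced: after zooming in with Lemma \ref{lema.zoomin} so the shadows of $\gamma_n\ell_n$ (taken disjoint from $\alpha(L_n)$) have visual measure $\to 2\pi$, the pushed arcs $\ell'_n \subset L$ must accumulate on an interval containing $\xi$ in its interior, hence must eventually \emph{cross} $r$, which is impossible for leaves of a one-dimensional foliation in $L$. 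The contradiction is topological (crossing within a leaf), not a failure of Theorem \ref{prop.landing}.

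Your proposal never produces such a reference object, and as you yourself note, the limit leaf $c_\infty$ through $y_\infty$ could perfectly well land at two distinct points; the visual-angle constraint only says $\eta_\infty$ lies on the arc between them, which is no contradiction at all. Your first fallback (``track the zoomed-in basepoints carefully'') does not fix this: even with the zoom-in, the far-away arcs $\gamma_n\ell_n$ escape the compact core, so the extracted limit leaf near $y_\infty$ carries no interval accumulation set. Your second fallback (Proposition \ref{prop.minimality} plus Proposition \ref{prop.disjointrays}) requires a ray whose accumulation set is a non-degenerate interval to feed into the nesting argument; but after Theorem \ref{prop.landing} every ray already lands, so there is no such interval available and the machinery never engages. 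What you would need, and what the paper supplies, is the a priori existence of a non-bubble leaf with a ray landing at a marker point; obtaining that is a substantial step (it occupies all of Subsection \ref{ss.bubbleleafs}) and cannot be bypassed by the normalization-and-limit argument alone.
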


\begin{figure}[ht]
\begin{center}
\includegraphics[scale=0.78]{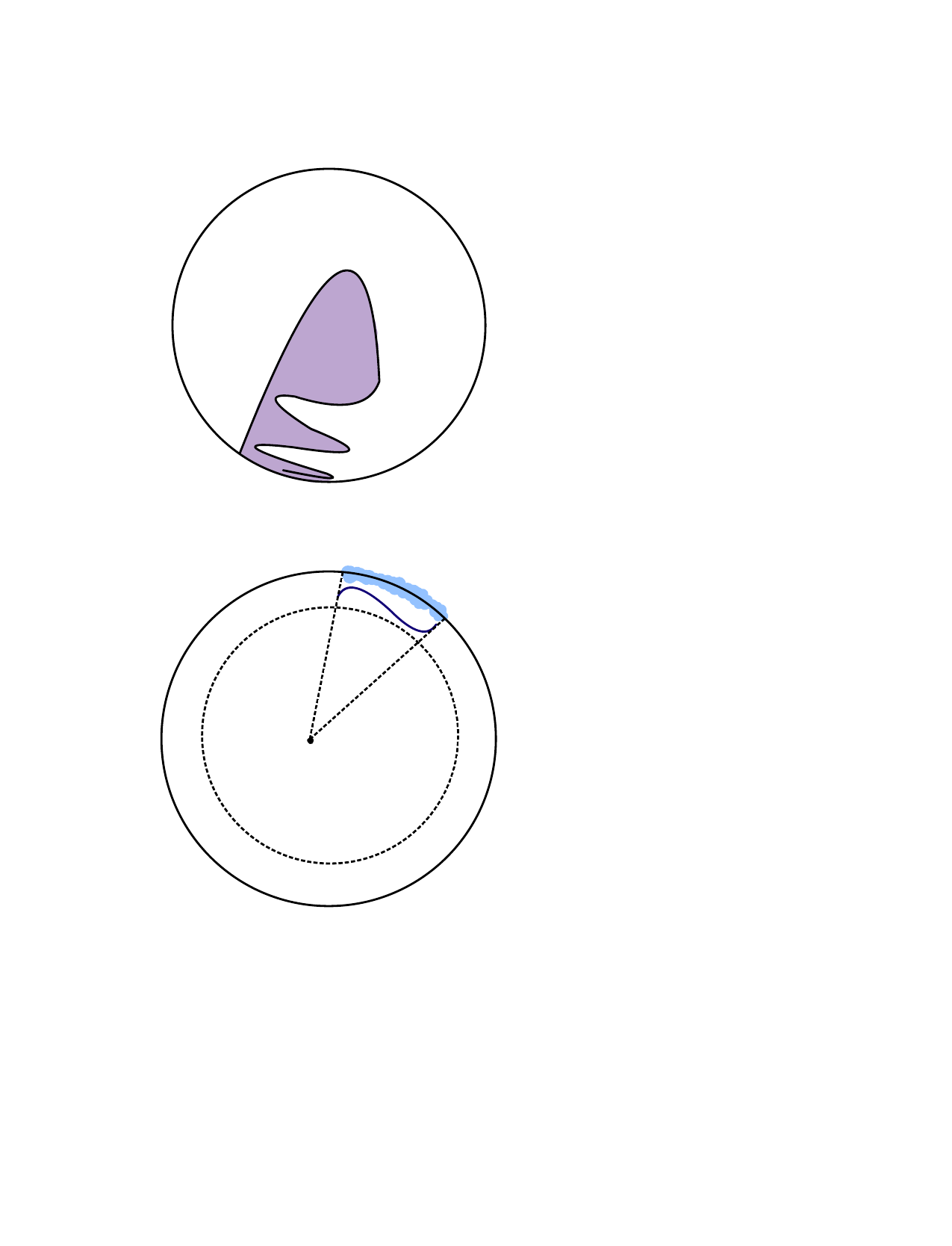}
\end{center}
\vspace{-0.5cm}
\caption{{\small The shadow of a set from a point $x \in L$ is the set of points in $S^1(L)$ blocked by this set (when seen in $\HH^2$). To have small visual measure means that if an arc of $\cG_L$ is very far from $x$ then its shadow has to be very small (independently of its length).}}\label{fig-shadow}
\end{figure}

Before we proceed with the proof let us explore some of its consequences. In \cite[Lemma 5.9]{FP2} the following is proved. 

\begin{prop}\label{prop.visualconsequence}
Assume that $\cG$ has the small visual measure property, then, there exists $a_0>0$ such that for every segment $\ell$ of leaf of $\cG_L$ we have that the geodesic segment joining the endpoints of $\ell$ is contained in $B_{a_0}(\ell)$.  
\end{prop}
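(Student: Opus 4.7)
The plan is to argue by contradiction: if the geodesic between the endpoints of $\ell$ strays far from $\ell$, then from a viewpoint on that geodesic, the shadow of $\ell$ must span a visual angle close to $\pi$, which violates the small visual measure hypothesis. Concretely, I would choose $\eps_0 \in (0, \pi)$, say $\eps_0 = \pi/2$, and let $R_0$ be the radius given by small visual measure for this $\eps_0$. I would then set $a_0 = R_0 + C$, where $C = C(Q_0)$ is a constant large enough to absorb the quasi-isometry distortion of $\Phi_L$ (with constant $Q_0$ from Proposition \ref{prop.QI}) and the Morse-lemma constant controlling the Hausdorff distance between $Q_0$-quasi-geodesics and the hyperbolic geodesics they shadow in $\HH^2$.

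Given $\ell \subset \cG_L$ with endpoints $p,q$, I would work via $\Phi_L$ in $\HH^2$: set $P = \Phi_L(p)$, $Q = \Phi_L(q)$, and let the ``geodesic'' $g$ in $L$ from $p$ to $q$ be the $\Phi_L$-preimage of the hyperbolic geodesic segment $\gamma \subset \HH^2$ from $P$ to $Q$. Assume for contradiction there is $x \in g$ with $d_L(x,\ell) > a_0$, and set $X = \Phi_L(x) \in \gamma$. Since both endpoints $p,q$ belong to $\ell$, the distances $d_L(x,p), d_L(x,q)$ exceed $a_0$, hence by the quasi-isometry $d_{\HH^2}(X,P)$ and $d_{\HH^2}(X,Q)$ are of order $a_0/Q_0$. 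A standard hyperbolic estimate then shows that the angle at $X$ between the directions to $P$ and to $Q$ is as close to $\pi$ as we wish, provided $a_0$ was chosen large enough; we arrange it to be at least, say, $3\pi/4$.

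Next, consider the continuous map $\phi : \ell \to \partial \HH^2$ that sends $y \in \ell$ to the ideal endpoint of the hyperbolic geodesic ray in $\HH^2$ starting at $X$ and passing through $\Phi_L(y)$. Continuity follows from the continuity of $\Phi_L$ and of the map ``direction $\mapsto$ ideal point''. The image $\phi(\ell)$ is a connected subset of $\partial \HH^2 \cong S^1(L)$ containing $\phi(p)$ and $\phi(q)$, and by definition is contained in the shadow of $\ell$ from $x$. Since the angular separation of $\phi(p)$ and $\phi(q)$ at $X$ is at least $3\pi/4$, every connected arc in $S^1$ joining them has angular length at $X$ at least $3\pi/4$, so the shadow has visual measure greater than $\eps_0 = \pi/2$. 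But by the choice of $R_0$ and the assumption $d_L(x,\ell) > a_0 \geq R_0$, the shadow must have visual measure less than $\eps_0$, a contradiction. Pulling back through $\Phi_L$, this says every point of $g$ lies within $a_0$ (up to the built-in additive quasi-isometry distortion already absorbed into $a_0$) of $\ell$ in $L$.

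The main obstacle is essentially bookkeeping of constants: one must confirm that the ``visual measure'' in the small visual measure property and the angular measure at $X$ on $\partial \HH^2$ are comparable on the relevant arc up to a uniform multiplicative constant (which the paper flags as metric-independent), and one must combine cleanly the $Q_0$-quasi-isometry distortion, the Morse lemma stability, and the hyperbolic angle-at-$X$ estimate into one explicit bound for $a_0$. None of these steps require a new idea beyond standard Gromov-hyperbolic geometry.
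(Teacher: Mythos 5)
Your argument is correct and follows essentially the paper's proof: a point on the geodesic that is far from $\ell$ sees the (connected) shadow of $\ell$ with visual measure at least $\pi$, contradicting the small visual measure property; the paper merely phrases this with sequences $\ell_n$, $x_n$ at distance $>n$ instead of a fixed $a_0$. One small remark: since $X=\Phi_L(x)$ lies on the hyperbolic geodesic $\gamma$ from $P$ to $Q$, the angle at $X$ between the directions to $P$ and $Q$ is exactly $\pi$, so the ``standard hyperbolic estimate'' you invoke (large distances to $P$ and $Q$ forcing the angle close to $\pi$) is both unnecessary and, as a general statement, false --- what does the work is precisely that $X$ sits between $P$ and $Q$ on $\gamma$.
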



We stress that $B_{a_0}(\ell)$ is the neighborhood of $\ell$ of size
$a_0$ in $L$, rather than in $\mt$.

Note that from this we get that small visual measure allows to control endpoints of rays of leaves of $\cG_L$.  That is, if one considers a point $x \in L$ which belongs to some leaf $c \in \cG_L$ and considers points $y_n$ in $c_x^+$ going to infinity. It follows that the geodesic segments joining $x$ and $y_n$ are all contained in the $a_0$ neighborhood of $c_x^+$, thus, it follows that the geodesic ray joining $x$ with $\partial^+\Phi(c)$ is contained in the $a_0$-neighborhood of $c_x^+$. 


\subsection{Bubble leaves}\label{ss.bubbleleafs} 

\begin{definition} (bubble leaves) \label{defi.bubble}
We say that $c \in \wt{\cG}$ is a \emph{bubble leaf} if $\partial^+ \Phi(c) = \partial^- \Phi(c) = \{\xi\}$ where $\xi$ is some point in $\partial \HH^2$. 
\end{definition}

We have dealt with such leaves in Lemma \ref{lema-bubblecenter} showing that nearby leaves of the other foliation must also have bubble leaves with the same endpoint.   We will perform a similar argument now, but trying to control the size of the interval on the leaf space where this holds. 

The goal of this subsection is to show the following:

\begin{prop}\label{prop.somenonbubble}
There exists a leaf $c \in \wt{\cG}$ such that $\partial^+\Phi(c) \neq \partial^-\Phi(c)$ (c.f. equation \ref{eq:indepfol}). 
\end{prop}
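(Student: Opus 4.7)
The plan is to argue by contradiction: suppose every leaf of $\wt\cG$ is a bubble leaf, so that by Theorem \ref{prop.landing} every $c\in \wt\cG$ has a single landing point $\xi_c \in \partial \HH^2$ with $\partial^+\Phi(c) = \partial^-\Phi(c) = \{\xi_c\}$. I will produce a leaf with two distinct landing points by using a hyperbolic deck transformation together with the structure of non-separated leaves that the bubble hypothesis forces. Working in the intermediate cover $\wihatM$ suffices thanks to the identification of the leaf space of $\wihat\cF_1$ with $\partial\HH^2$ via $\alpha$; the conclusion then lifts to $\mt$.

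First I would pick a non-central $\gamma \in \pi_1(M)$ whose image in $\pi_1(S) = \pi_1(M)/\ZZ$ is hyperbolic, with distinct fixed points $\gamma^+,\gamma^- \in \partial\HH^2$, and choose $L \in \wihat\cF_1$ with $\alpha(L) = \gamma^+$, so that $\gamma L = L$. If $\gamma$ happened to fix some leaf $c \in \cG_L$, then since $\pi_1(M)$ is torsion-free $\gamma$ would act on the line $c$ freely as a translation; the orbit $\{\gamma^n(x)\}$ of any $x \in c$ would then converge under the quasi-isometry $\Phi_L$ to $\gamma^+$ as $n\to+\infty$ and to $\gamma^-$ as $n\to-\infty$, so the two rays of $c$ would land at the distinct points $\gamma^+,\gamma^-$, contradicting the bubble assumption. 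Hence I may assume $\gamma$ fixes no leaf of $\cG_L$.

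Next I would argue that non-separated leaves must still exist in $\cG_L$. Following the strategy of the proof of Proposition \ref{prop.someraylands}, I would pick a $\gamma$-invariant curve $\ell \subset L$ with $\ell/\langle\gamma\rangle$ compact, and use Lemma \ref{lema-escape1} to bound the length of arcs of leaves of $\cG_L$ inside $B_K(\ell)$. Taking any $c \in \cG_L$ through a point of $\ell$, the infinite $\gamma$-orbit $\{\gamma^n c\}$ yields infinitely many disjoint arcs of $\cG_L$ crossing a compact fundamental domain of $B_K(\ell)$, from which a limiting procedure (as in Proposition \ref{prop.someraylands}) extracts a pair of distinct non-separated leaves $c_1, c_2 \in \cG_L$. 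By Lemma \ref{lem.nonHsdfftwointersect} there is $E \in \wihat\cF_2$ with $c_1 \cup c_2 \subset L \cap E$, and Proposition \ref{prop-nonseplands} forces the non-separated ray of $c_1$ to land at $\alpha(E)$; the bubble assumption then gives $\xi_{c_1} = \alpha(E)$.

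Finally, I would close the argument by running a symmetric analysis in $\cG_E$. Proposition \ref{p.dichotomyHsdff} ensures $\cL_E$ is also non-Hausdorff, so non-separated pairs exist in $\cG_E$; using Proposition \ref{prop-pushing} and the minimality of the $\pi_1(S)$-action on $\partial\HH^2$ (Proposition \ref{prop.minimality}), I would arrange the configuration so that $c_1$ (or a suitable deck translate) participates in such a non-separated pair in $\cG_E$, whose non-separated ray, again by Proposition \ref{prop-nonseplands}, would land at $\alpha(L)$. The bubble hypothesis then yields $\xi_{c_1} = \alpha(L)$, so $\alpha(L) = \alpha(E)$. A careful choice of $\gamma$ at the outset, ensuring that the $E$ produced has $\alpha(E) \neq \gamma^+$, possible since $L$ must intersect many distinct leaves of $\wihat\cF_2$ with varying non-marker points, yields the desired contradiction. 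The hard part will be producing the non-separated pair $c_1,c_2 \in \cG_L$ under the bubble assumption, since the standard argument in Proposition \ref{prop.someraylands} leverages non-landing rays which our hypothesis excludes; and secondly, closing the symmetric loop in $\cG_E$ with the same $c_1$ (or a tightly controlled substitute) to force the identity $\alpha(L)=\alpha(E)$. An additional subtlety is the alternate case in which $\cL_L$ turns out to be Hausdorff, which would require a separate geometric obstruction based on the incompatibility between a purely ``horocyclic'' bubble foliation and the structure forced by the intersection of two transverse minimal $2$-foliations (via Lemma \ref{lema-bubblecenter} and Proposition \ref{prop-pushing}).
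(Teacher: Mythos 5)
Your proposal takes a genuinely different route from the paper's: you try to produce the contradiction dynamically, via a hyperbolic deck transformation $\gamma$, whereas the paper's proof uses the structure theory of bubble regions (Lemmas \ref{lema-bubbleregions} and \ref{lema-openbubbleland}) combined with a cardinality/Baire argument. Concretely, the paper shows that if every leaf is a bubble leaf, then each $L \in \wt{\cF_1}$ has a unique open bubble region whose landing point is $\alpha(L)$; a transversal inside this open region determines a non-degenerate interval $I_{\alpha(L)}$ in the leaf space of $\wt{\cF_2}$, whose members carry bubble leaves landing at $\alpha(L)$. Since there are uncountably many $p = \alpha(L)$ and only countably many rationals in the leaf space of $\wt{\cF_2}$, some leaf $E \in \wt{\cF_2}$ belongs to $I_p$ for uncountably many $p$, forcing $E$ to contain uncountably many distinct bubble regions and contradicting Lemma \ref{lema-bubbleregions}. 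Your approach avoids this machinery entirely, but at the cost of gaps that I do not see how to fill.

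There are three genuine gaps. First, your step producing non-separated leaves in $\cG_L$ is not correct as stated. The argument in Proposition \ref{prop.someraylands} relies crucially on a \emph{non-landing} ray repeatedly re-entering $B_K(\ell)$ (Lemma \ref{lema-returns}); under your bubble hypothesis every ray lands, and a bubble leaf $c$ through $\ell$ with $\xi_c \notin \{\gamma^\pm\}$ meets $B_K(\ell)$ in a bounded set, so there is no infinite sequence of arcs of $c$ to extract a limit from. Replacing $c$ by its $\gamma$-translates $\gamma^n c$ does not help: those arcs project to the \emph{same} arc of the compact fundamental domain of $B_K(\ell)/\langle\gamma\rangle$, so no new limiting behaviour appears. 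Second, the ``symmetric loop'' in $\cG_E$ is exactly the kind of argument the paper warns against: the remark following Proposition \ref{prop-nonseplands} explicitly states that $c_1,c_2$ being non-separated in $\cG_L$ does \emph{not} imply they are non-separated in $\cG_E$, so you cannot conclude that $c_1$ participates in a non-separated pair in $\cG_E$ whose other half also lies in $L$, and hence you cannot force $\alpha(L)=\alpha(E)$ this way. Third, you acknowledge but do not handle the possibility that $\cL_L$ is Hausdorff; under the bubble hypothesis this possibility is not ruled out a priori, and your entire argument is conditioned on finding non-separated leaves in $\cG_L$. (Note that the paper's proof via bubble regions treats all cases uniformly with no Hausdorffness assumption.)
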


We will need to control the place where the bubble leaves land. For this, we will separate the leaf $L$ in \emph{bubble regions}. To introduce
this, let us first make some definitions. Notice first that by Corollary \ref{coro.separationH2} we know that if $c \in \cG_L$ is a bubble leaf with $\partial^{\pm}\Phi(c)=\{\xi\}$ we can define $D_c$ to be the disk in the complement of $c$ so that the accumulation of $\Phi_L(D_c)$ in $\partial \HH^2$ is exactly $\xi$. 
 
\begin{definition} (bubble region of $c$) \label{bubbleregion}
 Given $L \in \wt{\cF_i}$ and $c \in \cG_L$ a bubble leaf with $\partial^{\pm}\Phi(c)=\{\xi\}$ we denote by $\cB(c, L)$ the \emph{bubble region of $c$} in $L$ as the union of all leaves $c' \in \cG_L$ such that there is some $c'' \in \cG_L$ which is a bubble leaf and so that $D_c \cup D_{c'} \subset D_{c''}$. We call $\xi$ the \emph{landing point} of the bubble region. 
\end{definition}

We also remark that in general the bubble region of $c$ is not the 
union of all bubble leaves $c'$ in $\cG_{L_i}$ so that 
$\partial \Phi^{\pm}(c') = \{ \xi \}$. For example 
it could be that $c, c'$ bound maximal disjoint disks $D_c, D_{c'}$.
By maximal for $c$ we mean there is no leaf $c"$ of $\cG_{L_i}$ 
distinct from $c$, and such that
$D_{c"} \supset D_c$. In this case $D_c, D_{c'}$ are two disjoint 
bubble regions with the same ideal point $\xi$.

 \begin{lema}\label{lema-bubbleregions}
 Every bubble region is either open or closed. 
Each leaf contains at most countably many distinct bubble regions. 
If every leaf in $\cG_L$ is a bubble leaf, then there is a unique open bubble region $\cB$ which verifies that $\Phi_L(\cB)$ accumulates in all of $\partial \HH^2$. 
 \end{lema}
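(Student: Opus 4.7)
The plan is to prove the three assertions in order.

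For the dichotomy, I would fix a bubble leaf $c \in \cG_L$ with ideal point $\xi = \partial^\pm \Phi(c)$ and consider the family $\mathcal{M}_c$ of bubble leaves $c''$ with $\partial^\pm \Phi(c'') = \{\xi\}$ and $D_c \subseteq D_{c''}$. The key step is to show $\mathcal{M}_c$ is totally ordered by disk-inclusion: any two of its elements have disks that intersect (both contain $D_c$) with disjoint boundary leaves in $L$, so a planar Jordan-type argument (Proposition \ref{prop.plane}) in $L$ forces one disk to sit inside the other. From Definition \ref{bubbleregion}, $\cB(c, L)$ consists of the leaves $c'$ with $D_{c'} \subseteq D_{c''}$ for some $c'' \in \mathcal{M}_c$. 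If $\mathcal{M}_c$ has a maximum $c^*$, then $\cB(c, L) = \overline{D_{c^*}}$, closed in $L$; otherwise every leaf in $\cB(c, L)$ lies strictly inside some $D_{c''} \in \mathcal{M}_c$ and hence has an open $L$-neighborhood in $\cB(c, L)$, so $\cB(c, L) = \bigcup_{c'' \in \mathcal{M}_c} D_{c''}$ is open.

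For the countability assertion, the relation of being in the same bubble region is an equivalence relation on the bubble leaves of $L$ (transitivity follows from the total ordering of the chains), so distinct bubble regions are pairwise disjoint subsets of $L$. Each bubble region contains a non-empty open subset of $L$, namely $D_{c^*}$ in the closed case or the region itself in the open case, and since $L$ is second countable there can be at most countably many of them.

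For the final assertion, assuming every leaf of $\cG_L$ is a bubble, let $\mathcal{C}$ be the union of all closed bubble regions $\overline{D_{c^*_i}}$ of $L$, indexed by a countable set, so that $L \setminus \mathcal{C}$ is the disjoint union of the countably many open bubble regions. To produce $\cB$, I would show that for any $\eta \in \partial \HH^2$ outside the countable set $\{\xi_i\}$ of ideal endpoints of closed bubble regions, a geodesic ray in $L$ landing at $\eta$ must eventually leave every $\overline{D_{c^*_i}}$ (whose only ideal point is $\xi_i \neq \eta$), so $\eta$ is accumulated by points of $L \setminus \mathcal{C}$. A Baire argument over the countably many open bubble regions then yields some $\cB_{j_0}$ whose accumulation in $\partial \HH^2$ has non-empty interior, and a further rigidity argument (using the structure of the chain $\mathcal{M}_{c_0}$ for any $c_0 \in \cB_{j_0}$ together with Proposition \ref{prop.disjointrays}) would upgrade this to accumulation in all of $\partial \HH^2$. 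For uniqueness, I would suppose two such open bubble regions $\cB_1, \cB_2$ exist and derive a contradiction from their disjointness combined with Proposition \ref{prop.disjointrays} applied to their respective nested bubble chains.

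The hard part will be the existence step of the third assertion. The subtle point is that countable unions of closed sets need not be closed, so a priori $\mathcal{C}$ could accumulate in $\partial \HH^2$ at points beyond the countable set $\{\xi_i\}$; the plan to handle this uses that each $\overline{D_{c^*_i}}$ has a single ideal point and that the boundary leaves $c^*_i$ are disjoint elements of $\cG_L$, invoking Proposition \ref{prop.disjointrays} to rule out extra accumulations. Upgrading from ``non-empty interior in $\partial \HH^2$'' to ``all of $\partial \HH^2$'' in the Baire step will require exploiting the nested structure of the chain defining the chosen open bubble region, and is also where uniqueness is expected to follow from the same planar topology tools.
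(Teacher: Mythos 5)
Your handling of the first two assertions is essentially the paper's argument: the disks of the leaves in a bubble region are nested/directed, a maximal element (necessarily unique) makes the region closed while its absence makes the region an increasing union of open disks, and disjointness of regions plus non-empty interior gives countability. No issue there.

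The gap is in the third assertion, and it sits exactly where you flagged the difficulty. Your existence step does not go through as planned: knowing that a geodesic ray towards $\eta\notin\{\xi_i\}$ eventually leaves each individual closed region $\overline{D_{c_i^*}}$ says nothing about leaving their union $\mathcal{C}$, since the ray may pass from one closed region to another indefinitely. Worse, the statement you then try to rescue --- that $\mathcal{C}$ does not accumulate on $\partial \HH^2$ beyond the countable set $\{\xi_i\}$ --- is false in general: infinitely many disjoint closed disks, each meeting $S^1(L)$ at a single point, can perfectly well accumulate on a dense subset of $S^1(L)$. What is actually needed is the weaker fact that $\mathcal{C}$ cannot contain a full neighborhood in $L$ of a point of $S^1(L)$, and Proposition \ref{prop.disjointrays} cannot deliver this (nor your Baire upgrade, nor uniqueness): the boundary leaves $c_i^*$ and the leaves $e_n$ in your chains are bubble leaves, whose limit sets are single points, so that proposition imposes no constraint whatsoever on them.

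The missing idea is local, and it is how the paper argues. If $B$ is a closed bubble region with boundary leaf $c$ and $\tau$ is a small transversal at $x_0\in c$ leaving $B$, then every bubble region meeting $\tau\setminus\{x_0\}$ has boundary crossing $\tau$ at most once (otherwise some bubble leaf would cross $\tau$ twice, contradicting Corollary \ref{coro.foliations} inside $L$), so a single region contains $\tau\setminus\{x_0\}$, and it must be open (a closed region containing points arbitrarily close to $x_0$ would contain $x_0$). Thus each closed region is flanked on the outside by exactly one open region; the sets consisting of an open region together with its attached closed regions are open, pairwise disjoint, and cover $L$ when every leaf is a bubble leaf, so connectedness of $L$ gives uniqueness of the open region $\cB$ --- an argument of clopen-partition type, not one about limit sets of rays. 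The same local fact yields the accumulation statement: if some connected neighborhood $U$ of $\eta\in S^1(L)$ met no open region, then each $\overline{D_{c_i^*}}\cap U$ would be relatively clopen in $U\cap L$ (relatively open because a boundary-leaf point inside $U$ would have points of an open region arbitrarily close to it), forcing $U\cap L$ into a single closed region, which accumulates at only one point of $S^1(L)$, a contradiction. (Alternatively one can invoke a Sierpi\'nski-type theorem forbidding a partition of a connected, locally connected, locally compact space into countably many disjoint nonempty closed pieces, but some connectedness input of this kind is unavoidable; Baire plus Proposition \ref{prop.disjointrays} is not a substitute.)
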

 
 In figure \ref{fig-bubbles} a depiction of the conclusion of this lemma is presented.

\begin{figure}[ht]
\begin{center}
\includegraphics[scale=0.48]{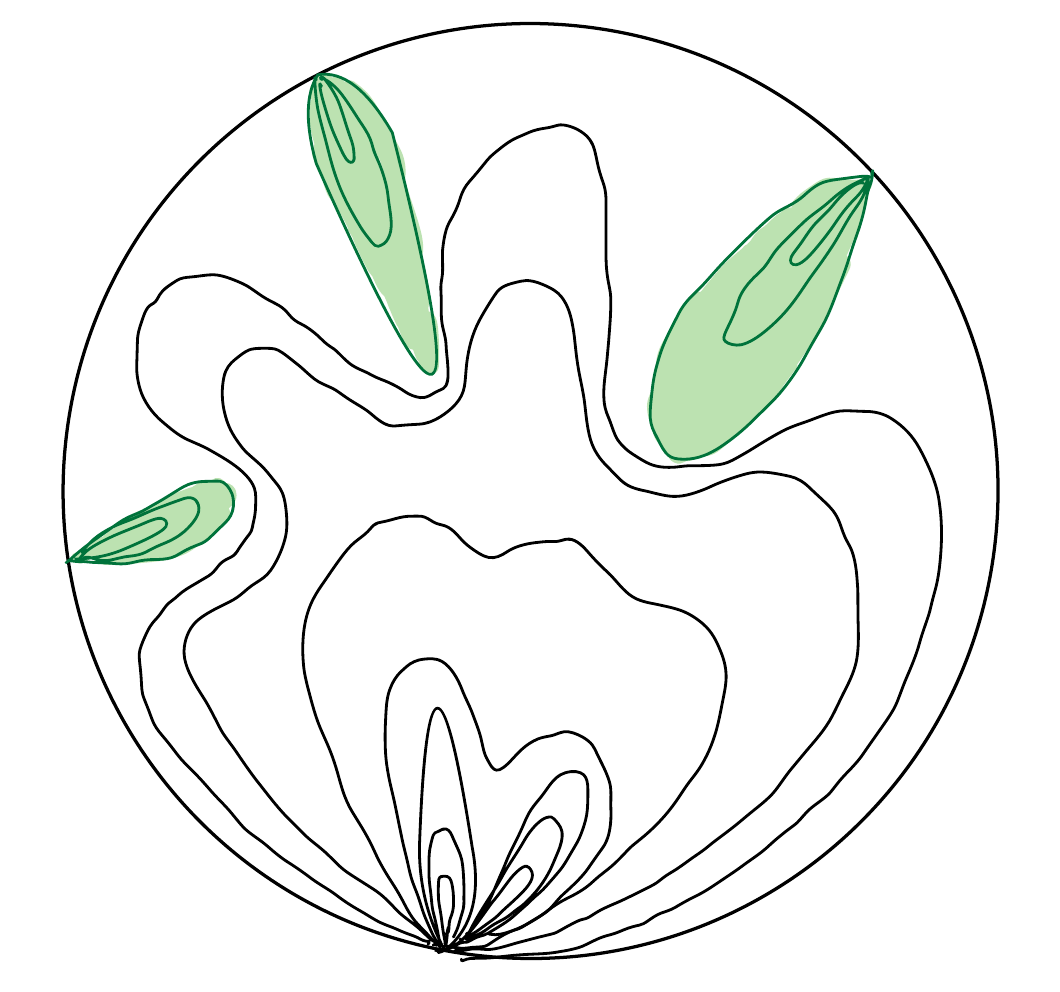}
\end{center}
\vspace{-0.5cm}
\caption{{\small Depiction of a leaf $L$ so that every leaf of $\cG_L$ is a bubble leaf. The painted regions are the closed bubble regions and their complement is the open bubble region.}}\label{fig-bubbles}
\end{figure}

 \begin{proof}
 Each bubble region is a set saturated by $\cG_L$.  There is a natural partial order between the leaves in a bubble region given by $c_1 \prec c_2$ if $D_{c_1} \subset D_{c_2}$.  It follows that if there is a maximal element in the bubble region, then it is unique (by definition of bubble region, every pair of elements is smaller or equal than some element in the region) and the bubble region is closed, while if there is no maximal element then the bubble region is open. As bubble regions have non-empty interior we know that there are at most countably many bubble regions. 
 
Assume now that every leaf of $\cG_L$ belongs to some bubble region. If there is a unique bubble region, then it is an open bubble region since there cannot be a maximal element. We next prove that not every bubble is closed: Let $B$ be a closed bubble with boundary leaf $c$ and let $\tau$ be
a small transversal to $\cG$ in $L$ with an endpoint in $c$
and not intersecting the interior of $B$.
Parametrize $\tau$ as $x_t, 0 \leq t \leq 1$ with $x_0$ in $c$.
Suppose a bubble region $B'$ intersects $\tau$ not in $x_0$. Then
its boundary intersects $\tau$ (as $x_0$ is not in $B'$). 
In addition its boundary intersects $\tau$ in a single
point, because if it intersected $\tau$ in two points, one
would produce a bubble leaf intersecting $\tau$ in two points,
contradiction. It follows that there is a single bubble region
$B'$ containing $\tau \setminus \{ x_0 \}$ and $B'$ is
an open bubble.


Finally, given an open bubble region $\cB$ we can consider $\wihat\cB$ to be $\cB$ together with all the closed bubbles whose boundary intersects the boundary of $\cB$. It follows that $\wihat\cB$ is an open set and that if $\cB'$ is another open bubble region, then $\wihat\cB \cap \wihat\cB' = \emptyset$, thus, we get there is a unique open bubble region. 

To conclude, using Corollary \ref{coro.separationH2} we know that the complement of a closed bubble region verifies that it accumulates in all of $S^1(L)$, so the same holds for the complement of countably many closed bubble regions, thus the open bubble region accumulates everywhere, also using
the properties derived above. 
\end{proof}


We remark that the uniqueness of open bubble regions in a leaf $L$ of $\wcF_i$
does not necessarily work if not every leaf in $\cG_L$ is a bubble
leaf.

Since open bubble regions are special, it is natural to expect that their landing point will also be special: 
 
 \begin{lema}\label{lema-openbubbleland}
 If every leaf in $\wt{\cG}$ is a bubble leaf then in each $L \in \wt{\cF_i}$ the unique open bubble region of $\cG_L$ verifies that its landing point is $\alpha(L)$. 
 \end{lema}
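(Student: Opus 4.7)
The plan is to show that the map $L \mapsto \xi_L$, sending $L \in \wihat\cF_1$ to the common limit point $\xi_L \in \partial \HH^2$ of leaves in its open bubble region $\cB_L$, defines, via the identification $\alpha$ of the leaf space of $\wihat\cF_1$ with $\partial \HH^2$, a continuous $\Gamma$-equivariant self-map $f$ of $\partial \HH^2$, and then to apply classical rigidity to conclude $f = \mathrm{id}$, so that $\xi_L = \alpha(L)$. The case $L \in \wihat\cF_2$ follows by the same argument with the roles of the two foliations swapped.

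Equivariance is immediate from Lemma \ref{lema-bubbleregions}: since $\cB_L$ is the unique open bubble region in $L$, for any $\gamma \in \Gamma$ one has $\gamma \cB_L = \cB_{\gamma L}$, hence $\gamma \xi_L = \xi_{\gamma L}$. For continuity, I would fix $L_n \to L$ in the leaf space, pick $c \in \cB_L$, and take $c_n \in \cG_{L_n}$ converging to $c$ via the transversal structure of $\cG$. Openness of $\cB_L$ provides strictly increasing bubble leaves $c^{(k)} \in \cB_L$ with $D_{c^{(k)}} \subsetneq D_{c^{(k+1)}}$ surrounding $c$; under the standing hypothesis that every leaf of $\wt\cG$ is a bubble, continuity of $\cG$ yields corresponding increasing bubbles around $c_n$ in $L_n$ for large $n$, so $c_n$ lies in a bubble region of $L_n$ with no maximal element, which by uniqueness must be $\cB_{L_n}$. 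By Corollary \ref{coincide} the landing of $c_n$ in $L_n$ agrees with the landing computed in the leaf of $\wt\cF_2$ containing it, and converges to $\xi_L$, yielding $\xi_{L_n} \to \xi_L$.

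For the rigidity step, take a hyperbolic $\gamma \in \Gamma$ with attracting fixed point $p^+$ and repelling fixed point $p^-$. By equivariance $f(p^+)$ is fixed by $\gamma$, so $f(p^+) \in \{p^+, p^-\}$. If $f(p^+) = p^-$, choose $x$ near $p^+$ with $f(x) \neq p^-$; then equivariance gives $f(\gamma^n x) = \gamma^n f(x) \to p^+$, while continuity forces $f(\gamma^n x) \to f(p^+) = p^-$, a contradiction. Hence $f(p^+) = p^+$. By Proposition \ref{prop.minimality} such attracting fixed points are dense in $\partial \HH^2$, so $f$ fixes a dense set and continuity implies $f = \mathrm{id}$, giving $\xi_L = \alpha(L)$.

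The main obstacle is the continuity step. One must rule out that $c_n \in \cG_{L_n}$ converging to $c \in \cB_L$ could lie in a closed bubble region of $L_n$ whose maximal leaf escapes to infinity as $n \to \infty$, which would cause a discontinuous jump in $\xi_{L_n}$. The resolution should combine the hypothesis that every leaf of $\wt\cG$ is a bubble, uniqueness of the open bubble region from Lemma \ref{lema-bubbleregions}, and the non-separation analysis of Section \ref{s.transverse} (particularly Proposition \ref{prop-nonseplands}), which together should rigidly constrain the transverse variation of bubble regions.
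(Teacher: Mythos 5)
Your reduction to a continuous $\Gamma$-equivariant self-map $f$ of $\partial \HH^2$ and the concluding rigidity step are fine in outline (the rigidity argument needs a small patch: if no $x$ near $p^+$ has $f(x)\neq p^-$ then $f$ is locally, hence by equivariance globally, constant equal to $p^-$, which contradicts equivariance under any element not fixing $p^-$; density of attracting fixed points is Proposition \ref{prop.minimality}). The genuine gap is the continuity of $L \mapsto \xi_L$, which you flag but do not prove, and the sketch you give for it does not work. Convergence $c_n \to c$ of leaves of $\wt{\cG}$ is convergence on compact sets and controls nothing at infinity: a leaf of $\cG_{L_n}$ that tracks a bubble $c^{(k)}$ of the open region of $L$ along an arbitrarily long compact piece may close up at infinity with a completely different landing point, so "continuity of $\cG$" does not produce nested bubbles around $c_n$ with landing point near $\xi_L$. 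Moreover, even granting $k$ nested bubbles around $c_n$ for every fixed $k$ and $n$ large (depending on $k$), this does not show that for a \emph{fixed} $n$ the bubble region of $c_n$ has no maximal element, so you cannot conclude $c_n \in \cB_{L_n}$; and even if you could, you would still need the ideal point of $\cB_{L_n}$ to converge to $\xi_L$, which is again a statement about continuity of landing under transverse perturbation. In this paper such control is only available through Proposition \ref{prop-pushing}, i.e.\ inside the sets $\wihatD_\eps(L,I)$, hence only away from the non-marker direction; since a priori you do not know where $\xi_L$ sits relative to $\alpha(L)$, the continuity you need is essentially as hard as the lemma itself (it is of course true a posteriori, because $\xi_L=\alpha(L)$ and $\alpha$ is continuous).

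The paper sidesteps any a priori continuity. It argues by contradiction, assuming the open bubble region of some $L$ has ideal point $\xi\neq\alpha(L)$. Lemma \ref{lema.landmarkerthendense} gives density of landing points, hence countably many closed bubble regions with dense ideal points; one picks a closed bubble region with ideal point $\eta\notin\{\alpha(L),\xi\}$ lying (after shrinking the leaf-space interval $I$) inside $\wihatD_\eps(L,I)$, together with rays of the open bubble region that accumulate on its boundary leaf and stay in $\wihatD_\eps(L,I)$. Pushing via Proposition \ref{prop-pushing} to a leaf $E\in I$ invariant under a deck transformation $\gamma$ whose fixed points avoid $\xi$ produces in $E$ an open bubble region with ideal point $\xi$; applying $\gamma$ then yields a second open bubble region in $E$, contradicting the uniqueness in Lemma \ref{lema-bubbleregions}. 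To salvage your approach you would have to supply a proof of continuity of $\xi_L$ of comparable depth; as written, the key step of your proposal is missing.
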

 
 \begin{proof}
 The proof is by contradiction. Assume that there is a leaf $L  \in \wt{\cF_i}$ where the unique open bubble region $\cO$ has landing point $\xi \neq \alpha(L)$. We fix a closed interval $I$ in the leaf space of $\wt{\cF_i}$ containing $L$ in its interior,
so that for every $L' \in I$ we have that $\alpha(L') \neq \xi$. Denote by $J \subset \partial \HH^2$ the interval $J=\{ \alpha(L') \ : \ L' \in I\}$. We consider $\eps>0$ from Proposition \ref{prop-pushing} and fix the region $\wihatD_\eps(L,I)$. 
 
 We first notice that Lemma \ref{lema.landmarkerthendense} shows that the existence of a leaf whose landing point is $\xi \neq \alpha(L)$ implies that in each leaf $L$ the set of landing points of rays in the leaf is dense in $\partial \HH^2$. Thus, for a given leaf $L$ there must be rays converging to this dense set of points and thus closed bubble regions in $L$ so that
the union of the landing points of these is a dense in $\partial \HH^2$. 
Pick one closed bubble region 
$\cB$ whose ideal point is neither $\alpha(L)$ nor $\xi$.

Up to shrinking $I$ to a smaller interval
we assume that $\cB$ completely contained in $\wihatD_\eps(L,I)$. Moreover, if we call $c$ the maximal element of $\cB$, it is a leaf of $\cG_L$ whose endpoints are $\partial^{\pm}\Phi(c) = \{\eta\}$ with $\eta \notin J$. We can assume that every closed bubbles leaf whose landing point is in the interval $K$  joining $\xi$ and $\eta$ and not intersecting $J$ is contained in $\wihatD_\eps(L,I)$. 
Indeed any such bubble region $\cB'$ not contained
in $\hat D_{\eps}(L,I)$ has points
limiting to points in the interval $K$ as well as points a bounded
distance from the geodesic $g$ in $L$ with ideal points the endpoints
of $I$. Suppose there are infinitely many of these. They cannot 
accumulate to some point in $L$, hence segments of the boundary of these limit
on a non degenerate open interval of $S^1(L)$. This contradicts
that the set of ideal points of leaves of $\cG_L$ is dense in
$S^1(L)$. It follows that there are only finitely many bubbles
in $L$ satisfying this property and we take the last one
satisfying the property.
  
 Since $\cO$ is an open bubble, we can find a sequence of nested curves $e_n$ in $\cO$ which accumulates on $c$. More precisely, if $D_{e_n}$ are the disks defined by $e_n$ and contained in $\cO$ it follows that $D_{e_n} \subset D_{e_{n+1}}$ and $\cO = \bigcup_n D_{e_n}$. It follows that there are rays $r_n$ of $e_n$ which are contained in $\wihatD_\eps(L,I)$ which also accumulate on $c$ (note that the full curve $e_n$ must enter $\wihatD_\eps(L,I)$, but it cannot happen that for large $n$ both rays are contained in
$\wihatD_\eps(L,I)$ since the union of the disks $D_{e_n}$ is all of $\cO$).

Now we fix a leaf $E \in I$ which is invariant under some $\gamma \in \pi_1(M) \setminus \{\mathrm{id}\}$ (cf. \S~\ref{s.minimality}) and so that the fixed points of $\gamma$ are different from $\xi$ (note that one must be contained in $J$). If we apply Proposition \ref{prop-pushing} we deduce that the leaf $c$ pushes to a leaf $c'$ of $\cG_E$ whose endpoints are $\eta$ and the rays $r_n$ push to rays $r_n'$ which have one endpoint in $\xi$ and accumulate on 
a ray of $c'$. If $e_n'$ are the leaves of $\cG_E$ containing the rays $r_n'$ we deduce that they must belong to an open bubble region since every leaf of $\cG_E$ is a bubble leaf and the leaves $e_n'$ are all landing in $\xi$. It follows that $E$ has an open bubble region with limit point $\xi$. Applying $\gamma$ we deduce that there is more than one open bubble region in $E$ contradicting the previous lemma.  
 \end{proof}
 
To prove Proposition \ref{prop.somenonbubble} we will proceed by contradiction and therefore assume that all leaves of $\wt{\cG}$ are bubble leaves. This will allow us to use the previous results. To be able to get a contradiction we will need to construct uncountably many bubble leaves with different landing points in a leaf of the other foliation. 

%
%
%

 \begin{proof}[Proof of Proposition \ref{prop.somenonbubble}]
 Assume by contradiction that every leaf of $\wt{\cG}$ is a bubble leaf. 
 
 We claim that for each $L \in \wt{\cF_1}$ there is a non-trivial interval of the leaf space of $\wt{\cF_2}$ which contains bubble leaves whose limit points are $\alpha(L)$. This is just because Lemma \ref{lema-openbubbleland} states that the open bubble region $\cO_L$ of $L$, which exists due to Lemma \ref{lema-bubbleregions}, must land in $\alpha(L)$. 
Considering a transversal $\tau$ to $\cG_L$ inside $\cO_L$ gives a non trivial interval $I_L$ of the leaf space of $\wt{\cF_2}$ (i.e. those leaves which intersect $\tau$) containing bubble leaves whose landing point is $\alpha(L)$. The interval $I_L$ also depends on $\tau$, but we are omitting this
dependence. Notice that we are not claiming that the elements of 
$\{ I_L, L \in \wcF_1 \}$ are pairwise distinct.
 
Pick $x_n$ dense and countable in the leaf space of $\wcF_2$ 
(which is homeomorphic to $\mathbb{R}$).
If for every $x_n$ there are only countably many $L \in \wcF_1$
so that $x_n \in I_L$, then the set of those $L \in \wcF_1$ for
which $I_L$ does not contain any $x_n$ is still uncountable.
Hence this set is non empty, contradicting the density of
$\{ x_n \}$, since any $I_L$ is an open set
in the leaf space of $\wcF_2$.

%
We deduce that there is a leaf $E \in \wt{\cF_2}$ for which
there are uncountably many $L \in \wcF_1$ so that $E \in I_L$ with pairwise distinct $\alpha(L)$. It follow that $E$ contains uncountably many bubble leaves which land in pairwise different points. This  contradicts the fact that a leaf can contain at most countably many distinct bubble regions (see Lemma \ref{lema-bubbleregions}). This contradiction completes the proof. 
 \end{proof}

\subsection{Proof of Proposition \ref{prop.visual}} 
The proof will be by contradiction. Using what we have proved we will have a ray landing at a marker point. Then, assuming that the small visual property fails, we can use the following lemma and minimality to get a contradiction.

\begin{lema}\label{lema.zoomin}
For every $\delta>0$ there exists $K:=K(\delta)>0$ such that if: 
\begin{itemize}
\item $I$ is a segment in $\partial \HH^2$, 
\item $x \in \HH^2$ and 
\item $r$ is a geodesic ray starting from $x$ and landing in some point $\xi$ of $I$ separating $I$ in two intervals of visual measure $>\delta/2$ seen from $x$.
\end{itemize}
Then if $y \in r$ verifies that $d_{\mathbb{H}^2}(x,y)>K$, it follows that the visual measure of $I$ seen from $y$ is larger than $2\pi -\delta$. 
Here $d_{\mathbb{H}^2}$ is the hyperbolic metric in $\mathbb{H}^2$.
\end{lema}

\begin{proof}
Since isometries are transitive in $T^1\HH^2$ we can assume
in the upper half space model of $\HH^2$,
 that $x = i$, the geodesic ray $r$ is the ray $\{ai : a\geq 1\}$ and the interval $I$ contains, for some large $t$ depending on $\delta$ a set of the form $A_t=[t, +\infty) \cup \{\infty \} \cup (-\infty, -t]$. The lemma then reduces to computing the visual measure of $A_t$ from a point of the form $ai$ which goes to $2\pi$ with $a \to \infty$.
\end{proof}

\begin{proof}[Proof of Proposition \ref{prop.visual}] 
Consider $c \in \wt{\cG}$ given by Proposition \ref{prop.somenonbubble} and let $L \in \wt{\cF_1}$, $E \in \wt{\cF_2}$ so that $c \subset L \cap E$. We will argue for $L$ (the case for $E$ is symmetric).  

Let $r$ be a ray of $c$ so that its landing point $\partial \Phi(r)=\xi \neq \alpha(L)$. Let $I$ be a closed interval of the leaf space of $\wt{\cF_1}$ containing $L$ in its interior and such that if $L' \in I$ then $\alpha(L') \neq \xi$. Up to reducing the ray $r$ we can assume that $r \subset \wihatD_\eps(L,I)$ for $\eps$ as given in Proposition \ref{prop-pushing}. 

Assume now by contradiction that there is $\delta>0$ and a sequence $\ell_n$ of segments of $\cG_{L_n}$ with $L_n \in \wt{\cF_1}$ and points $x_n \in L_n$ so that $d_{L_n}(x_n, \ell_n) > 5n$ and 
$\ell_n$ has shadow in $S^1(L_n)$ of length $> 10 \delta$ when seen from $x_n$. Up to cutting the segments $\ell_n$ we can assume that the shadow has length $>5\delta$ and is disjoint from $\alpha(L_n)$. 

Consider, for each $n$ a geodesic ray from $x_n$ to the middle point of the shadow of $\ell_n$ and a point $y_n$ in the geodesic ray and at distance $=2n$ from $x_n$. By minimality of $\cF_1$ and up to changing slightly the geodesic ray, we can assume that there is $\gamma_n$ sending $y_n$ to a point very close to $L$ and in a compact set of $\mt$. Using Lemma \ref{lema.zoomin} we get that up to subsequence we have: 

\begin{itemize}
\item $d_{L_n}(y_n, \ell_n) > 2n$,
\item the visual measure of $\ell_n$ seen from $y_n$ is larger than $2\pi - a_n$ with $a_n \to 0$ (this is after identifying $S^1(L)$ with $\partial \HH^2$),
\item $\gamma_n y_n \to y_\infty \in L$. 
\end{itemize}

Note that we chose $\ell_n$ so that $\alpha(L_n)$ does not belong to the shadow of $\ell_n$ from $x_n$ and thus we get that the shadow from $\gamma_n y_n$ of $\gamma_n \ell_n$ contains $\xi$ in its interior if $n$
is big enough. This is because $\gamma_n L_n \to L$ and
$\alpha(\gamma_n L_n) \to \alpha(L)$. 
Since for large $n$ the leaf $\gamma_n L_n$ is close to $L$ we can assume that $\gamma_n L_n$ is in $I$ and we can push arcs $\ell'_n$ of $\gamma_n \ell_n$ to $L$ so that $\Phi_L(\ell'_n)$ accumulates in an interval containing $\xi$ in its interior. This forces $\ell'_n$ to eventually intersect $r$ which is a contradiction. 
\end{proof}




\section{Leafwise Hausdorff leaf space implies quasigeodesic}\label{s.Hsdff}

Here we will show the following: 

\begin{teo}\label{teo.hsdffcase}
Let $\cF_1$ and $\cF_2$ be transverse minimal foliations of $M = T^1S$ and 
let $\cG$ denote the foliation obtained by intersection. Assume that in some leaf $L \in \wt{\cF_1}$ the leaf space of $\cG_L$ is Hausdorff. Then, the foliation $\cG$ is an Anosov foliation. 
\end{teo}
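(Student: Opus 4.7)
The plan is to establish the \emph{leafwise quasigeodesic} property for $\cG$: in every leaf $E \in \wt{\cF_i}$ ($i=1,2$), the leaves of $\cG_E$ are uniform quasigeodesics of $E$. Once this is in place, the conclusion that $\cG$ is (homeomorphic to) an Anosov foliation follows from \cite[\S 5]{BFP}: leafwise quasigeodesicity forces the flow along $\cG$ to be expansive in the sense of \cite{IM,Paternain}, and an expansive flow preserving the two dimensional foliations $\cF_1, \cF_2$ is then a topological Anosov flow.

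First I would propagate the Hausdorff hypothesis via Proposition~\ref{p.dichotomyHsdff}, so that $\cG_E$ has Hausdorff leaf space for \emph{every} $E \in \wt{\cF_i}$, $i=1,2$. Combined with Theorem~\ref{prop.landing} and Proposition~\ref{prop.visual}, in every such $E$ every ray of $\cG_E$ lands at a unique point of $S^1(E)$ and, by Proposition~\ref{prop.visualconsequence}, every arc of $\cG_E$ lies in a uniform neighborhood of the geodesic joining its endpoints. The remaining content of the leafwise quasigeodesic property is the converse: every arc lies \emph{within} a uniform neighborhood of that geodesic.

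I would prove this by contradiction. Suppose there are arcs $\ell_n$ of leaves of $\cG_{L_n}$ with points $x_n \in \ell_n$ whose distance in $L_n$ to the geodesic $g_n$ joining the endpoints of $\ell_n$ tends to infinity. Using minimality of $\cF_1$ and compactness of $M$, choose deck transformations $\gamma_n \in \pi_1(M)$ with $\gamma_n x_n$ converging to $x_\infty \in L_\infty \in \wt{\cF_1}$. Small visual measure forces the two tails of $\gamma_n \ell_n$, seen from $\gamma_n x_n$, to shrink to points $\xi^\pm \in \partial \HH^2$, which become the two landing points of a limit leaf $c_\infty \in \cG_{L_\infty}$. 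The escape of $x_n$ from $g_n$ then forces $\xi^+ = \xi^-$, so $c_\infty$ would be a bubble leaf. Hence the proof reduces to ruling out bubble leaves.

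To rule out bubbles, suppose $c \in \cG_L$ is a bubble leaf with $\partial^\pm \Phi(c) = \{\xi\}$ bounding a disk $D_c \subset L$ with limit set $\{\xi\}$. Every leaf of $\cG_L$ in $D_c$ is then a bubble leaf at $\xi$, and the corresponding leaves $E_t \in \wt{\cF_2}$ through a short transversal entering $D_c$ fill a non-trivial interval in the leaf space of $\wt{\cF_2}$, each carrying a bubble leaf of $\cG_{E_t}$ landing at $\xi$. By Proposition~\ref{prop.somenonbubble} there is a non-bubble leaf $c^* \in \cG$ somewhere; using density of the $\pi_1(M)$-orbit of $c^*$ (via minimality) and the pushing technique of Proposition~\ref{prop-pushing}, I would produce in some $E_t$ a leaf $c'' \in \cG_{E_t}$ with two distinct landing points, at least one not equal to $\xi$. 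Combining the Hausdorff property of $\cG_{E_t}$, disjointness of $c''$ from the interior bubble leaves, and Proposition~\ref{prop.disjointrays} applied to landing intervals of disjoint rays, one obtains an inconsistent configuration of landing intervals, a contradiction. The main obstacle I expect is exactly this final step: bubble leaves genuinely occur in the Matsumoto--Tsuboi example \cite{MatsumotoTsuboi}, so their nonexistence must use the Hausdorff hypothesis in an essential way, and the pushing arguments must carefully track landing points relative to the non-marker directions in both foliations simultaneously.
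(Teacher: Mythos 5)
Your overall framework (propagate Hausdorffness via Proposition~\ref{p.dichotomyHsdff}, use landing and small visual measure, then aim for the leafwise quasigeodesic property and invoke \cite{BFP}) is a legitimate target, but the route you sketch has genuine gaps and is not how the paper's proof runs.

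First, the paper does not prove leafwise quasigeodesicity directly. Instead it establishes three structural facts in each leaf $L$: (i) no bubble leaves exist, (ii) every $c \in \cG_L$ has one ideal point equal to $\alpha(L)$ (Lemma~\ref{lema.hsdfflanding}), and (iii) for every $\xi \neq \alpha(L)$ there is a unique leaf with ideal points $\{\xi,\alpha(L)\}$; from this it builds an equivariant bijection with the pair space of $\partial\HH^2$ and concludes via \cite{Ghys}. You omit point~(ii) entirely. This is a crucial step and is not a consequence of the absence of bubble leaves: a leaf with two distinct ideal points neither equal to $\alpha(L)$ is not a bubble leaf, yet the paper shows (via the disjoint-disks trick $D^+_c,\gamma D^+_c,\gamma^2 D^+_c$) that such a leaf forces non-Hausdorffness. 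Without (ii), even your proposed compactness argument does not go through, because it implicitly requires continuity of the landing map on the leaf space, and that continuity is derived in the paper precisely from (ii) and uniqueness.

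Second, your reduction "not quasigeodesic $\Rightarrow$ bubble leaf exists" is not airtight. After translating by $\gamma_n$, the arcs $\gamma_n\ell_n$ sit in \emph{different} two-dimensional leaves $\gamma_n L_n$ converging to $L_\infty$, so plane-separation arguments do not control how the ideal points of the limit leaf $c_\infty$ relate to the shrinking endpoint pair of $\gamma_n\ell_n$. The convergence $\gamma_n\ell_n \to c_\infty$ is only on compact sets, and nothing in the hypotheses stated so far prevents $c_\infty$ from having two distinct ideal points while the endpoints of $\gamma_n\ell_n$ coalesce elsewhere. Pinning this down needs exactly the structure you skipped.

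Third, your bubble-elimination step misses the paper's clean argument (the first, unlabeled lemma of \S 6.1): given a bubble leaf $c$ at $\xi$, take a geodesic ray $r$ to $\xi$; by Proposition~\ref{prop.visualconsequence} both rays of $c$ stay within $a_0$ of $r$, so one finds $p_n\in c^+, q_n\in c^-$ with $d_L(p_n,q_n)\leq 2a_0$ but arc-length along $c$ going to infinity; translating to a compact fundamental domain yields limit points $p_\infty,q_\infty$ in a common $L'$ lying on distinct, non-separated leaves of $\cG_{L'}$ — directly contradicting the Hausdorff hypothesis. Your route through Proposition~\ref{prop.somenonbubble}, density and pushing does not lead to a concrete contradiction as written (you would need to produce a non-bubble leaf actually intersecting the disk $D_c$, which the pushing machinery near the marker direction does not obviously provide), and you yourself flag this step as the expected obstacle. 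The Hausdorff hypothesis must enter much more directly than your sketch allows.
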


This also follows from \cite{FP4} which gives an alternative proof. Here we present a proof in our restricted
setting, since it is vastly simpler and can show some of the ideas in a more transparent way. 

Recall that an \emph{Anosov foliation} is a one dimensional foliation in $M$ which is homeomorphic to the orbit foliation of a topological Anosov flow. As follows from \cite[\S 6]{BFP}, due to minimality, it is enough to show that the one-dimensional foliation is \emph{quasigeodesic}, that is, for every leaf $L \in \wt{\cF_i}$ the leaves of $\cG_L$ are quasigeodesics in $L$.  

Here we are working with $M=T^1S$ so it makes sense to compare our one-dimensional foliation with the one of the geodesic flow in negative curvature: this foliation, when restricted to a leaf $L$ of the weak-stable or weak-unstable foliation has the following properties that we will try to produce to show Theorem \ref{teo.hsdffcase}:

\begin{itemize}
\item one of the landing points of every leaf of $\cG_L$ is $\alpha(L)$, 
\item given any point $\xi \in \partial \HH^2 \setminus \{\alpha(L)\}$ there is a (unique) leaf whose landing points are $\xi$ and $\alpha(L)$. 
\end{itemize}

Thanks to some classical results, showing these properties will be enough to establish that the foliation is an Anosov foliation (see \cite{Ghys}). This implies that it is quasigeodesic (see \cite{Fenley}). We refer the reader to \cite[\S 5]{BFP} for more details.

\subsection{Bubble leaves and landing} 

First we need to show that bubble leaves (recall \S~\ref{ss.bubbleleafs}) produce non-Hausdorff behaviour. For this, the small visual measure property is crucial, in particular Proposition \ref{prop.visualconsequence}. 

We recall here that the assumptions of Theorem \ref{teo.hsdffcase} imply that in \emph{every} leaf of $\wt{\cF_1}$ and $\wt{\cF_2}$ the restriction of $\wt{\cG}$ has Hausdorff leaf space because of Proposition \ref{p.dichotomyHsdff}. 

%
%

\begin{lema}
If $\cG_L$ has a bubble leaf for some $L\in \wt{\cF_i}$, then there is a leaf $L' \in \wt{\cF_i}$ so that $\cG_{L'}$ has non-Hausdorff leaf space. 
\end{lema}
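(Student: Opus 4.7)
The plan is a proof by contradiction. Using Proposition \ref{p.dichotomyHsdff}, it suffices to produce non-Hausdorff leaf space in \emph{some} leaf of either foliation, so I would assume toward contradiction that $\cG_{L'}$ is Hausdorff for every leaf $L'$. By Lemma \ref{lema-bubblecenter}, after possibly swapping $\cF_1$ and $\cF_2$, we may reduce to the case $\partial\Phi(c) = \{\xi\}$ with $\xi \neq \alpha(L)$, and write $c \subset L \cap E$ for a (by the Hausdorff assumption, unique) $E \in \wt{\cF_2}$.

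The key tool is small visual measure: by Proposition \ref{prop.visualconsequence}, the geodesic ray in $L$ from a basepoint towards $\xi$ lies in the $a_0$-neighborhood of both rays $c^+$ and $c^-$, so one can find sequences $p_n \in c^+$, $q_n \in c^-$ with $p_n, q_n \to \xi$ and $d_L(p_n, q_n) \leq 2 a_0$, hence also $d_{\mt}(p_n, q_n) \leq 2 a_0$. By compactness of $M$, pick $\gamma_n \in \pi_1(M)$ with $\gamma_n p_n$ in a fixed compact fundamental domain. After extracting subsequences, $\gamma_n p_n \to p_\infty$, $\gamma_n q_n \to q_\infty$ in $\mt$, $\gamma_n L \to L_\infty$, $\gamma_n E \to E_\infty$, so $p_\infty, q_\infty \in L_\infty \cap E_\infty$. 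If $p_\infty$ and $q_\infty$ lie in distinct components of $L_\infty \cap E_\infty$, Lemma \ref{lem.nonHsdfftwointersect} directly provides the non-Hausdorffness, producing the contradiction and proving the lemma.

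The main obstacle will be the `same leaf' case: $p_\infty, q_\infty$ lie on a common leaf $c_\infty$ of $\cG_{L_\infty}$. Tracking the limits of the rays $\gamma_n c^\pm$ (which land at $\gamma_n \xi \to \xi_\infty$ up to subsequence) shows that $c_\infty$ is then itself a bubble leaf at $\xi_\infty$, so the bubble configuration reproduces itself in the limit. The extra information I would exploit is that the arc of $\gamma_n c$ from $\gamma_n p_n$ to $\gamma_n q_n$ must pass through a fixed interior basepoint $o \in c$ and therefore has leafwise length tending to infinity, forcing a long excursion in $\gamma_n L$. By applying further deck transformations to bring midpoints of this excursion back into a compact region, and combining with Proposition \ref{prop-pushing} to push the configuration to nearby leaves of $\wt{\cF_2}$ (using the minimality of the action on $\partial \HH^2$ from \S \ref{s.minimality} to place the excursion near a marker direction), the plan is to detect a second component of the intersection $L_\infty \cap E_\infty$, or of a suitable translate thereof, again yielding the non-Hausdorffness via Lemma \ref{lem.nonHsdfftwointersect}. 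The delicate step — and the main difficulty of the proof — is to rule out that the bubble structure merely reproduces itself indefinitely without producing the desired disconnection; this will require exploiting the precise geometry of the excursion together with transversality of $\cF_1$ and $\cF_2$ to show that some further extraction necessarily lands in the disconnected alternative.
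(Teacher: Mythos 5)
Your setup matches the paper's: Proposition \ref{prop.visualconsequence} places a geodesic ray in the $a_0$-neighborhood of both rays $c^+, c^-$, giving sequences $p_n, q_n$ at bounded $L$-distance escaping to $\xi$; deck transformations bring $p_n$ into a compact set and subsequences give $p_\infty, q_\infty \in L_\infty \cap E_\infty$. If $p_\infty$ and $q_\infty$ lie in distinct components of $L_\infty\cap E_\infty$, Lemma \ref{lem.nonHsdfftwointersect} finishes — a slightly cleaner way to close this case than the paper's separate no-transversal-disk argument.

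The gap is exactly where you say the difficulty is. You record the crucial fact (the $c$-arclength from $p_n$ to $q_n$ passes through the basepoint and tends to infinity) but do not use it, instead proposing an open-ended plan (pushing, minimality of the boundary action, further extractions) that you acknowledge you cannot complete. No such machinery is needed. If $p_\infty, q_\infty$ lay on the same leaf $c_\infty$ of $\wcG$, take $\nu$ to be the compact segment of $c_\infty$ from $p_\infty$ to $q_\infty$ and a product neighborhood $U$ of $\nu$ for $\wcG$ (which, for small enough transverse size, is simultaneously a union of foliation boxes for $\wcF_1$ and $\wcF_2$, each leaf meeting it in at most one plaque by Corollary \ref{coro.foliations}). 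For $n$ large, $\gamma_n p_n, \gamma_n q_n \in U$, and the unique plaque of $\gamma_n L$ (resp.\ $\gamma_n E$) in $U$ forces the arc of $\gamma_n c$ from $\gamma_n p_n$ to $\gamma_n q_n$ to be contained in $U$, hence of uniformly bounded length. This contradicts your arclength observation directly; the ``bubble reproducing itself indefinitely'' scenario you were worried about simply cannot occur, because the product structure of $\wcG$ near $\nu$ is incompatible with an unbounded excursion of nearby leaves connecting two nearby points. Once this is in place the proof is complete.

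A minor remark: the preliminary reduction to $\xi \neq \alpha(L)$ via Lemma \ref{lema-bubblecenter} is not needed (the argument works for any $\xi$), and the proof-by-contradiction framing adds no content — you in fact construct the leaf $L_\infty$ with non-Hausdorff $\cG_{L_\infty}$ directly.
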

 
 
 \begin{proof}
 Consider a bubble leaf $c \in \cG_L$ (assume $L \in \wt{\cF_1}$) and fix a geodesic ray $r$ starting at some point in $c$ which has the same landing point $\xi \in \HH^2$ as $c$ (i.e. $\partial \Phi(r) = \partial^{\pm}\Phi(c)= \{\xi\}$). Let $x_0$ be the starting point of $r$ in $c$ and consider the rays $c_{x_0}^+$ and $c_{x_0}^-$. Passing to the limit in Proposition \ref{prop.visualconsequence} we know that there is some $a_0 >0$ such that  $r$ is contained in the $a_0$-neighborhood of $c_{x_0}^+$ and $c_{x_0}^-$. 
 
 Fix a sequence $y_n \in r$ such that $\Phi_L(y_n) \to \xi$ and consider points $p_n \in c_{x_0}^+$ and $q_n \in c_{x_0}^-$ at distance 
less than $a_0$ from $y_n$ in $L$. It follows that: 
 \begin{itemize}
 \item $d_L(p_n,q_n) \leq 2 a_0$, 
 \item $\Phi_L(p_n) \to \xi$ and $\Phi_L(q_n) \to \xi$, 
 \item the length of the segment of $c$ joining $p_n$ and $q_n$ goes to infinity. 
 \end{itemize} 
 
 Now consider deck transformations $\gamma_n \in \pi_1(M)$ which map $p_n$ to a compact fundamental domain $K \subset \mt$. Since $\gamma_n$ are isometries and $d_L(p_n,q_n)$ is bounded, the points $\gamma_n q_n$ also belong to a compact set. Up to subsequence we can assume that both $\gamma_n p_n$ and $\gamma_n q_n$ converge to points $p_\infty$ and $q_\infty$. Note that since the distance in $L$ from $p_n$ and $q_n$ is bounded, both points $p_\infty$ and $q_\infty$ belong to the same leaf $L' \in \wt{\cF_i}$. 
 
Since the length from $p_n$ to $q_n$ along $c$ goes to infinity 
it follows that $p_\infty$ and $q_\infty$ cannot belong to the same leaf of $\cG_{L'}$. On the other hand $p_n,q_n$ belong to the same leaf of $\wt{\cG}$ and thus the same leaf $E_n$ of $\wt{\cF_2}$ which since $p_n, q_n$ are close to $L'$ it follows that either $E_n \cap L'$ is not connected (in which case, the leaf space of $\cG_{L'}$ is not Hausdorff) or $E_n \cap L'$ is a sequence of leaves of $\cG_{L'}$ converging to both the leaf through $p_\infty$ and $q_\infty$ and again we get that $\cG_{L'}$ does not have Hausdorff leaf space.  

Since $p_\infty, q_\infty$ do not belong to the same leaf of
$\cG_{L'}$ but $p_n, q_n$ belong to the same leaf of $\wt{\cF_2}$ thus, $\cG_{L'}$ does not have Hausdorff leaf space as we wanted to show.
\end{proof}
 
 Now we show that at least one of the endpoints of each leaf in $\cG_L$ must be the non-marker point. Fix an orientation in $L$ and consider, for a given $c \in \cG_L$ which we know separates $L$ in two open disks $D^{\pm}_c$, each one verifying that the limit set of $\Phi_L(D^{\pm}_c)$ in $\partial \HH^2$ is one of the intervals joining the landing points of $c$ (see Corollary \ref{coro.separationH2}). 
 
 \begin{lema}\label{lema.hsdfflanding}
 Let $L \in \wt{\cF_i}$ and $c \in \cG_L$ then either $\partial^+\Phi(c)= \{\alpha(L)\}$ or $\partial^{-} \Phi (c) = \{\alpha(L)\}$. 
 \end{lema}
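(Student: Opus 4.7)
I argue by contradiction. Suppose there is $c \in \cG_L$ with $L \in \wt{\cF_1}$ such that $\partial^+\Phi(c) = \{\xi^+\}$ and $\partial^-\Phi(c) = \{\xi^-\}$ with both $\xi^\pm \neq \alpha(L)$. The previous lemma rules out bubble leaves under the Hausdorff hypothesis, so $\xi^+ \neq \xi^-$. Let $E \in \wt{\cF_2}$ be the leaf with $c \subset L \cap E$. The curve $c$ separates $L$ into two open disks $D^+, D^-$ whose limit arcs $J^+, J^-$ partition $\partial \HH^2 \setminus \{\xi^\pm\}$; arrange notation so that $\alpha(L) \in \mathrm{int}(J^-)$, so leaves contained in $D^+$ must have both landing points in $\overline{J^+}$ by Corollary \ref{coro.separationH2}.

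First, I would push $c$ to nearby leaves of $\wt{\cF_1}$. Choose a small closed interval $I$ in the leaf space of $\wt{\cF_1}$ with $L$ in its interior and $\{\alpha(L') : L' \in I\} \subset J^- \setminus \{\xi^\pm\}$, which is possible by continuity of $\alpha$. By the small visual measure property (Proposition \ref{prop.visual}) combined with Proposition \ref{prop.setDfol}, both rays of $c$ are eventually contained in $\wihatD_\eps(L, I)$, where $\eps$ is the constant of Proposition \ref{prop-pushing}. Applying Proposition \ref{prop-pushing} to each tail, for every $L' \in I$ one obtains a pair of rays in $L' \cap E$ landing at $\xi^+$ and $\xi^-$ respectively. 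Since $\cG_{L'}$ is Hausdorff by Proposition \ref{p.dichotomyHsdff}, Lemma \ref{lem.nonHsdfftwointersect} gives that $L' \cap E$ is a single leaf $c_{L'} \in \cG_{L'}$ landing at $\xi^\pm$.

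Next, I would invoke minimality to set up a dynamical contradiction. By the density part of Proposition \ref{prop.minimality}, choose $\gamma \in \pi_1(M)$ whose axis $L_\gamma$ lies in $I$, with repelling fixed point on $\partial \HH^2$ equal to $\alpha(L_\gamma)$ and attracting fixed point $\gamma^+$ in $\mathrm{int}(J^+)$. Then the iterates $\gamma^n c_{L_\gamma}$ are pairwise distinct leaves of $\cG_{L_\gamma}$ (each in $L_\gamma \cap \gamma^n E$) with landing points $\gamma^n \xi^+, \gamma^n \xi^-$, both converging to $\gamma^+$ as $n \to \infty$.

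The final step is to extract a contradiction from this configuration. Using Proposition \ref{prop.visualconsequence}, each $\gamma^n c_{L_\gamma}$ stays within a uniformly bounded neighborhood (in $L_\gamma$) of the geodesic between its two endpoints. As both endpoints converge to the single point $\gamma^+$, combining this geometric control with a symmetric application of pushing from the $\wt{\cF_2}$ side and the characterization of non-separated leaves in Proposition \ref{prop-nonseplands} (which forces their landing at the non-marker of the common $\wt{\cF_2}$ leaf) one can arrange that the iterated family $\gamma^n c_{L_\gamma}$ yields either a bubble leaf landing at $\gamma^+$ (contradicting the previous lemma together with Hausdorffness) or a pair of non-separated leaves of some $\cG_{L_\gamma}$ or $\cG_{\gamma^n E}$ (directly contradicting Hausdorffness via Proposition \ref{p.dichotomyHsdff}). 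The main obstacle will be carrying out this last accumulation precisely: it requires carefully exploiting the interplay between the two transverse foliations, using the $\wt{\cF_2}$-pushing to identify how the leaves $\gamma^n c_{L_\gamma}$ organize in $\cG_E$ and its iterates, and then extracting a genuine non-separated pair or degenerate bubble from the combined dynamics of $\gamma$ and of pushing.
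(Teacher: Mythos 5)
Your opening moves are sound: pushing both tails of $c$ through an interval $I$ of the leaf space (this uses the marker property behind Proposition \ref{prop.setDfol}, not really the small visual measure), and using Hausdorffness plus Lemma \ref{lem.nonHsdfftwointersect} to conclude that each $L'\in I$ meets $E$ in a single leaf $c_{L'}$ with ideal points $\xi^+,\xi^-$, in particular on a $\gamma$-invariant leaf $L_\gamma$. The problem is the contradiction step. First, you misquote Proposition \ref{prop.visualconsequence}: it says the geodesic joining the endpoints of a segment of $\cG_L$ lies in a uniform neighborhood of that segment, \emph{not} that the segment lies in a uniform neighborhood of the geodesic; the containment you invoke is essentially the leafwise quasigeodesic property, which is precisely what this whole section is trying to establish and cannot be assumed (the remark after Proposition \ref{prop.visualconsequence} stresses exactly this). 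Second, and more seriously, with your \emph{linked} choice of $\gamma$ (fixed points separating $\xi^+$ from $\xi^-$, so that both ideal points of $\gamma^n c_{L_\gamma}$ converge to $\gamma^+$) the configuration does not by itself produce a bubble leaf or a non-separated pair. If the nested disks cut off by the $\gamma^n c_{L_\gamma}$ on the $\gamma^+$ side kept meeting a fixed compact set, one could indeed extract a leaf with both ideal points at $\gamma^+$; but nothing you cite prevents the leaves $\gamma^n c_{L_\gamma}$ from escaping every compact set of $L_\gamma$ and marching off to the ends of the leaf space of $\cG_{L_\gamma}$ --- under your standing Hausdorff hypothesis that leaf space is homeomorphic to $\RR$, $\gamma$ acts on it freely, and free orbits do exactly that --- in which case no accumulation occurs and no contradiction is reached. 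You acknowledge this ``accumulation'' step is unresolved; it is the actual content of the proof, not a loose end.

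The paper closes the argument by making the opposite, \emph{unlinked} choice: using Proposition \ref{prop-pushing} one may assume $L$ itself is invariant under a $\gamma$ whose fixed points do not link $\{\xi^+,\xi^-\}$, i.e.\ both landing points of $c$ lie in one complementary interval of $\{\gamma^+,\gamma^-\}$. Then the complementary disk $D^+_c$ of $c$ not limiting on $\gamma^\pm$ has pairwise disjoint translates $D^+_c,\gamma D^+_c,\gamma^2 D^+_c$, and a short plane-topology argument shows that $c$ and $\gamma^2 c$ cannot meet a common transversal; since a Hausdorff (hence $\RR$) leaf space would force such a transversal, this contradicts the standing hypothesis via Proposition \ref{p.dichotomyHsdff}. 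That unlinked choice is the idea missing from your argument; with it, neither the visual-measure control you tried to use nor any accumulation analysis is needed.
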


\begin{figure}[ht]
\begin{center}
\includegraphics[scale=0.56]{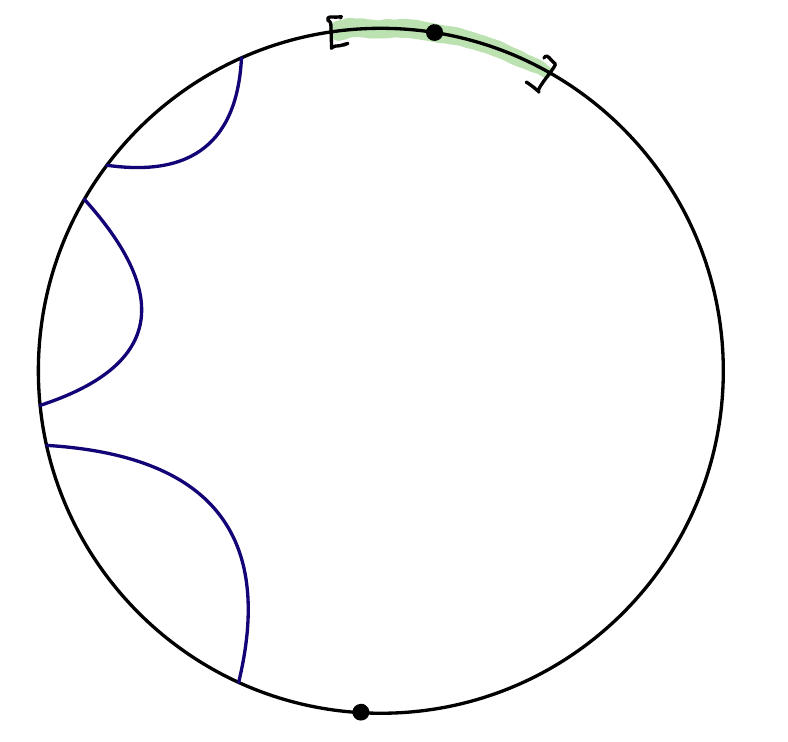}
\begin{picture}(0,0)
\put(-180,103){$\gamma c$}
\put(-159,153){$\gamma^2 c$}
\put(-203,116){$\gamma D^+_c$}
\put(-123,206){$\gamma^+=\alpha(L_\gamma)$}
\put(-150,54){$c$}
\put(-183,52){$D_c^+$}
\put(-125,15){$\gamma^{-}$}
\end{picture}
\end{center}
\vspace{-0.5cm}
\caption{{\small If no ray lands in $\alpha(L)$ the leaf space cannot be Hausdorff.}}\label{fig-nonhsdff}
\end{figure}

\begin{proof}
Let $L \in \wt{\cF_i}$ and $c \in \cG_L$ and assume that $\alpha(L) \notin \partial^{+} \Phi(c) \cup \partial^- \Phi(c)$. 

Proposition \ref{prop-pushing} allows us to assume that the leaf $L$ is fixed by some $\gamma \in \pi_1(M) \setminus \{\mathrm{id}\}$, and such that both points in $\partial^{+} \Phi(c) \cup \partial^- \Phi(c)$ belong to the same connected component of $\partial \HH^2$ minus the fixed points of $\gamma$. Indeed, it is enough to consider an interval $I$ of the leaf space of $\wt{\cF_i}$ containing $L$ in the interior so that the non-marker points of leaves in $I$ are never in $ \partial^{+} \Phi(c) \cup \partial^- \Phi(c)$:
hence given an even smaller interval we can push the entire leaf $c$ to nearby
leaves of $\wcF_i$ along the leaf of the other foliation with Proposition
\ref{prop-pushing}.
 Then since leaves with non-trivial stabilizer are dense (see \S~\ref{s.minimality}), we can push to one of them in $I$ to get the same property in such a leaf. 
To get that the fixed points of $\gamma$ do not link with 
$ \partial^+\Phi(c) \cup \partial^-\Phi(c) $ we can  use 
for instance that the set of closed geodesics in $S$ is dense in $S$ (cf. \S~\ref{s.minimality}).

We now choose orientations so that $D^+_c$ is the complementary
component of $c$ which does not limit on $\gamma^+, \gamma^-$.
If there is a transversal to $\cG_L$ in $L$ from $c$ to $\gamma c$,
then the image of this under $\gamma$ is a transversal
from $\gamma c$ to $\gamma^2 c$. 
Notice that $D^+_c, \gamma(D^+_c), \gamma^2(D^+_c)$ are
pairwise disjoint.
It now follows that $c, \gamma^2 c$
cannot intersect a common transversal and the leaf space cannot be Hausdorff. (See figure~\ref{fig-nonhsdff}.) 
\end{proof}

\subsection{Construction of the Anosov foliation}

To complete the proof that the foliation $\cG$ is an Anosov foliation we need to show that for every $L \in \wt{\cF_i}$ and $\xi \in \partial \HH^2 \setminus \{ \alpha(L) \}$ there is a leaf $c \in \cG_L$ such that $\partial \Phi(c) = \{\xi, \alpha(L) \}$ and that such a leaf is unique; this will produce an equivariant equivalence with the leaf space of the orbit flow of the geodesic flow in negative curvature \cite{Ghys} which is known to be enough to show that the foliation has the desired properties. 

We divide this into some steps: 

\begin{lema}
Let $L \in \wt{\cF_i}$ and $\xi \in \partial \HH^2$, then, there exists $c \in \cG_L$ with a ray landing in $\xi$ (i.e. such that $\xi \in \partial^{\pm} \Phi(c)$).  
\end{lema}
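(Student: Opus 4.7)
My plan is to split on whether $\xi = \alpha(L)$. If $\xi = \alpha(L)$, Lemma \ref{lema.hsdfflanding} already gives the result, since it shows that every leaf of $\cG_L$ has a ray landing at $\alpha(L)$. So I would assume $\xi \neq \alpha(L)$ from now on and produce $c$ as a limit of leaves whose landing points approximate $\xi$.

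By Lemma \ref{lema.landmarkerthendense} applied to shrinking neighborhoods of $\xi$ disjoint from $\alpha(L)$, I obtain a sequence of leaves $c_n \in \cG_L$ whose rays land at points $\xi_n \neq \alpha(L)$ with $\xi_n \to \xi$. Lemma \ref{lema.hsdfflanding} forces the landing set of $c_n$ to be exactly $\{\alpha(L), \xi_n\}$. To extract a limit, I would fix a basepoint $o \in L$ and let $x_n \in c_n$ be a closest point to $o$ in $L$. Since the closure of the shadow of $c_n$ from $o$ (identified with $\partial \HH^2$ via $\Phi_L$) contains both $\alpha(L)$ and $\xi_n$, for large $n$ this shadow has visual measure bounded below by a positive constant depending only on the visual distance between $\alpha(L)$ and $\xi$. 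Proposition \ref{prop.visual} (small visual measure) then forces $c_n$ to meet a ball $B_R(o)$ for some uniform $R>0$; in particular $d_L(o, x_n) \leq R$. Passing to a subsequence $x_n \to x \in L$, continuity of $\wt\cG$ yields a limit leaf $c \in \cG_L$ through $x$ with $c_n \to c$ locally uniformly in $L$. By Theorem \ref{prop.landing} and Lemma \ref{lema.hsdfflanding} the landing set of $c$ is $\{\alpha(L), \eta\}$ for some $\eta \in \partial \HH^2$, and it remains to show $\eta = \xi$.

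The hard part is this identification, for which I would exploit Proposition \ref{prop.visualconsequence}. Let $r_n^-$ denote the ray of $c_n$ from $x_n$ landing at $\xi_n$, and let $h_n$ be the geodesic ray in $L$ from $x_n$ to $\xi_n$; applying Proposition \ref{prop.visualconsequence} to truncated segments of $r_n^-$ and passing to the limit yields $h_n \subset N_{a_0}(r_n^-)$ for a uniform constant $a_0 > 0$. Similarly let $r^-$ be the ray of $c$ from $x$ landing at $\eta$. Since $x_n \to x$ and $\xi_n \to \xi$, the geodesic rays $h_n$ converge locally uniformly to the geodesic ray $h$ from $x$ to $\xi$. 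Combining this with $c_n \to c$ locally uniformly allows me to pass the inclusion to the limit: any point $y \in h$ far from $x$ is a limit of $y_n \in h_n$ within $a_0$ of points $p_n \in r_n^-$ lying in a compact set, and some subsequence of $p_n$ converges to a point $p \in r^-$ with $d_L(y, p) \leq a_0$. Thus the tail of $h$ lies in $N_{a_0}(r^-)$. Since $\Phi_L$ is a quasi-isometry and $r^-$ lands at $\eta$, the closure of $N_{a_0}(r^-)$ in $L \cup \partial \HH^2$ accumulates only at $\eta$; therefore the landing point $\xi$ of $h$ must coincide with $\eta$, completing the proof.

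The main obstacle is this last paragraph: the leaves $c_n$ converge only on compact subsets of $L$ while landing points are a property at infinity, so controlling the limiting landing point requires coupling local convergence of leaves with the uniform bounded-distance property of Proposition \ref{prop.visualconsequence} to transfer asymptotic information from the $c_n$ to $c$.
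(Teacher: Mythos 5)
Your proof is correct, but it takes a genuinely different route from the paper's. The paper's argument is topological and shorter: knowing from Lemma~\ref{lema.hsdfflanding} that every leaf has ideal points $\{\alpha(L), \xi(c)\}$, it shows the map $c \mapsto \xi(c)$ on the leaf space is continuous by a separation argument (if $\xi_n := \xi(c_n) \to \eta \neq \xi(c)$, density gives a leaf $\ell$ with ideal points $\{\alpha(L), \zeta\}$ where $\zeta$ lies strictly between $\eta$ and $\xi(c)$ on the arc avoiding $\alpha(L)$; then $\ell$ separates $c$ from $c_n$ for large $n$, contradicting $c_n \to c$); continuity with dense image from $\RR$ to $\RR$ then forces surjectivity, no compactness estimate and no use of small visual measure needed. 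You instead build a sequence $c_n$ with $\xi_n \to \xi$ directly and prove it subconverges, which requires a compactness bound and an identification of the limit's landing point, both supplied by Proposition~\ref{prop.visualconsequence}. Your route is more explicit about where the limit leaf comes from, at the cost of invoking the small visual measure machinery that the paper's proof bypasses. Two points worth tightening in your version: (i) the step through the visual measure of the shadow of the whole leaf is valid but indirect, and it is cleaner to say the geodesic $h_n$ from $\alpha(L)$ to $\xi_n$ passes within $R_0 = R_0(\delta_0)$ of $o$ because the angle at $o$ between $\alpha(L)$ and $\xi_n$ is bounded below, and $c_n$ must come within $a_0$ of that point of $h_n$ by Proposition~\ref{prop.visualconsequence}; (ii) at the end you assert $p = \lim p_n$ lies on the ray $r^-$ landing at $\eta$, which is not immediate since $r_n^-$ could a priori converge to the other ray of $c$; the clean fix is to note only that the tail of $h$ lies in $B_{a_0}(c)$, so $\xi$ is a limit point of $B_{a_0}(c)$ and hence $\xi \in \{\alpha(L), \eta\}$, and since $\xi \neq \alpha(L)$ by assumption you conclude $\xi = \eta$.
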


\begin{proof}
Recall from Lemma \ref{lema.landmarkerthendense} that the set of points in $\partial \HH^2$ for which there is a ray in $L$ which lands in that point is dense. 

Consider a sequence $c_n$ of leaves of $\cG_L$ converging to some leaf $c \in \cG_L$ (which is unique because the leaf space of $\cG_L$ is Hausdorff). 
Let $\xi_n$ be the landing point of $c_n$ different from $\alpha(L)$ and let $\xi$ and $\alpha(L)$ be the landing points of $c$. Assume that (up to taking a subsequence) $\xi_n \to \eta \neq \xi$. One can then consider a ray which lands in a point which belongs to the interval between $\xi$ and $\eta$ not containing $\alpha(L)$. This ray belongs to some leaf $\ell \in \cG_L$. By the
previous lemma, the leaf $\ell$ has one ideal
point in $\alpha(L)$. It follows that $\ell$
must then separate the curves $c_n$ with large $n$ from $c$, contradicting the fact that $c_n \to c$. 

This continuity plus the fact that the landing points are dense implies that every point different from $\alpha(L)$ 
is a landing point as we wanted to show. 
\end{proof}

Now we show uniqueness as a consequence of minimality of the foliations (in particular, Theorem \ref{teo.matsumoto} which implies that the non-marker points of leaves vary monotonically, see \S \ref{ss.nonmarker}). 

\begin{lema}\label{lema-nolandinginterval} 
Let $L \in \wt{\cF_i}$ and $\xi \in \partial \HH^2 \setminus \{\alpha(L)\}$, then, the leaf $c \in \cG_L$ such that $\partial^{\pm}\Phi(c) = \{\xi, \alpha(L)\}$ is unique. 
\end{lema}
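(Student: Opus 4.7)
The plan is to argue by contradiction. Suppose $c_1, c_2 \in \cG_L$ were distinct leaves both landing at $\{\xi, \alpha(L)\}$. Being disjoint properly embedded curves in $L \cong \mathbb{R}^2$, they would bound a strip $D \subset L$ by Proposition \ref{prop.plane}. I would pick a transversal $\tau : [0,1] \to L$ to $\cG_L$ crossing $\overline{D}$ from $c_1$ to $c_2$, and for each $t \in [0,1]$ let $c_t \in \cG_L$ be the leaf through $\tau(t)$ and $E_t \in \wt{\cF_j}$ (with $j$ the index different from $i$) the leaf of $\wt{\cF_j}$ containing $c_t$. Hausdorffness of $\cG_L$ forces $t \mapsto c_t$ to be injective, and Lemma \ref{lem.nonHsdfftwointersect} in turn forces $t \mapsto E_t$ to be injective.

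Next, by Lemma \ref{lema.hsdfflanding}, each $c_t$ has $\alpha(L)$ as one landing point; I denote the other by $\eta(t)$, noting $\eta(0) = \eta(1) = \xi$. Bubble leaves are ruled out by the bubble-leaf lemma proved earlier in \S\ref{s.Hsdff} together with Proposition \ref{p.dichotomyHsdff}, so $\eta(t) \neq \alpha(L)$. Applying Lemma \ref{lema.hsdfflanding} inside $E_t$ (landing sets from $L$ and $E_t$ coincide by Corollary \ref{coincide}) then yields $\alpha(E_t) \in \{\alpha(L), \eta(t)\}$.

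The decisive monotonicity input is that $\alpha$ is a covering map from the leaf space of $\wt{\cF_j}$, which is $\mathbb{R}$ by Corollary \ref{coro.foliations}, onto $\partial\HH^2$: this follows from the identification of the leaf space of $\wihat{\cF_j}$ with $\partial\HH^2$ via $\alpha$ in \S\ref{ss.minimal}, together with the fact that $\mt \to \wihatM$ is a cyclic cover whose deck group acts trivially on $\partial\HH^2$. Consequently $t \mapsto \alpha(E_t)$ is a continuous injection $[0,1] \to \partial\HH^2$, hence a homeomorphism onto an arc. In particular $\alpha(E_0) \neq \alpha(E_1)$; since both lie in $\{\alpha(L), \xi\}$, after relabeling I may assume $\alpha(E_0) = \alpha(L)$ and $\alpha(E_1) = \xi$. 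For $t \in (0,1)$ the value $\alpha(E_t)$ lies in the open arc from $\alpha(L)$ to $\xi$, in particular differs from $\alpha(L)$, forcing $\eta(t) = \alpha(E_t)$; thus $\eta(t) \to \alpha(L)$ as $t \to 0^+$.

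The contradiction then comes from an independent continuity statement for $\eta$, based on small visual measure. Fixing a basepoint $x \in c_0$ and points $x_t \in c_t$ with $x_t \to x$, Proposition \ref{prop.visualconsequence} together with its extension to rays (noted immediately after its proof) places the geodesic ray in $L$ from $x_t$ to $\eta(t)$ inside the $a_0$-neighborhood of the ray of $c_t$ from $x_t$ landing at $\eta(t)$. Since Hausdorff leaf space makes $c_t \to c_0$ on compact subsets of $L$, any subsequential limit of these geodesic rays stays inside the $a_0$-tube of the ray of $c_0$ landing at $\xi$; as $\xi$ is the only ideal endpoint of that tube, one must have $\eta(t) \to \xi$, contradicting $\eta(t) \to \alpha(L) \neq \xi$. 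The main obstacle in this plan is precisely the continuity of the landing map $\eta$, where the small visual measure property does the essential geometric work of tying the landings of nearby leaves together.
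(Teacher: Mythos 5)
Your setup coincides with the paper's (the strip $D$, the transversal, the leaves $c_t\subset E_t\cap L$, injectivity of $t\mapsto E_t$ via Lemma \ref{lem.nonHsdfftwointersect}, and $\alpha(E_t)\in\{\alpha(L),\eta(t)\}$ from Lemma \ref{lema.hsdfflanding} applied in $E_t$ together with Corollary \ref{coincide}), but the step you call decisive has a genuine gap. The map $\alpha$ from the leaf space of $\wt{\cF_2}\cong\RR$ to $\partial\HH^2$ is a covering, hence only \emph{locally} injective, and a covering composed with an injective path need not be injective: the path $t\mapsto E_t$ could a priori wrap around, and in particular $E_0$ and $E_1$ could differ by a power of the central deck transformation, in which case $\alpha(E_0)=\alpha(E_1)$. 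So neither the relabeling $\alpha(E_0)=\alpha(L)$, $\alpha(E_1)=\xi$ nor the assertion that $\alpha(E_t)$ stays in the open arc (hence $\neq\alpha(L)$) is justified, and without them the conclusion $\eta(t)\to\alpha(L)$ does not follow. There is a second gap at the end: Proposition \ref{prop.visualconsequence} places the geodesic ray from $x_t$ to $\eta(t)$ in the $a_0$-tube of a ray of $c_t$, but on compact sets $c_t$ is only known to converge to the whole leaf $c_0$ (its far portions may accumulate on either ray of $c_0$), so subsequential limits of those geodesics are confined only to a bounded neighborhood of $c_0$, whose ideal points are $\{\xi,\alpha(L)\}$. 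That gives accumulation of $\eta(t)$ in $\{\xi,\alpha(L)\}$, which is perfectly compatible with $\eta(t)\to\alpha(L)$, so no contradiction closes. (The continuity statement $\eta(t)\to\xi$ is in fact true here, but it is proved by the separation argument of the lemma immediately preceding this one---using a leaf with ideal points $\{\alpha(L),\zeta\}$ to separate $c_t$ from $c_0$---not by the tube estimate alone.)

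The ingredient you never use, and which makes the lemma immediate, is that each intermediate leaf $c_t$, $t\in(0,1)$, lies in the region bounded by $c_1\cup c_2\cup\{\xi,\alpha(L)\}$, so its ideal points lie in $\{\xi,\alpha(L)\}$; excluding bubbles, $\eta(t)=\xi$ for every $t$, hence $\alpha(E_t)\in\{\xi,\alpha(L)\}$ for all $t\in[0,1]$. A continuous map of an interval into a two-point set is constant, so $\alpha$ would be constant on the nondegenerate interval $\{E_t\}$ of the leaf space of $\wt{\cF_2}$, contradicting Theorem \ref{teo.matsumoto} (via the identification of the leaf space of $\wihat\cF_2$ with $\partial\HH^2$ in \S\ref{ss.minimal}, $\alpha$ is locally injective). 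This is the paper's proof; note that it only needs local injectivity of $\alpha$, so the wrap-around issue that undermines your monotonicity step never arises, and no continuity-of-landing input is required.
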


\begin{proof}
Assume that in a leaf $L \in \wt{\cF_1}$ (the case where $L \in \wt{\cF_2}$ is symmetric) there are two leaves of $\cG_L$ so that they land in $\xi$ and $\alpha(L)$. Since the leaf space of $\cG_L$ is Hausdorff and one of the landing points of every $c\in \cG_L$ is $\alpha(L)$ we deduce that there is an interval $I$ in the leaf space of $\wt{\cF_2}$ consisting on leaves $E$ such that $E \cap L$ is a leaf $c \in \cG_L$ so that $\partial^{\pm} \Phi_L(c) = \{\xi, \alpha(L)\}$. From the previous lemma applied to $\wt{\cF_2}$ we know that if $E \in I$ and if $c' \in \cG_E$ then $\alpha(E) \in \partial^{\pm} \Phi_E(c')$. Considering $c \in E \cap L$ we deduce that for every $E \in I$ we have that $\alpha(E) \in \{\xi , \alpha(L)\}$. Since the point $\alpha(E)$ varies continuously with $E$ we deduce that there is an interval of $\wt{\cF_2}$ which has the same non-marker point while the non-marker point varies monotonically. 
\end{proof}

Now we are ready to prove the main result of this section. 

\begin{proof}[Proof of Theorem \ref{teo.hsdffcase}]
Note that from our previous results we have a bijection between the space of leaves of $\wihat\cG$ (the lift of the foliation $\cG$ to $\wihatM$ the intermediate cover of $M$) and the set of pairs of distinct points of $\partial \HH^2$. This bijection is continuous and equivariant under deck transformations which is enough to show that the foliation is homeomorphic to the orbit foliation of the geodesic flow of a hyperbolic metric on $S$. (See \cite{Ghys} or \cite{BFP} for more details.)  
\end{proof}


\section{The Matsumoto-Tsuboi example revisited}\label{ss.MatsumotoTsuboi}
Here we revisit the example from \cite{MatsumotoTsuboi} from the point of view of our results. In \cite{MatsumotoTsuboi} an example of a pair of transverse foliations\footnote{In \cite{MatsumotoTsuboi} triples of transverse foliations are considered, but for the interests of our work we will ignore the third foliation.} of $\mathbb{T}^2 \times [-1,1]$ is constructed in such a way 
that the boundaries match with the weak stable and weak unstable foliations seen in the lift to $T^1S$ of a simple closed geodesic. That is, if $\ell$ is a closed geodesic in $S$, it lifts to a torus $T \subset T^1S$ which contains two periodic orbits of the geodesic flow (associated to the orbits associated to $\gamma$ with both orientations). Note that the weak stable and weak unstable foliations are horizontal, so they are transverse to $T$. Figure \ref{fig-transversetori} depicts the intersection of the foliations with the torus in its universal cover (which become tangent at the two periodic orbits which are common to both weak foliations). 

This way one can cut $T^1S$ along such a torus and glue this new foliation of  $\mathbb{T}^2 \times [-1,1]$ to obtain a new pair of transverse foliations of $T^1S$. From the way this pair of foliations is constructed, it is clear that the new foliations, called $\cF_1, \cF_2$, will continue to be minimal and their intersection foliation $\cG$ will verify that for every leaf $L \in \wt{\cF_i}$ the foliation $\cG_L$ will not have Hausdorff leaf space.  In this paper we will not use this construction, but we thought it could be relevant to spend some time explaining the example from our point of view which was relevant for us to formulate the right statements in the next two sections. We plan to study this example as well as other examples of transverse foliations with Reeb surfaces in a future work. 

\subsection{The construction and its possible variants}

Let $S$ be a closed surface with a hyperbolic metric and let $\varphi_t: T^1S \to T^1S$ be the geodesic flow. The metric induces an action of $\Gamma = \pi_1(S)$ on $\mathbb{H}^2$ and on $T^1 \mathbb{H}^2 = \wihatM$ so that the weak stable foliation of $\varphi_t$ is the foliation $\cF_{ws}$ defined in \S \ref{s.minimalfol} by identifying $T^1 \mathbb{H}^2$ with $\HH^2 \times \partial \HH^2$. The weak unstable foliation will be called $\cF_{wu}$ and is obtained in a similar process, only that the identification of $T^1\mathbb{H}^2$ with $\HH^2 \times \partial \HH^2$ is made by identifying the point at infinity as the limit of the geodesic in the backwards direction. Both $\cF_{ws}$ and $\cF_{wu}$ are transverse to the fibers.   

Fix any simple closed geodesic $\ell$ in $M=T^1S$ for the metric $g$ and let $T_\ell$ be the torus obtained by looking at the unit vectors tangent at points of $\ell$. This torus intersects both $\cF_{ws}$ and $\cF_{wu}$ transversally and the flow is transverse to $T_\ell$ except at the two orbits of $\varphi_t$ which are contained in $T_\ell$. See figure~\ref{fig-transversetori}. 

\begin{figure}[ht]
\begin{center}
\includegraphics[scale=0.68]{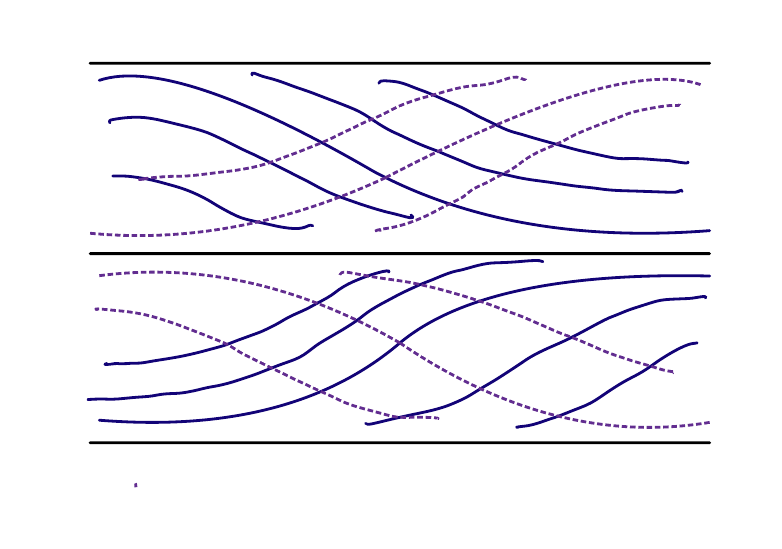}
\end{center}
\vspace{-0.5cm}
\caption{{\small The intersection of the transverse tori with the weak stable and weak unstable of the geodesic flow.}}\label{fig-transversetori}
\end{figure}

In \cite{MatsumotoTsuboi} a construction of two transverse foliations $\cS, \cU$ of $T_\ell \times [0,1]$ are given intersecting the boundary tori with the following properties: 

\begin{itemize}
\item each leaf $S \in \cS$ is either a cylinder (i.e. homeomorphic to $S^1 \times [0,1]$) or a band (i.e. homeomorphic to $\RR \times [0,1]$), and the intersection of $S$ with each boundary torus $T_\ell \times \{0\}$ and $T_\ell \times \{1\}$ are the same (in the trivialization
$T_\ell \times [0,1]$) and coincides with the intersection of some leaf of $\cF_{ws}$ with $T_\ell$, 
\item each leaf $U \in \cU$ is either a cylinder or a band, and the intersection with each boundary torus $T_\ell \times \{0\}$ and $T_\ell \times \{1\}$ is the same and coincides with the intersection of some leaf of $\cF_{wu}$ with $T_\ell$, 
\item there are exactly two cylinder leaves $S_1, S_2$ of $\cS$ and  $U_1, U_2$ of $\cU$ corresponding to the periodic orbits of the flow in $T_\ell$,
\item each leaf $S \in \cS \setminus \{S_1, S_2\}$ intersects either $U_1$ or $U_2$ in two connected components which are infinite lines and bound a Reeb band.   
\item each leaf $U \in \cS \setminus \{U_1, U_2\}$ intersects either $S_1$ or $S_2$ in two connected components which are infinite lines and bound a Reeb band.   
\item $S_1$ intersects $U_1$ in at least three circles, 
\item $S_2$ intersects $U_2$ in at least three circles and $U_1$ in at least four circles. 
\end{itemize}

We depict one possibility in figures \ref{fig-MT1} and \ref{fig-MT2}. 

\begin{figure}[ht]
\begin{center}
\includegraphics[scale=0.52]{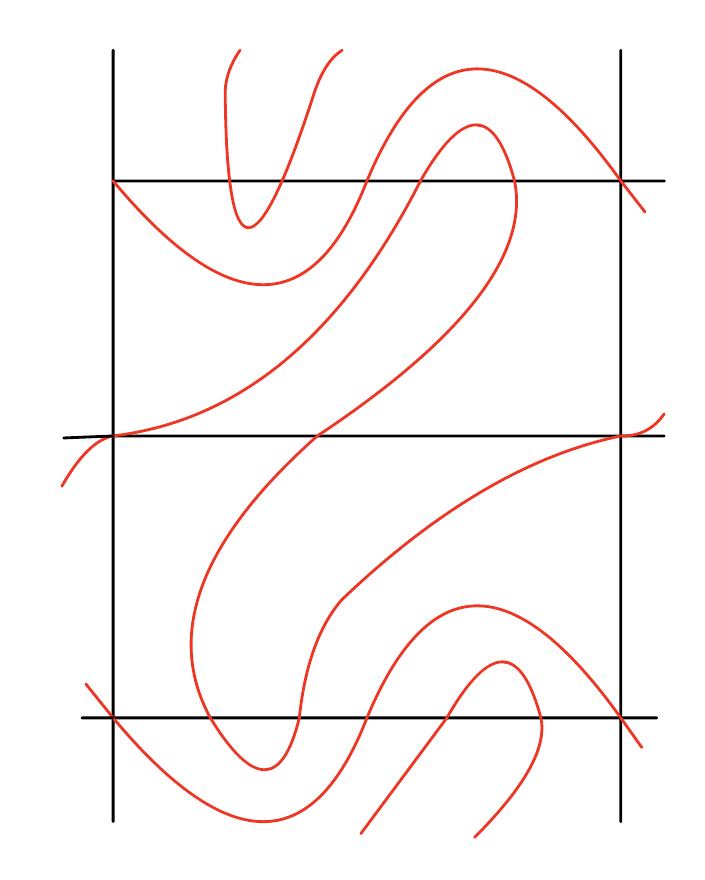}
\end{center}
\vspace{-0.5cm}
\caption{{\small The horizontal lines represent the cylinders $U_1$ and $U_2$ (and their translations up by deck transformations) and the red curves represent the cylinders $S_1$ and $S_2$ (and their translations up by deck transformations). The figure depicts how the leaves intersect and each intersection corresponds to a circle in the leaf. The rest of the leaves of the $\cU$ foliation are also horizontal leaves, but when going around the holonomy of the
compact leaves, they intersect in a different height (in particular, the corresponding leaf is an infinite band whose intersection with the torus is an infinite familly of horizontal segments which accumulate in $U_1$ and $U_2$ when going forward and backward respectively). The other leaves of the $\cS$ foliation interpolate the traces of $S_1$ and $S_2$ and also change as going around the torus in the flow direction.}}\label{fig-MT1}
\end{figure}

\begin{figure}[ht]
\begin{center}
\includegraphics[scale=0.72]{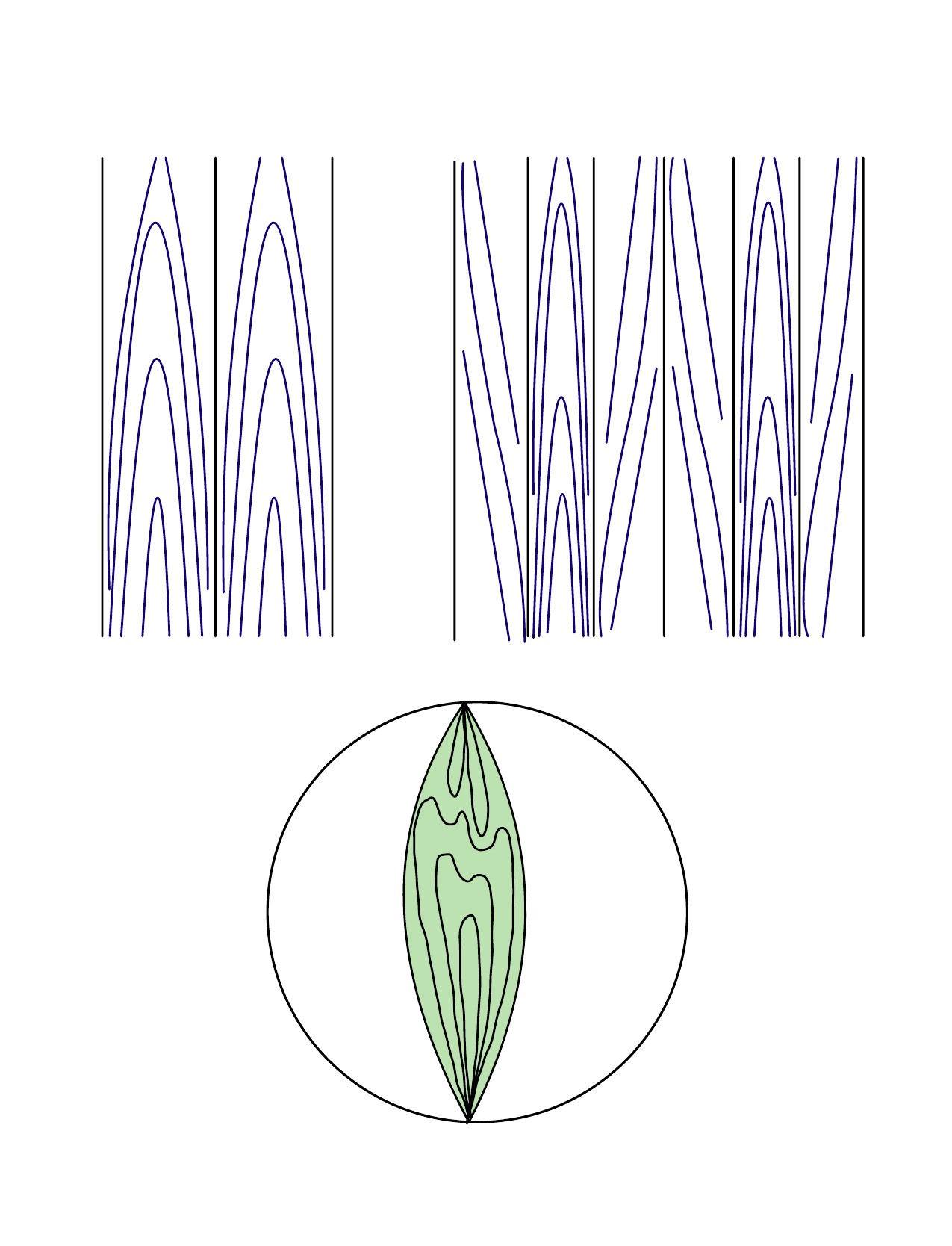}
\end{center}
\vspace{-0.5cm}
\caption{{\small In the left one can see the lift of one of the cylinders ($U_2$) and the intersected foliation lifted to this cover. In the right one sees the same for the other cylinder ($U_1$). These intersections correspond to intersecting the leaves as one moves in the direction of the flow (which makes leaves of the $\cS$ foliation approach in the forward or backward direction to the leaves $S_1$ and $S_2$). The straight lines corresponds to the intersection with the leaves $S_1$ and $S_2$ which are invariant under moving one step up.}}\label{fig-MT2}
\end{figure}

We now consider the foliations $\cF_1, \cF_2$ in $M_0$ obtained by first cutting $T^1S$ foliated with $\cF_{ws}$ and $\cF_{wu}$ along $T_\gamma$ and gluing a copy of $T_\ell \times [0,1]$ foliated by $\cS$ and $\cU$ and gluing the foliation $\cF_{ws}$ with $\cS$ and $\cF_{wu}$ with $\cU$. The manifold $M_0$ is still diffeomorphic to $T^1S$ (we have changed $T_\gamma$ for $T_\gamma \times [0,1]$ with the trivial identification of $T_\gamma \times \{0\}$ and $T_\gamma \times \{1\}$ with the two copies of $T_\gamma$ one gets after 'cutting' $T^1S$) and the foliations $\cF_{1}$ and $\cF_2$ are everywhere transverse. We note that it is possible to choose the foliations $\cS$ and $\cU$ in order that the resulting foliations $\cF_1$ and $\cF_2$ are \emph{not} uniformly equivalent, that is, the homeomorphisms $h_1$ and $h_2$ given by Theorem \ref{teo.matsumoto} sending $\cF_1$ and $\cF_2$ to $\cF_{ws}$ cannot be homotopic to each other (see figure \ref{fig-mtnonuniform1} and \ref{fig-mtnonuniform2}).The example in \cite{MatsumotoTsuboi} provides examples for which both foliations are uniformly equivalent.

\begin{figure}[ht]
\begin{center}
\includegraphics[scale=0.78]{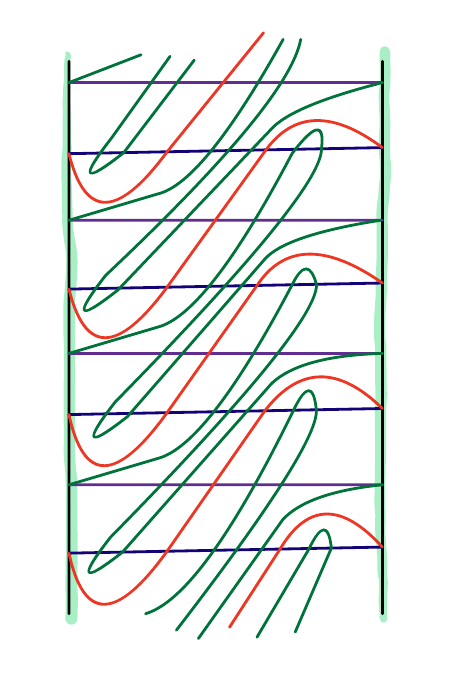}
\end{center}
\vspace{-0.5cm}
\caption{{\small Diagram on how the cylinders of each foliation can intersect in order to obtain a pair of non-uniformly equivalent foliations intersecting transversally.}}\label{fig-mtnonuniform1}
\end{figure}

\begin{figure}[ht]
\begin{center}
\includegraphics[scale=0.78]{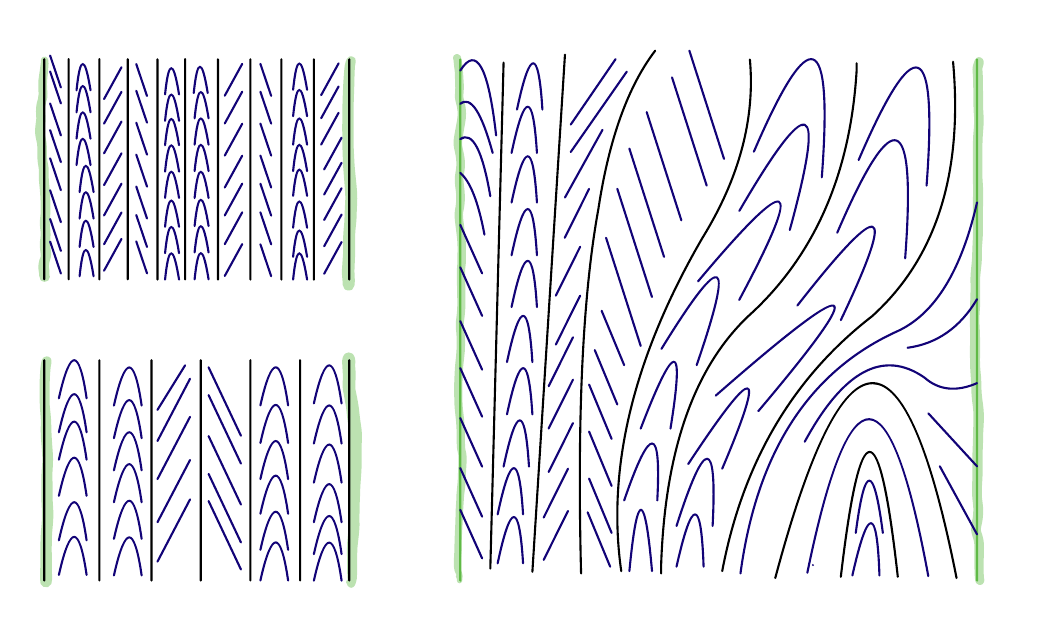}
\end{center}
\vspace{-0.5cm}
\caption{{\small In the left a diagram of the foliations in the cylinder leaves when crossing the tori. In the right, a depiction of the foliation interpolating both foliations of a leaf that accumulates in both cylinders in the different directions.}}\label{fig-mtnonuniform2}
\end{figure}

\begin{remark}\label{rem-variantsMT} 
Note that one can glue more than one block $T_\ell \times [0,1]$,
concatenetaning consecutive blocks.
It is also possible to do the cut and paste process in several disjoint simple closed geodesic to obtain variants of these examples.  More generally, take any Anosov flow in a closed 3-manifold with an embedded Birkhoff torus with only two Birkhoff annuli (that is, when the intersection of the weak stable and weak unstable foliations of the Anosov flow intersect the tori exactly as in this case). Then, one can glue some of the foliations of $\mathbb{T}^2 \times [0,1]$ and produce similar examples. We leave to a forthcoming work the (similar) analysis of this case.  One can also do this with embedded Birkhoff tori with more Birkhoff annuli,
but then the building blocks have to be accordingly adjusted: one needs
at least the same number of Birkhoff annuli, which can be achieved 
by finite covers of $\mathbb{T}^2 \times [0,1]$. This will also be pursued in the future. 
\end{remark}

\subsection{Properties of the intersected foliation} 

From the point of view of this paper, it is relevant for us to understand the properties of the foliation $\cG$ obtained by intersecting $\cF_1$ and $\cF_2$.  The main observations are the following: 

\begin{itemize}
\item Given $L \in \wt{\cF_1}$ we have that the foliation $\cG_L$ coincides with the foliation by geodesics as long as $L$ does not intersect the lift to the universal cover of $T_\ell \times [0,1]$. This lift has the property that it intersects every leaf in some regions which are bounded by two curves at bounded distance and landing at given points of $\partial \HH^2$ independent of the leaf $L$.
\item Inside each of the regions of intersection of the lift of $T_\ell \times [0,1]$ to $\mt$ the foliation $\cG_L$ has some design that depends on the specific foliation (see for instance figure~\ref{fig-MT2} for a depiction of the leaves in some specific examples). However, it is always the case that in these regions there is a pair of non-separated leaves at bounded distance of each other and landing at the two points at infinity of the region. Every leaf that enters the region must land in one of the endpoints of the region. 
In particular each ray of a leaf that enters this region does
not leave this region.
\item Given two non separated leaves $c_1,c_2 \in \cG_L$ they must accumulate in one of the regions mentioned in the previous point. Note that if the non separated leaves are not contained in the region modified between the two tori, the non-separated leaves may be at unbounded distance inside their leaf. 
\end{itemize}

These properties will become more apparent in \S~\ref{s.consReeb} which actually shows that behavior like this is the only possible.

\section{A dichotomy Hausdorff vs Reeb surfaces}\label{s.dichotomy} 

Let $\cF_1, \cF_2$ be two transverse minimal foliations on $M=T^1S$ and let $\cG$ be the foliation obtained by intersecting them. 

We define a \emph{Reeb surface} to be a compact surface with boundary $R$ verifying the following properties:

\begin{itemize}
\item $R$ is contained in a leaf $S$ of $\cF_i$ (with $i=1,2$),
\item the boundary components of $R$ are closed leaves of $\cG$, 
\item if $L$ is a lift of $S$ to the universal cover, then, the lift of $R$ to $L$ is a surface $B$ whose boundary consists of exactly two leaves $c_1, c_2$ of $\wt{\cG}$ at bounded distance which are non-separated in $\cG_L$.
\end{itemize}

In particular, it is easy to see that the surface must be either an annulus or a M\"{o}bius band. 

We will define for a leaf $L \in \wt{\cF_i}$ a \emph{non-Hausdorff bigon} to be a pair of non-separated leaves $c_1,c_2 \in \cG_L$ so that $\partial^{+}\Phi(c_1) \neq \partial^-\Phi(c_1)$ and $\partial^{\pm}\Phi(c_1)= \partial^{\pm} \Phi(c_2)$. 
It does not follow that the leaf space of $\cG_L$ inside the bigon
is the reals (see figure \ref{fig-nhb}). For example one can have bubble leaves $c, c'$ in the
bigon so that the disks $D_c, D_{c'}$ are disjoint.
On the other hand it is easy to see that if the boundaries of
the bigon are invariant by a non trivial deck transformation,
then the leaf space of $\cG_L$ inside the bigon is the reals. 
In particular, the existence of a Reeb surface is equivalent to having a non-Hausdorff bigon invariant under some deck transformation $\gamma \neq \mathrm{id}$.

\begin{figure}[ht]
\begin{center}
\includegraphics[scale=0.78]{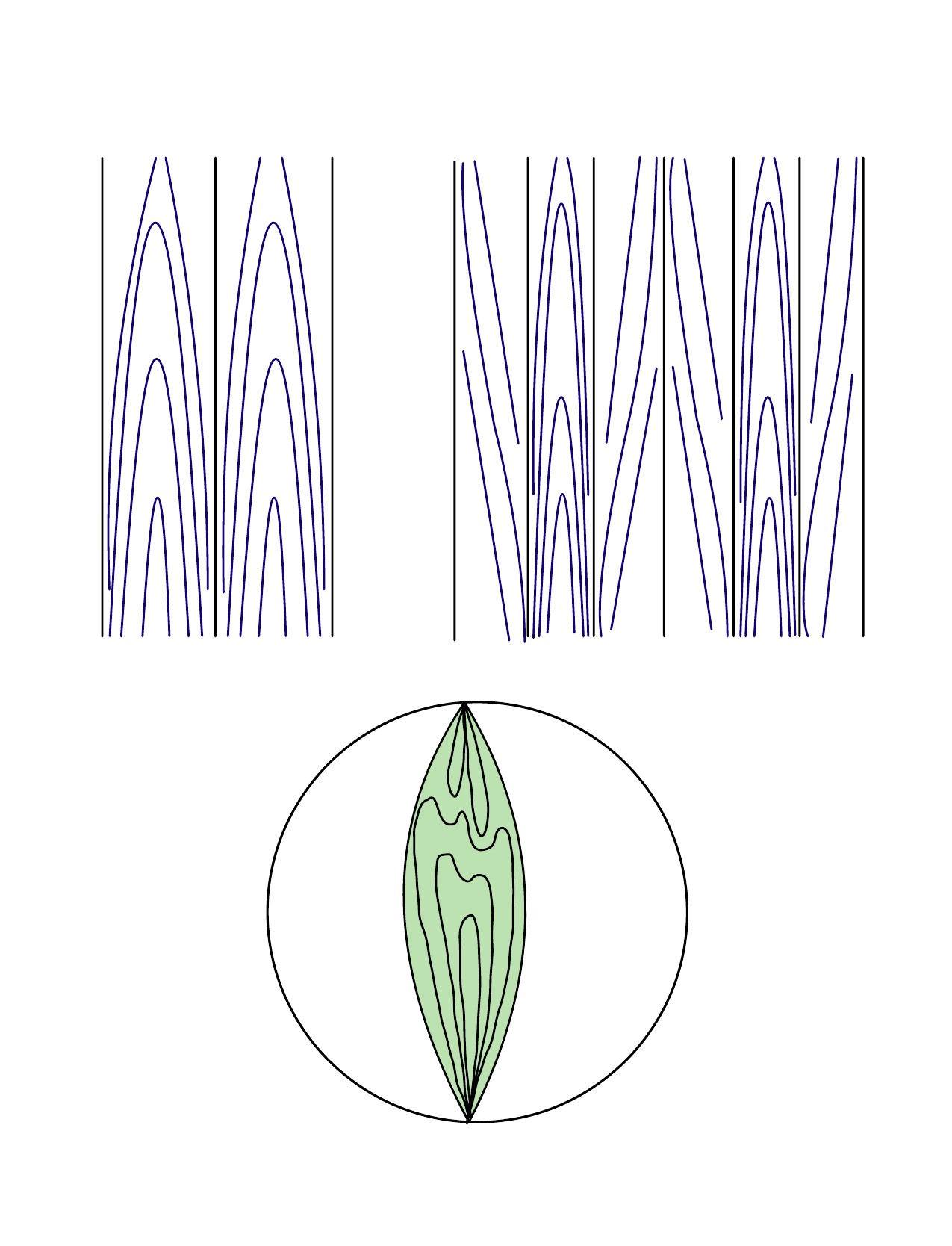}
\end{center}
\vspace{-0.5cm}
\caption{{\small The interior of a non-Hausdorff bigon may not be trivially foliated if it is not $\gamma$-invariant for some $\gamma \in \pi_1(M)$.}}\label{fig-nhb}
\end{figure}

Here we show: 

\begin{teo}\label{prop.dichotomy} 
Either there is a leaf $L \in \wt{\cF_1}$ so that the leaf space of $\cG_L$ is Hausdorff, or there exists a Reeb surface. 
\end{teo}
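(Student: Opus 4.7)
The plan is to argue by contradiction: assume that for no $L \in \wt{\cF_1}$ is $\cG_L$ Hausdorff, and then construct a Reeb surface. By Proposition \ref{p.dichotomyHsdff} every leaf of both $\wt{\cF_1}$ and $\wt{\cF_2}$ has non-Hausdorff $\cG$-leaf space. Fix $L \in \wt{\cF_1}$ and non-separated leaves $c_1,c_2 \in \cG_L$, which by Lemma \ref{lem.nonHsdfftwointersect} both lie in $L \cap E$ for some $E \in \wt{\cF_2}$.

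The first key step is to upgrade this pair to one which is non-separated in \emph{both} $\cG_L$ and $\cG_E$. The key observation is that no transversal in $E$ can connect $c_1$ to $c_2$, since such a transversal would be transverse to $\wt{\cF_1}$ and intersect $L$ twice, contradicting Corollary \ref{coro.foliations}. By Remark \ref{rem.nonseparated}, if $c_1,c_2$ are separated in $\cG_E$ then there exist intermediate non-separated leaves $e_1,e_2 \in \cG_E$; I replace $(c_1,c_2,L,E)$ by $(e_1,e_2,E,L')$ for the appropriate $L' \in \wt{\cF_1}$ and iterate. The subtle point will be showing this process terminates (or that a limit pair works); my expectation is to use a maximality argument, picking pairs minimizing some measure of the non-separated disagreement, combined with the small visual measure property (Proposition \ref{prop.visualconsequence}) to prevent the configurations from escaping to infinity.

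Once $(c_1,c_2)$ is non-separated in both leaf restrictions, by Theorem \ref{prop.landing} all rays land, and Proposition \ref{prop-nonseplands} applied in $\cG_L$ and separately in $\cG_E$ forces the non-separated ray class in $\cG_L$ to land at $\alpha(E)$ and the non-separated ray class in $\cG_E$ to land at $\alpha(L)$. If these were the same ray class (both positive, say), then both rays of $c_i$ would land at the common point $\alpha(L)=\alpha(E)$, a degeneracy which I rule out by slightly perturbing using density of non-marker points (\S\ref{s.minimality}). Thus the two ray classes are opposite, and $c_1 \cup c_2$ forms a non-Hausdorff bigon with ideal vertices $\{\alpha(L),\alpha(E)\}$.

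To obtain a Reeb surface, I use density of axes (Proposition \ref{prop.minimality}): find $\gamma_n \in \pi_1(M)$ with attracting/repelling fixed points converging to $\alpha(L),\alpha(E)$. The leaves $L_n \in \wt{\cF_1}, E_n \in \wt{\cF_2}$ with $\alpha(L_n)=\gamma_n^+$ and $\alpha(E_n)=\gamma_n^-$ are $\gamma_n$-invariant, and Proposition \ref{prop-pushing} transports the bigon structure from $(L,E)$ into $(L_n,E_n)$ for $n$ large. Since $\gamma_n$ fixes both ideal vertices of the pushed bigon, it must preserve the bigon and permute its two boundary leaves; replacing $\gamma_n$ by $\gamma_n^2$ if necessary, it fixes each boundary leaf. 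The two $\gamma_n$-invariant boundary leaves share both ideal endpoints, so by Proposition \ref{prop.visualconsequence} each lies in an $a_0$-neighborhood of the geodesic between those endpoints in $L_n$, yielding bounded distance in $L_n$. Quotienting the closed bigon by $\langle \gamma_n\rangle$ produces a compact annulus or Möbius band in a leaf of $\cF_1$ bounded by closed leaves of $\cG$, i.e.\ a Reeb surface. The main obstacle is the termination/canonicity argument in the first step; everything after that is a guided pushing-and-invariance argument using tools already developed.
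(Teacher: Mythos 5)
Your proposal diverges from the paper's proof at the very first step and has several genuine gaps.

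The core issue is the \emph{upgrade} step: you want to iterate $(c_1,c_2,L,E)\mapsto(e_1,e_2,E,L')$ until you reach a pair that is non-separated in both $\cG_L$ and $\cG_E$. The paper explicitly points out (see the remark following Proposition~\ref{prop-nonseplands}) that non-separation in $\cG_L$ does \emph{not} imply non-separation in $\cG_E$, and \S~\ref{ss.MatsumotoTsuboi} gives concrete examples where the non-separated pairs in $L$ and in $E$ are genuinely distinct. You acknowledge that termination is the subtle point but offer only a vague ``maximality plus small visual measure'' heuristic; there is no control on the iteration, and there is no reason to expect a fixed pair to emerge. This is precisely what the paper avoids: instead of seeking a pair non-separated in both foliations, the paper works directly with a pair non-separated in $\cG_L$, and proves (Proposition~\ref{p.dichotomytech}, via Lemmas~\ref{lema.restrictionintersect}--\ref{lem.fixedpointboundary}) that the non-separated rays projected to $M$ must spiral to a closed leaf $L'$ in a leaf of the two-dimensional foliation. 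This ``forbidden returns'' analysis with good half regions/half bands is the technical heart of the paper's argument and is entirely absent from your proposal.

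Two further flaws. First, your step ruling out coinciding ray classes is backwards: if the non-separated ray class in $\cG_L$ equals that in $\cG_E$, then by Proposition~\ref{prop-nonseplands} applied twice one gets $\alpha(L)=\alpha(E)$, which is \emph{not} a degeneracy to perturb away --- it is exactly the (easier) case handled directly by Proposition~\ref{prop.someReeb}, where the non-separated rays land on a $\gamma$-fixed non-marker point $\alpha(L)$. Perturbation does not preserve the non-separated-pair structure anyway. Second, the final pushing step is unsupported: Proposition~\ref{prop-pushing} preserves the landing set of the pushed ray and only applies in the region $\wihatD_\eps(L,I)$, i.e.\ away from the $\alpha(L)$-direction. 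Pushing your bigon to leaves $L_n$, $E_n$ with $\alpha(L_n)=\gamma_n^+$, $\alpha(E_n)=\gamma_n^-$ would produce rays in $L_n$ that still land at $\alpha(E)$ (the original ideal point), not at $\gamma_n^-$; the other endpoints are undetermined. What you need is a deck transformation fixing the \emph{actual} ideal point $\xi$ of the bigon, and that is exactly what the spiraling analysis in Lemma~\ref{lem.fixedpointboundary} produces. Your last step (from a genuinely $\gamma$-invariant bigon to a Reeb surface) is fine and mirrors Proposition~\ref{prop.someReeb}, but the preceding construction of that bigon is not established.
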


Theorem \ref{prop.dichotomy} together with Theorem \ref{teo.hsdffcase} is enough to conclude the proof of Theorem A. The purpose of this section is to prove Theorem \ref{prop.dichotomy}. 

\subsection{Some analysis on non-separated leaves}

We first find a useful  criterion to obtain Reeb surfaces: 

\begin{prop} \label{prop.someReeb}
Let $L \in \wt{\cF_i}$ containing two non-separated leaves $c_1,c_2 \in \cG_L$ such that there is $\gamma \in \pi_1(M) \setminus \{\mathrm{id}\}$ which fixes $L$ (i.e. $\gamma L = L$). Assume that the non-separated rays $r_1, r_2$ of $c_1, c_2$ (cf. \S \ref{ss.nonseparated}) verify that $\partial \Phi(r_1) = \partial \Phi (r_2) = \xi$ with $\gamma \xi = \xi$. Then, $\cF_i$ has a Reeb surface in the projection of $L$ to $M$.  
\end{prop}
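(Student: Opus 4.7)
The plan is to use Proposition \ref{prop-nonseplands} to identify the common leaf of $\wt{\cF_{3-i}}$ containing $c_1$ and $c_2$, to apply the small visual measure property to bound the $L$-distance between the non-separated rays, and then to use local finiteness of components of the intersection $L \cap E$ to show that some power of $\gamma$ preserves both $c_1$ and $c_2$; the Reeb surface is then obtained by descent to $M$.

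First I would invoke Proposition \ref{prop-nonseplands} to produce a leaf $E \in \wt{\cF_{3-i}}$ with $c_1 \cup c_2 \subset L \cap E$ and $\alpha(E) = \xi$. Since non-marker points classify leaves of $\wihat{\cF_{3-i}}$ via the homeomorphism $\alpha$ (cf. \S\ref{ss.minimal}) and $\gamma\xi = \xi$, after adjusting by a central element of $\pi_1(M)$ (which does not affect projections to $M$) we can arrange $\gamma E = E$. Next, $\gamma$ projects to a hyperbolic element of $\pi_1(S)$ whose two fixed points on $\partial \HH^2$ must be $\alpha(L)$ (since $\gamma L = L$) and $\xi$; thus $\gamma$ acts on $L$ as a translation along the axis joining these ideal points and $A := L/\langle\gamma\rangle$ is an annulus or M\"obius band. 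Proposition \ref{prop.visualconsequence} then shows that the geodesic ray in $L$ from any point on $r_k$ ($k=1,2$) to $\xi$ stays within $a_0$ of $r_k$, and since two geodesic rays in $L$ landing at the same point of $\partial \HH^2 = S^1(L)$ are asymptotic, there is $D > 0$ so that $r_1$ and $r_2$ are uniformly within $L$-distance $D$ of each other sufficiently deep along the rays.

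The main step is to show that some $N \geq 1$ satisfies $\gamma^N c_1 = c_1$ and $\gamma^N c_2 = c_2$. Suppose for contradiction that the orbit $\{\gamma^n c_1\}_{n \in \ZZ}$ consists of pairwise distinct leaves. Since $\gamma L = L$ and $\gamma E = E$, each $\gamma^n c_1$ is a component of $L \cap E$, a closed $1$-submanifold of $L$ by transversality of $\wt{\cF_1}$ and $\wt{\cF_2}$. Fix a compact fundamental domain $K$ for $\langle\gamma\rangle$ in $L$: because $c_1$ is a properly embedded line with $\xi$ as an ideal endpoint, it meets infinitely many of the tiles $\{\gamma^n K\}_{n \in \ZZ}$, so $\gamma^{-n} c_1 \cap K \neq \emptyset$ for infinitely many $n$. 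But every closed $1$-submanifold of a surface meets each compact set in only finitely many components, contradicting distinctness. Applying the same argument to $c_2$ and replacing $N$ by a common multiple gives the claimed power.

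With $\gamma^N$ preserving both $c_1, c_2$, these descend to closed leaves $\bar c_1, \bar c_2$ of $\cG$ in $S := \pi(L) \subset M$. Since $\gamma^N$ has fixed points on $\partial \HH^2$ only at $\{\alpha(L), \xi\}$, the ideal endpoints of $c_1, c_2$ lie in this set; combining the bounded-distance estimate from the first paragraph with the $\gamma^N$-equivariance and compactness of $L/\langle\gamma^N\rangle$, the leaves $c_1, c_2$ remain at uniformly bounded $L$-distance globally. The non-separated pair bounds a $\gamma^N$-invariant region in $L$ (the bigon with ideal vertex $\xi$) that descends to a compact annulus or M\"obius band $R \subset S$ with $\partial R = \bar c_1 \cup \bar c_2$, providing the required Reeb surface. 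The main technical obstacle is the final descent step: one must carefully verify that the bigon closure in $L$ is bounded exactly by $c_1 \cup c_2$ and descends without pathology, including in the degenerate case where one of the $c_j$ is itself a bubble leaf whose two ideal endpoints both coincide with $\xi$.
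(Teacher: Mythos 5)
Your overall strategy matches the paper's: prove that a power of $\gamma$ fixes both $c_1$ and $c_2$ via the small visual measure property (Proposition \ref{prop.visualconsequence}) combined with a finiteness argument near the axis of $\gamma$, then descend. However, your execution of the finiteness step has two gaps.

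First, you "fix a compact fundamental domain $K$ for $\langle\gamma\rangle$ in $L$," but no such domain exists: $L$ is a plane and $\gamma$ acts freely, so $L/\langle\gamma\rangle$ is an open annulus or M\"obius band, which is not compact. What \emph{does} exist is a compact fundamental domain inside a $\gamma$-invariant metric neighborhood of the $\gamma$-axis in $L$; the small visual measure estimate you already invoke (geodesic rays to $\xi$ lie within $a_0$ of $r_i$, and any two geodesic rays to the same ideal point converge) is precisely what shows $r_1,r_2$ and all $\gamma^k r_i$ eventually lie in such a neighborhood, so this is exactly the region to work in. The paper does this implicitly by localizing to ``a uniform neighborhood of $\ell$'' and using the finite area of its quotient. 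As written, your statement is false and your subsequent claim that $c_1$ ``meets infinitely many of the tiles $\{\gamma^n K\}$'' is not justified without this localization.

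Second, your local finiteness argument needs $\gamma^n c_1$ to all be components of the single closed $1$-submanifold $L\cap E$, hence you need $\gamma E = E$. You assert that this can be arranged ``after adjusting by a central element of $\pi_1(M)$,'' but composing $\gamma$ with a nontrivial central element destroys the hypothesis $\gamma L = L$ (the center acts freely on the leaf space of $\wt{\cF_i}$), so you cannot modify $\gamma$ and keep it fixing $L$. A priori one only knows $\gamma E = z^m E$ for some power $z^m$ of the fiber generator, with $m$ possibly nonzero; ruling out $m\ne 0$ requires a separate argument (in Lemma \ref{lem.ext} the paper uses small visual measure to show $\gamma^n E$ stays in a compact region, forcing $\gamma$ to have a fixed leaf in $\wt{\cF_{3-i}}$, hence $m=0$). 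Alternatively one can avoid needing $\gamma E = E$ entirely, as the paper does for this proposition, by bounding the number of pairwise non-separated pairs (not the number of components of a fixed $L\cap E$) that can meet the compact quotient of the axis neighborhood. Either fix is substantive and needs to be written out; as it stands the step from local finiteness to periodicity does not go through.
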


Note in particular that if $\xi = \alpha(L)$ then the hypothesis that $\gamma \xi = \xi$ holds automatically. 

\begin{proof}
Let $\ell \subset L$ be a $\gamma$-invariant curve joining $\xi$ with the other fixed point of $\gamma$ in $S^1(L)$, without loss of generality we can assume that $\ell$ intersects $r_1$ and $r_2$.  The small visual measure property (cf. Proposition \ref{prop.visualconsequence}) implies that there is $a_0$ so that every ray $r$ of $\cG_L$ which lands in $\xi$ and intersects $\ell$ verifies that the $a_0$-neighborhood of $r$ contains a ray of $\ell$. This applies to $r_1, r_2$ but also their iterates $\gamma^k r_i$ for $i=1,2$ and $k \in \ZZ$. Since $\gamma^j r_1$ is non-separated with $\gamma^j r_2$ it follows that in a given fundamental domain, one can have at most finitely many such rays. This implies that there is $k \in \ZZ \setminus \{0\}$ so that $\gamma^k r_i \cap r_i \neq \emptyset$ and thus $\gamma^k c_i = c_i$ for $i=1,2$. 

Now, note that this implies that $c_1$ and $c_2$ join $\xi$ with the other fixed point of $\gamma$ and project to a (simple) closed curve in $M$. If $\gamma$ preserves orientation, it follows that both $c_1$ and $c_2$ are $\gamma$-invariant, else, it can be that $\gamma$ permutes them and $\gamma^2$ preserves them. In both cases we deduce that they bound a Reeb surface. 
\end{proof}

%

\subsection{Proof of Theorem \ref{prop.dichotomy}}
The following proposition will be proved in the next subsection. 

\begin{prop}\label{p.dichotomytech}
Let $L \in \wt{\cF_i}$ and $c_1,c_2 \in \cG_L$ be two non-separated leaves so that their non-separated rays $r_1,r_2$ verify $\partial \Phi(r_1)=\partial \Phi(r_2) = \xi \neq \alpha(L)$. Then, there is $L' \in \wt{\cF_i}$ and a deck transformation $\gamma \in \pi_1(M) \setminus \{\mathrm{id}\}$ so that: 
\begin{itemize}
\item $\gamma \xi = \xi $ and $\gamma L' = L'$, 
\item  there is a  $\gamma$-invariant non-Hausdorff bigon in $L'$ (which has $\xi$ as one of its endpoints). 
\end{itemize}
\end{prop}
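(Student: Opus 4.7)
The plan is to promote the non-separated pair to a full non-Hausdorff bigon in $L$ with endpoints $\{\xi, \alpha(L)\}$, transport it to a deck-periodic leaf by pushing, and then extract $\gamma$ from the stabilizer of the bigon.

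First I would show that $c_1, c_2$ already bound a non-Hausdorff bigon in $L$ with endpoints $\{\xi, \alpha(L)\}$. By Proposition \ref{prop-nonseplands}, $c_1 \cup c_2 \subset L \cap E$ for some $E \in \wt{\cF_{3-i}}$ with $\alpha(E) = \xi$, so $c_1, c_2$ are also leaves of $\cG_E$. They cannot be separated in $\cG_E$: any transversal in $E$ from $c_1$ to $c_2$ would also be a transversal to $\wt{\cF_i}$ meeting $L$ at two distinct points, contradicting Corollary \ref{coro.foliations}. Hence Proposition \ref{prop-nonseplands} applies in $E$ and yields the non-separated ray class in $\cG_E$ landing at $\alpha(L)$; this class must be opposite to the non-separated class in $\cG_L$, else the coincidence $\partial^+ \Phi_L(c_j) = \partial^+ \Phi_E(c_j)$ from Corollary \ref{coincide} would force $\xi = \alpha(L)$, contrary to hypothesis. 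Thus $\partial^- \Phi(c_j) = \alpha(L)$ in $\cG_L$, and we have the desired bigon with endpoints $\{\xi, \alpha(L)\}$.

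Next, using Proposition \ref{prop-pushing} along $E$ I would push the bigon to all leaves $L'' \in \wt{\cF_i}$ in a small transversal interval $J$ around $L$ (chosen so that the non-separated rays lie in $\wihatD_\eps(L, J)$). Repeating the argument of the previous paragraph inside $L''$ shows that the two pushed components of $L'' \cap E$ form a non-Hausdorff bigon in $L''$ with endpoints $\{\xi, \alpha(L'')\}$. Density of leaves in $\wt{\cF_i}$ fixed by a non-trivial deck transformation (cf.\ \S\ref{s.minimality}) then produces some $L' \in J$ with $\gamma_0 L' = L'$ for a non-trivial $\gamma_0 \in \pi_1(M)$, together with a bigon $B = \{c_1', c_2'\}$ in $L'$ with endpoints $\{\xi, \alpha(L')\}$.

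To extract the required $\gamma$, I would look at the stabilizer $H$ of the unordered pair $\{c_1', c_2'\}$ in $\pi_1(M)$. Any $\gamma \in H$ must permute the endpoints $\{\xi, \alpha(L')\}$ of the bigon, so $\gamma^2$ fixes both; since $\gamma^2 c_i' = c_i' \subset L'$ also gives $\gamma^2 L' = L'$, the element $\gamma^2$ fulfils all of the requirements of the proposition on $B$. The main obstacle is therefore to show that $H$ is non-trivial. If $B$ projects to a compact surface in $M$ this is automatic; the delicate case is when the projection is non-compact, and I plan to handle it by combining the small visual measure property (Propositions \ref{prop.visual} and \ref{prop.visualconsequence}, which force the two non-separated rays to remain at bounded mutual $L'$-distance near $\xi$) with the compactness of $M$, in the spirit of Proposition \ref{prop.someraylands}: bringing pairs of points on the two non-separated rays back to a fixed fundamental domain by deck translates $\delta_n$ and extracting a subsequential limit should produce a non-trivial composition $\delta_n \delta_m^{-1}$ preserving $B$, hence lying in $H$.
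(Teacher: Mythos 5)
Your proposal has two serious gaps, both of which prevent it from working even in outline.

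The first gap is in Step 1, where you claim that $c_1, c_2$ already form a non-Hausdorff bigon in $L$ with endpoints $\{\xi,\alpha(L)\}$. You deduce that $c_1,c_2$ are non-separated in $\cG_E$ from the fact that no transversal in $E$ connects them. But absence of a transversal is strictly weaker than non-separation: there may be a chain of intermediate non-separated pairs of $\cG_E$ leaves (lying in leaves of $\wt{\cF_i}$ other than $L$) separating $c_1$ from $c_2$, with the transversal merely obstructed rather than the leaves branching from each other. The paper warns against exactly this step in the remark following Proposition~\ref{prop-nonseplands}: ``it should be noted that it does not follow from Lemma~\ref{lem.nonHsdfftwointersect} that $c_1$ and $c_2$ must be non-separated in the leaf space of $\cG_E$.'' Consequently you cannot invoke Proposition~\ref{prop-nonseplands} inside $E$, and there is no basis for concluding $\partial^-\Phi(c_j)=\alpha(L)$. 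Indeed, in a Matsumoto--Tsuboi-type example, for a generic non-periodic $L$ the boundary leaves of a branching configuration can have their opposite rays land at arbitrary marker points, not at $\alpha(L)$.

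The second gap is that Step 4 is essentially circular. Even if you could push a bigon with one endpoint $\xi$ to a leaf $L'$ fixed by some non-trivial $\gamma_0$, the stabilizer of $L'$ fixes $\alpha(L')$ but in general does not fix $\xi$. To produce a deck transformation preserving the bigon you would need $\gamma_0\xi = \xi$, i.e.\ $\xi$ is the second fixed point of a hyperbolic element whose other fixed point is $\alpha(L')$ — which is precisely part of the conclusion of the proposition, not something you may assume when selecting $L'$ among the dense set of periodic leaves. Your sketch of ``bringing pairs of points on the two rays into a fundamental domain and extracting $\delta_n\delta_m^{-1}$'' does not by itself produce an element preserving the bigon: nearby returns give non-separated leaves (as in Proposition~\ref{prop.someraylands}), but not an automorphism of a fixed bigon. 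This is where the paper's proof invests all its effort. It introduces good half regions $\cV(r_1,r_2,\sigma,\tau)$, proves the non-trivial return restriction in Lemma~\ref{lema.restrictionintersect}, verifies the hypotheses of that lemma via Lemma~\ref{lem.constructcontract}, and uses the dichotomy of Lemma~\ref{lema-returnsbothsides} to show that the only way for the rays to return consistently with Lemma~\ref{lema.restrictionintersect} is to spiral onto a closed geodesic in a leaf of $\cF_i$. Only then does $\xi$ get identified as a fixed point of some $\gamma$, after which Proposition~\ref{prop.someReeb} supplies the $\gamma$-invariant bigon. None of this delicate return analysis is present in your outline, and I do not see how to shortcut it.
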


Let us explain how Theorem \ref{prop.dichotomy} (and thus Theorem A) follows from this proposition. Note that as shown in Proposition \ref{p.dichotomyHsdff} if some leaf $L \in \wt{\cF_i}$ verifies that the leaf space of $\cG_L$ is non-Hausdorff then the same holds for every leaf of both foliations. By Proposition \ref{prop.someReeb}, if there is a leaf $L \in \wt{\cF_i}$ with non-trivial stabilizer for which the non-separated rays of a pair of non separated leaves land in $\alpha(L)$ then we can conclude the existence of a Reeb surface. So, we can assume that there is a leaf $L \in \cF_1$ which has non-separated leaves $c_1,c_2$ so that their non-separated rays $r_1,r_2$ verifiy that $\partial \Phi(r_1) = \partial \Phi(r_2) \neq \alpha(L)$. Thus, we can apply Proposition \ref{p.dichotomytech} to deduce the existence of a $\gamma$-invariant non-Hausdorff bigon in some leaf $L' \in \cF_1$ which implies the existence of a Reeb surface by Proposition \ref{prop.someReeb}. This completes the proof of Theorem \ref{prop.dichotomy}. 

\smallskip

For the purposes of the study we will make in \S~\ref{s.consReeb}, it is useful to obtain the following additional information: 

\begin{addendum}\label{add.spiral}
Under the assumptions of Proposition \ref{p.dichotomytech} one can choose $L'$ so that the rays $r_1,r_2$ are asymptotic  to $L'$ in $\mt$. 
\end{addendum}

Here, by `asymptotic' to $L'$ in $M$ we mean that if we parametrize $r_i$ in $M$ by arc length as $r_i(t)$, then, as $t$ goes to infinity, 
$d(r_i(t), L')$ converges to zero.
In other words the projection $\pi(r_i)$ of $r_i$ to $M$ is
asymptotic to a closed curve of $\cG$ in $\pi(L')$.

\subsection{Proof of Proposition \ref{p.dichotomytech}.}
The proof of this proposition has two main steps: we first analyze some impossible configurations of returns of the curves $r_i$  when projected to $M$ and then we use the small visual measure to determine that the rays $\pi(r_1)$ and $\pi(r_2)$ need to accumulate as desired. 

Let $r_1,r_2$ be non-separated rays of $\cG_L$ converging to $\xi \in S^1(L)$. Let $c_i$ be the leaves of $\cG_L$ containing them. Let $\sigma: [0,1] \to L$ be a properly embedded segment satisfying the following conditions: 
\begin{enumerate}
\item  $\sigma(0) \in r_1$, $\sigma(1) \in r_2$, 
\item $\sigma$ intersects
$c_1 \cup c_2$ only in the endpoints and $\sigma$ transverse to $\cG_L$ except at one interior 
point, 
\item There is a small transverse arc $\beta$ to $\cG_L$ starting
in $\alpha(0)$ and going in the direction of $c_2$ so that if a leaf
of $\cG_L$ intersects the interior of $\alpha$ then it intersects $\beta$.
\end{enumerate}
Let $\cC= r_1 \cup r_2 \cup \sigma([0,1])$ which is a Jordan curve and let $\cB= \cB(r_1,r_2,\sigma)$ be the closure of the connected component of the complement of $\cC$ which accumulates only on $\xi$ in $S^1(L)$. We say that $\cB$ is a \emph{good half band} if $\cB$ is the closure of the union of the arcs of leaves of $\cG_L$ contained in $\cB$ and joining points of $\sigma=\sigma([0,1])$. Equivalently, we say that $\cB$ is a good half band if the boundary contains two non-separated rays and it does not contain any complete leaf in its interior.

\begin{lema}\label{lem-nonseparatedraysGHB}
Let $c'_1,c'_2 \in \cG_L$ be two non-separated leaves in $L \in \wt{\cF_i}$ so that their non-separated rays $r'_1,r'_2$ verify $\partial \Phi(r'_1)=\partial \Phi(r'_2) = \xi$. Then, there exist non separated leaves $c_1,c_2 \in \cG_L$ with non-separated rays $r_1, r_2$ verifying $\partial \Phi(r_1)=\partial \Phi(r_2) = \xi$ and moreover verifies that it bounds a good half band.
\end{lema}

\begin{proof} 
Applying the consequence of the small visual measure property given in Proposition \ref{prop.visualconsequence} we get that for any geodesic ray $\ell_0$ landing in $\partial\Phi(r'_1)=\partial \Phi(r'_2)=\xi$ one has that $\ell_0$ has a subray contained in the $a_0+a_1$-neighborhood of every ray $\beta$ of $\cG_L$ that lands in $\xi$: this is because there is a geodesic ray with 
starting point the starting point of $\beta$, ideal point $\xi$ and
contained in the $a_0$ neighborhood of $\beta$. Then by Gromov hyperbolicity,
there is a global constant $a_1>0$ so that any two geodesic rays
with ideal point $\xi$, have subrays which are $< a_1$ Hausdorff distant
from each other in $L$.

A priori there could be some leaves of $\cG_L$ non separated from $c'_1$ and $c'_2$ and between $c'_1, c'_2$ (i.e. if we pick a curve $\sigma'$ as above and define $\cB' = \cB(c_1', c_2', \sigma')$, there may be curves non separated from $c_1'$ and $c_2'$ which have both endpoints in $\xi$ and intersect $\cB'$ $-$ and hence contained in $\cB'$). But since all of these keep intersecting the $a_0 + a_1$ neighborhood of $\ell_0$, it follows
that there are only finitely many of them. 
Therefore up to replacing $c'_2$ if necessary by the leaf
non separated from $c'_1, c'_2$ and closest to $c'_1$ we assume there
are no non separated leaves from $c'_1, c'_2$ and between them. This way, up to replacing the curves and choosing the rays conveniently one can produce $\sigma$ so that it produces a good half band as above. 
\end{proof}%

We can then start the analysis with a good half band $\cB_L=\cB(r_1,r_2,\sigma)$.
Denote by  $E=E(r_1,r_2)$ the leaf of $\wt{\cF_2}$ so that $r_1 \cup r_2 \subset L \cap E$.  
Choosing $\alpha$ in a similar way as $\sigma$ but contained in $E$ and so that in the last condition, the curve $\beta$ is small, 
this implies  that every curve of $\cG$ intersecting the interior of the band intersects $\cB_L$ in a compact set.  It also implies that every leaf of $\wt{\cF_2}$ that intersects the interior of $\cB_L$ must intersect a transversal to $E$, in particular, $E$ cannot intersect the interior of $\cB_L$, unless $\gamma E = E$, in which case
we get the conclusion of Proposition \ref{p.dichotomytech}.  

We consider an arc $\tau$ in $E$ joining the endpoints of $\sigma$. Note that there is a uniform $a_1$ so that we can choose $\tau$ to have length less than $a_1$ because leaves of $\wt{\cF_2}$ are uniformly properly embedded\footnote{This follows from Theorem \ref{teo.matsumoto} in our case, but is also a general fact for $\RR$-covered foliations, see e.g. \cite[Lemma 4.48]{Calegari-book}).}. The $r_i$ are contained in leaves $c_i$ of $\wcG$. We also choose $\tau$ to that it does not intersect $c_1 \cup c_2$ in its interior. We denote by $ \cB_E= \wihat\cB(r_1,r_2,\tau)$ the half band in $E$ whose boundary in the compactification of $E$ is $r_1 \cup r_2 \cup \tau \cup \xi$. Note that this may not be a `good half band' as we defined before, because there is no reason for $r_1$ and $r_2$ to be non-separated in $\cG_E$. 

\begin{lema}\label{lem-embeddeddisk}
There is an embedded disk $D$ in $\mt$  whose boundary is $\sigma \cup \tau$ intersecting  $\cB_L \cup \cB_E$ only in the boundary. 
\end{lema}

\begin{proof} The bands $\cB_L$ and $\cB_E$ only intersect at their boundaries $r_1 \cup r_2$, and thus $\cB_L \cup \cB_E$ is a properly (and tamely) embedded copy of $S^1 \times [0,\infty)$ (in fact, it is piecewise $C^1$.  Taking the one point compactification of $\mt$ to $S^3$ we get that $\hat D= \cB_L \cup \cB_E \cup \{\infty\}$ is a tamely embedded disk whose boundary is $\tau \cup \sigma$ and therefore, there is a homeomorphism of $S^3$ sending $\hat D$ to the standard disk in a hemisphere (see \cite[\S 17]{Moise}). Thus one can homotope the disk $\hat D$ rel $\partial \hat D$ away from $\infty$ and so that it does not intersects $\hat D$ in the interior and get an embedded disk $D$ in $\mt$ with the desired properties.   
\end{proof}

We denote by $\cV:= \cV(r_1,r_2,\sigma,\tau)$ the region defined in $\mt$ bounded by $D \cup \cB_L \cup \cB_E$ such that in each leaf inside, it limits only on $\xi$.  This is a topological ball\footnote{Again, this is a consequence of Schoenflies theorem in dimension 3, see \cite[\S 17]{Moise}. Note that the surfaces we are constructing are all tamely embedded by construction (they are piecewise leaves of foliations, and $D$ can be chosen smooth).}.  We call such a region a \emph{good half region}. 

We analyze a good half region $\cV= \cV(r_1,r_2,\sigma,\tau)$ with boundary $D \cup \cB_L \cup \cB_E$ where $\cB_L$ is a good half band in $L \in \wt{\cF_1}$ and $\cB_E$ is a half band in $E$. Let $p_i \in \sigma \cap \tau \cap r_i$ the corner points of $D$ and small foliation boxes $B^i_1$ and $B^i_2$ of $\wt{\cF_1}$ and $\wt{\cF_2}$ respectively around $p_i$ (for $i=1,2$).  We consider $I^i_1, I^i_2$ small non degenerate
closed intervals in the leaf spaces of $\wt{\cF_1}$ and $\wt{\cF_2}$ consisting on the leaves intersecting $B^i_1$ and $B^i_2$ respectively. We can assume that the boxes are small enough so that (for $i=1,2; j=1,2$):

\begin{itemize}
\item every leaf of $\wt{\cF_1}$ through $B^i_1$ intersects $E$,
\item every leaf of $\wt{\cF_2}$ through $B^i_2$ intersects $L$, 
\item the set $\wihatD_\eps (L, I^i_1)$ contains\footnote{This uses the fact that $\xi \neq \alpha(L)$.} $\cB_L$ (cf. Proposition \ref{prop.setDfol}),
\end{itemize}

Note that by slightly changing $B_1^i$ and $B_2^i$ we can and will assume that $I_1^1=I_1^2=I_1$ and $I_2^1 = I_2^2 = I_2$.  These conditions on the sets $B_i^j$ and the intervals $I_1,I_2$ will be assumed in what follows. We can also consider $I_1,I_2$ small enough so that they are disjoint from their images by the deck transformation associated to the center of $\pi_1(M)$ (i.e. they correspond to an interval in the leaf space $\hat \cL$ in $\widehat M$, cf. Corollary \ref{coro.foliations}). Note that this implies that every $\gamma \in \pi_1(M) \setminus \{ \mathrm{id}\}$ can have at most two fixed points in $I_i$ and if it has two, one must be attracting and one repelling.

\begin{figure}[ht]
\begin{center}
\includegraphics[scale=0.78]{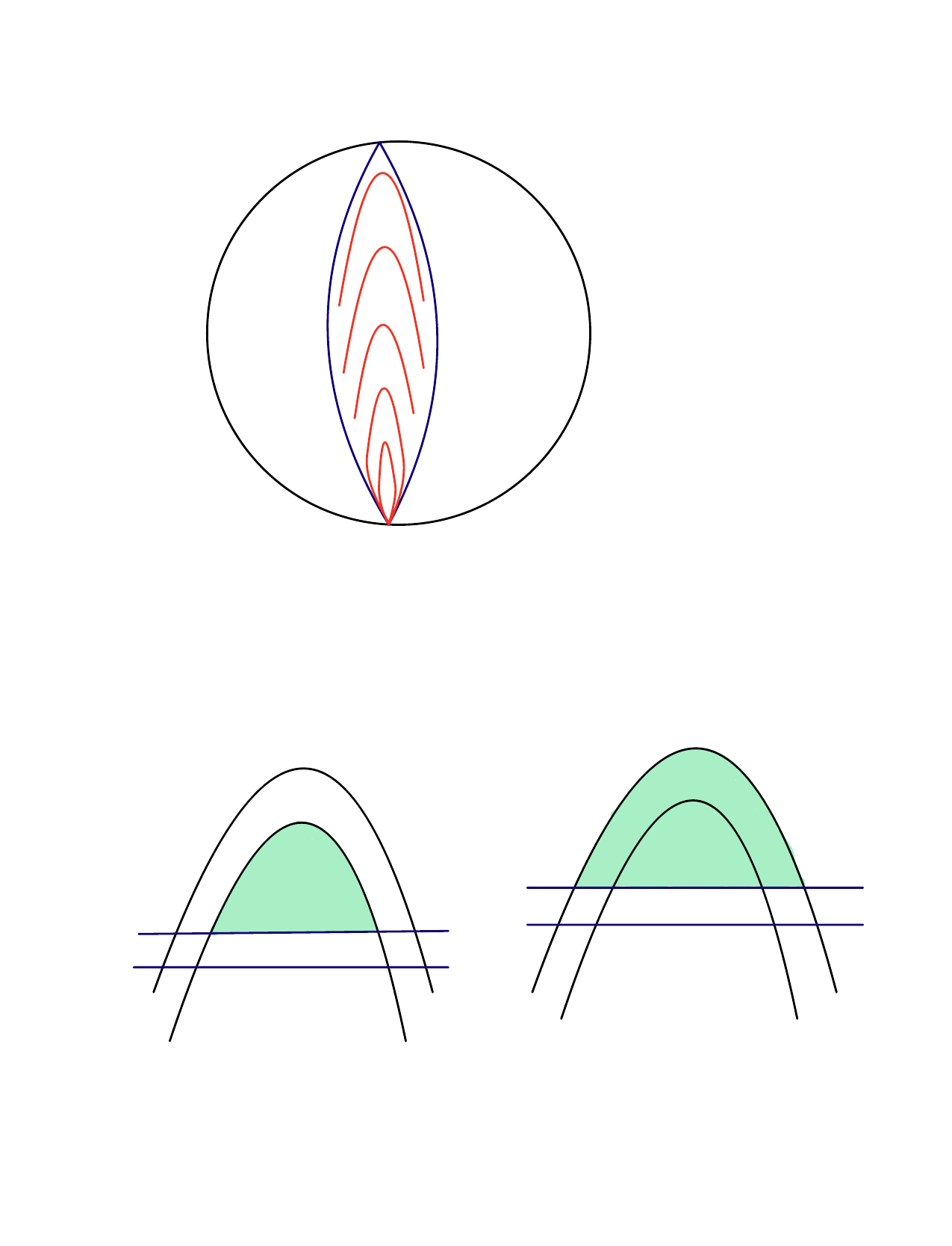}
\begin{picture}(0,0)
\put(78,90){$D$}
\put(-100,143){$D$}
\put(60,144){$\gamma D$}
\put(-110,110){$\gamma D$}
\end{picture}
\end{center}
\vspace{-0.5cm}
\caption{{\small Forbiden returns of a good half region according to Lemma \ref{lema.restrictionintersect}.}}\label{fig-noreturns}
\end{figure}

We are going to show that there are some restrictions on how $\cV$ can intersect with its translates under deck transformations. See Figure~\ref{fig-noreturns}. 

\begin{lema}\label{lema.restrictionintersect} 
Assume that $\gamma \in \pi_1(M)$ is such that:
\begin{itemize}
\item there is a point $z_1 \in r_1$ such that $\gamma z_1 \in B^1_1 \cap B^1_2$, 
\item there is a point $z_2 \in r_2$ so that the distance in $L$ from $z_1$ to $z_2$ is less than $2a_0+1$ and such that $\gamma z_2 \in B^2_1 \cap B^2_2$, 
\item $\gamma(I_1) \subset \mathrm{Int}(I_1)$, 
\item $I_2 \subset \mathrm{Int}(\gamma(I_2))$. 
\end{itemize}
Then, $\gamma L$ cannot intersect the interior of $\cV$, 
unless $\gamma E = E$, in which case we achieve
the conclusion of Proposition \ref{p.dichotomytech}. 
\end{lema}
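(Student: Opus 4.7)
The plan is to argue by contradiction: suppose $\gamma L$ meets the interior of the topological $3$-ball $\cV$. First I note that $\gamma L \neq L$, because the condition $\gamma(I_1)\subset \mathrm{Int}(I_1)$ forces $\gamma$ to act as a strict contraction on the $\cF_1$-leaf-space interval $I_1$, with a unique fixed leaf in its interior, and in any case $\gamma z_1 \neq z_1$ since $z_1$ lies far out along $r_1$ whereas $\gamma z_1 \in B_1^1$ sits in a small box around $p_1$. Hence $\gamma L\cap L = \emptyset$, so $\gamma L\cap\cB_L=\emptyset$. Because $\cV$ is a $3$-ball and $\gamma L$ is a properly embedded plane, each component of $\gamma L\cap\overline\cV$ is a topological disk whose boundary lies in $\partial\cV\setminus\cB_L = D\cup\cB_E$; the pieces of this boundary in $\cB_E$ are arcs or loops in leaves of $\wt\cG \cap E$, and the pieces in $D$ are properly embedded arcs and circles in the compact disk $D$.

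Next I localize using the foliation-box hypotheses. The conditions $\gamma z_i \in B_1^i\cap B_2^i$ place the starting points of the rays $\gamma r_i$ in small foliation boxes of both $\wt{\cF_1}$ and $\wt{\cF_2}$ around $p_1$ and $p_2$ respectively. In each box $B_2^i$ the leaf $\gamma L$ passes through a foliation chart of $\cF_2$, and therefore meets $E$ along a short transverse arc adjacent to $p_i$; by the choice of boxes these arcs lie inside $\cB_E$. Consequently at least one component $\Delta$ of $\gamma L\cap \overline\cV$ has a boundary on $\cB_E$ containing such an arc near one (or both) of $p_1,p_2$, together with a collection of arcs in $D$.

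The crucial step is to exploit the hyperbolic-type dynamics of $\gamma$ encoded in the two leaf-space hypotheses. The contraction $\gamma(I_1)\subset\mathrm{Int}(I_1)$, together with the requirement that $\wihatD_\eps(L,I_1)$ contain $\cB_L$, forces $\gamma L$ to remain within distance $\eps$ of $\cB_L$ throughout the band and to sit on a definite side of $L$, namely the side containing the attracting fixed leaf of $\gamma$ in $I_1$. Symmetrically $I_2\subset\mathrm{Int}(\gamma(I_2))$ means $\gamma^{-1}$ contracts $I_2$, so $\gamma E$ lies on the side of $E$ opposite to the $\gamma^{-1}$-attractor in $I_2$. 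By the construction of $\cV$ (bounded by $\cB_L,\cB_E,D$ and pointing toward $\xi$), these sidedness conclusions are incompatible with $\Delta\subset\overline\cV$: tracing a boundary component of $\Delta$ and matching up the arcs in $\cB_E$ with the arcs in $D$ near $p_1$ and $p_2$, one finds that $\gamma L$ would have to cross $E$ from the side opposite to $\cV$ into the side of $\cV$, contradicting the forced side of $\gamma E$. This yields the desired contradiction.

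The main obstacle, I expect, is exactly this last combinatorial-topological step: turning the two leaf-space hypotheses into precise geometric sidedness conclusions for $\gamma L$ relative to both $L$ and $E$ simultaneously, and then checking that the resulting configuration of arcs of $\gamma L\cap\overline\cV$ on $\cB_E\cup D$ cannot close up into any topological disk. Because $\cV$ is noncompact with ideal point $\xi$ while being bounded by $\cB_L,\cB_E$ on opposite transverse sides, one must keep careful bookkeeping of which sides of $L$ and $E$ each piece of $\gamma L$ occupies; the conjunction of contraction on $I_1$ and expansion on $I_2$ is exactly the rigidity that rules out the offending configuration.
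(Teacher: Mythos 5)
Your proposal correctly sets up the contradiction and makes some sound preliminary observations ($\gamma L \neq L$ hence $\gamma L\cap\cB_L=\emptyset$; the boundary components of pieces of $\gamma L\cap\overline{\cV}$ lie in $D\cup\cB_E$; the foliation-box hypotheses localize $\gamma r_i$ near $p_i$). However, you explicitly flag the decisive step as unresolved, and I do not think the "sidedness incompatibility" you sketch will close the argument. The two leaf-space hypotheses do give sidedness information, but the sides are not a priori linked: the side of $L$ containing the $\gamma$-attracting leaf $L'\in I_1$ and the side of $E$ containing the $\gamma$-repelling leaf $E'\in I_2$ can be arranged independently relative to $\cV$, so there is no single forbidden "crossing" configuration. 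In particular, both of the following can occur: $\gamma E$ enters $\cV$, or $\gamma E$ stays out of $\cV$ (in which case $E'$ enters $\cV$). A local boundary-tracing argument on $D\cup\cB_E$ near $p_1,p_2$ does not distinguish these, and neither produces an immediate Jordan-curve contradiction, because $\cV$ is a noncompact region limiting on $\xi$ and pieces of $\gamma L\cap\cV$ need not be compact disks.

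The paper's proof takes a genuinely different route that supplies the missing rigidity. Rather than working directly with $\gamma L$ and $\gamma E$, it passes to the $\gamma$-fixed leaves $L'\in I_1$ and $E'\in I_2$, uses the pushing Proposition \ref{prop-pushing} to transport the good half band $\cB_L$ to a good half band $\cB_{L'}$ in the $\gamma$-invariant leaf $L'$, and then exhibits a $\gamma$-invariant object inside $\cB_{L'}$ that contradicts goodness: in the case $\gamma E\cap\cV\neq\emptyset$ one gets a $\gamma$-invariant infinite strip separating $r_1'$ from $r_2'$ (via the definite width of $\gamma\cB_L$ and projecting $\cV$ to a solid torus in $\mt/\langle\gamma\rangle$); in the case $\gamma E\cap\cV=\emptyset$ one gets that $E'$ enters $\cV$, and $\gamma$-invariance of both $L'$ and $E'$ (together with the unique-plaque property in $B^2_1$) forces a $\gamma$-invariant closed curve of $\cG_{L'}$ inside $\cB_{L'}$. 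In both cases the contradiction is with the property that $\cB_{L'}$ is a \emph{good} half band, i.e.\ contains no entire $\cG$-leaf. To repair your argument you would need to import precisely this mechanism: split into the two cases according to whether $\gamma E$ meets $\cV$, push the band to the fixed leaf, and derive the contradiction from $\gamma$-invariance inside a good half band rather than from local sidedness.
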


\begin{proof}
We assume by contradiction that $\gamma L$  intersects the interior of $\cV$. From how we chose the intervals it follows that there is a (unique) fixed point of $\gamma$ in $I_1$ (which is attracting) and one in $I_2$ (which is repelling). Let $L'$ be the fixed point by $\gamma$ in $I_1$ and similarly $E'$ the fixed point of $\gamma$ in $I_2$.  Note that since $\gamma$ is atracting on $I_1$ then either $L' = L$ or $L'$ belongs to the same connected component of $I_1 \setminus \{L\}$ as $\gamma L$ and since $\gamma$ is expanding on $I_2$, either $E'=E$ or $E'$ belongs to the connected component of $I_2 \setminus \{E\}$ not containing $\gamma E$ (recall that the intervals where chosen small enough so that the action of $\gamma$ has a unique fixed point in $I_1$ and $I_2$).  In particular, both $E'$ and $L'$ intersect $B_i^j$ for $i=1,2 ; j=1,2$ and using Corollary \ref{coro.foliations} we know that they intersect each such box in a unique plaque. 

Since we have assumed that $\gamma L$ intersects the interior of $\cV$ then
$\gamma L \not = L$, and so by our choice of $I_1$, it follows
that $L'$ also intersects the interior of $\cV$, since it belongs to the connected component of $I_1 \setminus L$ containing $\gamma L$ (the set of leaves of $\wcF_1$ which intersect the interior of $\cV$). Since $L' \in I_1$ and $\cB_L \subset \wihatD_\eps(L,I_1)$ we have that $L$ and $L'$ are asymptotic in $\cB_L$ (i.e. $d(L \cap \cB_L, L') < \eps$,
meaning every point in $\cB_L$ is $< \eps$ from $L'$) and we can apply Proposition \ref{prop-pushing}. Denote by $\cB_{L'}$ the band in $L'$ obtained by pushing from $\cB_L$ to $L'$ in the sense of Proposition \ref{prop-pushing} (i.e. the band $\cB_{L'}$ is the one bounded by the intersections of $E$ with $L'$ which are close to $r_1$ and $r_2$). Note that this implies that $\cB_{L'}$ is a good half band, and its boundary rays that we denote as $r_1', r_2'$ (and are contained in $L' \cap E$) land in $\xi$ and are non-separated.

Let us first assume that $\gamma E$ intersects $\cV$.
In this case, since $\gamma z_1 \in B_1^1 \cap B_2^1$ and $\gamma z_2 \in B_1^2 \cap B_2^2$ it follows that $\gamma \cB_L$ and $\gamma \cB_E$ must  intersect $\cV$.  We obtain that $\cV$ projects to a solid torus in $M_\gamma = \mt/_{<\gamma>}$. One gets that $\gamma \cB_L$, whose boundaries are $\gamma r_1$ and $\gamma r_2$ has a definite width in $\gamma L$, meaning that the distance from one curve to the other is bounded below by some uniform constant due to the fact that they belong to the same leaf of $\wt{\cF_2}$. This implies that the band cannot disappear and is completely contained in $\cV$. It must thus be asymptotic to an  infinite strip strictly inside $\cB_{L'}$ which must be invariant under $\gamma$ and such that the boundaries are curves  which limit on $\xi$ and separate $r_1'$ from $r_2'$ contradicting the fact that $\cB_{L'}$ is a good half band. 

Suppose now that $\gamma E = E$. 
The argument in the previous case still applies so 
the band $\cB_L$ is asymptotic to a band in $L'$ which is invariant 
under $\gamma$.  This produces
a bigon in $L'$ and $\cB_L$ is asymptotic to this 
$\gamma$-invariant bigon. This achieves the conclusion of
Proposition \ref{p.dichotomytech} in this case.

Finally we treat the case that $\gamma E$ does not intersect $\cV$.  This implies that $E'$, the fixed point of $\gamma$ in $I_2$ belongs to the connected component of $I_2 \setminus \{E\}$ not containing $\gamma E$. Thus, $E'$ must intersect $\cV$. Since $\cB_{L'}$ is a good half band, each connected component of $E' \cap \cB_{L'}$ is a piece of a leaf of $\cG_{L'}$ and must be a compact interval by definition of good half band. The same holds for $E' \cap \cB_L$ due to Proposition \ref{prop-pushing}. However, since $\gamma$ fixes both $L'$ and $E'$ and because $E'$ intersects $B_1^2$ in a unique plaque we get that $\gamma$ must fix the connected component of the intersection $L' \cap E'$ intersecting $B_1^2$ and thus projects to a closed curve which lifts to a curve separating $r_1'$ from $r_2'$ we have shown had to be non-separated. This is a contradiction and completes the proof. 
\end{proof}

Now, we will show that it is possible to find deck transformations with the properties required by Lemma \ref{lema.restrictionintersect}. 

Consider a geodesic ray $\ell_0$ in $L$ so that it limits in $\xi$. The small visual measure property implies that there is some $a_0>0$ so that  for each $x$ in $\ell_0$ there is $z$ in $r_i$ with $d_L(x,z) < a_0$ (see Proposition \ref{prop.visualconsequence}).


\begin{lema}\label{lem.constructcontract} 
Let $\cV= \cV(r_1,r_2,\sigma, \tau)$ be a good half region and $p_1 = \sigma \cap r_1 \cap \tau$. Consider $B^1_1, B^1_2$ and $I_1,I_2$ as above and assume that there exists a sequence of points $z_n$ in $r_1$, with $d_L(z_n, x_n) < a_0$ where  the $x_n$ are in a geodesic ray 
$\ell_0$ as above and $x_n \to \xi$. Assume moreover that there are deck transformations $\gamma_n$ so that $\gamma_n z_n \in B^1_1 \cap B^1_2$. Then, for large enough $n$ we have that $\gamma_n I_1 \subset \mathrm{Int}( I_1)$ and $\gamma_n^{-1} I_2 \subset \mathrm{Int}(I_2)$. 
\end{lema}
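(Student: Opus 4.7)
The plan is to exploit the sharp contrast in behavior of the two foliations near $\xi$ along $r_1$. By Lemma~\ref{lem.nonHsdfftwointersect} the non-separated leaves $c_1, c_2$ lie in $L\cap E$ for some $E\in\wt{\cF_2}$, and Proposition~\ref{prop-nonseplands} then forces $\xi=\alpha(E)$; combined with the standing hypothesis $\xi\neq\alpha(L)$ this means $\xi$ is a marker direction of $L$ in $\wt{\cF_1}$ but the non-marker direction of $E$ in $\wt{\cF_2}$. By Proposition~\ref{prop.setDfol} (together with Corollary~\ref{coro.setD} and the formulas of \S\ref{ss.horizontal}--\S\ref{ss.minimal}) the transversal distance in $\wt{\cF_1}$ from $z_n$ to any leaf $L'\in I_1$ shrinks to $0$ uniformly in $L'$ as $n\to\infty$, whereas in $\wt{\cF_2}$ the transversal distances between nearby leaves contract exponentially when transported backward along $E$ away from $\xi$.

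For the first inclusion $\gamma_n I_1\subset\mathrm{Int}(I_1)$: for each $L'\in I_1$ choose $z_n'\in L'$ closest to $z_n$, so that $d(z_n,z_n')\to 0$ uniformly. The isometry $\gamma_n$ preserves this distance, hence $\gamma_n z_n'\in B_1$ for large $n$, giving $\gamma_n L'\in I_1$ with transversal parameter in $B_1$ arbitrarily close to the parameter of $\gamma_n L$. Consequently $\gamma_n I_1$ is a subinterval of $I_1$ whose length tends to $0$, centered at the parameter of $\gamma_n L$; since $\gamma_n z_n\in B_1\cap B_2$ keeps this center away from the endpoints of $I_1$ (after, if needed, a harmless slight enlargement of $B_1$), one concludes $\gamma_n I_1\subset\mathrm{Int}(I_1)$ for large $n$.

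For the second inclusion $\gamma_n^{-1}I_2\subset\mathrm{Int}(I_2)$: given $E'\in I_2$ passing through $q\in B_2$ near $p$, the leaf $\gamma_n^{-1}E'$ passes through $\gamma_n^{-1}q$, which by isometric invariance lies within $\mathrm{diam}(B_2)$ of $z_n$; hence $\gamma_n^{-1}E'$ lies at transversal distance $O(\mathrm{diam}(B_2))$ from $E$ at the point $z_n$. Following $E$ backward from $z_n$ to $p$ then moves away from the non-marker direction $\xi$ of $E$, so the marker property of $\wt{\cF_2}$ in every other direction contracts the transversal distance from $\gamma_n^{-1}E'$ to $E$ by a factor that tends to $0$ with $n$ (since $d_E(z_n,p)\to\infty$). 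Thus at $p$ the leaf $\gamma_n^{-1}E'$ meets the transversal of $B_2$ well inside it, yielding $\gamma_n^{-1}I_2\subset\mathrm{Int}(I_2)$ for large $n$.

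The hard point is the quantitative marker/non-marker estimate: it is Theorem~\ref{teo.matsumoto} that allows one to transplant the explicit hyperbolic-geometry picture of \S\ref{ss.nonmarker} to the general minimal foliations $\cF_1$ and $\cF_2$, turning the qualitative distinction ``contracting vs.\ non-contracting ideal direction'' into the uniform distance estimates needed here. Once this is in hand, the two inclusions follow by the same idea applied symmetrically in opposite directions, using $\xi\neq\alpha(L)$ for $\wt{\cF_1}$ (contraction at $z_n$) and $\xi=\alpha(E)$ for $\wt{\cF_2}$ (contraction when pulled back from $z_n$ to $p$ along $E$).
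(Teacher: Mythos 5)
Your proposal follows essentially the same route as the paper's proof: the engine is the asymmetry $\xi\neq\alpha(L)$ versus $\xi=\alpha(E)$ (the latter from Proposition \ref{prop-nonseplands}); the contraction of $\wt{\cF_1}$-transversals along $\ell_0$ toward the marker direction $\xi$, transported by the isometries $\gamma_n$ to the fixed box $B_1$, gives $\gamma_n I_1\subset\mathrm{Int}(I_1)$; and the growth of $\wt{\cF_2}$-transversals toward $\alpha(E)$ (equivalently, contraction when the comparison is carried from $z_n$ back to $p$ along $E$, which is how you phrase it) gives the reverse inclusion for $I_2$, with everything made quantitative via Theorem \ref{teo.matsumoto} and the uniform quasi-isometries, exactly as in the paper.

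One justification is off as written. In the second inclusion you attribute the contraction of the transversal distance from $\gamma_n^{-1}E'$ to $E$ to the fact that $d_E(z_n,p)\to\infty$. Distance alone does not control $\wt{\cF_2}$-holonomy: the factor contracts only because $z_n$ escapes toward $\alpha(E)$ \emph{radially in $E$}, i.e.\ stays a bounded distance from a geodesic ray of $E$ with ideal point $\alpha(E)$; if $z_n$ escaped toward a marker direction of $E$, the holonomy would go the wrong way and the pulled-back leaf need not be close to $E$ at $p$. This is precisely the step the paper verifies explicitly, transferring ``$z_n$ tracks a geodesic ray toward $\xi$'' from $L$ to $E$ via Proposition \ref{prop.QI} and the bounded discrepancy between $\Phi^1_L$ and $\Phi^2_E$ from Lemma \ref{lema.proyection}; you should add that sentence. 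The remaining informality at the endpoints of $I_1$ (a very short transversal through a point of $B_1$ could a priori meet leaves just outside $I_1$) is present at the same level in the paper's own argument, so your ``slight enlargement of $B_1$'' remark is an acceptable patch rather than a defect.
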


\begin{proof} 
This will use that $\xi \neq \alpha(L)$,  but $\xi = \alpha(E)$ by Proposition \ref{prop-nonseplands}.  
Up to subsequence assume that 
$\gamma_n z_n$ and $\gamma_n x_n$ converge. 

Fix a transversal $\zeta$ to $\cF_1$ through $p_1$ intersecting
exactly the leaves of $\wcF_1$ in $I_1$.
Let $L_1, L_2$ be the leaves of $\wcF_1$ through the endpoints
of $\zeta$. Recall that the ideal point of $r_1, r_2$ and hence
that of $\ell_0$ is $\xi \neq \alpha(L)$ so it is a contracting direction
for $\wcF_1$. So for big enough $n$ there is a transversal
to $\wcF_1$ of arbitrarily short length, through $x_n$ and connecting
the leaves $L_1, L_2$. 
By assumption $d_L(x_n,z_n)$ is bounded
by $a_0$ independently of $n$, so using the local product structure of the foliation
$\cF_1$ we obtain that there is also a transversal $\beta_n$ 
to $\wcF_1$ through $z_n$
 of very small length and connecting $L_1$ to $L_2$. 
Then $\gamma_n \beta_n$ is a transversal
of very small length passing through $\gamma_n z_n$ which 
is very close to $p$. In particular for 
$n$ sufficiently $\gamma_n \beta_n$ intersects a set of leaves
of $\wcF_1$ which is strictly contained in $I_1$.
This implies that $\gamma_n I_1 \subset \mathrm{Int}(I_1)$ for $n$ 
big enough.

Next consider $I_2$ and $\wcF_2$. 
We first have to verify that in $E$ the points $z_n$ are a
bounded distance from a geodesic ray in $E$.
First recall that Proposition \ref{prop.QI} proves that
there is a constant $Q_0 > 0$ so that for any $F$ a leaf
of $\wcF_i$ then $\Phi^i_F: F \to \HH^2$ is a
$Q_0$ quasi-isometry. 

The $z_n$ are a bounded distance from the geodesic ray
$\ell_0$ in $L$. 
By Proposition \ref{prop.QI} the image $\Phi^1_L(\ell_0)$ 
is a quasigeodesic and hence the points $\Phi^1_L(z_n)$ are a bounded
distance from a geodesic ray in $\HH^2$ which we denote by
$\ell_1$. Lemma \ref{lema.proyection} implies that there is $a_1 > 0$ such
that for any point $x$ in $\mt$, if $x \in L \in \wcF_1$
and $x \in E \in \wcF_2$, then
$d_{\HH^2}(\Phi^1_L(x), \Phi^2_E(x)) < a_1$.
Hence $\Phi^2_E(z_n)$ are a bounded distance from $\ell_1$.
Applying $(\Phi^2_E)^{-1}$ shows that $z_n$ are a bounded
distance from $(\Phi^2_E)^{-1}(\ell_1)$. The last curve
is a quasigeodesic in $E$ (again Proposition \ref{prop.QI}) and 
we conclude that $z_n$ are a bounded distance from a geodesic
ray in $E$ as claimed.

We know that $\ell_1$ converges to
$\alpha(E)$, so holonomy of $\wcF_2$
along $\ell_1$ is expanding. We have chosen $B^1_2$ sufficiently small, so that the transversal through
$z_n$ intersecting the leaves corresponding to $B^1_2$ will have length much bigger than length of a transversal through $p_1$. Then one gets that $\mathrm{Int}(\gamma_n I_2) \supset I_2$, or equivalently $\gamma^{-1}_n I_2 \subset \mathrm{Int}( I_2)$ as claimed.
\end{proof}

Before we show that an accumulation point must be fixed by some deck transformation we will show some general property about geodesic rays in leaves of
$\wcF_i$.

\begin{lema}\label{lema-returnsbothsides}
Let $\ell$ be a geodesic ray in some leaf $L \in \wt{\cF_i}$ with $\partial \Phi(\ell)= \xi \in \partial \HH^2$ then one of the following holds: \begin{enumerate}
\item there is $\gamma \in \pi_1(M) \setminus \{\mathrm{id}\}$ and a leaf $L' \in \wt{\cF_i}$ such that $\gamma L' = L'$ and $\gamma \xi = \xi$ and the projection of $\ell$ spirals towards the projection of $L'$, or, 
\item for every $\eps_1>0$ 
there is a foliated box $U$ in $\mt$ of diameter less than $\eps_1$, a sequence of points $y_n \in \ell$ going to infinity and deck transformations $\eta_n \in \pi_1(M)$  for which $\eta_n y_n \in U$ are in different leaves of $\wt{\cF_i}$ and such that $\eta_n L$ accumulate in infinitely many distinct leaves of $\wcF_i$.  
\end{enumerate}
\end{lema}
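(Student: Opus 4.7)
The plan is to argue by contrapositive: assume alternative~(1) fails, and produce alternative~(2). Fix $\eps_1>0$ and $m>0$. Set $y_n = \ell(n)$; by compactness of $M$, after passing to a subsequence and choosing deck transformations $\eta_n \in \pi_1(M)$, we have $\eta_n y_n \to \tilde q \in \mt$. Let $L_\infty \in \wt{\cF_i}$ be the leaf through $\tilde q$.

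I first treat the \emph{rigid} case, where along a further subsequence $\eta_n L = L_\infty$. Writing $\eta_n = \delta_n \alpha_0$ with $\alpha_0 L = L_\infty$ fixed and $\delta_n \in \mathrm{Stab}(L_\infty)$, the escape $\alpha_0 y_n \to \alpha_0 \xi$ in $\partial\HH^2 \cong S^1(L_\infty)$ combined with $\delta_n(\alpha_0 y_n) \to \tilde q$ forces the $\delta_n$ to be distinct and $\delta_n^{-1}\tilde q \to \alpha_0\xi$. By Matsumoto's theorem (cf.~\S\ref{s.minimalfol}), $\mathrm{Stab}(L_\infty)$ is identified modulo the center of $\pi_1(M)$ with the $\Gamma$-stabilizer of $\alpha(L_\infty) \in \partial\HH^2$; the central $\mathbb Z$ (the fiber direction) does not preserve horizontal leaves, so $\mathrm{Stab}(L_\infty) = \langle \delta \rangle$ is infinite cyclic with $\delta$ hyperbolic. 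North--South dynamics of $\delta$ on $\partial\HH^2$ then forces $\alpha_0\xi$ to be one of its two fixed points. Conjugating back, $\gamma := \alpha_0^{-1}\delta\alpha_0 \in \mathrm{Stab}(L)$ is non-trivial with $\gamma\xi = \xi$. Finally, Proposition~\ref{prop.visualconsequence} (small visual measure) pins $\ell$ to a bounded neighborhood inside $L$ of the axis of $\gamma$, so $\hat\ell$ spirals in $\pi(L)$ toward the projection of this axis: alternative~(1) holds with $L'=L$, contradicting the assumption.

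Therefore, for every accumulation point $\hat q \in \Omega$ (the $\omega$-limit set of $\hat\ell$), one has $\eta_n L \neq L_\infty$ eventually. Moreover, if $\Omega$ touched leaves in only finitely many $\pi_1(M)$-orbits of $\wt{\cF_i}$, pigeonholing one orbit as the ``frequent'' one would put us back in the rigid case and force~(1) again. So $\Omega$ is supported on leaves in infinitely many distinct orbits, and we may pick accumulation points $\hat q_1, \ldots, \hat q_m$ with lifts $\tilde q_j \in \tilde L_j$ in $m$ different orbits. By cocompactness of $M$, choose deck transformations $\rho_j$ so that the $\rho_j\tilde q_j$ all lie in a single foliated box $U \subset \mt$ of diameter less than $\eps_1$; the leaves $\rho_j\tilde L_j$ remain in $m$ distinct orbits, so their plaques in $U$ are pairwise distinct. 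For each $j$ fix a return sequence $\eta^{(j)}_k y^{(j)}_k \to \tilde q_j$. Interleaving the sequences $\rho_j\eta^{(j)}_k y^{(j)}_k \to \rho_j\tilde q_j$ and refining so the chosen points lie in pairwise distinct plaques produces the sequence required by alternative~(2), with the $m$ plaques of the $\rho_j\tilde q_j$ as accumulation plaques.

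The main obstacle is the dichotomy step: confirming that whenever accumulation concentrates on a single leaf-orbit, we are driven into the rigid case and hence into~(1). This hinges on combining the pigeonhole reduction with the North--South dynamics of the cyclic stabilizer of $L_\infty$ on $\partial\HH^2$ and the small visual measure control of $\ell$ relative to the $\gamma$-axis; Matsumoto's identification of $\cF_i$ with $\cF_{ws}$ is what makes the leaf-stabilizer analysis clean enough to close the argument.
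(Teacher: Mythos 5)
Your dichotomy does not match the one the statement requires, and the non-rigid branch has a genuine gap. The paper's proof (after reducing via Matsumoto's theorem to the case where $\cF_i$ is the weak stable foliation of a geodesic flow) observes that $\ell$ is, up to bounded error, an orbit of the geodesic flow, so its accumulation set $\Lambda$ in $M$ is a compact, connected, flow-invariant set. The correct dichotomy is: $\Lambda$ is a single closed orbit (option (1), where $L'$ is the weak-stable leaf through that orbit, and crucially $L'$ need not equal $L$), or $\Lambda$ is not a periodic orbit, in which case it contains a non-periodic orbit whose backward returns hit infinitely many distinct plaques (option (2)). Your split is instead ``$\eta_n L = L_\infty$ along a subsequence'' (rigid) versus not. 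When $\ell$ spirals toward a closed orbit $c$ lying in a leaf $F \ne \pi(L)$, the rigid case fails: every $\eta_n L$ is a lift of $\pi(L)$, hence never equals the lift $L_\infty$ of $F$. So you land in your ``not rigid'' branch. But in this scenario $\Omega = c$ touches exactly \emph{one} $\pi_1(M)$-orbit of leaves, and still the correct conclusion is option (1), not option (2).

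The specific flawed step is the claim that ``if $\Omega$ touched leaves in only finitely many $\pi_1(M)$-orbits, pigeonholing one orbit as the frequent one would put us back in the rigid case.'' The rigid case is a condition on the leaves $\eta_n L$, all of which lie in the single $\pi_1(M)$-orbit of $L$; it is not a condition on the orbits of the leaves that $\Omega$ touches. When $\Omega$ is concentrated in a leaf $F$ in a different orbit, the $\eta_n L$ accumulate on $L_\infty$ without ever equaling it, so the rigid hypothesis is not recovered, and no contradiction is produced. You would need an entirely separate argument (as in the paper: recognizing $\Lambda$ as a single closed orbit and taking $L'$ to be the leaf through it) to handle this case. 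A further minor point: you invoke Proposition~\ref{prop.visualconsequence} to pin $\ell$ near the axis of $\gamma$, but $\ell$ is already a geodesic ray in $L$ landing at $\xi$, and the axis also lands at $\xi$, so asymptoticity of geodesic rays in a Gromov hyperbolic leaf suffices; the small visual measure proposition concerns leaves of $\cG_L$ and is not the right tool here.
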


\begin{proof}
Up to changing $\ell$ by a uniformly bounded amount which does not affect the result, one can assume that $\cF_i$ is the weak stable foliation of the geodesic flow on $M=T^1S$ for a hyperbolic metric on $S$ (cf. Theorem \ref{teo.matsumoto}). Note that since this is the case, the accumulation set of $\ell$ when projected to $M$ is the same as the accumulation of the orbit of the geodesic flow from $\alpha(L)$ to $\xi$ inside $L$ traversed in 
the direction opposite to the flow. In particular 
it consists of a compact connected set saturated by orbits that we shall denote by $\Lambda$. 

First, note that unless $L = \gamma L$ and $\gamma \xi = \xi$ for some $\gamma \in \pi_1(M) \setminus \{\mathrm{id}\}$ (in which case $\Lambda$ is the unique closed geodesic in $L$ and we are in the first option) we have that all returns of the projection of $\ell$ to $M$ to a foliation box must happen in distinct plaques of $\cF_1$. This is the same for any backward orbit of the geodesic flow, if it limits in a point $x$ and it is not periodic, then it must intersect a transversal to $x$ in infinitely many distinct plaques of $\cF_1$ (because $\cF_1$ is the weak stable foliation of the geodesic flow by construction).  

Next, we assume that there is some $\eps_1>0$ so that for every $x\in \Lambda$ the two-dimensional transversal of size $\eps_1$ to the geodesic flow through the point $x$ does not intersect $\Lambda$ except perhaps at $x$ itself. In this case, we get that $\Lambda$ is a closed geodesic which belongs to some leaf $\cF_1$ and considering some lift $L'$ of this leaf to $\mt$ gives us the first option of the lemma. 

Finally, assume that $\Lambda$ is not a periodic orbit. Then, there is $x \in \Lambda$ such that every transversal to the flow through $x$ intersects $\Lambda$ in infinitely many distinct leaves. For every foliated neighborhood $U$ of $x$ we get that the returns need to have accumulation points every point of $\Lambda \cap U$ which has infinitely many different leaves obtaining the second option of the lemma.  
\end{proof}

Using the small visual measure property, one can extend this to rays of $\cG_L$ for every $L$. 

\begin{coro}\label{coro-accum}
Let $r$ be a ray in some leaf of $\cG_L$ for $L \in \wt{\cF_i}$ with
ideal point $\xi$ and let $\ell_0$ be a geodesic ray
in $L$ with ideal point $\xi$. Suppose that
$\ell_0$ does not spiral towards a leaf $L' \in \wt{\cF_i}$ invariant under some deck tansformation. Then, for every $\eps_2>0$ there is a sequence of points $x_n \in r$ going to infinity and deck transformations $\eta_n \in \pi_1(M)$ such that for every $n_0>0$ there are $n_1< n_2 < n_3$ so that $\eta_{n_1}x_{n_1}, \eta_{n_2}x_{n_2}, \eta_{n_3}x_{n_3}$ are $\eps_2$ distance apart and $\eta_{n_2}L$ is between $\eta_{n_2}L$ and $\eta_{n_3}L$.  
\end{coro}

\begin{proof} 
Fix $a_0$ given by the small visual measure so that for every $y \in \ell_0$ there is a point $x \in r$ so that $d(x,y)<a_0$. For $y \in \mt$ denote by $D_y$ to the disk of $\wcF_i$ of radius $a_0$ centred in $y$.

We fix $\eps_2$ and choose $\eps_1$ sufficiently small so that if two points $z,w$ are $\eps_1$-close, then, the disks $D_z$ and $D_w$ are at Hausdorff distance less than $\eps_2$. Now, choose $k$ sufficiently large so that if one chooses $k$-points $z_1, \ldots, z_k$ in a transversal to $\wcF_i$ of length $\eps_1$ and picks one point $w_i$ in each $D_{z_i}$ then it holds that at least 3 of them are $\eps_2$ apart. 

Note that the assumption on $r$ implies that $\ell_0$ verifies the second option of the previous lemma, implying that for every $\eps_1>0$ there is  a sequence of points $y_n \in \ell_0$ going to infinity in $\ell_0$ and deck transformations $\eta_n$ so that all points $\eta_n y_n$ belong to an open set $U$ of diameter less than $\eps_1$ and so that the leaves $\eta_n L$ have infinitely many accumulation points. In particular, we can assume that $\eta_n y_n$ accumulate in $k$-points $z_1, \ldots, z_k \in U$ which belong to different leaves of $\wcF_i$. 

Now, given $n_0$, we can choose $n_1< n_2< n_3$ and points $y_{n_1},y_{n_2}$ and $y_{n_3}$ so that $\eta_{n_i}y_{n_i}$ are in $U$ and the leaf $\eta_{n_2}L$ is between the leaves $\eta_{n_1}L$ and $\eta_{n_3}L$ and so that the corresponding points $x_{n_i}$ in $r$ are $\eps_2$ close. This completes the proof. 
\end{proof}

The next lemma then completes the proof of  Proposition \ref{p.dichotomytech} and Addendum \ref{add.spiral}. 

\begin{lema}\label{lem.fixedpointboundary}
Let $c_1,c_2 \in \cG_L$ be two non-separated leaves in $L \in \wt{\cF_i}$ so that their non-separated rays $r_1,r_2$ verify $\partial \Phi(r_1)=\partial \Phi(r_2) = \xi \neq \alpha(L)$.
Then there exists some $\gamma \in \pi_1(M) \setminus \{\mathrm{id}\}$ and $L' \in \wt{\cF_i}$ which verify that $\gamma L' = L'$ and $\gamma \xi =\xi$. Moreover, the rays $r_i$ when projected to $M$ spiral towards the projection of $L'$ and there exists a $\gamma$-invariant bigon in $L'$.
\end{lema}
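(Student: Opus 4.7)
The plan is to apply the dichotomy of Lemma \ref{lema-returnsbothsides} to a geodesic ray $\ell_0$ in $L$ landing at $\xi$, show that option (2) is incompatible with the good half region $\cV$, and extract the desired $\gamma$, $L'$, and invariant bigon from option (1). The ingredient that links both sides of the argument is small visual measure (Proposition \ref{prop.visualconsequence}): because $\xi \neq \alpha(L)$, the ray $\ell_0$ is eventually within distance $a_0$ of both $r_1$ and $r_2$, so the $M$-projections of $\ell_0, r_1, r_2$ have a common accumulation set. Reusing the setup preceding Lemma \ref{lema.restrictionintersect}, I would fix the good half region $\cV = \cV(r_1, r_2, \sigma, \tau)$, foliation boxes $B^i_j$ around $p_i = \sigma \cap r_i$, and intervals $I_1, I_2$, choosing $\sigma$ so that the projection of $p_1$ lies in the accumulation set of the projection of $\ell_0$.

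To rule out option (2), assume it holds and apply Lemma \ref{lema-returnsbothsides} with a small foliated box $U \subset B^1_1 \cap B^1_2$ around $p_1$: this yields $y_n \in \ell_0$ with $y_n \to \xi$ and deck transformations $\eta_n$ so that $\eta_n y_n \in U$ lie in distinct plaques having at least $m$ accumulation plaques, for any prescribed $m$. Pick $z_n \in r_1$ and $z_n' \in r_2$ within distance $a_0$ of $y_n$; the local product structure of the two foliations places $\eta_n z_n$ in $B^1_1 \cap B^1_2$ and $\eta_n z_n'$ in $B^2_1 \cap B^2_2$. Lemma \ref{lem.constructcontract} now gives $\eta_n I_1 \subset \mathrm{Int}(I_1)$ and $\eta_n^{-1} I_2 \subset \mathrm{Int}(I_2)$, so by Lemma \ref{lema.restrictionintersect}, $\eta_n L \cap \mathrm{Int}(\cV) = \emptyset$. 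But with $m$ large, at least one of the distinct accumulation plaques lies in $I_1$ on the $\cV$-side of $L$ (after, if needed, setting up a mirrored good half region on the other side and running the same argument); the inclusion $\cB_L \subset \wihatD_\eps(L, I_1)$ then forces such an $\eta_n L$ to be $\eps$-close to $\cB_L$ from the $\cV$-side, hence to enter $\mathrm{Int}(\cV)$, contradicting the previous conclusion.

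Option (1) therefore holds, yielding $\gamma \in \pi_1(M) \setminus \{\mathrm{id}\}$ and $L' \in \wt{\cF_i}$ with $\gamma L' = L'$, $\gamma \xi = \xi$, and the projection of $\ell_0$ (and hence of $r_1, r_2$ by small visual measure) spiraling towards the projection of $L'$. For the invariant bigon, let $E \in \wt{\cF_2}$ contain $r_1 \cup r_2$: Proposition \ref{prop-nonseplands} gives $\alpha(E) = \xi$, and because $\alpha$ identifies the leaf space of $\wt{\cF_2}$ equivariantly with $\partial \HH^2$, the identity $\gamma \xi = \xi$ forces $\gamma E = E$. Using the spiraling, pick $\mu_n \in \pi_1(M)$ with $\mu_n L \to L'$; Proposition \ref{prop-pushing} shows the non-separated pair $(\mu_n r_1, \mu_n r_2) \subset \mu_n L \cap \mu_n E$ persists with ideal endpoint $\mu_n \xi$, and after passing to a subsequence and adjusting by powers of $\gamma$ so that $\mu_n E \to E$ and $\mu_n \xi \to \xi$, one extracts in the limit a pair of non-separated rays $r_1', r_2' \subset L' \cap E$ landing at $\xi$. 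The argument of Proposition \ref{prop.someReeb} then applies verbatim: small visual measure bounds the mutual distance between $r_1'$ and $r_2'$, and compactness of a fundamental domain in $L'/\langle\gamma\rangle$ forces some $\gamma^k$ to preserve the leaves containing $r_1'$ and $r_2'$, yielding the desired $\gamma$-invariant (up to a power) non-Hausdorff bigon in $L'$.

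The hardest step will be ruling out option (2): one must verify simultaneously that the contraction-expansion hypotheses of Lemma \ref{lem.constructcontract} hold for the $\eta_n$ and that some $\eta_n L$ lies on the $\cV$-side of $L$ in $I_1$, so that Lemma \ref{lema.restrictionintersect} triggers a contradiction. A secondary delicate point is the choice of $\sigma$ so that $p_1$ projects into the accumulation set of the projection of $\ell_0$; this is necessary to be able to place $U$ inside $B^1_1 \cap B^1_2$ when invoking option (2).
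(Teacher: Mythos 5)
Your proposal follows the paper's overall strategy: invoke the dichotomy of Lemma~\ref{lema-returnsbothsides}, rule out option~(ii) via Lemmas~\ref{lem.constructcontract} and~\ref{lema.restrictionintersect}, and extract $\gamma$, $L'$, and the bigon from option~(i) using the spiraling and Proposition~\ref{prop.someReeb}. The extraction from option~(i) is essentially the paper's argument. However, the step where you rule out option~(ii) has a genuine gap.

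You want to place the foliated box $U$ produced by option~(ii) inside $B^1_1 \cap B^1_2$ around $p_1 = \sigma \cap r_1$, so that the returning deck transformations $\eta_n$ immediately satisfy the hypotheses of Lemma~\ref{lema.restrictionintersect}. But option~(ii) of Lemma~\ref{lema-returnsbothsides} only asserts that \emph{some} $U$ exists in $\mt$ of the given size: it comes from a non-periodic orbit in the accumulation set of $\pi(\ell_0)$, and its location is not under your control. Choosing $\sigma$ so that $\pi(p_1)$ lies in the accumulation set does not fix this, because the lift $U$ provided by the lemma is a particular lift in $\mt$ and need not contain $p_1$. Even if this could be arranged, the assertion that some accumulation plaque lies ``on the $\cV$-side of $L$'' is not justified: all the $\eta_n L$ could lie on the non-$\cV$ side, and the ``mirrored good half region'' you appeal to does not exist, since $\cV$ is the unique component bounded by $D \cup \cB_L \cup \cB_E$ that limits on $\xi$ (both $\cB_L$ and $\cB_E$ are the half bands in the $\xi$ direction, so there is no natural choice of a mirrored region on the other side of $L$).

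The paper avoids both issues by not requiring $U$ to be near $p_1$ at all. It lifts $U'$ from $M$ to a fixed $U$ in $\mt$ and, for each of the $b_2$ accumulation plaques $P_j$ of $U$, builds a sequence $p^j_n$ escaping in $\ell_0$ together with deck transformations $\gamma^j_n$ returning $p^j_n$ to $P_j$, and companion points $y^j_n \in r_1$, $z^j_n \in r_2$ within distance $a_0$ of $p^j_n$. A double pigeonhole on the limits $w_j$ of $\gamma^j_n(y^j_n)$ and $t_j$ of $\gamma^j_n(z^j_n)$ (with the quantitative constants $b_1 \leq b_2$ chosen carefully beforehand) produces two indices $j,k$ with $w_j, w_k$ and $t_j, t_k$ within $\eps_1$ of each other and with the plaque order consistent. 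Composing $(\gamma^j_{n_0})^{-1}\circ\gamma^k_{m_0}$ then yields a deck transformation that sends a point of $r_1$ to within $3\eps_1$ of \emph{another point of $r_1$} (namely $y^j_{n_0}$) and a point of $r_2$ to within $3\eps_1$ of a point of $r_2$, with the contraction/expansion pattern on $I_1, I_2$ required by Lemma~\ref{lema.restrictionintersect}. It is this ``return to $r_1$ itself,'' not a return to $U$, that makes Lemma~\ref{lema.restrictionintersect} applicable; and the ordering of the two plaques produced by the pigeonhole is what forces the contradiction without any need to argue that some translate lands on a particular side of $L$. This is the structural ingredient missing from your proposal.
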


\begin{proof} 

Assume without loss of generality (cf. Lemma \ref{lem-nonseparatedraysGHB}) that $r_1$ and $r_2$ are boundaries of a good half band $\cB_L$ in $L$. 

Let $\cV:= \cV(r_1,r_2, \sigma, \tau)$ a good half region with boundary $D \cup \cB_L \cup \cB_E$ constructed as in Lemma \ref{lem-embeddeddisk}. We can choose an ordering of the leaf space of $\wt{\cF_1}$ so that leaves that intersect $\cV$ are above $L$. 

By Proposition \ref{prop.someReeb} we can assume that $r_1$ verifies the assumptions of Corollary \ref{coro-accum}. Orient $r_1$ and given $x \in r_1$ consider $r_1(x)$ to be the ray starting at $x$. We can also define $\cV_x :=\cV(r_1(x), r_2, \sigma_x, \tau_x)$ by changing the arcs to intersect in $x$. Now, using Corollary \ref{coro-accum} given $\eps_2$ we can find points $x, y$ in $r_1$ so that $y$ is in $r_1(x)$ and a deck transformation $\eta$ so that $x$ and $\eta y$ and are very close and such that $\eta L$ is above $L$. This contradicts  Lemma \ref{lema.restrictionintersect}.

 It follows that the point $\xi$ is invariant under some $\gamma$ and that the ray $r_1$ converges to a closed a leaf $L'$ of $\wt{\cF_1}$ invariant under $\gamma$. 

In addition, the region between $r_1$ and $r_2$ in $L$
is asymptotic to a region in $L'$. This region in $L'$ 
has ideal point $\xi \not = \alpha(L')$. 
 The rays $r_1$ and $r_2$ are thus converging to some limit rays $e_1$ and $e_2$ in $L'$.
These rays are non separated in $L'$ and they have ideal
point $\xi$ such that $\gamma \xi = \xi$ and $\gamma L' = L'$.
Proposition \ref{prop.someReeb} implies that $e_1, e_2$
are the boundaries of a $\gamma$-invariant bigon.
This completes the proof of Lemma \ref{lem.fixedpointboundary}.
\end{proof}

\begin{remark}\label{rem-nhb}
Note that in the proof we also obtain that the endpoint $\xi$ is not equal to $\alpha(L')$ for the leaf $L'$ we have found. 
\end{remark}


\section{Consequences of the existence of Reeb surfaces}\label{s.consReeb}

The purpose of this section is to show the following proposition that gives more structure and improves Theorem A. In a certain sense, it says that if the intersected foliation $\cG$ is not an Anosov foliation, then the foliations should look very much like the ones studied in \S~\ref{ss.MatsumotoTsuboi}. 

\begin{theorem}\label{prop.consequenceReebSurface}
Assume that $\cF_1$ and $\cF_2$ have a Reeb surface and let $B$ be its lift to $L \in \wt{\cF_1}$ (a non Hausdorff bigon). Denote by $\xi, \eta$ its limit points in $\partial \HH^2$. Then, for every $L' \in \wt{\cF_i}$ there is a pair of non-separated leaves $c_1,c_2 \in \cG_{L'}$ such that $\partial^{\pm} \Phi(c_i) = \{\xi, \eta\}$ for $i=1,2$.
\end{theorem}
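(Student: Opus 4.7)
My plan is to fix all the data attached to the lifted Reeb bigon and then run a clopen argument on the leaf spaces $\wt{\cL}_i$, which are homeomorphic to $\RR$ and hence connected. First, I set up notation. The lifted Reeb bigon is bounded by two leaves $c_1,c_2\in\cG_L$ with $c_1\cup c_2\subset L\cap E$ for a unique $E\in\wt{\cF_2}$, and by Proposition \ref{prop.someReeb} both $L$ and $E$ are fixed by some non-trivial $\gamma\in\pi_1(M)$ (up to replacing by $\gamma^2$). Hence $\alpha(L),\alpha(E)\in\{\xi,\eta\}$ since $\gamma$ fixes only these two points on $\partial\HH^2$. Applying Proposition \ref{prop-nonseplands} to the non-separated rays of $c_1,c_2$ in $\cG_L$ shows $\alpha(E)$ equals their common endpoint, say $\alpha(E)=\xi$; by Remark \ref{rem-nhb} we have $\alpha(L)\neq\xi$, hence $\alpha(L)=\eta$. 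The same leaves $c_1,c_2$ lie in $\cG_E$; examining the bigon shows they are also non-separated in $\cG_E$ (their non-separated rays in $\cG_E$ point to the opposite end, landing at $\alpha(L)=\eta$). So the hypothesis of the theorem holds symmetrically for $E\in\wt{\cF_2}$, and it suffices to treat the case $i=1$.

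Define $\cI\subset\wt{\cL}_1$ to be the set of $L'\in\wt{\cF_1}$ admitting a non-separated pair $c_1',c_2'\in\cG_{L'}$ with $\partial^+\Phi(c_j')\cup\partial^-\Phi(c_j')=\{\xi,\eta\}$. We have $L\in\cI$, and I want to prove $\cI$ is open and closed in $\wt{\cL}_1\cong\RR$. For openness, let $L_1\in\cI$ with bigon boundary $c_1^1,c_2^1\subset L_1\cap E_1$ and $\alpha(E_1)\in\{\xi,\eta\}$. Since $\xi,\eta\neq\alpha(L_1)$, I can choose a small interval $I$ of $\wt{\cL}_1$ around $L_1$ with $\{\alpha(L'):L'\in I\}$ disjoint from $\{\xi,\eta\}$. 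Then Proposition \ref{prop.setDfol} puts the rays of $c_1^1,c_2^1$ eventually inside $\wihatD_\eps(L_1,I)$, and Proposition \ref{prop-pushing} produces, for each $L'\in I$, curves $c_1',c_2'\subset L'\cap E_1$ landing at $\{\xi,\eta\}$. Distinctness of $c_1',c_2'$ follows because they are limits of the distinct components $c_1^1,c_2^1$ of $L_1\cap E_1$ under the continuous perturbation, and non-separatedness in $\cG_{L'}$ is inherited because the interior leaves of the bigon $B_1$ in $L_1$ also push to $L'$ (again by Proposition \ref{prop-pushing}) and accumulate simultaneously on $c_1'$ and $c_2'$.

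For closedness, let $L_n\in\cI$ with $L_n\to L_\infty$ and bigon data $c_1^n,c_2^n\subset L_n\cap E_n$, with $\alpha(E_n)\in\{\xi,\eta\}$. Up to subsequence assume $\alpha(E_n)=\xi$ for all $n$, so the $E_n$ live in the countable fiber-$\ZZ$-orbit of leaves of $\wt{\cF_2}$ with non-marker point $\xi$. The key observation is that the openness argument actually fixes the companion leaf locally: on each small interval around $L'\in\cI$ the bigon sits in $L'\cap E_1$ for a constant $E_1$. Combined with Hausdorffness of the leaf space $\wt{\cL}_2$ and Proposition \ref{prop.visualconsequence} (which keeps the bigon boundaries $c_1^n,c_2^n$ within uniformly bounded distance in $L_n$), this forces the $E_n$ to stabilize along the convergent sequence $L_n\to L_\infty$, producing a single $E_\infty\in\wt{\cF_2}$ with $\alpha(E_\infty)=\xi$ so that eventually $E_n=E_\infty$. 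Transversality of $\cF_1$ and $\cF_2$ then makes the components $c_j^n\subset L_n\cap E_\infty$ converge to components $c_j^\infty\subset L_\infty\cap E_\infty$, which are distinct (their endpoints in $\partial\HH^2$ persist to $\{\xi,\eta\}$ by continuity of $\Phi^1_{L_n}$ and $\Phi^2_{E_\infty}$, cf. Corollary \ref{coincide}) and non-separated in $\cG_{L_\infty}$ (the interior bigon leaves persist in the limit). Hence $L_\infty\in\cI$ and $\cI=\wt{\cL}_1$. The case of $\wt{\cF_2}$ is obtained by running the identical argument starting from $E\in\cI_2$ using the symmetric setup noted above.

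\textbf{Main obstacle.} The delicate point is closedness, specifically showing that the companion leaves $E_n\in\wt{\cF_2}$ cannot escape to infinity within the discrete set $\{F:\alpha(F)=\xi\}\subset\wt{\cL}_2$. The plan is to use the bounded-width geometry of the bigons coming from small visual measure together with the ``locally constant'' nature of $L'\mapsto E_{L'}$ established in the openness step; combined with Hausdorffness of $\wt{\cL}_2$ this pins down a unique limit $E_\infty$. The second subtle point is that non-separatedness must survive the limit: this is handled by tracking the interior leaves of the bigon, which form an open family in $\cG$ whose limit as $L_n\to L_\infty$ is again a non-empty open family accumulating simultaneously on both $c_1^\infty$ and $c_2^\infty$.
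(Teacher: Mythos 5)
Your clopen strategy has a genuine gap in the openness step, and it is exactly the obstruction the paper spends most of \S\ref{s.consReeb} working around. You write ``Since $\xi,\eta\neq\alpha(L_1)$, I can choose a small interval $I$\dots'' --- but nothing guarantees $\alpha(L_1)\notin\{\xi,\eta\}$. The leaf space $\wt{\cL}_1$ contains a countable discrete set of leaves $L'$ with $\alpha(L')\in\{\xi,\eta\}$ (the $\gamma$-invariant leaves and their fiber translates), and at each such leaf the pushing mechanism of Proposition \ref{prop-pushing} only produces bigons in a \emph{half} neighborhood. This is precisely the content of Lemmas \ref{lema-bigonst2} and \ref{lema-bigonst3}: when the bigon's endpoint coincides with the non-marker point of the leaf, one of the two rays of each boundary curve lies in the non-contracting direction of $\wcF_1$, so it cannot be pushed to both sides, and the transverse behavior on the ``bad'' side is genuinely different. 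The Matsumoto--Tsuboi examples show this is not an artifact of the technique: bigons really do fail to deform continuously across these boundary leaves, and new bigons (not homotopic to the pushed ones through the one-dimensional foliation) must be produced on the other side.

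Because of this, $\cI$ as you define it is not a priori open at the leaves $L'$ with $\alpha(L')\in\{\xi,\eta\}$, and the connectedness argument collapses. The paper's route is structurally different: it first shows (via Lemma \ref{lem.ext} plus Lemmas \ref{lema-bigonst2}, \ref{lema-bigonst3}) that bigons propagate over one closed half of $\wt{\cL}_1$, namely $\{L':\alpha(L')\in\cI_B\}$ or its complement (Lemma \ref{lem-stableorient} pins down which half). To reach the other half it invokes Proposition \ref{prop-bigonst3}, which analyzes the full configuration of periodic bigons in the two $\gamma$-invariant leaves $L$ and $\hat L$ and uses the orientation bookkeeping (Lemmas \ref{l.pushchangealpha}, \ref{l.pushsamealpha}, the parity of the number of bigons, and the link with $\cI_B$) to manufacture a second bigon whose half-interval is complementary to the first. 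Your proposal never engages with this creation step, so it cannot be repaired by tightening the closedness argument; the openness failure at the boundary leaves is the actual obstruction the theorem is overcoming.

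A secondary concern: even in the closedness step, your claim that ``the openness argument actually fixes the companion leaf locally'' is not correct in general. Pushing the boundary curves $c_1,c_2$ to $L'$ does produce curves in $L'\cap E_1$, but the region between them need not be a single bigon --- the paper's proof of Lemma \ref{lem.ext} explicitly entertains the possibility that the limit region decomposes into pieces with ideal points $\{\xi,\alpha(L')\}$ and $\{\eta,\alpha(L')\}$ and then rules that out using $\gamma$-invariance. This subtlety is swallowed in your sketch, but it is minor compared to the openness issue.
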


To prove this, we first provide some stability properties of non-Hausdorff bigons in nearby leaves. Let $B$ be a non-Hausdorff bigon in  a leaf $L \in \wt{\cF_i}$ with boundaries $c_1,c_2 \in \cG_{L}$, we denote by $\xi_B^+$ and $\xi_B^{-}$ the points in $\partial \HH^2$ given by $\partial^{\pm} \Phi(c_i) = \{\xi_B^+, \xi_B^-\}$ for $i=1,2$ and such that $\{\xi_B^+\}$
is the ideal point of the rays of $c_1, c_2$ which
are non separated from each other. See Figure~\ref{fig-idealpoints}.

\begin{figure}[ht]
\begin{center}
\includegraphics[scale=0.78]{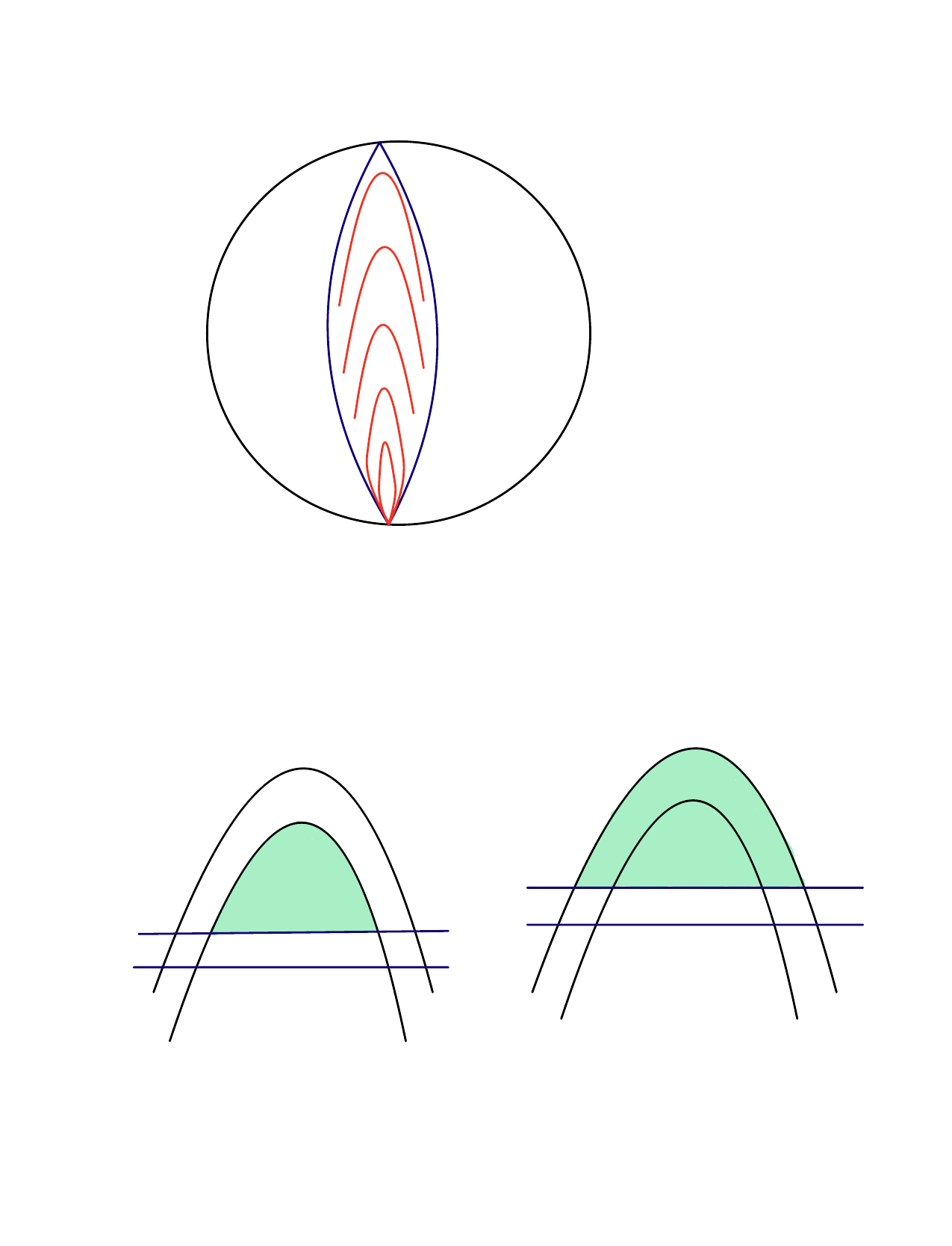}
\begin{picture}(0,0)
\put(-163,143){$c_1$}
\put(-135,212){$\xi_B^+$}
\put(-103,144){$c_2$}
\put(-130,0){$\xi_B^-$}
\end{picture}
\end{center}
\vspace{-0.5cm}
\caption{{\small The ideal points of a non-Hausdorff bigon $B$.}}\label{fig-idealpoints}
\end{figure}

%
%

\subsection{Persistence and extension to the closure} 
In this subsection we consider a Reeb surface lifting to a non-Hausdorff bigon $B$ in some leaf $L \in \wt{\cF_i}$ which is $\gamma$-invariant, and let $I_\gamma^{\pm}$ be the connected components of $\partial \HH^2 \setminus \{\gamma^{\pm}\}$ where $\gamma^\pm$ are the fixed points of $\gamma$. We will show that for one of the intervals $I_{\gamma}^{\pm}$, say $I_\gamma^+$ it holds that for every $L' \in \wt{\cF_i}$ so that $\alpha(L') \in I_\gamma^+$ the leaf $L'$ has a non-Hausdorff bigon $B'$ sharing the same endpoints as $B$. To do this, we will need to prove several stability properties, and along the way we will obtain some useful information that will help us to prove Theorem \ref{prop.consequenceReebSurface}. 

We first show the easy consequence of Proposition \ref{prop-pushing} that bigons persist in nearby leaves when both endpoints are marker points. 

\begin{lema}\label{lema-bigonst1}
Let $B$ be a non-Hausdorff bigon in a leaf $L \in \wt{\cF_i}$ with boundaries $c_1,c_2 \in \cG_{L}$ such that $\alpha(L) \notin \{\xi_B^+, \xi_B^-\}$. Then, there is a neighborhood $I$ of $L$ in the leaf space of $\wt{\cF_i}$ such that for every $L' \in I$ there is a non-Hausdorff bigon $B'$ in $L'$ which shares both endpoints with $B$.  
\end{lema}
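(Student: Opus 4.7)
The plan is to transfer the bigon structure to nearby leaves by pushing along the other foliation via Proposition \ref{prop-pushing}. Assume without loss of generality that $L \in \wt{\cF_1}$. By Lemma \ref{lem.nonHsdfftwointersect}, since $c_1, c_2 \in \cG_L$ are non-separated they share a common leaf $E \in \wt{\cF_2}$ with $c_1 \cup c_2 \subset L \cap E$, and both $c_i$ have ideal set $\{\xi_B^+, \xi_B^-\}$. Because neither ideal point equals $\alpha(L)$, Proposition \ref{prop.setDfol} shows that for any sufficiently small closed interval $I$ around $L$ in the leaf space of $\wt{\cF_1}$, each ray of $c_1$ and $c_2$ is contained, outside a compact piece, in $\wihatD_\eps(L,I)$, where $\eps>0$ is as in Proposition \ref{prop-pushing}.

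For each $L' \in I$, I will apply Proposition \ref{prop-pushing} to both rays of $c_1$ and both rays of $c_2$ (both endpoints are ``marker'' for $\wt{\cF_1}$ since they differ from $\alpha(L)$), obtaining pushed rays in $L' \cap E$ landing at the same ideal points $\xi_B^+, \xi_B^-$. Near a basepoint of $c_1$, the local product structure of the transverse pair $(\wt{\cF_1}, \wt{\cF_2})$ guarantees that for $L'$ close enough to $L$ the two pushed rays of $c_1$ belong to a common leaf $c_1' \in \cG_{L'}$ contained in $L'\cap E$; similarly define $c_2'$. For $L' \neq L$ in a small enough subinterval of $I$, the leaves $c_1', c_2'$ are distinct by continuity of the pushing (the leaves $c_1$ and $c_2$ stay at uniformly positive distance on any fixed compact piece of $L$). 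Thus $c_1' \neq c_2'$ are two distinct leaves of $\cG_{L'}$ both contained in $L' \cap E$, with ideal set $\{\xi_B^+, \xi_B^-\}$.

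It remains to verify that $c_1', c_2'$ are genuinely non-separated in $\cG_{L'}$ and not merely distinct components of $L' \cap E$. For this, apply Lemma \ref{lema.nonseparated} in $L$ to produce transversals $\tau_1, \tau_2$ in $L$ based at points of $c_1, c_2$ and a family of leaves $e_t \in \cG_L$, $t \in (0, t_1]$, with $e_t \subset L \cap F_t$ for some $F_t \in \wt{\cF_2}$, such that $e_t$ accumulates simultaneously on $c_1$ and on $c_2$ as $t \to 0$. Using the local product structure of $(\wt{\cF_1}, \wt{\cF_2})$ near the basepoints of $\tau_1$ and $\tau_2$, for $L'$ sufficiently close to $L$ one obtains corresponding transversals $\tau_1', \tau_2'$ in $L'$ based at $c_1', c_2'$ and leaves $e_t' \subset L' \cap F_t$ that, by continuity, accumulate on $c_1'$ via $\tau_1'$ and on $c_2'$ via $\tau_2'$ as $t \to 0$. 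Hence $c_1', c_2'$ are non-separated and form the desired bigon $B'$ in $L'$.

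The main obstacle is the final step: Proposition \ref{prop-pushing} and the count of components of $L' \cap E$ only guarantee non-Hausdorffness of $\cG_{L'}$, but a priori $c_1'$ and $c_2'$ could be separated by some intermediate pair of non-separated leaves (see Remark \ref{rem.nonseparated}). The uniform-in-$L'$ control given by the local product structure around the basepoints of $\tau_1, \tau_2$ is what allows the accumulating family $e_t$ from Lemma \ref{lema.nonseparated} to be transported into $L'$ while still accumulating on both pushed boundary leaves, which is precisely what rules out this scenario.
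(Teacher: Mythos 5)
Your overall strategy is the same as the paper's: this is purely an application of Proposition~\ref{prop-pushing}, made available because both ideal points of $B$ are marker points for $L$. However, your handling of the final step (establishing non-separation of $c_1', c_2'$) has a real gap, and your proposed resolution of the obstacle you yourself flag does not actually close it.

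The problem is in the sentence ``by continuity, accumulate on $c_1'$ via $\tau_1'$ and on $c_2'$ via $\tau_2'$.'' The segments of $e_t$ joining $\tau_1(t)$ to $\tau_2(\varphi(t))$ have length going to infinity as $t \to 0$, since they are filling out the non-compact bigon $B$. A pointwise continuity argument (``for each fixed $t$, the configuration persists for $L'$ close enough to $L$'') gives a neighborhood of $L$ that shrinks as $t \to 0$; it does not produce a single interval $I$ that works for \emph{all} $t$ simultaneously, which is what is needed to conclude that $c_1'$ and $c_2'$ are non-separated in a fixed $L' \in I$. Local product structure near the \emph{basepoints} of $\tau_1, \tau_2$, which is what you invoke, says nothing about the long middle portion of $e_t$, so it cannot supply the missing uniformity. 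Moreover, you only observed (via Proposition~\ref{prop.setDfol}) that the rays of $c_1$ and $c_2$ outside compact pieces lie in $\wihatD_\eps(L,I)$; this does not by itself control where the $e_t$ live.

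What the paper's one-line proof is actually using is the stronger fact that the \emph{entire bigon} $B$ is contained in $\wihatD_\eps(L,I)$ once $I$ is small enough. This follows from Proposition~\ref{prop.setDfol} plus the observation that the closure of $B$ in $L \cup S^1(L)$ meets $S^1(L)$ only at $\{\xi_B^+, \xi_B^-\}$, both different from $\alpha(L)$: the ``ends'' of $B$ are in $\wihatD_\eps(L,I)$ for small $I$ because they converge to marker directions, and the remaining compact core is trivially inside for small $I$. With the whole $B$ (hence every $e_t$ in its entirety, for all $t$) inside the pushing region, Proposition~\ref{prop-pushing} transports the entire one-dimensional foliation $\cG_L|_B$ to $L'$ uniformly; the non-separation of $c_1, c_2$ (witnessed by the $e_t$ accumulating on both) then transfers verbatim. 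Your proof becomes correct once you replace the claim about the rays of $c_1, c_2$ by this stronger containment of all of $B$.
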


\begin{proof}
This is just an application of Proposition \ref{prop-pushing} since the non-Hausdorff bigon is contained in $\wihatD_\eps(L,I)$ (see Proposition \ref{prop.setDfol}) for some $\eps$ for which Proposition \ref{prop-pushing} applies and some small neighborhood $I$ of $L$ in the leaf space of $\wt{\cF_i}$.  
\end{proof}

Now we show that such bigons extend to the boundary $\gamma$-invariant
leaves.

\begin{lema}\label{lem.ext}
Let $B$ be a non-Hausdorff bigon in a leaf $L \in \wt{\cF_i}$ with boundaries $c_1,c_2 \in \cG_{L}$ such that $\alpha(L) \notin \{\xi_B^+, \xi_B^-\}$. Assume that there is $\gamma \in \pi_1(M)$ whose fixed point at $\partial \HH^2$ are exactly $\xi_B^+$ and $\xi_B^-$. Then, for every $L' \in \wt{\cF_i}$ such that $\alpha(L')$ is in the same connected component as $\alpha(L)$ in $\partial \HH^2 \setminus \{\xi_B^+, \xi_B^-\}$ the leaf $L'$ has a non-Hausdorff bigon with the same endpoints as $B$. Moreover, this property passes to the closure of that set of leaves and thus extends to the boundary leaves $L_0, L_1$ such that $\alpha(L_0)= \xi_B^+$ and $\alpha(L_1) = \xi_B^-$. 
\end{lema}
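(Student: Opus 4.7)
Let $J$ denote the connected component of $\partial\HH^2 \setminus \{\xi_B^+, \xi_B^-\}$ containing $\alpha(L)$, and set
\[
A := \{\, L' \in \wt{\cF_i} : \alpha(L') \in J \text{ and } L' \text{ carries a non-Hausdorff bigon with endpoints } \xi_B^+, \xi_B^-\,\}.
\]
The plan is first to show that $A$ contains every leaf whose non-marker point lies in $J$ by an exhaustion argument, and then to extend bigons to $L_0, L_1$ by a limit argument combined with Proposition \ref{prop.someReeb}.

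First I would observe that $A$ is non-empty (it contains $L$), open in the leaf-space topology by Lemma \ref{lema-bigonst1}, and $\gamma$-invariant, since $\gamma$ fixes both $\xi_B^\pm$ and hence sends any bigon with these endpoints to another such bigon in $\gamma L'$. The key technical point is that the openness around each $L' \in A$ holds on a uniform scale. Proposition \ref{prop.visualconsequence} bounds every bigon width by $2 a_0$ in its own leaf, so any bigon is contained in a uniform tubular neighborhood of a leafwise geodesic from $\xi_B^-$ to $\xi_B^+$. Combined with the description of $D_\eps(I)$ in Corollary \ref{coro.setD}, an elementary visual-length estimate in $\HH^2$ shows that the interval $I$ supplied by Lemma \ref{lema-bigonst1} can always be taken of angular size bounded below by some $c > 0$ depending only on $a_0$ and the constant $\eps$ of Proposition \ref{prop-pushing}. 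Hence the angular $c$-neighborhood of $\alpha(L')$ in $J$ is entirely contained in $A$ for every $L' \in A$, and iterating this step finitely many times starting from $L$ exhausts the compact arc $J$, giving $A = \{L' : \alpha(L') \in J\}$.

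For the extension to $L_0$ (the case of $L_1$ is symmetric), I would fix a sequence $L_n \in A$ with $\alpha(L_n) \to \xi_B^+$, for instance $L_n = \gamma^n L$. For each $n$ the bigon in $L_n$ has boundaries $c_1^n, c_2^n$ lying jointly in $L_n \cap E_+$, where $E_+$ is the leaf of $\wt{\cF_j}$ with $\alpha(E_+) = \xi_B^+$ (via Lemma \ref{lem.nonHsdfftwointersect} and Proposition \ref{prop-nonseplands}). Using the uniform width bound $2 a_0$ and deck translations that center the bigons in a compact fundamental domain of $M$, I would extract convergent subsequences $c_1^n \to c_1^\ast$ and $c_2^n \to c_2^\ast$ inside $L_0 \cap E_+$. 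The crucial non-collapse claim $c_1^\ast \neq c_2^\ast$ would follow from a transversal argument: if the two limits coincided, the local product structure of the pair $(\cF_1, \cF_2)$ near the common limit would produce, for large $n$, local connections between $c_1^n$ and $c_2^n$ within $L_n \cap E_+$, contradicting that they are distinct connected components of this intersection. Passing to limits the sequences of leaves of $\cG_{L_n}$ accumulating on both $c_1^n$ and $c_2^n$ yields non-separation of $c_1^\ast, c_2^\ast$ in $\cG_{L_0}$ with non-separated rays landing at the $\gamma$-fixed point $\xi_B^+$. Proposition \ref{prop.someReeb} applied to $L_0$ and $\gamma$ then produces a bigon in $L_0$ whose boundaries are invariant under a power of $\gamma$; both of their ideal endpoints must be $\gamma$-fixed and hence are exactly $\xi_B^+$ and $\xi_B^-$, completing the extension.

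The hard part will be simultaneously securing the uniform lower bound $c$ on the angular size of the neighborhood in Lemma \ref{lema-bigonst1}, on which the exhaustion of $J$ rests, and the non-collapse of $c_1^\ast, c_2^\ast$ at the limit leaf $L_0$. Both obstacles are ultimately resolved by the uniform bigon width coming from the small visual measure property (Proposition \ref{prop.visualconsequence}), which is the fundamental geometric input making the argument work.
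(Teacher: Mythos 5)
The central gap is your claim of a \emph{uniform} lower bound $c$ on the angular size of the interval produced by Lemma~\ref{lema-bigonst1}, which is what the whole exhaustion argument rests on, and which is false. The interval $I$ in that lemma must be chosen so that the bigon $B'$ is contained in $\wihatD_\eps(L',I)$; by Proposition~\ref{prop.setDfol} and Corollary~\ref{coro.setD}, the set $\wihatD_\eps(L',I)$ excludes a fixed-size neighborhood of the geodesic spanning $\alpha(I)$, together with the half-plane limiting on $\alpha(I)$. Since $B'$ has ideal points $\xi_B^\pm$, one is forced to take $\alpha(I)$ disjoint from $\{\xi_B^+,\xi_B^-\}$. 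But as $\alpha(L')$ approaches an endpoint, say $\xi_B^+$, the admissible interval $\alpha(I)$ is pinched between $\alpha(L')$ and $\xi_B^+$ and its length goes to $0$; equivalently, the bigon's ray landing at $\xi_B^+$ comes arbitrarily close to the region that $\wihatD_\eps(L',I)$ must exclude. So the openness of $A$ is not at a uniform scale, and ``iterating finitely many times'' does not cover the (open, not compact) arc $J$. The $\gamma$-invariance of $A$ that you observe does not repair this: an open, nonempty, $\gamma$-invariant subset of $J$ need not equal $J$ (it could be a union of shrinking intervals around the $\gamma$-orbit of $\alpha(L)$), so invariance plus openness plus nonemptiness is not a connectedness argument.

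What is actually needed, and what the paper's proof supplies, is the \emph{closedness} of the set of leaves carrying the bigon, which combined with openness and connectedness of the interval gives the result. The closedness step is the real content: one must analyze the limits of the bigon boundaries $r_1^n, r_2^n$ inside the fixed $\gamma$-invariant leaf $E$ of the transverse foliation, classify the possible ideal points of the limiting leaves via Proposition~\ref{prop-nonseplands} (they can only be $\xi_B^\pm$ or $\alpha(L')$), and then rule out the degenerate configurations where $\alpha(L')$ appears as an ideal point by using $\gamma$-invariance of $E$ to produce crossings of leaves of $\cG_E$. Your proposal omits this entirely. Your treatment of the extension to $L_0,L_1$ is closer to the paper's (which handles it as the limiting case of the same closedness argument), though the phrase ``deck translations that center the bigons'' is not coherent as stated---deck translates land in a different leaf, and in fact no recentering is needed since the bigons have bounded width and accumulate along the $\gamma$-invariant geodesic. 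Still, that part is a secondary issue; the missing closedness argument is the essential gap.
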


We note that up to composing $\gamma$ with a power of the deck transformation associated to fibers we can assume that the leaves $L_0$ and $L_1$ in the conclusion are $\gamma$-invariant. 

\begin{proof}
We assume that $L \in \wt{\cF_1}$. The set of leaves for which $\alpha(L')$ belongs to the same connected component of $\alpha(L)$ in $\partial \HH^2 \setminus \{\xi_B^+, \xi_B^-\}$ is a countable union of open intervals. For any pair of such intervals, there is power of the deck transformation associated to the fiber (which induces the identity in $\partial \HH^2$) which maps one interval to the other, so it is enough to show that one such open interval $I_0$ (the one containing $L$) verifies the desired property. We can also assume that $\gamma$ fixes $I_0$. 

Let $E \in \wt{\cF_2}$ be the leaf so that $E\cap L$ contains the boundaries of the non-Hausdorff bigon and consider the set of leaves $I_1 \subset I_0$ consisting of leaves $L'$ so that $L' \cap E$ contains curves that bound a non-Hausdorff bigon joining $\{\xi_B^+,\xi_B^-\}$. Note that since $\xi_B^+$ is the point where the non-separation happens, it follows from Proposition \ref{prop-nonseplands} that $\alpha(E)= \xi_B^+$ and is thus  $\gamma$-invariant. We claim that this implies that $\gamma E = E$: note that since $\gamma \alpha(E)= \alpha(E)$ then either $E$ is fixed, or it is mapped to a leaf $E'$ obtained by acting on $E$ with a power of the deck transformation associated to the fiber and therefore $\gamma$ acts freely on the leaf space of $\wt{\cF_2}$  (recall that we are assuming that $\gamma$ is the element of $\pi_1(M) \setminus \{\mathrm{id}\}$ which also fixes $I_0$, which is an interval in the leaf space of $\wt{\cF_1}$).  To show that $\gamma E=E$ we therefore note that $\gamma^n(c_1), \gamma^n(c_2)$ are boundaries in bigons in $\gamma^n(L)$ and since $\gamma$ fixes $I_0$ which is a bounded interval in the leaf space of $\wt{\cF_1}$ the iterates $\gamma^n(L)$ converges to some leaf $L' \in \wt{\cF_1}$. By the small visual measure
property it follows that $\gamma^n(c_1)$ must remain intersecting a compact set in $\mt$ and hence hence $\gamma^n(E)$ which contains $\gamma^n(c_1)$ must also intersect a compact set in $\mt$ for all $n$. This shows that $\gamma$ cannot act freely on the leaf space of $\wt{\cF_2}$ and since $\alpha(E)$ is $\gamma$-invariant we deduce $\gamma E=E$.

Applying the argument in the previous lemma and using the fact that as long as $\alpha(L') \notin  \{\xi_B^+, \xi_B^-\}$ we can push the non Hausdorff bigon to nearby leaves we deduce that $I_1$ is open in $I_0$. 

To complete the proof we will show that $I_1$ is also closed in $I_0$. 

Consider $L_n \to L' \in I_0$ with $L_n$ in an open interval $I$ contained in $I_1$.  The leaf $L_n$ contains a non-Hausdorff bigon $B_n$ bounded by leaves $r_1^n, r_2^n$ of $\cG_{L_n}$ contained in $L_n$ but also in $E$ $-$ because pushing
preserves the $\wcF_2$ leaves they are i $-$ because pushing
preserves the $\wcF_2$ leaves they are in. Denote by $D_n$ the region in $E$ bounded by $r_1^n$ and $r_2^n$. We can assume that the sequence $D_n$ is monotonic. Note first that if the region $D_n$ decreases with $n$ (i.e. $D_{n+1} \subset D_n$) then since $D_n$ must contain a non-Hausdorff bigon of $E$ we get that in the limit the curves $r_1^n$ and $r_2^n$ converge to curves joining $\xi_B^+$ and $\xi_B^-$ as we want to show. 

We assume then that the sets $D_n$ increase with $n$ and consider the set $D_\infty= \overline{\bigcup_n D_n}$ whose closure in $E \cup S^1(L)$ is compact, connected, and cannot be the whole $E \cup S^1(E)$ (because of the small visual measure property).  Let $R_1' = \{\ell_1, \ldots, \ell_k, \ldots\}$ be the (possibly finite, but countable) collection of all limits of the curves $r_1^n$ in $\cG_{E}$. Similarly, denote by $R_2' = \{\ell_1^2, \ldots, \ell_m^2, \ldots \}$ the limits of $r_2^n$.  Since the
$r^n_1, r^n_2$ are in $L_n \cap E$, the limits are in
$E \cap L'$. Hence they belong to $\cG_E$ and $\cG_{L'}$. By Proposition \ref{prop-nonseplands} the limit points of the leaves $\ell_i^j$ can be either the ones of $r_j^n$ (that is $\xi_B^+= \alpha(E)$ or $\xi_B^-$), or $\alpha(L')$ if there is more than one limit curve.  

The set $R_1' \cup R_2'$ in $E \cup S^1(E)$ bounds the set $D_\infty$ which is $\cG_E$-saturated and has non empty interior since the region between $r_1^n$ and $r_2^n$ must contain a non-Hausdorff bigon which belongs to all the $D_n$. Since
the possible limit points of any leaf in the boundary
are only $\xi^+_B, \xi^-_B, \alpha(L')$, It follows that $R_1'$ or $R_2'$ must contain at least one leaf with an ideal point 
in $\xi_B^+$ and one with an ideal point in $\xi_B^-$ (they could be
the same leaf).

Therefore, if $L'$ does not contain a non-Hausdorff bigon joining $\xi_B^+$ and $\xi_B^-$ bounded by leaves in $R_1' \cup R_2' \subset L' \cap E$, there must be a region whose limit in $\partial \HH^2$ 
is either exactly $\{\xi_B^+, \xi_B^-, \alpha(L')\}$; 
or two regions, one with ideal points 
$\{\xi_B^+,  \alpha(L')\}$, and one with ideal points 
$\{\xi_B^-,  \alpha(L')\}$. 
We analyze the first possibility, the second one is similar.
Since $E$ is $\gamma$-invariant and since $\gamma$ does not fix $\alpha(L')$, then applying $\gamma$ or $\gamma^{-1}$ to
a curve with ideal points $\xi^-_B, \alpha(L')$ we obtain
a curve with one ideal point $\xi^-_B$ and another in the open
interval $(\xi^+_B, \alpha(L'))$ which does not contain $\xi^-_B$.
This gives a contradiction since the curves in $\cG_E$ cannot cross. 

Note that the boundary leaves bound a non Hausdorff bigon
in $L'$. Indeed these  boundary leaves have ideal points $\xi^-_B, \xi^+_B$,
and Proposition \ref{prop-pushing}  implies that the same local picture has to be seen in $L'$ as for $L_n$ for large $n$.  In other words the bigons in $L_n$ push through to $L'$.

Note that in the case of $L_n \to L_i$ with $i = 0,1$ we get the same conclusion (even simpler, since we do not have the possibility to have three limit points), only that when $\alpha(L_i)=\alpha(E)$ we cannot ensure that the region bounded by the leaves is a non-Hausdorff bigon. However, since there is no transversal intersecting both boundary leaves, it is easy to see that there must be one non-Hausdorff bigon in between. 
\end{proof}

When the endpoints of the bigon contain the non-marker point of $L$, stability is harder to establish. 
There are two cases depending on whether $\xi^+_B = \alpha(L)$ or $\xi^-_B = \alpha(L)$.

\subsection{Half interval stability: the case where $\alpha(L) = \xi_B^+$} 

The goal of this subsection is to give a proof of the following

\begin{prop}\label{lema-bigonst2}
Let $B$ be a non-Hausdorff bigon in a leaf $L \in \wt{\cF_i}$ with boundaries $c_1,c_2 \in \cG_{L}$ such that $\alpha(L) =\xi_B^+$.  Assume moreover that for some $\gamma \in \pi_1(M) \setminus \{\mathrm{id}\}$ we have that $\gamma B = B$. Then, there is a half neighborhood $I$ of $L$ in the leaf space of $\wt{\cF_i}$ (i.e. $I$ is a connected component of $J \setminus \{L\}$ where $J$ is a neighborhood of $L$ in the leaf space)   such that for every $L' \in I$ there is a non-Hausdorff bigon $B'$ in $L'$ which shares both endpoints with $B$.  
\end{prop}

Let us assume that $L \in \wt{\cF_1}$. Since $c_1$ and $c_2$ are non-separated in $\cG_L$, we know that there is $E \in \wt{\cF_2}$ such that $c_1 \cup c_2 \subset E \cap L$. 

Let $B_E$ denote the region in $E$ bounded by $c_1 \cup c_2$ which is an infinite band with bounded width, limiting on $\xi_B^\pm$ (cf. Lemma \ref{lema.proyection}). 
This set $B_E$ is not necessarily a non-Hausdorff bigon since $c_1$ and $c_2$ could be separated in $\cG_E$, but there must exist some non-Hausdorff bigon contained in $B_E$ and limiting in the same points.
By assumption $\alpha(L)=\xi_B^+$ and using Proposition \ref{prop-nonseplands} we have that $\alpha(E)=\xi_B^+ = \alpha(L)$. Up to considering the inverse, we can assume that $\gamma$ acts as an expansion on $\xi_B^+$. (Note also that since $\alpha(E)=\alpha(L)$ we have that $E = \gamma E$.) 

Fix small transversals $\tau_1$ and $\tau_2$ to $\cG_E$ in $E$ parametrized in such a way that the leaf $L_t \in \wt{\cF_1}$ through $\tau_1(t)$ also passes through $\tau_2(t)$ and that $\tau_i(0)$ belongs to $c_i$. We also assume these are parametrized so that for $t>0$ we have that $\tau_1(t)$ belongs to $B_E$.

For $t<0$ denote by $c_1^t$ and $c_2^t$ the leaves of $\cG_E$ through $\tau_1(t)$ and $\tau_2(t)$ respectively, which belong to $L_t \cap E$. Note that these cannot coincide because they are separated by $c_1$ (and $c_2$). Note also that the rays of $c_1^t$ and $c_2^t$ in the direction of $\xi_B^-$ must approximate $c_1$ and $c_2$ for $t$ small because a neighborhood of $\xi_B^-$ in $L$ is contained in the set where  Proposition \ref{prop-pushing} applies.

The leaves $c_1^t$ and $c_2^t$ cannot be connected by a transversal in $L_t$ 
(since they are both in $E$). Between them (maybe coinciding with one of them), there is a pair of leaves $e^t_1, e^t_2$ which is non-separated and such that $e_1^t, e_2^t$ both separate $c_1^t$ from $c_2^t$ (unless they coincide with them). It follows that both have $\xi_B^-$ as an ideal point. 

We call $E_t \in \wt{\cF_2}$ the leaf so that $e_1^t \cup e_2^t \subset L_t \cap E_t$. We will show that the other landing point of $e^t_1, e^t_2$
is in $\xi_B^+$ and thus there is a non-Hausdorff bigon in $L_t$ 
with ideal points $\xi^-_B, \xi^+_B$ as desired.  
Notice that a priori $E_t$ does not vary continuously with $t$,
even though we will show in the next lemma that $E_t$ is continuous
with $t$ when $t = 0$.

The other rays of $e_1^t$ and $e_2^t$ must land in $\alpha(E_t)$ (because the rays we showed limit in $\xi_B^-$ are separated by Proposition \ref{prop-pushing} and thus the other rays are non separated therefore  Proposition \ref{prop-nonseplands} applies).  In particular, the curves $e_1^t$ and $e_2^t$ bound a non-Hausdorff bigon $B_t$ in $L_t$ which limits in $\xi_t = \alpha(E_t)$ and in $\xi_B^-$. Let $L_0 = L$ and $\xi_0 = \alpha(L_0) = \alpha(E)$. 

\begin{lema}\label{lema-claim1} The function $\xi_t$ of $t$  is continuous at zero. More precisely, for every $J$ neighborhood of $\xi_B^+$ there is $\delta>0$ so that for every $t \in (\delta, 0]$ we have that $\xi_t \in J$. 
%
\end{lema}

\begin{proof}
To see this fix some small neighborhood $U$ of $\xi_0$ in $\partial \HH^2$.
Now choose a transversal $\eta$ to $\cG_L$ starting at a point
in $c_1$ and entering $B$. 

We choose $\eta$  small enough such that
if a leaf $\ell$ of $\cG_{L_0}$ intersects  $\eta$,
then the leaf $\ell$ is contained in a leaf $E' \in \wt{\cF_2}$ so that $\alpha(E') \in U$.  
Denote by $\ell_0$ the leaf of $\cG_L$ through the endpoint
of $\eta$ which is not in $c_1$. In particular $\ell_0$ is contained
in the interior of $B$.
Now choose $I$ a neighborhood of $\alpha(L)$ so that 
$\ell_0 \cup \eta$ is contained in 
$\wihatD_\eps(L, I)$ 
(this set is defined as in equation \eqref{eq:setDfol} where $\eps$ is chosen so that Proposition \ref{prop-pushing} holds). 
Recall that $L = L_0$. 
Let $\ell_t$ be the push through to $L_t$ of the leaf $\ell_0 \subset L$.
In other words $\ell_t$ is the component
of $\wcF_2(\ell_0) \cap L_t$ which is $\eps$ close to $\ell_0$.
In the same way $\eta$ can be pushed to a transversal $\eta_t$
to $\cG_{L_t}$ 
in $L_t$ starting in $c^t_1$ and ending in $\ell_t$.
For each $t$, $\ell_t$ is contained in between $c^t_1, c^t_2$ 
in $L_t$. In addition there is a transversal $\eta_t$
to $\cG_{L_t}$ from $c^t_1$ to $\ell_t$.
Therefore $\ell_t$ is
in between $e^t_1, e^t_2$ in $L_t$.
In particular this implies that $e^t_1$ is in between
$c^t_1$ and $\ell_t$ in $L_t$.
We have that $c^t_1 \subset E$, $e^t_1 \subset E_t$ and $\ell_t \subset 
\wcF_2(\ell_0)$, with $\alpha(\wcF_2(\ell_0))$ in $U$.
 Since $\alpha(E)$ is in $U$, it now follows that $\alpha(E_t)$
is in $U$ all well, as long as $\alpha(L_t)$ is in $I$.
Since $\xi_t = \alpha(E_t)$ this completes the proof.
\end{proof}

Note that we have also shown that there is a transversal 
to $\cG_{L_t}$ from $e^t_1$ to $c^t_1$ which $\eps$ pushes
to $L = L_0$ and similarly there is a transversal from $e^t_2$ to
$c^t_2$ which also pushes to $L$. It now makes sense to talk about monotonicity of $\xi_t$, we can indeed show: 

\begin{lema}\label{lema-claim2} The point $\xi_t$ varies in a weakly monotonic way, that is, for small $t, t' \in (-\delta,0]$ we have that if $t' < t$ then $\xi_{t'} \leq \xi_{t}$ for the orientation of $J$ making $\xi_B^+$ the maximal point. 
\end{lema}
\begin{proof}
For small $\delta'>0$, consider $\eta: (-\delta', 0] \to L$ such that $\eta$ is transverse to $\cG_L$ and such that $\eta(0) \in c_1$ and $\eta(s) \in B$ for all $s \in (-\delta',0)$. Note that as we have shown in Proposition
\ref{prop-nonseplands}, that if $\delta$ is small enough, we know that for every $t \in (-\delta,0)$ we have that $e_1^t$ belongs to the same leaf of $\wt{\cF_2}$ as $\eta(s)$ for some $s \in (-\delta',0]$. This identification will be recorded by a function $\rho: (-\delta, 0] \to (-\delta',0]$ so that $\rho(t)=s$. 

If $\delta$ is small enough, then the image of $\eta$ is contained in $\wihatD_\eps(L,I_\delta)$ where $I_\delta$ is the interval of $\partial \HH^2$ made by $\alpha(L_t)$ with $t \in (-\delta,0]$, so, applying Proposition \ref{prop-pushing} we find transversals $\eta^t: (-\delta',0] \to L_t$ intersecting the same $\wt{\cF_2}$ leaves. Denote by $\ell_s \in \cG_L$ the leaf of $\cG_L$ through the point $\eta(s)$ and let $E'_s \in \wt{\cF_2}$ the leaf that contains $\ell_s$. We note that all $\ell_s$ with $s \in (-\delta',0)$ are bubble leaves with endpoint in $\xi_B^-$. 

Note that for the function $\rho$ defined above, we have $E_t = E'_{\rho(t)}$. Consider $e^t_{t'}$ to be the leaf of $\cG_{L_t}$ containing $\eta^t(\rho(t'))$. We get that $e^t_t=e^t_1$ by definition. 

We can thus restate the claim stating that whenever $t' < t < 0$ one has that $\rho(t') \leq \rho(t)$. We assume by contradiction that this does not hold, 
that is, for a pair $t' < t < 0$ we have
$\rho(t') > \rho(t)$.
Then we get that the leaf $e^{t'}_t$ is a bubble leaf with endpoint $\xi_B^-$. On the other hand  $e^t_t = e^t_1$ is a leaf in $L_t$ with one ideal
point  $\xi_B^-$.
It follows that the leaf $E_t= E'_{\rho(t)}$ must intersect $L_{t'}$ and $L_0$ in bubble leaves while it intersects $L_t$ in at least $e_1^t \cup e_2^t$. We will show that this is impossible: 
Denote by $c$ the corresponding bubble intersection of $E_t$ with $L$.
By the remark after the proof of Lemma \ref{lema-claim1}, we know that there is 
a small transversal $\beta$ to $\cG_{L_{t'}}$ in $L_{t'}$ 
from $e^{t'}_t$ to $c^{t'}_2$ and this pushes to $L_0$, through
$\wcF_2(e^{t'}_t) = E_t$. Then $E_t$  intersects $L_t$ near
the push through of $\beta$. The same happens for
$e^t_2$. Hence the local leaf of $E_t$
passes through $e^t_2$ and also $c$.
In other words there is a small transversal $\nu_2$ to $\cG_{E_t}$
in $E_t$ intersecting $e^{t'}_t$, $e^t_2$ and $c$ in
turn. In the same way there is a small transversal 
$\nu_1$ to $\cG_{E_t}$
in $E_t$ from $e^{t'}_t$ to $e^t_1$
to $c$. Consider the closed curve in $E_t$  which is
the concatenation of $\nu_1$, a segment in $c$, $\nu_2$ and
a segment in $e^{t'}_t$.  This closed curve does not bound
a disk in $E_t$ because $e^t_1, e^t_2$ intersect this curve
in a single point. 
This would show that $E_t$ is not a plane.
This contradiction completes the proof. 
\end{proof} 

 Note that Lemma \ref{lema-bigonst1} applies directly as soon as $\xi_t \neq \alpha(L_t)$, so, we get: 
 
\begin{lema}\label{lema-claim3} For every $t \in (-\delta,0]$ either $\xi_s$ is locally constant near $t$ or $\xi_t= \alpha(L_t)$. 
\end{lema}

Finally, we show: 
\begin{lema}\label{lema-claim4}
If $\xi_t= \alpha(L_t)$ for $t \not = 0$ then, one cannot have that $L_t$ is invariant under some $\gamma_t \in \pi_1(M) \setminus \{\mathrm{id}\}$. 
\end{lema}
\begin{proof}
First notice that if this is the case then also one has that $\gamma_t \xi_t = \xi_t$ since $\gamma_t L_t = L_t$ implies that $\alpha(L_t)$ is $\gamma_t$-invariant. On the other hand, by definition, $L_t$ contains a non-Hausdorff bigon $B_t$ whose endpoints are $\xi_t$ and $\xi_B^-$. Since $t \not = 0$
and $\xi_t = \alpha(L_t)$ then $\xi_t \not = \alpha(L_0)$. But
as $\xi_B^+ = \alpha(L_0)$, then $\xi_t \neq \xi_B^+$. Also we know that $\xi_B^+$ and $\xi_B^-$ are the fixed points of $\gamma=\gamma_0$ (the deck transformation leaving $L=L_0$ invariant) then $\gamma_t \xi_B^-\neq \xi_B^-$. 
Let $B_t$ be a bigon in $L_t$ with ideal points $\xi_t$ and
$\xi^-_B$.

Now we argue as in Proposition \ref{prop.someReeb}: Due to the small visual measure property there is a uniform bound on the number of distinct non-Hausdorff bigons that share an endpoint in a given leaf of $\wt{\cF_i}$. On the other hand, applying $\gamma_t^n$ to $B_t$ we obtain infinitely many disjoint non-Hausdorff bigons in $L_t$ sharing one of the endpoints, namely $\xi_t$. This gives a contradiction and proves the lemma.  
\end{proof}

We can now give a proof of Proposition \ref{lema-bigonst2}

\begin{proof}[Proof of Proposition \ref{lema-bigonst2}] 
 We will now show that $\xi_t$ must be constant and equal to $\xi_B^+$ as desired. 
 Assume first that we have that $\xi_t = \alpha(L_t)$ in an open interval $I \subset (-\delta,0)$. Since leaves with non-trivial stabilizer are dense, it follows that for some $t$ close to $0$ we have that $L_t$ is $\gamma_t$ invariant for some $\gamma_t \in \pi_1(M) \setminus \{\mathrm{id}\}$ contradicting Lemma \ref{lema-claim4}.

Now, assume that for some $t \in (-\delta,0)$ we have that $\xi_t \neq \alpha(L_t)$. Consider $A= \{ s \in (-\delta,0) \ : \ \xi_s = \xi_t \}$. Note that Lemma \ref{lema-claim1} implies $\xi_t$ is continuous at $t=0$  we know that $A$ avoids a neighborhood of $0$.  Therefore, if $s_0 <0$ is the supremum of $A$, it follows from Lemma \ref{lema-claim3} that $s_0$ verifies that $\xi_{s_0} = \alpha(L_{s_0})$. 
In $L_{s_0}$ we have a bigon $B_{s_0}$ with $\xi^+_{B_{s_0}} = \xi_{s_0}$.
Applying Proposition \ref{p.dichotomytech} to $L_s$ with $s$ in the
interior of the interval (so $s \not = s_0$)  we deduce that $\xi_{s_0}$ is fixed by some deck transformation $\hat\gamma_s \in \pi_1(M)$ and not acting trivially on $\partial \HH^2$. Since $\alpha(L_{s_0}) = \xi_{s_0}$ then up to changing $\hat \gamma_s \in \pi_1(M)$ by some power of the deck transformation generated by the fiber we get some $\gamma_s \in \pi_1(M) \setminus \{\mathrm{id}\}$ which fixes $L_{s_0}$ again contradicting Lemma \ref{lema-claim4}, unless $s_0 = 0$. In other words the 
interval where $\xi_s$ is constant has an endpoint in $0$. 

This completes the proof that $\xi_t$ must be constant equal to $\xi_B^+$ for $t \in (-\delta,0)$ and thus completes the proof of Proposition \ref{lema-bigonst2}. 
\end{proof}

\begin{remark}\label{rem-bigonst2} 
We have in fact showed the following: If $B$ is a $\gamma$-invariant bigon in a leaf $L \in \wt{\cF_1}$, and $E \in \wt{\cF_2}$ is the leaf which intersects $B$ in the boundary $c_1, c_2 \in \cG_L$, is such that the non-separated rays of $c_1,c_2$ in $L$ land \emph{in} $\alpha(L)$, then for every leaf $L' \in \wt{\cF_1}$ close to $L$ intersecting $E$ \emph{outside} the region $B_E$ bounded by $c_1 \cup c_2$,
the leaf $L'$  contains a bigon $B'$ whose endpoints coincide with those of $B$ and its boundaries correspond to the intersection with $E$.  
\end{remark}

\subsection{Half interval stability: the non-separated side is a marker point.}
This means that $\xi_B^+ \not = \alpha(L)$, but $\xi_B^- = \alpha(L)$. 
When the endpoint of the bigon is a marker point for $L$, we can also push to one side, but the argument is different: 

\begin{prop}\label{lema-bigonst3}
Let $B$ be a non-Hausdorff bigon in a leaf $L \in \wt{\cF_i}$ with boundaries $c_1,c_2 \in \cG_{L}$ such that $\alpha(L) =\xi_B^-$. Assume moreover that for some $\gamma \in \pi_1(M) \setminus \{\mathrm{id}\}$ we have that $\gamma B = B$.  Then, there is a half neighborhood $I$ of $L$ in the leaf space of $\wt{\cF_i}$ such that for every $L' \in I$ there is a non-Hausdorff bigon $B'$ in $L'$ which shares both endpoints with $B$.  
\end{prop}

\begin{proof}
Again we assume for concreteness that $L \in \wt{\cF_1}$. Let $E$ be the leaf in $\wt{\cF_2}$ so that $c_1 \cup c_2 \subset L \cap E$. We have that $\alpha(L) = \xi_B^-$ by assumption and that $\alpha(E)= \xi_B^+$ by Proposition \ref{prop-nonseplands}. 

Now, denote by $B_E$ the region in $E$ bounded between $c_1$ and $c_2$. Suppose that $B'$ is a non-Hausdorff bigon contained in $B_E$. We first  claim that
$B'$ must have its non-separated rays land in $\xi_B^+$ (and in particular, 
this shows that $c_1$ and $c_2$ cannot be non-separated in $E$ because they belong to $L$ and $\alpha(L) = \xi_B^-$ and Proposition \ref{prop-nonseplands} would imply that the non separated rays land there). To prove the claim, consider first some $E'$ very close to $E$ in such a way that it intersects $B$ in a curve which is very close to $c_1 \cup c_2$. If $C$ is a non-Hausdorff bigon in $B_E$ limiting in $\xi_B^-$, first let 
$L''$ be the $\wcF_1$ leaf containing $\partial C$,  and then can take a leaf $L'$ of $\wcF_1$  very close to $L''$ and in such a way that the intersection of $L'$ with $E$ is a bubble leaf with points very close to $E'$. It follows that the intersection of $L'$ and $E'$ in the region between $B$ and $B_E$ must be compact, which is a contradiction because it would give a circle leaf in $\cG_{L'}$ (and $\cG_{E'}$). 
This proves the claim.

In fact the same argument shows that any bigon in $B_E$ cannot have
a boundary leaf in $c_1$ or $c_2$. This implies that inside 
$B_E$ both $c_1, c_2$ have a neighborhood which does not intersect
a bigon. Starting from $c_1$ consider leaves $e$ of $\cG_E$ inside
$B_E$ and near $c_1$. There is a small interval of leaves
where $\gamma$ acts as a contraction or expansion in this interval. Similarly
for $c_2$. Hence there is a maximal interval $[c_1,e_1]$ where
$\gamma$ fixes the endpoints and no other leaf of $\cG_E$
in between. Similarly there is a maximal interval $[c_2,e_2]$ with
the same properties. The leaves $e_1, e_2$ are in the same leaf $L'$ of
$\wcF_1$ and it is invariant under $\gamma$. By construction
$\alpha(L') = \xi^+_B$.
Finally any leaf $L''$ of $\wcF_1$ between $L$ and $L'$ has
a bigon $B_{L''}$ which is asymptotic to $B$ in one direction
and to a bigon $B_{L'}$ in the other direction.

This provides the interval required by the proposition.
\end{proof} 

\begin{remark}\label{rem-orientbigons} 
Note that as a consequence of the proof we have the following property: Let $B$ a $\gamma$-invariant non-Hausdorff bigon in a leaf $L \in \wt{\cF_1}$ bounded by curves $c_1, c_2$ so that $c_1 \cup c_2 \subset L \cap E$ for some $E \in \wt{\cF_2}$. Let $\{\xi_B^+, \xi_B^-\}$ the endpoints of the bigon, where as above, $\xi_B^+$ denotes the non-separated point of the curves $c_1,c_2$.  Then, we have that all non-Hausdorff bigons in $E$ (note that there is at least one since there is no transversal in $E$ from $c_1$ to $c_2$) that are contained between $c_1$ and $c_2$ have its non separated point in $\xi_B^+$. 
\end{remark}

In contrast with Remark \ref{rem-bigonst2} we notice the following: 

\begin{remark}\label{rem-orientbigons2}
If $B$ is a $\gamma$-invariant bigon in a leaf $L \in \wt{\cF_1}$ and $E \in \wt{\cF_2}$ is the leaf which intersects $B$ in the boundary $c_1, c_2 \in \cG_L$ is so that the non-separated rays of $c_1,c_2$ in $L$ land in a point \emph{different from} $\alpha(L)$, then, for every leaf $L' \in \wt{\cF_1}$ close to $L$ intersecting $E$ \emph{inside} the region $B_E$ bounded by $c_1 \cup c_2$ contains a bigon $B'$ whose endpoints coincide with those of $B$ and its boundaries correspond to the intersection with $E$.  
\end{remark}

\subsection{Putting all stability together} 

Let $B$ be a non-Hausdorff bigon in $L \in \wt{\cF_1}$ with boundaries $c_1,c_2 \in \cG_L$ and endpoints $\xi_B^+$ and $\xi_B^-$ (recall that $\xi_B^+$ denotes the endpoint which is the limit of the non-separated rays of $c_1$ and $c_2$). Consider a transversal $\tau: [0,\delta) \to L$ to $\cG_L$ so that $\tau(0) \in c_1$,  so that
$\tau((0,\delta))$ is contained in $B$.
Let then $E_t \in \wt{\cF_2}$ be the leaf through the point $\tau(t)$. We will denote by $\cI_B$ the closed interval in $\partial \HH^2$ which is the closure of the connected component of $\partial \HH^2 \setminus \{\xi_B^+, \xi_B^-\}$ containing $\alpha(E_t), t > 0$, that is: 

\begin{equation}\label{eq:IB}
\cI_B = \overline{ \mathrm{cc}_{\alpha(E_t)} (\partial \HH^2 \setminus \{\xi_B^+, \xi_B^-\}) }.
\end{equation} 

We note that the definition of $\cI_B$ is independent on the choice of $\tau$ and $t > 0$ (and also works if $\tau(0) \in c_2$ instead of $c_1$). 

\begin{lema}\label{lem-stableorient} 
Let $B$ be a $\gamma$-invariant non-Hausdorff bigon in a leaf $L \in \wt{\cF_1}$ with boundaries $c_1,c_2 \in \cG_{L}$ such that $\alpha(L) =\xi_B^-$ and $\gamma \in \pi_1(M) \setminus \{\mathrm{id}\}$. Then, for every $L' \in \wt{\cF_1}$ such that $\alpha(L') \in \cI_B$ (cf. equation \eqref{eq:IB}) we have that $L'$ contains a non-Hausdorff bigon $B'$ with endpoints $\xi_B^-, \xi_B^+$. In the same way if $\alpha(L) =\xi_B^+$ we also get that for every leaf $L' \in \wt{\cF_1}$ such that $\alpha(L') \in \cI_B$ we have that $L'$ contains a non-Hausdorff bigon $B'$ with endpoints $\xi_B^-, \xi_B^+$. 
\end{lema}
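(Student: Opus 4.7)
My plan is to propagate the bigon by combining the local stability results (Lemmas \ref{lema-bigonst2} and \ref{lema-bigonst3}) with the global extension across an interval of leaf space provided by Lemma \ref{lem.ext}.

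I would first treat the case $\alpha(L)=\xi_B^-$. The plan is to apply Lemma \ref{lema-bigonst3} to produce a half neighborhood $I$ of $L$ in the leaf space of $\wt{\cF_1}$ such that every $L''\in I$ contains a non-Hausdorff bigon with endpoints $\xi_B^+,\xi_B^-$. Using the discussion following Lemma \ref{lema-bigonst3} together with Remark \ref{rem-orientbigons}, the half neighborhood one gets is on the side consisting of leaves that intersect $E$ \emph{inside} $B_E$, and by the very definition of $\cI_B$ in equation \eqref{eq:IB} this is precisely the side where $\alpha(L'')$ lies in the interior of $\cI_B$. I would then pick one such $L''$ with $\alpha(L'')\notin\{\xi_B^+,\xi_B^-\}$.

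Next I would feed $L''$ and its bigon $B''$ into Lemma \ref{lem.ext}; the $\gamma$-invariance of the original bigon $B$ supplies the required non-trivial deck transformation fixing both $\xi_B^+$ and $\xi_B^-$. The conclusion of Lemma \ref{lem.ext} then produces a non-Hausdorff bigon with endpoints $\xi_B^+,\xi_B^-$ in every $L'\in\wt{\cF_1}$ with $\alpha(L')$ in the same connected component of $\partial\HH^2\setminus\{\xi_B^+,\xi_B^-\}$ as $\alpha(L'')$, which is the interior of $\cI_B$; the closure clause of the same lemma extends this to the two boundary leaves with $\alpha=\xi_B^\pm$, so one obtains the conclusion for every leaf with $\alpha(L')\in\cI_B$. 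The symmetric case $\alpha(L)=\xi_B^+$ proceeds in exactly the same way, using Lemma \ref{lema-bigonst2} and Remark \ref{rem-bigonst2} in place of Lemma \ref{lema-bigonst3}: the half neighborhood one obtains now consists of leaves intersecting $E$ \emph{outside} $B_E$, i.e.\ of leaves $L''$ with $\alpha(L'')\notin\cI_B$, and a second application of Lemma \ref{lem.ext} propagates the bigon across the complementary interval.

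The step I expect to require the most care is tracking the orientation: verifying that the half neighborhoods produced by Lemmas \ref{lema-bigonst2} and \ref{lema-bigonst3} really land on the correct side of $\{\xi_B^+,\xi_B^-\}$ in $\partial\HH^2$. This is essentially the content of Remark \ref{rem-bigonst2} and Remark \ref{rem-orientbigons}, but it has to be matched carefully against the specific choice of interval in \eqref{eq:IB}; a sign mistake here would exchange $\cI_B$ with its complement and invert the whole statement.
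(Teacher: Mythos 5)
Your proof follows the same route as the paper's: apply Lemma~\ref{lema-bigonst3} (or Lemma~\ref{lema-bigonst2} in the symmetric case) to obtain a half neighborhood of $L$ carrying bigons with the same ideal points, then feed any such nearby leaf into Lemma~\ref{lem.ext} to propagate the bigon across an entire closed interval of leaf space. The second half of your argument (the application of Lemma~\ref{lem.ext}, including the closure clause to reach the two $\gamma$-invariant boundary leaves) is fine, and you correctly identify where the $\gamma$-invariance hypothesis enters.

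However, there is a genuine gap in the orientation step, and it is not just a matter of ``requiring care.'' You claim that the half neighborhood from Lemma~\ref{lema-bigonst3} lands on the $\cI_B$ side ``by the very definition of $\cI_B$,'' but this is not a tautology: $\cI_B$ in equation~\eqref{eq:IB} is defined via the direction in which $\alpha(E_t)$ moves when one pushes a transversal to $\cG_L$ \emph{into} $B$ and records the $\wcF_2$-leaves $E_t$ it crosses. The half neighborhood from Lemma~\ref{lema-bigonst3}, by contrast, consists of $\wcF_1$-leaves $L_s$ met by a transversal to $\cG_E$ entering $B_E$. These are transversals in two \emph{different} leaves, crossing two \emph{different} foliations, and there is no purely definitional reason why the interval swept by $\alpha(E_t)$ (from $\xi_B^+$) should be the same connected component of $\partial\HH^2\setminus\{\xi_B^+,\xi_B^-\}$ as the interval swept by $\alpha(L_s)$ (from $\xi_B^-$). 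Matching them is exactly the content of the paper's opening discussion in this proof: that the transverse orientations of $\cF_1,\cF_2$ are compatible (both being horizontal means a single path transverse to both foliations has the same sign for both), combined with the observation that if $B$ lies on the positive $\wcF_2$-side of $E$ then $B_E$ lies on the negative $\wcF_1$-side of $L$. Without supplying that implication, the statement you are trying to prove could equally well come out with $\cI_B$ replaced by its complementary interval, which would contradict the second statement of the lemma. This is precisely the step your write-up defers to the remarks without actually carrying out.
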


\begin{proof}
Note that both $\cF_1$ and $\cF_2$ are transversally orientable. We choose an orientation so that if $\mu:(-\eps,\eps) \to \mt$ is a positively oriented path transverse to {\emph{both}} 
foliations $\wcF_1, \wcF_2$, then if $L_t \in \wcF_1$ and $E_t \in \wcF_2$ are the leaves through $\mu(t)$ then $\alpha(L_t)$ and $\alpha(E_t)$ move both clockwise. To see that this is possible, recall that when lifted to $\mt$ it follows that the transverse orientation induce a direction on which the point $\alpha(L_t)$ varies as $L_t$ varies in the leaf space of $\wt{\cF_i}$. Note that since both foliations inherit the orientation of the base (because they are horizontal) the transverse orientations match (recall that from Theorem \ref{teo.matsumoto} we know that there are homeomorphisms inducing the identity in the base which maps the foliations $\cF_i$ to $\cF_{ws}$), and so we can speak of moving in the clockwise or counterclockwise direction in $\partial \mathbb{H}^2$.

Let $E$ be the $\wcF_2$ leaf containing $c_1 \cup c_2$, and 
let $B_E$ be the region in $E$ bounded by $c_1 \cup c_2$.
Let $\tau$ be a transversal to $\cG_L$ starting
at $c_1$ and entering $B$.

The following happens: moving $E_t$ (the leaf through $\tau(t)$, so $E_0 = E$) moves $\alpha(E_t)$ in one direction of $\partial \HH^2$, starting at $\xi_B^+$, by definition $\alpha(E_t)$ moves into
$\cI_B$. Without loss of generality, we can assume that $\alpha(E_t)$ moves clockwise, equivalently the leaves of $\wt{\cF_2}$ intersecting $B$ are 
\emph{above} $E_0$.
In other words $B$ is above $E_0$.
Next consider a curve $\eta:(-\delta,\delta) \to E_0$ transverse to $\cG_{E_0}$ so that $\eta(0) \in c_1$ such that the parametrization is clockwise, 
i.e. if $L_s$ is the leaf of $\wt{\cF_1}$ through the point $\eta(s)$ we have that $\alpha(L_s)$ moves clockwise as $s$ increases.


Now if $\alpha(L_0) = \xi_B^-$, the previous lemma tells us that the leaves 
$L_s$ which have bigons bounded by the curves in $E \cap L_s$
are the leaves intersecting the region $B_E$ (see Remark \ref{rem-orientbigons2}). 
We saw above that the region $B$ is above $E_0$ and therefore
$B_E$ is \emph{below} $L_0$.

Therefore the leaves $L_t$ which have these bigons, satisfy that
the point $\alpha(L_t)$ is locally counterclockwise to $\xi_B^-$ and thus in the same connected component as $\alpha(E_s)$ (for small $s$) in $\partial \HH^2 \setminus \{\xi_B^+,\xi_B^-\}$.

Applying Lemma \ref{lem.ext} we get that the full closed interval between $\xi_B^+$ and $\xi_B^-$, has a bigon with ideal points $\xi^-_B, \xi^+_B$. 
If on the other hand we assume that $\alpha(E_t)$ moves counterclockwise
when $t$ increases, we get that $B$ is \emph{below} $E_0$,
and as above it will follows that $B_E$ is above $L_0$. 
One obtains the same result as above.

Finally we consider the case that $\alpha(L) = \xi^+_B$.
In this case the important fact to 
note is that in the proof of Proposition \ref{lema-bigonst2} we obtain the half neighborhood by moving in the opposite direction (see  Remark \ref{rem-bigonst2}), where we explain that the 
$L_s$ near $L_0$ which intersect $E_0$ forming a bigon in $L_s$
intersect $E_0$ {\emph{outside}} $B_E$ (as opposed to inside
$B_E$ in the previous case). 
Therefore with the conventions as in the previous case 
we have the following:
if  $\alpha(E_t) > \alpha(E_0)$ (for $t > 0$), 
we produce bigons in leaves $L_s$ above $L_0$,
so $\alpha(L_s) > \alpha(L_0)$. Therefore $\alpha(E_t)$ moves
clockwise and $\alpha(L_t)$ moves clockwise.
But $\alpha(E_t)$ moving clockwise when $t$ increases
means that $\alpha(E_t)$ moves inside $\cI_B$ for $t > 0$,
and henceforth $\alpha(L_s)$ moves inside $\cI_B$ for $t > 0$.
Thus we obtain the second statement of the  lemma.
\end{proof}

Putting together what we have shown, we can deduce: 

\begin{prop}\label{prop-halfintervals}
Let $\cF_1, \cF_2$ be two transverse minimal foliations of $M = T^1S$ and let $\cG$ be their intersection. Then if $\cG$ is not homeomorphic to the foliation given by the geodesic flow of a hyperbolic metric, it follows that
there are finitely many disjoint simple closed curves $s_1, \ldots, s_k$ 
in $S$ such that for every periodic non-Hausdorff bigon $B$ in a leaf $L \in \wt{\cF_i}$ we have that $\partial \cI_B$ (cf. equation \eqref{eq:IB}) corresponds to the endpoints of a lift of one of the curves $s_j$ to $\HH^2$. In particular: 
\begin{itemize}
\item If two periodic non-Hausdorff bigons share one endpoint, then they must share both endpoints. 
\item The endpoints of two different periodic non-Hausdorff bigons cannot be linked.
\item Up to deck transformations, there are finitely many periodic non-Hausdorff bigons. Equivalently there are finitely many Reeb surfaces of $\cG$ in $\cF_i$.
\end{itemize}
\end{prop}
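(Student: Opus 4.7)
The plan is to associate to each periodic non-Hausdorff bigon $B$ the pair $\{\xi_B^+, \xi_B^-\} \subset \partial \HH^2$ of fixed points of a stabilizing deck transformation, show these pairs cannot link in $\partial \HH^2$, and then extract a finite family of simple closed curves on $S$ from the standard correspondence with conjugacy classes in $\Gamma = \pi_1(S)$. Since $B$ is fixed by some $\gamma \in \pi_1(M) \setminus \{\mathrm{id}\}$, projecting $\gamma$ to $\Gamma$ yields a hyperbolic element whose axis in $\HH^2$ has endpoints $\xi_B^\pm$, so each periodic bigon is canonically associated to an axis in $\HH^2$ and, via its $\Gamma$-orbit, to a closed geodesic on $S$.

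The first step I will carry out is a full-circle extension: for every leaf $L' \in \wt{\cF_i}$ with $\alpha(L') \notin \{\xi_B^+, \xi_B^-\}$, there is a (possibly non-periodic) non-Hausdorff bigon in $L'$ sharing both endpoints with $B$. Lemma \ref{lem.ext} gives the extension across the closed half-interval of leaf space containing $\alpha(L)$, reaching boundary leaves $L_0, L_1$ with $\alpha(L_i) = \xi_B^\pm$, and after multiplying $\gamma$ by a power of the fiber generator these leaves are $\gamma$-invariant. Applying Lemma \ref{lem-stableorient} to bigons in $L_0$ and $L_1$ then pushes the structure across the complementary half. With this extension in hand, linking is ruled out directly: if two periodic bigons $B_1, B_2$ had linked endpoint pairs $\{\xi_1^\pm\}, \{\xi_2^\pm\}$, I would choose $L'$ whose $\alpha(L')$ avoids all four points; then $\cG_{L'}$ would contain disjoint leaves (the boundaries of the two extended bigons) landing at linked ideal points, contradicting Theorem \ref{prop.landing} together with Proposition \ref{prop.disjointrays}. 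This proves the second bullet of the proposition; the first bullet then follows from the standard fact that two hyperbolic elements of a torsion-free Fuchsian group sharing a single fixed point in $\partial \HH^2$ must share both.

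To finish, non-linking translates, under the classical correspondence between conjugacy classes in $\Gamma$ and closed geodesics on $S$, into the statement that the closed geodesics $s_j$ obtained are simple (no two lifts of the same geodesic link) and pairwise disjoint (no two lifts of distinct geodesics link). Since a closed hyperbolic surface of genus $g \geq 2$ admits at most $3g-3$ pairwise disjoint simple closed geodesics, only finitely many $s_1, \ldots, s_k$ appear, and the endpoints of every periodic bigon are the ideal endpoints of a lift of some $s_j$. The main obstacle I expect is the careful side-tracking needed in the extension step: Lemmas \ref{lem.ext} and \ref{lem-stableorient} each cover only one half of $\partial \HH^2$, and the orientation convention distinguishing $\xi_B^+$ from $\xi_B^-$ (i.e.\ which ray class is the non-separated one) must be tracked consistently so that extensions from $L_0$ and from $L_1$ genuinely cover the complementary interval rather than redundantly re-covering the starting half.
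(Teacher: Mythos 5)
Your outline matches the paper's in its endgame (shared endpoint $\Rightarrow$ the stabilizers lie in a common cyclic subgroup $\Rightarrow$ both endpoints coincide; non-linking $\Rightarrow$ the axes project to disjoint simple closed geodesics $\Rightarrow$ finiteness), but the first step you rely on, the ``full-circle extension'', is a genuine gap. You claim that every leaf $L'$ with $\alpha(L')\notin\{\xi_B^+,\xi_B^-\}$ carries a bigon with the same endpoints as $B$, obtained by running Lemma \ref{lem.ext} up to the boundary leaves $L_0,L_1$ and then applying Lemma \ref{lem-stableorient} to the bigons there to cross into the other half. This does not work as stated: which half Lemma \ref{lem-stableorient} covers is governed by whether the non-separated ideal point of the bigon in $L_0$ (resp.\ $L_1$) equals $\alpha(L_0)$ (resp.\ $\alpha(L_1)$), and this is not controlled at this stage, so the push may simply re-cover the half already obtained. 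In fact the paper warns (beginning of the subsection on creating new bigons, referring to the Matsumoto--Tsuboi examples) that bigons need not continue beyond one half interval of the leaf space; producing bigons in the complementary half requires genuinely new bigons, and that is the content of Proposition \ref{prop-bigonst3}, Lemmas \ref{l.pushchangealpha} and \ref{l.pushsamealpha}, and ultimately Theorem \ref{prop.consequenceReebSurface} --- whose proofs use Proposition \ref{prop-halfintervals} itself (e.g.\ via Remark \ref{rem-hiplema} and directly in the proof of Lemma \ref{l.pushsamealpha}). So invoking the full extension here is either unproved or circular. A smaller imprecision in the same step: Lemma \ref{lem.ext} requires $\alpha(L)\notin\{\xi_B^\pm\}$, whereas a periodic bigon lives in a $\gamma$-invariant leaf with $\alpha(L)\in\{\xi_B^\pm\}$, so one must first move off $L$ using Lemma \ref{lema-bigonst2} or \ref{lema-bigonst3}.

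The good news is that the proposition does not need the full circle, and your linking argument can be repaired to match the paper's: each periodic bigon gives (via Lemmas \ref{lema-bigonst2}/\ref{lema-bigonst3} and \ref{lem.ext}, equivalently Lemma \ref{lem-stableorient}) one \emph{closed half-interval} of $\partial\HH^2$ bounded by its endpoints such that every leaf with $\alpha$ in that half-interval carries a bigon with those endpoints. If the endpoint pairs of $B$ and $B'$ were linked, then each endpoint of the half-interval for $B'$ lies in the interior of one of the two arcs bounded by $\xi_B^\pm$, so the interiors of the two chosen half-intervals necessarily overlap; picking $L'$ with $\alpha(L')$ in that overlap (and off the four ideal points) gives a single leaf containing two bigons with linked endpoints, whose boundary leaves of $\cG_{L'}$ would have to cross --- impossible by Corollary \ref{coro.separationH2}. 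With that replacement, and applying the non-linking also to deck translates of a single periodic bigon to get simplicity of the projected geodesic, your conclusion (at most finitely many disjoint simple closed curves on $S$) goes through exactly as in the paper.
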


\begin{proof}
Assume that a leaf $L \in \wt{\cF_1}$ has a $\gamma$-periodic non-Hausdorff bigon $B$. Due to Propositions \ref{lema-bigonst2} or \ref{lema-bigonst3} we know that we can push such a bigon to nearby leaves. Then, thanks to Lemma \ref{lem.ext}  we know that for every leaf $L' \in \wt{\cF_1}$ with $\alpha(L') \in \cI_B$ 
verifies that has a bigon joining the endpoints $\xi_B^+$ and $\xi_B^-$ of $B$. 

First, assume that $\cI_B$ and $\cI_{B'}$ share an endpoint, then, since $\gamma$ and $\gamma'$ must fix those endpoints, we deduce that $\gamma'$ and $\gamma$ belong to the same cyclic group of $\pi_1(M)$. Thus, we deduce that both endpoints must coincide. 

Assume now that there are two distinct non-Hausdorff bigons $B,B'$ so that $B$ is $\gamma$-periodic and $B'$ is $\gamma'$-periodic and so that 
the ideal points of $B$, $B'$ are linked.

Again using Propositions \ref{lema-bigonst2} and \ref{lema-bigonst3},
we can find bigons with same ideal points as $B$
in all leaves $L''$ in $\wcF_1$ in the interval 
defined by $\alpha(L'')$ in the closure
of one complementary component of $\xi^+_B, \xi^-_B$.
The set of such $\alpha(L'')$ produces an interval $I_B$ of $\partial \HH$.
The same holds for $B'$, with corresponding interval $I_{B'}$. So if the ideal points link, it follows that the interiors of $I_B, I_{B'}$ intersect, and
we can find a leaf $L'$ which has both a bigon with same endpoints as $B$ and one which has both endpoints as $B'$. This is a contradiction since bigons cannot cross (since they are bounded by leaves of $\cG_{L'}$ which is a foliation). 

Finally, note that since a $\gamma$-invariant  non-Hausdorff bigon cannot cross with its translates by other deck transformations, each such non Hausdorff bigon corresponds to a simple closed curve in $S$. Similarly, distinct periodic non-Hausdorff bigons correspond to disjoint curves, and at most finitely many such disjoint curves can exist in $S$. 

To prove the final property: we may assume periodic bigons $B_1, B_2$ are 
associated with same simple closed curve $s_j$ of $S$, and are both bigons 
in say $\wcF_1$. There is a 
unique element $\gamma$ of $\pi_1(M)$ which projects to $s_j$ 
in $S$ and acts with fixed points in $\wcF_i$. This $\gamma$
has a discrete set of fixed points in the leaf space of $\wcF_1$. 
So we may assume up to fiber translates, that $B_1, B_2$ are in
the same leaf of $\wcF_1$. But then there are finitely many
such.
This proves finiteness of Reeb surfaces in $M$. 
\end{proof}

\begin{figure}[ht]
\begin{center}
\includegraphics[scale=0.98]{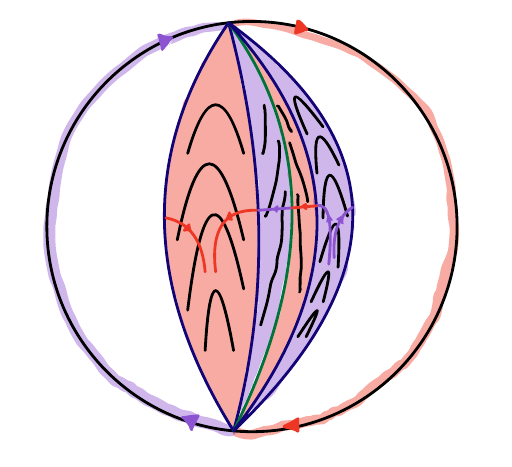}
\end{center}
\vspace{-0.5cm}
\caption{{\small How do the non-marker points move on the other foliation as one considers the leaf associated to a transversal of $\cG_L$.}}\label{fig-orientation}
\end{figure}

\subsection{Creating new bigons}

As one can see in the examples of \cite{MatsumotoTsuboi} it is possible that some bigons do not have continuations beyond one half interval of the leaf space. Thus, to show that there are bigons in every leaf, we need to construct new bigons (i.e. which do not come from varying continuously from the original one) in order to produce bigons in the other half interval. This requires a careful analysis of the orientation of the foliations. 

We first show that the existence of some periodic bigon forces landing points of rays to be rather restricted. We state this in somewhat more generality. 

\begin{lema} \label{lema-crossing}
Assume that there is a leaf $L \in \wt{\cF_i}$ such that it contains a leaf $e \in \cG_L$ so that $\alpha(L) \notin \{\partial^+ \Phi(e) \cup \partial^-\Phi(e)\}$.  In addition assume that $\partial^+ \Phi(e) \not = \partial^- \Phi(e)$. 
Let $J$ be a non trivial interval in the leaf space of $\wcF_i$.
Then there is a leaf $L'$ in $J$ fixed by a non trivial deck
transformation $\beta$ so that no leaf of $\cG_{L'}$ has one
ideal point fixed by $\beta$.
\end{lema}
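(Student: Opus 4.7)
The plan is to combine the pushing technique of Proposition~\ref{prop-pushing} with the density of hyperbolic fixed points on $\partial \HH^2$ guaranteed by Proposition~\ref{prop.minimality}. First, I would use Proposition~\ref{prop-pushing} to extend $e$ to a continuous family of leaves $e_t \in \cG_{L_t}$ for $L_t$ in a small interval $I_0 \ni L$ of the leaf space of $\wt{\cF_i}$, so that the ideal points $\partial^\pm \Phi(e_t)$ vary continuously with $t$ and avoid the marker $\alpha(E_t)$ of the corresponding leaf $E_t \in \wt{\cF_{3-i}}$ containing $e_t$. By the minimality of the $\pi_1(M)$-action on the leaf space (Proposition~\ref{prop.minimality}), applying a suitable deck transformation allows us to assume without loss of generality that $I_0 \subset J$.

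Second, by Proposition~\ref{prop.minimality} the set of attracting fixed points of hyperbolic elements of $\pi_1(M)$ is dense in $\partial \HH^2$, and by further density one may also prescribe the repelling fixed point in a given open interval. I would choose $\beta$ such that the attracting fixed point $\beta^+$ lies in $I_0$ (guaranteeing that the fixed leaf $L' := L'_\beta$ with $\alpha(L') = \beta^+$ lies in $J$), and such that both $\beta^+$ and $\beta^-$ are disjoint from the (closed) set of ideal points of the pushed leaves $e_t$ near $L'$, which is a meager constraint and hence admissible by density.

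Third, I would argue by contradiction: suppose that some leaf $c \in \cG_{L'}$ has an ideal point in $\{\beta^+, \beta^-\}$. Since $L'$ is $\beta$-invariant, the full orbit $\{\beta^n c\}_{n \in \ZZ}$ consists of distinct leaves of $\cG_{L'}$ all sharing this ideal point. By Proposition~\ref{prop.visualconsequence} (small visual measure), each such ray must be uniformly close in $L'$ to a geodesic ray landing at that fixed point, so these rays accumulate on the axis of $\beta$ in $L'$, producing non-separated configurations in $\cG_{L'}$. By Proposition~\ref{prop-nonseplands}, the shared ideal point of any such non-separated pair must equal $\alpha(F)$ for the leaf $F \in \wt{\cF_{3-i}}$ containing those rays. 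Pushing the pre-chosen leaf $e$ (or its appropriate deck-translate) to $L'$ via Proposition~\ref{prop-pushing} and comparing ideal points, one produces a configuration in some leaf $L_t$ near $L'$ whose landing behavior contradicts the standing hypothesis $\alpha(E) \notin \partial^+ \Phi(e) \cup \partial^- \Phi(e)$.

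The principal obstacle will be this last step: quantifying the interaction between the $\beta$-orbit of a hypothetical bad leaf $c$ and the pushed family $\{e_t\}$ in a way that cleanly violates the hypothesis. A likely cleaner implementation is to take limits $\beta_n \to \beta$ with $\beta_n^+ \to p$ and $\beta_n^- \to q$ for $p, q$ chosen transverse to the landing data of $e$, and then extract a limit of hypothetical leaves $c_n \in \cG_{L'_n}$ with ideal points at $\beta_n^\pm$; by Proposition~\ref{prop.visual} together with Proposition~\ref{prop-pushing}, the limit yields a leaf of $\wt \cG$ whose ideal points collide with the pushed image of $e$, contradicting the avoidance of $\alpha(E)$ in the hypothesis.
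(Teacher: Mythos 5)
Your plan shares the correct high-level skeleton with the paper's proof — translate the hypothesis leaf $e$ into $J$ by a deck transformation and the pushing mechanism, pick $\beta$ with a fixed leaf $L'$ in $J$, and argue by contradiction by iterating $\beta$ on a hypothetical bad leaf $c\in\cG_{L'}$. But you are missing the single key geometric condition that makes the contradiction land, and your substitute route has a genuine gap.

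The paper's choice of $\beta$ is not ``both fixed points avoid the ideal points of the pushed $e_t$'' (a measure-type constraint) but the topological constraint that $\{\beta^+,\beta^-\}$ \emph{links} $\{a_1,a_2\}=\partial^+\Phi(e)\cup\partial^-\Phi(e)$: one shrinks $J$ so it avoids $a_1,a_2$, takes $I$ on the \emph{opposite} side of $\{a_1,a_2\}$ from $J$, and then picks $\beta$ with one fixed point in $I$ and one in $J$. After pushing $e$ from $L$ to $L'$ (possible exactly because $J$ avoids $a_1,a_2$, so both landing directions are marker directions along the whole interval), one obtains $e'\in\cG_{L'}$ with ideal points $a_1,a_2$. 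Now if $c\in\cG_{L'}$ has an ideal point at a $\beta$-fixed point, say at $\beta^-\in I$, iterating by $\beta$ or $\beta^{-1}$ pushes the \emph{other} ideal point of $c$ arbitrarily close to $\beta^+\in J$, so some $\beta^n c$ has one ideal point in $I$ and one in $J$. Since $\{a_1,a_2\}$ links $I,J$, the curve $\beta^n c$ must cross $e'$ inside $L'$, which is impossible for distinct leaves of a one-dimensional foliation. That crossing is the whole proof. Merely requiring $\beta^\pm$ to avoid the landing set of the $e_t$ gives no control on which side of $\{a_1,a_2\}$ the fixed points lie, and hence no crossing.

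Your substitute route via small visual measure and Proposition~\ref{prop-nonseplands} also has a concrete hole: leaves $\beta^n c$ sharing the ideal point $\beta^-$ and accumulating near the axis of $\beta$ are \emph{not} forced to be non-separated. They are nested around $\beta^-$ and can perfectly well all admit common transversals (think of a fan of curves from $\beta^-$ opening out to $\beta^+$), in which case no non-Hausdorff configuration appears and Proposition~\ref{prop-nonseplands} cannot be invoked. So ``producing non-separated configurations'' is an unjustified step, and the final ``compare and contradict the hypothesis'' — which you yourself flag as the principal obstacle — is exactly the part that cannot be made to work without the linking choice of $\beta$. The fix is not a limiting argument on $\beta_n$ (which is not even well-posed in a discrete group) but the circular-order/linking arrangement described above.
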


\begin{proof}
Decrease $J$ if necessary so that $\alpha(L')$ never attains either of
$\partial^+ \Phi(e)$ or $\partial^-\Phi(e)$ for $L' \in J$. In particular $\alpha$ is injective in $J$.
Let $a_1 = \partial^+ \Phi(e), \ a_2 = \partial^- \Phi(e)$.

Let $I$ be an open interval in $\partial \mathbb{H}^2$ contained in the connected component of $\partial \mathbb{H}^2 \setminus \{a_1,a_2\}$ which does not contain $\alpha(L)$. 

Now choose a non trivial deck transformation $\beta$ with one fixed 
point in $I$ and one in $\alpha(J)$.
Let $L'$ be a leaf of $\wcF_i$ fixed by $\beta$, and we can
assume that $L'$ is in $J$. 
This leaf satisfies the conclusion. Suppose that $\cG_{L'}$ has 
a leaf $c$ which is not a bubble leaf and has one
ideal point fixed by $\beta$.
Without loss of generality assume that $c$ has an ideal point
in $I$ which is fixed by $\beta$. Then iterating by $\beta$ or
$\beta^{-1}$ we eventually obtain a leaf  $c'$ of $\cG_{L'}$ with
one ideal point in $I$ and one ideal point in $\alpha(J)$. But $a_1, a_2$
link with $I, \alpha(J)$. This contradicts that $c'$ is a leaf of
$\cG_{L'}$  with ideal points $a_1, a_2$, which would cause crossing of
different leaves of $\cG_{L'}$.

If $c$ is a bubble leaf landing in a point fixed by $\beta$ one can iterate and produce a leaf which is not a bubble leaf and has the same characteristics. 
\end{proof}

\begin{remark} \label{rem-hiplema}
Note that if the leaf space of $\wt{\cG}$ is not Hausdorff, it follows from Theorem \ref{prop.dichotomy} that there are some non-Hausdorff bigons in some leaf, and by Proposition \ref{prop-halfintervals} that an open set of leaves contains non-Hausdorff bigons. Using minimality, we deduce that every leaf $L \in \wt{\cF_i}$ has infinitely many non-Hausdorff bigons with distinct endpoints in $\partial \mathbb{H}^2$ and therefore  we are in the hypothesis of the previous lemma for every $L \in \wt{\cF_i}$. 
\end{remark}

To produce new non-Hausdorff bigons we need to push to the other side of the bigons which is more delicate. We first show the following: 

\begin{lema}\label{l.pushchangealpha} 
Let $L \in \wt{\cF_1}$ and $E \in \wt{\cF_2}$ with $\alpha(L) \neq \alpha(E)$ so that there is $\gamma \in \pi_1(M) \setminus \{\mathrm{id} \}$ fixing $L$ and $E$. 
Let $\ell_0 \in E \cap L$ be a connected component of the intersection so that $\{\partial^+ \Phi(\ell_0), \partial^- \Phi(\ell_0) \} = \{\alpha(L), \xi_0\}$ for some $\xi_0 \neq \alpha(L)$. Let $\tau: [0,\delta) \to L$ be a transversal to $\cG_L$, with $\tau(0) \in \ell_0$, and such that
if $E_t$ denotes the leaf through $\tau(t)$ for $t>0$ we have that $\alpha(E_t)$ and $\tau(t)$ are in different connected components of $L \setminus \ell_0$. Then, for small $t>0$, if $\ell_t$ denotes the curve of $\cG_L$ through $\tau(t)$ it follows that $\{\partial^+ \Phi(\ell_t), \partial^- \Phi(\ell_t) \} \subset \{\alpha(L), \xi_0\}$ and at least one of the points is $\alpha(L)$. 
\end{lema}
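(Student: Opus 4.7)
The plan is to combine Theorem \ref{prop.landing} with a two-way application of the symmetric form of Proposition \ref{prop-pushing} (pushing in the $\wt{\cF_2}$-direction inside $L$), and then derive a contradiction in the bad case using the incompatibility with Corollary \ref{coro.foliations} / Lemma \ref{lem.nonHsdfftwointersect}.

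First I would set up the basic picture. By Theorem \ref{prop.landing} both rays of $\ell_t$ land at some points $\eta^+, \eta^- \in \partial \mathbb{H}^2$. Since $\ell_t, \ell_0$ are disjoint leaves of $\cG_L$, the curve $\ell_t$ is contained in the component $D$ of $L\setminus \ell_0$ containing $\tau(t)$, and by Corollary \ref{coro.separationH2} the closure of $\Phi_L(D)$ in $\partial \mathbb{H}^2$ is the closed arc $I_1$ from $\alpha(L)$ to $\xi_0$ on the $\tau(t)$-side. Thus $\eta^\pm \in I_1$, while the orientation hypothesis places $\alpha(E_t)$ in the complementary open arc $I_2$ (for $t$ small enough, by continuity from $\alpha(E) \ne \alpha(L)$).

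Next I would produce one endpoint equal to $\alpha(L)$. Shrink a neighborhood $I$ in the leaf space of $\wt{\cF_2}$ containing $E$ so that $E_t \in I$ and $\alpha(L) \ne \alpha(E_s)$ for all $E_s\in I$; this is possible because $\alpha(L) \ne \alpha(E)$. Then the ray of $\ell_0$ landing at $\alpha(L)$ is eventually contained in $\wihatD_\eps(E, I)$ by Proposition \ref{prop.setDfol}, since $\alpha(L)$ is a marker direction for every leaf in $I$. Applying the symmetric form of Proposition \ref{prop-pushing} (pushing $E \to E_t$ across $L$), with starting point at $\tau(0)\in \ell_0$, yields a subray in $L\cap E_t$ landing at $\alpha(L)$ whose starting point is uniformly close to $\tau(0)$, hence close to $\tau(t)$. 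By the local product structure of $\wt{\cF_2}$ near $\tau(0)$, this pushed subray lies in the same local component of $L\cap E_t$ as $\tau(t)$, which is $\ell_t$. Thus $\alpha(L)\in\{\eta^+,\eta^-\}$.

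Finally, I would rule out the other endpoint landing in the open arc $(\alpha(L),\xi_0)\subset I_1$. Suppose the other landing point $\eta$ satisfies $\eta \notin \{\alpha(L),\xi_0\}$. Since $\alpha(E_t) \in I_2$ and $\eta \in I_1^\circ$, we have $\eta \ne \alpha(E_t)$, so $\eta$ is a marker direction for $E_t$; the ray of $\ell_t$ landing at $\eta$ therefore enters $\wihatD_\eps(E_t, I')$ for a small interval $I'$ containing $E$ and $E_t$. Applying the symmetric Proposition \ref{prop-pushing} in the reverse direction (pushing $E_t \to E$), starting at $\tau(t)$, gives a subray $r'\subset L\cap E$ landing at $\eta$ whose starting point $y$ is close to $\tau(t)$, and hence close to $\tau(0)$ for $t$ small. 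If $y \in \ell_0$, then $\ell_0$ has a ray landing at $\eta \notin \{\alpha(L),\xi_0\}$, contradicting the hypothesis on the endpoints of $\ell_0$. Otherwise $y$ lies on a component $\ell_0' \ne \ell_0$ of $L\cap E$. In that case the local product structure of $\cG_L$ near $\tau(0)$ produces a short curve in $L$ transverse to $\cG_L$ running from $\tau(0)\in\ell_0$ to $y\in\ell_0'$ (concatenating transversal segments across finitely many foliation charts to accommodate the uniform pushing constant $C$ from Proposition \ref{prop-pushing}); this is simultaneously a transversal to $\wt{\cF_2}$ hitting $E$ twice, contradicting Corollary \ref{coro.foliations}. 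The main technical obstacle is this last construction: since $C$ is a fixed (not arbitrarily small) constant, the transversal may not fit inside a single foliation box, so one has to concatenate transversals across finitely many boxes along a bounded-length path from $\tau(0)$ to $y$ in $L$, using compactness of $M$ to secure a uniform lower bound on foliation box size. With that contradiction, $\eta\in\{\alpha(L),\xi_0\}$, which together with the previous step gives the conclusion of the lemma.
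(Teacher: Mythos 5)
Your treatment of the first half (producing $\alpha(L)$ as one ideal point of $\ell_t$) is in essence the paper's argument: since $\ell_0$ is a fixed curve, for $t$ small the full $\alpha(L)$-ray of $\ell_0$ lies in the pushing region and can be pushed from $E$ to $E_t$ starting at $\tau(0)$, so that part is fine (your wording "eventually contained" understates what is actually used, but it can be repaired).

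The gap is in the second half. To push the $\eta$-ray $r_t$ of $\ell_t$ from $E_t$ back to $E$ and conclude that the pushed curve starts near $\tau(t)$, you would need the \emph{entire} ray $r_t$ (from $\tau(t)$ to $\eta$) to lie in $\wihatD_\eps(E_t, I')$, i.e.\ to stay uniformly close to $E$. You only justify this at the two ends: near $\tau(t)$ by the short transversal, and near $\eta$ because $\eta$ is a marker direction. But unlike $\ell_0$, the curve $\ell_t$ changes with $t$, so no compactness/shrink-$t$ argument applies to the middle of $r_t$. This is exactly where non-Hausdorffness of $\cG_L$ bites: if $\ell_0$ is non-separated from some $c \neq \ell_0$, the ray $r_t$ may track $c$ for an arbitrarily long stretch before heading off to $\eta$, and $c$ need not be close to $E$. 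Pushing then produces a ray of a different component of $L\cap E$ (or of $L\cap F$ for some other $F$), whose starting point is nowhere near $\tau(0)$. Your fallback case (a point $y$ on $\ell_0'\neq \ell_0$ near $\tau(0)$, giving a transversal to $\wt{\cF_2}$ hitting $E$ twice) only applies when $y$ lies in a foliation box around $\tau(0)$ — which is precisely what you cannot guarantee; and a concatenation of short transversals across several foliation boxes is not in general a transversal to $\wt{\cF_2}$, so the Corollary~\ref{coro.foliations} contradiction is unavailable.

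A symptom of the missing idea: your argument never uses the hypothesis that some $\gamma \in \pi_1(M)\setminus\{\mathrm{id}\}$ fixes $L$, nor the small visual measure property, both of which the paper needs. The paper's proof confronts the splitting phenomenon head-on and, rather than forcing the pushed ray back onto $\ell_0$, only extracts that some component $\ell'$ of $L\cap E$ has one ideal point at $\xi$ and the other in $\{\alpha(L),\alpha(E)\}$. It then iterates $\ell'$ by $\gamma$ (using invariance of $L$ and $E$) to obtain infinitely many distinct components of $L\cap E$ sharing a common ideal point; since such components cannot meet a common transversal they are uniformly spread out, contradicting the small visual measure property (Proposition~\ref{prop.visualconsequence}). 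That $\gamma$-iteration plus small visual measure step is the ingredient your proposal is missing, and it is why the $\gamma$-hypothesis is in the statement of the lemma.
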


\begin{proof}
Consider $I_t$ the interval of the leaf space of $\wt{\cF_2}$ made of the leaves $E_s$ with $s \in (0,t)$ and fix a small $\eps$ so that Proposition \ref{prop-pushing} applies for $\wt{\cF_2}$. 
For $t$ small we identify $I_t$ with $\bigcup \{ \alpha(E_s), s \in (0,t) \}$.
Consider then the set $\wihat D_\eps(E,I_t)$ for this foliation. It follows (see Proposition \ref{prop.setDfol}) that the set $\wihat D_t$ which is the projection of $\wihatD_\eps(E,I_t)$ to $L$ contains a complementary component 
of a neighborhood of uniform size around the geodesic joining the points $\alpha(E)=\alpha(E_0)$ and $\alpha(E_t)$.
This complementary component has $\alpha(L)$ in its closure.

In particular, one can choose $t$ small so that $\wihatD_t$ is disjoint from the image of $\tau$. In particular, one gets that one of the rays of $\ell_t$ must converge to $\alpha(L)$. We now want to understand the other ray. If $\alpha(E) \neq \xi_0$ then, the same argument shows that the other ray of $\ell_t$ converges to $\xi_0$, so we will assume in what follows that $\alpha(E)= \xi_0$. 

We assume by contradiction that the other ray of $\ell_t$ starting at $\tau(t)$,  that we call $r_t$, lands in some point $\xi \notin \{\alpha(L),\alpha(E)\}$. It cannot land in any point of the interior of $I_t$ since it would need to intersect $\ell_0$. 

We can thus apply Proposition \ref{prop-pushing} to the leaves $E_s$ for $s \in (t,0)$ to obtain a familly of rays $r_s$ in $L$ that start at $\tau(s)$ and always land in $\xi$. To see this, note that while $r_s$ intersects $\wihat D_t$ then the ray varies continuously with $s$, but if stops intersecting $\wihatD_t$ it could in principle split into more than one ray. However, due to Proposition \ref{prop-nonseplands} the landing should occur in $\alpha(E_s)$ which is impossible since it would force the ray to intersect $\ell_0$. Finally, note that when $s\to 0$ there could be splitting, but in this case we obtain that there is a curve in $E_0 \cap L$ (note that $E_0=E$) which goes from $\alpha(E)$ to $\xi$. 

In conclusion, we have shown that if the result  does not hold, then $E \cap L$ must have a curve $\ell'$ from one of the fixed points of $\gamma$ to $\xi$. Since $E$ and $L$ are $\gamma$-invariant we can iterate this intersected curve $\ell'$ to obtain a sequence of distinct intersections of $E\cap L$ landing in the same point (and since they do not admit a common transversal, they must have some distance in between).  This contradicts the small visual measure property and completes the proof of the lemma.  
\end{proof} 

There is a similar phenomenon when the splitting goes in the opposite direction: 

\begin{lema}\label{l.pushsamealpha}
Assume that $\wt{\cG}$ is not Hausdorff\footnote{Note that this is a standing hypothesis of this section, but we emphasize it here because it is crucial in the proof to be able to use Lemma \ref{lema-crossing}.}. Let $L \in \wt{\cF_1}$ and $E \in \wt{\cF_2}$ so that $\alpha(E)=\alpha(L)$  so that there is $\gamma \in \pi_1(M) \setminus \{\mathrm{id} \}$ fixing both $L$ and $E$. Let $\ell_0 \in E \cap L$ be a connected component of the intersection so that $\{\partial^+ \Phi(\ell_0), \partial^- \Phi(\ell_0) \} = \{\alpha(L), \xi_0\}$ for some $\xi_0 \neq \alpha(E)=\alpha(L)$. Let $\tau: [0,\delta) \to L$ be a transversal to $\cG_L$, with $\tau(0) \in \ell_0$. Then, for small $t>0$, if $\ell_t$ denotes the leaf of $\cG_L$ through $\tau(t)$ it follows that $\{\partial^+ \Phi(\ell_0), \partial^- \Phi(\ell_0) \} \subset \{\alpha(L), \xi_0\}$ and at least one of the points is $\xi_0$.
\end{lema}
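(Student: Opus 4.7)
The strategy mirrors Lemma \ref{l.pushchangealpha}, with the roles of $\alpha(L)$ and $\xi_0$ swapped: here it is $\xi_0$, not $\alpha(L)$, that is a marker point of $E$, since by hypothesis $\alpha(E)=\alpha(L)\neq\xi_0$. The first task will be to produce one ray of $\ell_t$ landing at $\xi_0$; the second, harder, task will be to rule out that the other ray of $\ell_t$ lands at any point outside $\{\alpha(L),\xi_0\}$.

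For the first task, I would fix $\eps>0$ so that Proposition \ref{prop-pushing} applies to the foliation $\wt{\cF_2}$, and let $I_t$ denote the interval in the leaf space of $\wt{\cF_2}$ consisting of the leaves $E_s$ through $\tau(s)$ for $s\in[0,t]$. Since $\xi_0\neq\alpha(E)$, Proposition \ref{prop.setDfol} together with Corollary \ref{coro.setD} guarantee that a sub-ray of $\ell_0$ approaching $\xi_0$ lies entirely in $\wihatD_\eps(E,I_t)$ for $t$ sufficiently small. Pushing this ray via Proposition \ref{prop-pushing} produces a close ray in $L\cap E_t$ which also lands at $\xi_0$, and the local uniqueness of the $\cG_L$-leaf near $\tau(t)$ for small $t$ forces this pushed ray to be a ray of $\ell_t$.

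For the second task, let $r_t$ denote the other ray of $\ell_t$ and suppose for contradiction that $r_t$ lands at some point $\xi\notin\{\alpha(L),\xi_0\}$ for arbitrarily small $t>0$; the case in which the limit set of $r_t$ is a non-degenerate interval is ruled out via Lemma \ref{lema-alphainlimit}. Deforming backward through the interval $(0,t]$ in the leaf space of $\wt{\cF_2}$ by Proposition \ref{prop-pushing} applied near $\xi$, which is a marker point of every $E_s$ for $s\in[0,t]$ (because $\alpha(E_s)$ is close to $\alpha(L)\neq\xi$), one obtains a continuous family of rays $r_s\subset L\cap E_s$ all landing at $\xi$. The only possible obstruction is splitting, but by Proposition \ref{prop-nonseplands} a split ray would land at $\alpha(E_s)$, and any such sub-ray would be separated from $\xi$ by $\ell_0$; hence no splitting occurs on $(0,t]$. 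Letting $s\to 0^+$ produces a limit curve $\ell'\subset L\cap E$ one of whose rays lands at $\xi$, and Proposition \ref{prop-nonseplands} applied to the possible splitting at $s=0$ forces the other ray of $\ell'$ to land at $\alpha(L)=\alpha(E)$.

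The final step is to iterate $\ell'$ by the deck transformation $\gamma$, using that both $L$ and $E$ are $\gamma$-invariant. If $\xi$ is not the second fixed point of $\gamma$ in $\partial\HH^2$, then $\{\gamma^n\ell'\}_{n\in\ZZ}$ is an infinite family of pairwise distinct, pairwise disjoint curves in $L\cap E$, each sharing the ideal point $\alpha(L)$, with the other ideal points $\gamma^n\xi$ converging to a fixed point of $\gamma$; no two of these admit a common transversal in $L$, so they must remain uniformly separated in $L/\langle\gamma\rangle$, contradicting Proposition \ref{prop.visualconsequence} exactly as in the proof of Lemma \ref{l.pushchangealpha}. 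The remaining case, in which $\xi$ is the second fixed point $\beta_\gamma$ of $\gamma$ and hence $\ell'$ is itself $\gamma$-invariant, is the main obstacle and is where the standing non-Hausdorff hypothesis on $\wt{\cG}$ is essential; here I would apply Lemma \ref{lema-crossing} (valid by Remark \ref{rem-hiplema}) to select, in an arbitrarily small interval of the leaf space of $\wt{\cF_1}$ around $L$, a leaf $L'$ fixed by a nontrivial deck transformation $\beta$ such that no leaf of $\cG_{L'}$ has a $\beta$-fixed ideal point, and then transport the $\gamma$-invariant configuration formed by $\ell_0$ and $\ell'$ to $L'$ through the stability results of the preceding subsection (in particular Lemmas \ref{lema-bigonst2} and \ref{lem-stableorient}) in order to produce in $\cG_{L'}$ a leaf with a $\beta$-fixed ideal point, the desired contradiction.
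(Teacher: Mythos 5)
Your first task (producing a ray of $\ell_t$ landing at $\xi_0$) is correct and matches the paper's opening move. The second task, however, has two genuine gaps. First, you never establish that $\ell_0$ itself is $\gamma$-invariant. The paper's proof opens with exactly this observation: since $L$ and $E$ are both $\gamma$-invariant, if $\gamma\ell_0 \neq \ell_0$ then $\{\gamma^n\ell_0\}_n$ gives infinitely many disjoint components of $L\cap E$ all sharing the ideal point $\alpha(L) = \alpha(E)$, which contradicts Proposition~\ref{prop.visualconsequence}. Hence $\ell_0$ is $\gamma$-invariant and $\{\alpha(L),\xi_0\}$ are exactly the two fixed points of $\gamma$. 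This observation is doing real work: it immediately shows that your ``remaining case'' (where $\xi$ is the second fixed point of $\gamma$) is vacuous, since that second fixed point is $\xi_0$ and you have assumed $\xi\neq\xi_0$. The elaborate appeal to Lemma~\ref{lema-crossing}, Lemmas~\ref{lema-bigonst2} and~\ref{lem-stableorient} to handle that case is both unnecessary and, as written, not actually a proof: $\ell_0$ and $\ell'$ do not bound a non-Hausdorff bigon (they have different second ideal points), so the stability lemmas for bigons do not apply to that configuration.

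Second, and more seriously, your ``deforming backward'' step is not justified, and the asserted reason for ruling out splitting is wrong. When you push the ray $r_t$ from $E_t$ back toward $E$, splitting off of $\ell_s$ can occur at a point $s_0\in(0,t]$: the pushed ray can become a ray of a leaf $c$ non-separated from $\ell_{s_0}$. By Lemma~\ref{lem.nonHsdfftwointersect} both $\ell_{s_0}$ and $c$ lie in $L\cap E_{s_0}$, and by Proposition~\ref{prop-nonseplands} their non-separated rays land at $\alpha(E_{s_0})$; but $\alpha(E_{s_0})$ is \emph{not} separated from $\xi$ by $\ell_0$. Both $\ell_{s_0}$ and $c$ lie on the $\ell_t$-side of $\ell_0$, so all their ideal points (including $\alpha(E_{s_0})$ and $\xi$) lie in the same complementary arc of $\partial\HH^2\setminus\{\alpha(L),\xi_0\}$. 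So your claimed obstruction to splitting does not hold, and the construction of $\ell'$ is not established. This is precisely the case the paper spends most effort on: it parametrizes the landing point $\nu_t$ of the other ray, shows weak monotonicity and continuity at $t=0$ (the latter via $\gamma$-iteration, which is why $\gamma$-invariance of $\ell_0$ is needed), rules out $\nu_t=\alpha(E_t)$ on any non-trivial interval via Lemma~\ref{lema-crossing}, and then excludes an infinite sequence of jumps using Proposition~\ref{p.dichotomytech} and Proposition~\ref{prop-halfintervals}. Your proposal contains none of this machinery, so the splitting scenario is left unaddressed.
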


\begin{proof}
Note first that since $E$ and $L$ are $\gamma$-invariant then, one must have that $\ell_0$ also is. Otherwise $\ell_0$ is
not $\gamma$ periodic, so we would get infinitely many distinct connected components  of $E\cap L$, all of them sharing the endpoint $\alpha(L)=\alpha(E)$. 
But this contradicts small visual measure since no two of such components can intersect a common transversal. 
It follows that $\xi_0, \alpha(L)$ are the two fixed points of $\gamma$.

Consider $I_{\delta}$ the interval of the leaf space of $\wt{\cF_2}$ which 
consists of the leaves $E_t$ through 
$\tau(t)$ with $t \in (0,\delta)$. The curve $\ell_t \subset E_t \cap L$ is the curve of $\wt{\cG}$ passing through $\tau(t)$. Since $\xi_0 \neq \alpha(E)$ we can apply Proposition \ref{prop-pushing} to deduce that one of the rays of $\ell_t$ converges to $\xi_0$. We now want to understand the other ray.

Consider for $t\in[0, \delta)$ the other ray of $\ell_t$ starting at $\tau(t)$, that we call $r_t$, and let 
$\xi_t \in \partial \HH^2$ be the landing point point of $\xi_t$. 
As in the proof of Lemma \ref{lema-bigonst2} we claim that the point $\xi_t$ is weakly monotonic as we move $t$, that it is continuous at $t=0$ and that either it is $\xi_s=\alpha(E_t)$ or $\xi_t$ is locally constant. Weak monotonicity is simpler in this case, since if $t'>t$ we have that $\ell_{t}$ separates $\ell_{t'}$ from $\ell_0$. 
Using the pushing argument of Proposition \ref{prop-pushing} we get that if $\xi_t$ is not $\alpha(E_t)$ then we must have that $\xi_t$ must be locally constant.  
Finally for continuity at $0$:  we assume that $\xi_t \neq \xi_0$ for some small $t$ (else we get the result if $\xi_t = \xi_0$ for all $t$ sufficiently small).
Assume that $\alpha(L)$ is the attracting fixed point of $\gamma$.
Then $\gamma^n(\ell_t)$ converges to $\alpha(L)$ when $n \to \infty$.
Since $\gamma^n(\ell_t) = \ell_{t_n}$ with $t_n \to 0$, 
continuity of $\xi_t$ at $t = 0$ follows.

Suppose now that $\xi_t = \alpha(E_t)$ for a non trivial
interval $J$ in $(0,t)$. 
We identify $J$ with an interval in the leaf space of $\wcF_2$,
which we can identify with an interval in $\partial \mathbb{H}^2$
as well.
As noted in Remark \ref{rem-hiplema} 
we satisfy the hypothesis of Lemma \ref{lema-crossing}. Hence there is a leaf $E_t$ in $J$ such that
$E_t$ is fixed by a non trivial deck transformation $\beta$
and no leaf of $\cG_{E_t}$ has one ideal point fixed by $\beta$.
However we know that $\xi_t = \alpha(E_t)$ and $\alpha(E_t)$
is fixed by $\beta$. This is a contradiction since $\ell_t$
has one ideal point $\xi_t$.

Exactly as in Proposition \ref{lema-bigonst2} we deduce that if $\xi_s$ is not constant and equal to $\alpha(E)=\alpha(L)$ then either it is constant and equal to $\xi_0$ or the function $\xi_s$ is locally constant with jumps in a discrete set of $(0,\delta)$ in points $a_n \to 0$ so that for each $n$ we have $\ell_{a_n}$ lands in $\xi_{n} := \alpha(E_{a_n})$. This implies that the intersection of $E_{a_n} \cap L$ contains, besides $\ell_{a_n}$ a curve joining $\xi_{n-1}$ and $\xi_n$. These curves are accumulated by the curves $\ell_t$ with $t \in (a_{n-1},a_n)$ and therefore, due to Proposition \ref{p.dichotomytech} we know that there is some $\gamma_n \in \pi_1(M) \setminus \{\mathrm{id}\}$ fixing $E_{a_n}$. This implies that $E_{a_n}$ has a non-Hausdorff bigon joining $\xi_n$ with some point in the interval from $\xi_0$ to $\xi_{n-1}$ not containing $\xi_n$. This contradicts Proposition \ref{prop-halfintervals} because it produces bigons whose endpoints are linked or have a unique common endpoint.   This completes the proof. 
\end{proof}

We can now prove the main result of this section: 

\begin{prop}\label{prop-bigonst3}
 Let $B$ be a $\gamma$-periodic non-Hausdorff bigon in a leaf $L \in \wt{\cF_i}$ for some $\gamma \in \pi_1(M) \setminus \{\mathrm{id}\}$.  
 Then, one of the following options hold: 
 \begin{enumerate}
 \item There are $\gamma$-invariant bigons $B_1$ and $B_2$ in $L$ so that $\xi_{B_1}^+ = \xi_{B_2}^+$ and such that $\cI_{B_1} \cup \cI_{B_2} = \partial \HH^2$, or, 
\item There is an even number of ordered\footnote{This means that in $L$ the bigon $B_j$ separates $B_{j-1}$ from $B_{j+1}$ for all $j =2 \ldots k-1$.} $\gamma$-invariant bigons $B_1, B_2, \ldots B_{2k}$ in $L$ so that consecutive ones have different non separated points. Moreover, if the order is chosen so that $B_1$ is the bigon such that $\xi_{B_1}^- = \alpha(L)$ then one has that $\cI_{B_1}$ is the interval which is separated from $B_1$ by $B_2$. 
 \end{enumerate}
 \end{prop}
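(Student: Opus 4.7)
The plan is to exploit the fact that, since $\gamma \in \pi_1(M)\setminus\{\mathrm{id}\}$ fixes both $L$ and $B$, it fixes $\alpha(L)$ as well as $\xi_B^+,\xi_B^-$. After replacing $\gamma$ by a suitable power to ensure its projection to $\pi_1(S)$ is non-trivial (which is possible since $B$ is periodic and corresponds to a simple closed curve in $S$ via Proposition~\ref{prop-halfintervals}), $\gamma$ acts as a hyperbolic isometry on $\partial\HH^2$ with exactly two fixed points. Hence $\alpha(L) \in \{\xi_B^+, \xi_B^-\}$ and the argument splits into two cases.

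\emph{Case A: $\alpha(L) = \xi_B^+$.} I aim at conclusion 1. By Lemma~\ref{lema-bigonst2} and the orientation analysis of Lemma~\ref{lem-stableorient}, the bigon $B$ extends to bigons in all leaves $L'$ whose non-marker point lies in the closed arc complementary to $\cI_B$. The companion bigon $B_2$ will be produced by applying the dual argument to the leaf $E \in \wt{\cF_2}$ containing $c_1 \cup c_2$. Since by Proposition~\ref{prop-nonseplands} we have $\alpha(E)=\xi_B^+=\alpha(L)$, the symmetric application of Lemma~\ref{lema-bigonst2} to $E$ together with Remark~\ref{rem-orientbigons} forces the existence of a second non-Hausdorff bigon $B_2 \subset L$ sharing both endpoints with $B$ but with $\cI_{B_2}$ equal to the complementary arc. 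The alternative, that no such companion exists, forces (via Proposition~\ref{p.dichotomytech} applied to a ray in $\cG_L$ landing at $\xi_B^-$ in the missing direction) the existence of a different bigon $B'$ with $\xi_{B'}^- = \alpha(L)$, which then places us in Case~B below.

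\emph{Case B: $\alpha(L) = \xi_B^-$.} I aim at conclusion 2, setting $B_1 = B$. The chain $B_2,\ldots,B_{2k}$ is constructed iteratively: starting from $B_j$, I take a transversal to $\cG_L$ emanating from a boundary leaf of $B_j$ on the side opposite to $\cI_{B_j}$ and apply Lemmas~\ref{l.pushchangealpha} and~\ref{l.pushsamealpha} to track the landing points of the curves in $\cG_L$ encountered along it. By the monotonicity and splitting analysis of those lemmas, jumps in the landing-point function correspond precisely to crossing a new bigon, which by Proposition~\ref{p.dichotomytech} must be periodic. Proposition~\ref{prop-halfintervals} (no linking, no sharing of exactly one endpoint) then forces $B_{j+1}$ to have endpoint pair disjoint from that of $B_j$, and the orientation analysis of Lemma~\ref{lem-stableorient} forces its non-separated endpoint to lie on the opposite side from that of $B_j$, giving the required alternation. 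The chain must close up cyclically because $L/\langle\gamma\rangle$ is an annulus and, by Proposition~\ref{prop-halfintervals}, the set of periodic bigons in $L$ projects to a finite disjoint collection of simple closed curves in $S$. The parity of the alternation forces the total number of bigons in the cycle to be even, yielding $2k$. The placement of $\cI_{B_1}$ relative to $B_2$ follows directly from the direction in which the first transversal was taken.

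The main technical obstacle is producing the companion bigon $B_2$ in Case~A: establishing that the one-sided extension of $B$ in the leaf space of $\wt{\cF_1}$ must be matched by a corresponding one-sided extension in $\wt{\cF_2}$ that, upon restriction back to $L$, yields a second bigon with opposite $\cI$. This requires a careful simultaneous control of the orientations of both transverse foliations (through Lemma~\ref{lem-stableorient} and Remark~\ref{rem-orientbigons}) and a verification that the failure to find $B_2$ directly produces the alternative chain configuration of Case~B, so that the dichotomy is exhaustive.
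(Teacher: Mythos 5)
There are two genuine gaps. First, in your Case A the production of the companion bigon $B_2$ in $L$ with $\cI_{B_2}$ the complementary arc is not justified: Lemma \ref{lema-bigonst2} applied to $E$ only yields bigons in a half-neighbourhood of $E$ in the leaf space of $\wt{\cF_2}$, and Remark \ref{rem-orientbigons} only controls the non-separated point of bigons of $\cG_E$ lying between $c_1$ and $c_2$; neither statement returns a second bigon of $\cG_L$ in the \emph{same} leaf $L$, and indeed no such bigon is forced: when $\alpha(L)=\xi_B^+$ the leaf $L$ can perfectly well be in the alternating configuration of option (2), with $B$ one of the bigons whose non-separated point is $\alpha(L)$. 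The dichotomy in the statement is not governed by whether $\alpha(L)$ equals $\xi_B^+$ or $\xi_B^-$ for the given bigon, but by the global pattern of \emph{all} $\gamma$-invariant bigons of $\cG_L$. The paper's proof lists them in order together with all $\gamma$-invariant leaves $\ell_0,\dots,\ell_m$ of $\cG_L$, and shows that two consecutive bigons with the same non-separated point have complementary intervals (giving option 1), while consecutive bigons with opposite non-separated points have equal intervals; so failure of option 1 forces alternation. Your hedge that Case A may ``fall into Case B'' concedes this, but then Case A proves nothing.

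Second, and more seriously, your evenness argument fails. The $\gamma$-invariant bigons of $\cG_L$ are linearly ordered in the plane $L$ (each separates its neighbours); they do not ``close up cyclically'', and the annulus $L/\langle\gamma\rangle$ imposes no cyclic structure on them. Moreover alternation alone is compatible with an odd number of bigons, so parity does not follow from it. The actual argument analyzes the two extreme $\gamma$-invariant leaves $\ell_0,\ell_m$ of $\cG_L$: Lemma \ref{l.pushsamealpha} shows that the $\wt{\cF_2}$ leaves $F_0,F_m$ containing them satisfy $\alpha(F_0)=\alpha(F_m)\neq\alpha(L)$ (otherwise pushing $\ell_0$ outside the region $D$ bounded by $\ell_0\cup\ell_m$ would produce leaves of $\cG_L$ outside $D$ with ideal points among the fixed points of $\gamma$, which is impossible), and then Lemma \ref{l.pushchangealpha} pins down the direction in which the non-marker points of the crossed $\wt{\cF_2}$ leaves move upon entering $D$ through $\ell_0$ and exiting through $\ell_m$. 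Since each bigon crossing reverses this direction of progress, the number of bigons must be even; the same bookkeeping identifies which complementary interval $\cI_{B_1}$ is, giving the final clause of option (2). This orientation count at the boundary leaves $\ell_0,\ell_m$ is the heart of the proposition and is absent from your proposal.
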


 \begin{proof}

To start the proof, note 
first that due to small visual measure, 
there are finitely many $\gamma$-invariant non-Hausdorff bigons in $L$ that we denote as $B_1, \ldots, B_k$ in order (i.e. $B_j$ separates $B_{j-1}$ from $B_{j+1}$ in $L$). Moreover, each such $\gamma$-periodic non-Hausdorff bigon $B_j \in L$ corresponds to the intersection of $L$ with some $E_j \in \wt{\cF_2}$ such that $\alpha(E_j) \in \{\xi,\eta\}$ which are the fixed points of $\gamma$ (and also the separated and non-separated points of $B$). Note that since $L$ is $\gamma$-invariant we know that $\alpha(L) \in \{\xi,\eta\}$. Moreover, if $\alpha(E_j) = \alpha(L)$ then it follows that $\xi_{B_j}^+ =  \alpha(L)$ while if $\alpha(E_j) \neq \alpha(L)$ then one has that $\xi_{B_j}^+ \neq \alpha(L)$.

There are also a finite number $\ell_0, \ldots, \ell_m$ of $\gamma$-invariant leaves of $\cG_L$ which correspond to intersections with leaves $F_i \in \wt{\cF_2}$ such that $\alpha(F_i) \in \{\xi,\eta\}$ (indeed, these leaves are at some minimal distance away from each other and each one must be uniformly close to the geodesic joining $\xi$ and $\eta$). We also consider these leaves ordered in the same direction. Some of them are contained in
the boundaries of the bigons $B_j$. 
We know that $m\geq 1$ since there is at least one $\gamma$-periodic non-Hausdorff bigon.

Let $D$ be the closure of the region in $L$ bounded by
$\ell_0, \ell_m$.
Since $m \geq 1$ then $D$ is non empty, with non empty interior.
Fix a transversal orientation to $\cF_2$.

Consider, for every $i \in \{0, \ldots, m-1\}$ 
a transversal $\tau_i: [0,\delta) \to L$ so that $\tau_i(0) \in \ell_i$ and such that $\tau_i(t)$ is between $\ell_i$ and $\ell_{i+1}$. Denote by $\cC_i$ the (closed) interval from $\xi$ to $\eta$ and containing $\alpha(H_t)$ where 
$H_t \in \wt{\cF_2}$ is the leaf so that $\tau_i(t) \in H_t$. Note that this 
interval is well defined and is independent  of the choice of $\tau_i$ and $t$. 

We can show:

\vskip .08in
\noindent
{\bf Claim 1:} $\cC_i \cup \cC_{i+1} = \partial \HH^2$. Moreover, the region between $\ell_i$ and $\ell_{i+1}$ is a bigon if and only if $\alpha(F_i) = \alpha(F_{i+1})$. 
\vskip .05in

Note that the interior of $\cC_i$ is exactly the set of points $\nu$ in $\partial \HH^2$ such that there is a leaf $E \in \wt{\cF_2}$ which intersects the region between $\ell_i$ and $\ell_{i+1}$ in $L$ and $\alpha(E) = \nu$. 
This is because both sets are $\gamma$ invariant and contain
small neighborhoods of the endpoints.
Hence if one considers a transversal to $\cG_L$ intersecting in
the interior the leaf $\ell_{i+1}$ (which is
a connected component of $L \cap F_{i+1}$),
one sees that $\cC_i$ and $\cC_{i+1}$ correspond to distinct intervals and thus $\cC_i \cup \cC_{i+1} = \partial \HH^2$ as desired. 

To see the last property, notice that if the region is a bigon
then $F_i = F_{i+1}$ so the $\alpha$'s coincide. Otherwise
$F_i, F_{i+1}$ are distinct, fixed by $\gamma$, and there is no
other $\gamma$ invariant leaf between $F_i$ and $F_{i+1}$.
Hence when acting on the leaf space of $\wcF_2$, it follows
that up to inverse, $\gamma$ is attracting in $F_i$ and
repelling in $F_{i+1}$. This implies that $\alpha(F_i)
\not = \alpha(F_{i+1})$.

\vskip .08in
\noindent
{\bf Claim 2:} Consider $B_j, B_{j+1}$ consecutive non-Hausdorff bigons in $L$. Then if $\xi_{B_j}^+ = \xi_{B_{j+1}}^+$ we have that $\cI_{B_{j}} \cup \cI_{B_{j+1}} = \partial \HH^2$, and if $\xi_{B_j}^+ = \xi_{B_{j+1}}^-$ then $\cI_{B_j}= \cI_{B_{j+1}}$. 
\vskip .05in

 To see this notice that in the first case one has between the boundaries of $B_j,B_{j+1}$ an even (possibly zero, if $B_j \cap B_{j+1}$ intersect in some leaf $\ell_j$) number of regions between consecutive curves $\ell_i$. Since each intersection makes a half turn in $\partial \HH^2$ without changing orientation (because this only happens when one crosses a non-Hausdorff bigon) we obtain that the leaves of $\wt{\cF_2}$ intersecting the interior of $B_j$ and $B_{j+1}$ correspond to different intervals in $\partial \HH^2$ whose boundaries are $\{\xi, \eta\}$ thus $\cI_{B_{j}} \cup \cI_{B_{j+1}} = \partial \HH^2$. The other case is similar, since one has an odd number of such regions in between.

\vskip .1in
\noindent
 {\bf Conclusion: } Claim 2 implies that the only way that all non-Hausdorff bigons with the same non-separated point verify that their associated interval is the same is that adjacent non Hausdorff bigons must have distinct non-separated points. Note that we still could have that there is a unique such non-Hausdorff bigon.  
\vskip .05in
 
 To obtain that there are an even number of such bigons, we will use Lemmas \ref{l.pushchangealpha} and \ref{l.pushsamealpha}.
Using Lemma \ref{l.pushsamealpha} we
deduce that $\ell_0$ and $\ell_m$ verify that $\alpha(F_0) = \alpha(F_m)  \neq \alpha(L)$. 
The reason is as follows: suppose first on the contrary that $\alpha(L) = \alpha(F_0)$. In this case we can apply Lemma 
\ref{l.pushsamealpha} to the curve $\ell_0$ contained in $L \cap F_0$.
It has ideal points $\xi, \eta$, which are distinct (one of them
is $\alpha(L) = \alpha(F_0)$, the other we call $\xi_0$).
Now consider a transversal $\tau$ starting in $\ell_0$ and exiting $D$
and  $\ell'$  a curve of $\cG_L$ intersecting $\tau$ outside $D$
and near $\tau(0)$.
Apply Lemma \ref{l.pushsamealpha} to $\ell_0$: it implies that 
the ideal points of $\ell'$ 
are contained in $\alpha(L), \xi_0$. But that is impossible
given the choice of $\ell_0$: no curve outside $D$ has 
ideal points contained in $\{ \xi, \eta \}$. 
This shows that $\alpha(F_0) \not = \alpha(L)$.
This also applies to $\ell_m$, showing that $\alpha(L) \neq
\alpha(F_m)$. Hence $\alpha(F_m) = \alpha(F_0) \neq \alpha(L)$.

Moreover Lemma \ref{l.pushchangealpha} implies that $\cC_0$ is the interval of $\partial \HH^2 \setminus \{\xi, \eta\}$ which is in the opposite 
connected component of $\partial \mathbb{H}^2 \setminus \{ \eta, \xi \}$ of $\ell_m$ in $L \setminus \ell_0$. Similarly, $\cC_{m-1}$ is the oposite interval, that is, the interval of $\partial \HH^2 \setminus \{\xi, \eta\}$ which is in the opposite connected component to $\ell_0$ in $L \setminus \ell_m$. In particular, the two intervals $\cC_0, \cC_{m-1}$ are different.   
 
We will now check the progress along leaves $\wcF_2$ when we cross
the region $D$ starting from $\ell_0$ all the way through $\ell_m$.
 Notice that crossing each periodic non-Hausdorff bigon changes the orientation in which one makes progress in the leaves of $\wt{\cF_2}$.
Lemma \ref{l.pushchangealpha} 
implies that in both $\ell_0$ and $\ell_m$ when one crosses
into $D$, then the 
$\alpha$'s of the $\wcF_2$ leaves move in the opposite direction
(e.g. when crossing $\ell_0$ it is in the complementary
component of $\ell_0$ which does not contain $D$).
In particular crossing throuh $\ell_0$ into $D$ and
through $\ell_m$ outside $D$, then one needs to 
cross $\wcF_2$ leaves 
with the same orientation. This implies that  there must be an even number of non-Hausdorff bigons in between. 

Finally if we choose the order so that $B_1$ is the one such that $\xi_{B_1}^-= \alpha(L)$ we have that one needs to intersect an even number of curves $\ell_i$ after $\ell_0$ to 
get to $B_1$  (possibly $\ell_0$ is the boundary of $B_1$, in which case we do not intersect any). We get that the boundary of $B_1$ is 
of the form $\{ \ell_{2i} \cup \ell_{2i+1} \}$ for some $i \geq 0$. This
implies that  $\cC_{2_i}= \cC_0$. This is because if $G$ is the leaf of $\wcF_2$ containing $\partial B_1$, then $\alpha(G) = \xi_{B_1}^+$. This completes the proof. 
\end{proof}

\subsection{Proof of Theorem \ref{prop.consequenceReebSurface} and applications}

\begin{proof}[Proof of Theorem  \ref{prop.consequenceReebSurface} ]
By assumption we know that there is at least one leaf $L$ with a non-Hausdorff bigon $B$ and some $\gamma \in \pi_1(M) \setminus \{\mathrm{id}\}$ for which we have  $\gamma B = B$ and if we denote the endpoints as $\{\xi,\eta\} \subset \partial \HH^2$ these are also $\gamma$-invariant. 
Without loss of generality we assume that $L \in \wcF_1$.
In particular, $\alpha(L) \in \{\xi,\eta\} =\{\xi_B^+, \xi_B^-\}$. Let $E
\in \wcF_2$ be such that the boundary of $B$ is contained in $E \cap L$. 


Denote by $\hat L$ a leaf in $\wt{\cF_1}$ so that $\alpha(\hat L) \in \{\xi,\eta\}$ and so that $\alpha(\hat L) \neq \alpha(L)$. The leaf $\hat L$ must also be $\gamma$-invariant. It follows from applying either Proposition \ref{lema-bigonst2} or Proposition \ref{lema-bigonst3} that $\hat L$ contains at least one non-Hausdorff bigon which up to changing $\hat L$ by a deck transformation associated to the fiber we can assume is bounded by leaves of $E \cap \hat L$. 

We now apply Proposition \ref{prop-bigonst3}. If the first condition of the proposition happens for either $L$ or $\hat L$, we get that there are $\gamma$-periodic non-Hausdorff bigons $B_i, B_j$ in $L$ (or $\hat L$) with the same non-separated point and so that $\cI_{B_i} \cup \cI_{B_j}
= \partial \mathbb{H}^2$. Applying Lemma \ref{lem-stableorient}  we deduce that for every $L' \in \wt{\cF_1}$ there must be a non-Hausdorff bigon joining $\xi$ and $\eta$ in $L'$. This completes this case.

 Thus, we can assume that both for $L$ and $\hat L$ we have that the second option of Proposition \ref{prop-bigonst3} holds. We need to show:

\vskip .08in
\noindent
{\bf Claim 1:} If we consider $B_1, \ldots, B_{2k}$ the $\gamma$-invariant non-Hausdorff bigons in $L$ with a chosen order and use the same order in $\hat L$ and get $\gamma$-invariant non-Hausdorff  bigons $\hat B_1, \ldots, \hat B_{2m}$ we have that $k=m$ and that the non-separated point of $B_1$ and $\hat B_1$ coincide. 
\vskip .05in

Using Lemma \ref{lem-stableorient} we know that there is a closed interval $\cI \subset \partial \HH^2$ (with boundary $= \{ \xi, \eta \}$) so that each bigon $B_i$ in $L$ pushes to a bigon in every $L'$ with $\alpha(L') \in \cI$. Moreover, if $E_i$ is the leaf of $\wt{\cF_2}$ so that $E_i$ intersects $L$ in the boundary of $B_i$, then we have that every leaf $L'$ as above has the corresponding bigon contained in between the intersection of $L'$ and $E_i$. Note that Remark \ref{rem-orientbigons} implies that the non-separated point of the corresponding bigons are in the same direction.

Since the intersection of $E_i$ with the leaves $L'$ from $L$ to $\hat L$ separates the region between $L$ and $\hat L$ we get that the order of the bigons cannot be reversed. Thus, we deduce that in $\hat L$ we have bigons in the same order and the same directions. It remains to show that there cannot be bigons in $\hat L$ that do not come from pushing those in $L$, but this just follows by a symmetric argument. This completes the proof of the claim.

\vskip .1in
\noindent
{\bf Conclusion: } Assume that $B_1$ verifies 
that $\xi_{B_1}^- = \alpha(L)$, 
the other case is symmetric.
In this case we deduce from Claim 1 that 
$\xi_{\hat B_{1}}^+ = \alpha(\hat L)$, and hence
$\xi_{\hat B_{2k}}^- = \alpha(\hat L)$.
By Proposition \ref{prop-bigonst3} it follows that
that $\cI_{B_1} \cup \cI_{\hat B_{2k}} = \partial \HH^2$:
first apply it to $L$ so $\cI_{B_1}$ is the interval
opposite to $B_i, i > 1$. When seen in $\hat L \cup S^1(\hat L)$
we have that $\cI_{B_1}$ is the interval in the same
complementary component of $\hat L \setminus  {\hat B_{2k}}$
that contains $\hat B_1$. Now apply it to $\hat L$:
$\cI_{\hat B_{2k}}$ is the component opposite from $\hat B_1$
when seen from $\hat B_{2k}$.
Hence $\cI_{B_1} \cup \cI_{\hat B_{2k}} = \partial \mathbb{H}^2$.
\vskip .05in

 Finally applying Lemma \ref{lem-stableorient}  we deduce that for every $L' \in \wt{\cF_1}$ there must be a non-Hausdorff bigon joining $\xi$ and $\eta$ in $L'$. 
\end{proof}


\begin{coro}\label{coro.consReeb}
Assume that $\cF_1$ and $\cF_2$ are two transverse minimal foliations of $M= T^1S$ so that the foliation $\cG$ obtained as their intersection is not homeomorphic to the orbit foliation of the geodesic flow for a hyperbolic metric on $S$. Then, there are finitely many disjoint simple closed curves $s_1, \ldots, s_k$ in $S$ so that for every lift $\tilde s_j$ of some of these curves, if $\xi, \eta$ denote the endpoints of $\tilde s_j$ in $\partial \HH^2$,
they satisfy the following: for every leaf $L \in \wt{\cF_i}$ we have that $L$ has a non-Hausdorff bigon joining $\xi$ and $\eta$. 
\end{coro}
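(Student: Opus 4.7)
The plan is essentially to package together Theorem A, Proposition \ref{prop-halfintervals}, and Theorem \ref{prop.consequenceReebSurface}. Since $\cG$ is assumed not to be homeomorphic to the geodesic flow foliation, Theorem A combined with Theorem \ref{teo.hsdffcase} forces $\cG$ to contain a Reeb surface. Equivalently, some leaf of $\wt{\cF_i}$ contains a periodic non-Hausdorff bigon.

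First I would invoke Proposition \ref{prop-halfintervals} to extract the finite collection of pairwise disjoint simple closed curves $s_1,\ldots, s_k$ in $S$: each periodic non-Hausdorff bigon $B$ satisfies that $\partial \cI_B$ is the pair of endpoints of a lift of some $s_j$, distinct such curves are disjoint (since periodic bigons with a common endpoint share both endpoints, and distinct bigons do not link), and the collection is finite by the small visual measure property (Proposition \ref{prop.visual}) which forbids accumulation of disjoint bigons. By construction, for each $j$ there exists at least one lift $\tilde s_j^\ast$ of $s_j$ whose endpoints $\xi_j, \eta_j \in \partial \HH^2$ bound a $\gamma_j$-invariant non-Hausdorff bigon $B_j$ in some leaf $L_j \in \wt{\cF_{i_j}}$.

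Next I would apply Theorem \ref{prop.consequenceReebSurface} to each $B_j$: for every leaf $L' \in \wt{\cF_1}$ (and symmetrically for every $L' \in \wt{\cF_2}$, using that Theorem \ref{prop.consequenceReebSurface} is proven with the roles of $\cF_1, \cF_2$ interchangeable), there is a non-Hausdorff bigon in $L'$ with endpoints $\xi_j, \eta_j$. This handles the distinguished lift $\tilde s_j^\ast$.

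Finally, to extend to an arbitrary lift $\tilde s_j = \delta \, \tilde s_j^\ast$ with $\delta \in \pi_1(S)$, I would pick any lift $\hat\delta \in \pi_1(M)$ of $\delta$ through the central extension $\pi_1(M) \to \pi_1(S)$, and push the previously produced bigons forward by $\hat\delta$. Since $\hat\delta$ permutes the leaves of $\wt{\cF_i}$ and acts on $\partial \HH^2$ via $\delta$ (the central fiber generator acts trivially on $\partial \HH^2$), given any target leaf $L \in \wt{\cF_i}$ one applies $\hat\delta$ to a bigon constructed in the previous paragraph for the leaf $\hat\delta^{-1} L$; this yields a bigon in $L$ with endpoints $\delta \xi_j, \delta \eta_j$, which are precisely the endpoints of $\tilde s_j$. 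The main content has already been established in this section — Theorem \ref{prop.consequenceReebSurface} is the substantive ingredient — so no essential obstacle remains beyond bookkeeping the equivariance; I do not expect any hidden difficulty here.
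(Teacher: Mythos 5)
Your proposal is correct and follows essentially the same route as the paper: get a periodic non-Hausdorff bigon from Theorem A, propagate it to every leaf via Theorem \ref{prop.consequenceReebSurface}, use $\pi_1(M)$-equivariance to handle all lifts of the corresponding closed curve, and get disjointness and finiteness of the curves from the non-linking of periodic bigons (Proposition \ref{prop-halfintervals}). The only difference is bookkeeping: the paper phrases the finiteness directly via the lamination of non-linked endpoint pairs and the bound on disjoint simple closed curves in a genus $g\geq 2$ surface, while you quote Proposition \ref{prop-halfintervals}; these are the same argument.
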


Note that even if we fix the homotopy classes of the curves $s_i$ which contain Reeb surfaces, it could be that the intersected foliations are not equivalent since there are possible variants to the Matsumoto-Tsuboi construction (see Remark \ref{rem-variantsMT}). In particular we can add several copies
of $\mathbb{T}^2 \times I$ associated with the same $s_i$ and they 
will not produce foliations which are equivalent. This statement gives a combinatorial way to describe the possible intersected foliations.

\begin{proof}
It follows from  Theorem \ref{prop.consequenceReebSurface} that once a leaf $L$ contains a non-Hausdorff bigon invariant by some $\gamma \in \pi_1(M) \setminus \{\mathrm{id}\}$ then every leaf contains a non-Hausdorff bigon joining the same endpoints. Note that since everything is equivariant under the action of $\pi_1(M)$ we get that every leaf must contain a non-Hausdorff bigon joining the fixed points of all elements in $\pi_1(M)$ conjugated to $\gamma$. Since non-Hausdorff bigons cannot intersect, these form a lamination in $\partial \HH^2$ (by this we mean a collection of pairs of distinct points in $\partial \HH^2$ which are pairwise not linked) and thus corresponds to a simple closed curve in $S$ (see also Proposition \ref{prop-halfintervals}). Moreover, if there is another $\gamma'$-invariant non-Hausdorff bigon, it must be also disjoint from the first one, so we get that it is associated to a disjoint simple closed curve in $S$. Since there are only finitely many 
homotopy classes of disjoint simple closed curves in a surface $S$ of genus $g \geq 2$ we obtain the result. 
\end{proof}

This section motivates the following: 

\begin{question}\label{q.MT}
Let $\cF_1, \cF_2$ be two transverse minimal foliations in a closed 3-manifold $M$. Assume that the intersected foliation $\cG = \cF_1 \cap \cF_2$ verifies that its lift $\tilde \cG$ to the universal cover does not have Hausdorff leaf space. Is it true that $M$ contains a $\pi_1$-injective torus $T$ so that every leaf $L \in \wt{\cF_i}$ contains non-Hausdorff bigons joining the endpoints of the intersections of lifts of $T$ with $L$? In particular, is it true that if $M$ is atoroidal then two transverse foliations intersect with Hausdorff leaf space in the universal cover?
\end{question}

\section{Application to partially hyperbolic diffeomorphisms}\label{s.partiallyhyperbolic}

A diffeomorphism $f: M \to M$ will be said \emph{partially hyperbolic} if it admits a $Df$-invariant splitting $TM = E^s \oplus E^c \oplus E^u$ verifying that there is some $n>0$ so that for $x \in M$ and unit vectors $v^s \in E^s(x)$, $v^c \in E^c(x)$ and $v^u \in E^u(x)$ we have that: 
\begin{equation}\label{eq:PH} 
\| Df^n v^s \| < \frac{1}{2} \min \{ 1, \|Df^n v^c \| \}   \ \ , \ \  \|Df^n v^u\| > 2 \max \{ 1, \| Df^n v^c\| \} .  
\end{equation}

The problem of classification of partially hyperbolic diffeomorphisms in 3-manifolds introduced in \cite{BonattiWilkinson, BI} has seen a lot of activity in the last few years. In particular, we point out to \cite{BFFP,BFFP2,FP2} for the analysis when the $3$-manifold is hyperbolic, and more generally for
diffeomorphisms homotopic to the identity in general $3$-manifolds. In \cite{BFP} we have introduced a class of systems called \emph{collapsed Anosov flows} which would provide a natural and useful notion of classification of such systems. In \cite{FP2} we have shown that this class contains all partially hyperbolic diffeomorphisms in hyperbolic 3-manifolds. The proof involves a careful study of pairwise transverse foliations, but also dynamics is introduced at several points in a crucial way. As mentioned in the introduction, the goal of this paper is to see to which extent we can extract dynamical information by using only the geometric properties of transverse foliations. 

We note that besides the classical examples of time one maps of geodesic flows (or more generally, \emph{discretized Anosov flows}, see \cite{BFFP, Martinchich}), unit tangent bundles admit many other classes of examples (see \cite{BGP,BGHP}) some of which have been studied in \cite{BFFP3, FP2}. 

The goal of this section is to prove Corollary B which states that every conservative partially hyperbolic diffeomorphism in $M = T^1 S$  is a collapsed Anosov flow up to finite cover and it is thus accessible (cf. \cite{FP3}).  This section will assume some familiarity with some standard results and notions of partial hyperbolicity, all of them can be found in \cite{BFP}. 

\subsection{Preliminary results and precise statement} 

Recall from \cite{BI} that when a partially hyperbolic diffeomorphism has orientable bundles whose orientation is preserved by $f$, then it preserves transverse \emph{branching foliations} $\cW^{cs}$ and $\cW^{cu}$ tangent respectively to $E^s \oplus E^c$ and $E^c \oplus E^u$. In \cite[Theorem 3.1]{HPS} some conditions are obtained which imply that these foliations do not have vertical leaves (for instance, being volume preserving is one of such conditions). 
Vertical means the leaf can be homotoped to be a union of Seifert
fibers in $M = T^1 S$.
If no vertical leaves exist, then, arguments like in \cite{BFFP, BFFP2} (see in particular \cite[Proposition 8.3]{FP2}) will allow us to prove the
following: 

\begin{teo}\label{teo.coroB}
Let $f: M \to M$ be a partially hyperbolic diffeomorphism on $M=T^1S$ where $S$ is a surface of genus $g \geq 2$. Assume moreover that $f$ preserves branching foliations which do not have vertical leaves. Then, $f$ is a collapsed Anosov flow.  
\end{teo}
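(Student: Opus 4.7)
The plan is to deduce Theorem \ref{teo.coroB} by applying Theorem A to the approximating foliations of the branching center-stable and center-unstable foliations produced by \cite{BI}, and then invoking the characterization of collapsed Anosov flows from \cite{BFP}. First I would use the orientability hypothesis (passing to a finite cover if necessary, which is harmless for concluding the collapsed Anosov flow property) together with \cite{BI} to obtain $f$-invariant branching foliations $\cW^{cs}$ and $\cW^{cu}$ tangent to $E^s \oplus E^c$ and $E^c \oplus E^u$, respectively, along with transverse approximating true foliations $\cF^{cs}_\eps$ and $\cF^{cu}_\eps$. The hypothesis that the branching foliations have no vertical leaves is exactly the input needed to run the minimality arguments in the style of \cite{BFFP,BFFP2}: any sublamination would, as in \cite[Proposition 8.3]{FP2}, force either a vertical leaf or a structure incompatible with partial hyperbolicity on $T^1 S$. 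Thus I would first establish that both $\cF^{cs}_\eps$ and $\cF^{cu}_\eps$ are minimal foliations of $M$, using Britenham's result \cite{Brit} and Matsumoto's rigidity (Theorem \ref{teo.matsumoto}) to exclude proper sublaminations.

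Once minimality and transversality of the two approximating foliations is in hand, Theorem A applies directly to the pair $\cF_1 := \cF^{cs}_\eps$ and $\cF_2 := \cF^{cu}_\eps$ with intersected foliation $\cG$. The theorem produces the dichotomy: either $\cG$ is homeomorphic to the orbit foliation of a geodesic flow on $S$, or $\cG$ contains a Reeb surface. The key step is then to rule out the Reeb surface alternative. This is where partial hyperbolicity enters decisively: a Reeb surface in $\cG$ would lift to a non-Hausdorff bigon with two boundary leaves of $\wt{\cG}$ lying at bounded distance in a center-stable leaf, bounded by curves tangent to the center direction. Using the stronger information from Corollary \ref{coro.consReeb} (or directly from the bigon structure), such a configuration forces two center leaves inside a common center-stable leaf to be at bounded Hausdorff distance, contradicting the uniform expansion along $E^u$ applied via the dynamics of $f$ (the standard argument being that the branching foliations cannot contain closed center curves with contracting/expanding local behavior compatible with a Reeb configuration, since this would produce a periodic center leaf on which $f$ acts with conflicting dynamics). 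This is essentially the argument alluded to in the abstract when the authors say that the existence of Reeb surfaces is incompatible with partially hyperbolic foliations.

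Having eliminated the Reeb alternative, we conclude that $\cG$ is homeomorphic to the orbit foliation of a topological Anosov flow $\Phi_t$ on $M$ (the geodesic foliation). Since $\cG$ is by construction a one-dimensional foliation tangent to a continuous line field (the center direction of the branching center-stable foliation, intersected with the branching center-unstable foliation), this matches the setting for which \cite{BFP} provides a characterization of collapsed Anosov flows: if the center foliation of a partially hyperbolic diffeomorphism is the orbit foliation of a topological Anosov flow and the two-dimensional branching foliations correspond to the weak stable/unstable foliations of that flow, then $f$ is a collapsed Anosov flow. The verification that the identification of $\cG$ with the orbits of $\Phi_t$ sends $\cF^{cs}_\eps$ and $\cF^{cu}_\eps$ to the weak stable and weak unstable foliations of $\Phi_t$ follows from transversality together with the dimension count and uniqueness (up to orientation) in Theorem \ref{teo.matsumoto}.

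The main obstacle I expect is the Reeb ruling-out step. Theorem A only gives the existence of a Reeb surface as an obstruction, but the partially hyperbolic input must be converted into an obstruction to Reeb surfaces at the level of the center foliation inside the center-stable leaves. The cleanest way to do this is likely to use the $f$-invariance: a Reeb surface in $\cG$ would, under iteration of $f$, produce a periodic center-stable leaf containing a periodic center curve bounding a Reeb component, and the contraction/expansion structure of $f$ restricted to such a configuration leads to a contradiction with the uniform bounded-distance behavior guaranteed by the bigon structure (as codified by Proposition \ref{prop.someReeb} together with the dynamical consequence that bounded center segments cannot be preserved by a partially hyperbolic map). This dynamical ingredient is the one piece not contained in the purely foliation-theoretic core of the paper, and is the place where the proof of Corollary B genuinely uses that $f$ is partially hyperbolic rather than merely that $\cF^{cs}_\eps$ and $\cF^{cu}_\eps$ are transverse minimal foliations.
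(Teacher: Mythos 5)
Your overall architecture coincides with the paper's: obtain the Burago--Ivanov branching foliations together with approximating true foliations, prove minimality of the approximating pair using the no-vertical-leaves hypothesis, apply Theorem A, rule out the Reeb alternative using partial hyperbolicity, and conclude via the criterion of \cite[Theorem D]{BFP} (an Anosov foliation is leafwise quasigeodesic, so Theorem \ref{teo.enoughQG} applies). The genuine gap is in your Reeb-exclusion step. First, the Reeb surface produced by Theorem A lives in the approximating foliations $\cF^{cs}$, $\cF^{cu}$, which are \emph{not} $f$-invariant, so you cannot iterate the configuration by $f$ to manufacture ``a periodic center-stable leaf containing a periodic center curve bounding a Reeb component''; one must first transfer the configuration to the invariant branching foliations, namely: a Reeb surface yields (up to finite cover) an annulus $S$ inside a leaf of $\cW^{cs}$ whose boundary curves are leaves of the intersected one-dimensional branching foliation and such that no transversal crosses from one boundary component to the other. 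Second, even granting this transfer, the contradiction you propose is not a valid mechanism: two center curves at bounded Hausdorff distance inside a common $cs$-leaf does not by itself contradict anything ($E^u$-expansion acts transversally to that leaf and controls nothing inside it), and nothing forces the annulus or its boundary curves to be $f$-periodic, so there is no ``periodic center leaf with conflicting dynamics'' to exploit. The argument that actually works is soft and two-dimensional: the unit vector field tangent to $E^s$ is transverse to the boundary leaves of $S$, so by Poincar\'e--Bendixson its flow restricted to the trapped annulus has a closed orbit, i.e.\ a closed curve tangent to $E^s$; this is impossible for a partially hyperbolic diffeomorphism, since iterating by $f$ would produce closed curves tangent to $E^s$ of arbitrarily small length. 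That is the only place where partial hyperbolicity is used, and it requires no invariance or periodicity of the Reeb surface.

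A secondary remark: your minimality step is only gestured at. Citing \cite{Brit} and Matsumoto's rigidity does not by itself exclude proper $\cW^{cs}$-saturated sublaminations; the paper's argument shows that each complementary region of a proper minimal set must contain a full unstable leaf, that all leaves are planes or annuli (Matsumoto applied after blowing down), and then runs an octopus decomposition producing a periodic solid torus, contradicted by a volume-versus-length argument as in \cite[Proposition 5.12]{HPS}. Also, your appeal to Corollary \ref{coro.consReeb} is unnecessary: the application to partial hyperbolicity does not use \S\ref{s.consReeb} at all.
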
 

Here, being a collapsed Anosov flow means that there is a semiconjugacy between $f$ and a self-orbit equivalence of an Anosov flow of $M$ (which by a classical result of Ghys must be orbit equivalent to a geodesic flow, see \cite{Ghys}). The semiconjugacy is required to have some technical properties relating the flow and the center direction. We obtain the strongest such condition, called \emph{strong collapsed Anosov flow} in \cite{BFP}. Since we will deduce a property of the leaves that implies such condition by \cite[Theorem D]{BFP} we will refer the reader to \cite{BFP} for the actual definition of collapsed Anosov flow. 

These are the two results we shall use to be able to apply Theorem A to partially hyperbolic diffeomorphisms:

\begin{teo}[Burago-Ivanov]\label{teo.BI}
Let $f: M \to M$ be a partially hyperbolic diffeomorphism preserving an orientation of its invariant bundles. Then, $f$ admits branching foliations $\cW^{cs}$ and $\cW^{cu}$ which are approximated by foliations $\cF^{cs}$ and $\cF^{cu}$. Moreover, if the branching foliations $\cW^{cs}$ and $\cW^{cu}$ are minimal, then one can choose $\cF^{cs}$ and $\cF^{cu}$ to also be minimal. 
\end{teo}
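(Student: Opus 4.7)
The existence part $-$ $f$ admits branching foliations $\cs, \cu$ tangent respectively to $E^s \oplus E^c$ and $E^c \oplus E^u$, each approximated for every small $\eps>0$ by a genuine foliation $\fes$ (resp.\ $\feu$) $-$ is precisely the main theorem of Burago-Ivanov \cite{BI}, and the plan is to simply cite it. From their construction one extracts an additional key ingredient: a continuous map $\hs: M \to M$, homotopic to the identity and within $\eps$ of it in the $C^0$ metric, which sends every leaf of $\fes$ surjectively onto a leaf of $\cs$, and symmetrically a map $\hu$ relating $\feu$ and $\cu$. This map is what will make the minimality addendum accessible.

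For the minimality part, assume $\cs$ is minimal; the argument for $\cu$ is entirely symmetric. The first step, which is easy, is the following: any leaf $L$ of $\fes$ is $\eps$-dense in $M$. Indeed $\hs(L)$ is a leaf of $\cs$, hence dense in $M$ by minimality, and $\hs$ is $\eps$-close to the identity, so every point of $M$ lies within $\eps$ of $L$. The main work $-$ and the main obstacle $-$ is to promote $\eps$-denseness to genuine denseness for every $\eps$ small enough.

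The plan for this promotion is as follows. Pick, by Zorn's lemma, a nonempty closed $\fes$-saturated set $\Lambda \subset M$ which is minimal for inclusion. Since $\hs$ carries leaves of $\fes$ to leaves of $\cs$, the image $\hs(\Lambda)$ is closed and $\cs$-saturated, and so must equal $M$ by minimality of $\cs$. The technical heart of the argument is then to conclude $\Lambda = M$ from $\hs(\Lambda) = M$. This will use the structure of the fibers of $\hs$ coming from the Burago-Ivanov construction: they are connected sets of diameter at most $\eps$ contained in center arcs, in particular transverse to $\fes$. If $\Lambda$ were a proper sublamination, its open complement $U$ would be $\fes$-saturated; pushing $U$ through $\hs$ and using that center arcs of length $\eps$ cannot connect $\Lambda$ to $U$ for $\eps$ sufficiently small, one would produce a proper nonempty open $\cs$-saturated subset of $M$, contradicting the minimality of $\cs$.

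The hardest step to make rigorous is controlling the fibers of $\hs$ well enough to run the last argument; as a backup plan one may instead invoke the rigidity provided by Theorem \ref{teo.matsumoto}: after isotoping $\fes$ to a horizontal representative and checking it is Reebless (an automatic consequence of its proximity to a taut branching foliation), the existence of a proper sublamination together with $\eps$-denseness of every leaf would contradict the fact that every minimal foliation of $T^1S$ is, up to isotopy on the base, $\cF_{ws}$, and that this property persists under small $C^0$-perturbations which are isotopic to the identity.
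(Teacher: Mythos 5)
The paper does not prove this statement: the existence of branching foliations and $\eps$-approximating foliations is quoted from Burago--Ivanov \cite{BI}, and the minimality addendum is deferred to the discussion in \cite[\S 3]{BFP}. Your proposal attempts to supply an argument for the addendum, which is a worthwhile exercise, but as written it has a genuine gap that neither of your two plans closes.

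The reduction to $\hs(\Lambda)=M$ via minimality of $\cs$ is fine. The problem is the next step: since $U = M\setminus\Lambda$, the sets $\Lambda$ and $U$ share a common frontier, and a center arc through any frontier point of $\Lambda$ meets both $\Lambda$ and $U$ no matter how short it is, so "center arcs of length $\eps$ cannot connect $\Lambda$ to $U$" is simply false and does not produce a proper $\cs$-saturated open set. Concretely, the natural candidate $V=\{q\in M:\hs^{-1}(q)\subset U\}$ is nonempty and open for small $\eps$, but there is no reason for $V$ to be $\cs$-saturated: if $q_0\in V$ and $W$ is the $\cs$-leaf through $q_0$, then some $\fes$-leaf $F\subset U$ maps onto $W$, which shows each fiber over $W$ \emph{meets} $U$, not that it is \emph{contained} in $U$ $-$ the fiber over a generic $q\in W$ may well dip into $\Lambda$, consistently with $\hs(\Lambda)=M$. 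Your backup plan via Theorem~\ref{teo.matsumoto} does not work either: Matsumoto's theorem classifies foliations already known to be minimal, which is precisely what is to be proved; $\eps$-density is much weaker than density and yields no contradiction by itself (and one cannot send $\eps\to 0$, because the putative non-dense leaf, and hence the witnessing $\delta$, depends on $\eps$); and the assertion that minimality of a foliation of $T^1S$ persists under small $C^0$-perturbations isotopic to the identity is not a known fact. Closing the argument actually requires finer structure of the Burago--Ivanov collapse than its fibers being short center arcs, which is why the paper cites \cite[\S 3]{BFP} rather than sketching a proof.
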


We refer the reader to \cite[\S 3]{BFP} for a discussion on this result, in particular the final property. 
The approximation is such that there is a {\emph{collapsing}} map $h: M \to M$
which sends leaves of $\cF^{cs}$ to leaves of $\cW^{cs}$ (same for cu),
is homotopic to the identify, is $C^1$ along leaves, is $\eps$ close
to the identity, and has derivatives along the leaves which
is $C^1$ close to the identity along the leaves,  and has further 
properties. Many properties transfer between the two foliations,
in particular the topological types of leaves of both $\cW^{cs}$ and
$\cF^{cs}$ is the same.

To obtain Theorem \ref{teo.coroB} we will then apply the following criterion given by \cite[Theorem D]{BFP}: 

\begin{teo}\label{teo.enoughQG}
Let $f: M \to M$ be a partially hyperbolic diffeomorphism preserving
an orientation of its invariant bundles. If leaves of the foliation obtained by intersecting $\cF^{cs}$ and $\cF^{cu}$ are quasigeodesics in the universal cover of their respective leaves, then $f$ is a collapsed Anosov flow. 
\end{teo}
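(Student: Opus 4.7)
The plan is to feed the partially hyperbolic hypothesis into the machinery provided by the rest of the paper and by the earlier literature, with the main theorem (Theorem A) serving as the geometric engine and Theorem \ref{teo.enoughQG} as the output step. First I would apply Burago--Ivanov (Theorem \ref{teo.BI}) to obtain branching foliations $\cW^{cs}, \cW^{cu}$ tangent to $E^s\oplus E^c$ and $E^c \oplus E^u$, approximated by regular foliations $\cF^{cs}, \cF^{cu}$. The hypothesis that no leaf is vertical is exactly what rules out Seifert-fibered leaves in the branching foliations, so that standard arguments in $T^1S$ (the ones alluded to in the statement, essentially \cite[Proposition 8.3]{FP2} together with \cite{BFFP,BFFP2}) show that $\cW^{cs}$ and $\cW^{cu}$ are minimal. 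By Theorem \ref{teo.BI}, the approximating regular foliations $\cF^{cs}, \cF^{cu}$ can also be chosen minimal. A standard perturbation then makes them transverse to each other (angles between $E^s\oplus E^c$ and $E^c \oplus E^u$ are bounded below, so the approximation inherits transversality up to small perturbation).

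With $\cF^{cs}, \cF^{cu}$ minimal and transverse on $M = T^1S$, Theorem A applies to the one-dimensional foliation $\cG = \cF^{cs}\cap \cF^{cu}$: either $\cG$ is homeomorphic to the orbit foliation of a geodesic flow (in particular, an Anosov foliation), or $\cG$ contains a Reeb surface. In the first case, an Anosov foliation subfoliating a pair of transverse Reebless foliations by Gromov hyperbolic leaves is automatically leafwise quasigeodesic (cf.\ \cite{Fenley} and the discussion preceding \S\ref{s.Hsdff}), so Theorem \ref{teo.enoughQG} delivers the conclusion that $f$ is a collapsed Anosov flow.

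The core task is therefore to exclude the existence of a Reeb surface for foliations coming from a partially hyperbolic diffeomorphism, and this is where I expect the main (and essentially only) dynamical input. A Reeb surface in, say, a leaf of $\cW^{cs}$ has compact boundary components which are closed leaves of $\cG$; by compactness there are at most finitely many Reeb surfaces (cf.\ the finiteness extracted in Proposition \ref{prop-halfintervals}), so some iterate of $f$ preserves each boundary circle. Each boundary circle is a closed curve tangent to $E^c$ contained in a single leaf of both $\cW^{cs}$ and $\cW^{cu}$, and the two boundary circles of one Reeb annulus lie in a common $\cW^{cu}$ leaf. I would then combine this with the non-Hausdorff bigon description in \S\ref{s.dichotomy}: when lifted, the two boundary leaves are at bounded distance inside the $\cF^{cs}$ leaf while being separated in the $\cF^{cu}$ leaf, so the stable holonomy must collapse an unstable-direction separation, contradicting the uniform expansion of $E^u$ under (an iterate of) $f$. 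This is the incompatibility anticipated in the introduction and the argument is morally identical to the exclusion of periodic center Reeb-type configurations carried out for homotopic to identity diffeomorphisms in \cite{BFFP2} and \cite[\S 8]{FP2}, which I would invoke (modulo straightforward adaptation) rather than redo.

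The hard part will not be the architecture, which is clean once Theorem A is in hand, but precisely this Reeb-surface exclusion: one must check that the dynamical contradiction really does apply to \emph{any} Reeb surface of $\cF^{cs}$ or $\cF^{cu}$, in particular that the surface (or a power thereof) is genuinely preserved by $f$ and that its boundary circles lie in the appropriate branching-leaf configuration. Once this incompatibility is established, the remainder of the proof is a bookkeeping exercise: Theorem A forces the Anosov-foliation alternative, leafwise quasigeodesity follows, and Theorem \ref{teo.enoughQG} closes the argument.
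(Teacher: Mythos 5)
There is a genuine gap, and it is structural: your proposal does not prove Theorem \ref{teo.enoughQG}; it assumes it. The statement to be established is the implication ``if the leaves of $\cF^{cs}\cap\cF^{cu}$ are quasigeodesics in the universal covers of their respective leaves, then $f$ is a collapsed Anosov flow.'' Your write-up instead sketches the proof of Theorem \ref{teo.coroB} (equivalently Corollary B): you obtain minimal approximating foliations, invoke Theorem A, exclude Reeb surfaces, deduce the Anosov-foliation alternative and leafwise quasigeodesity, and then close with ``so Theorem \ref{teo.enoughQG} delivers the conclusion that $f$ is a collapsed Anosov flow.'' That final invocation is precisely the statement you were asked to prove, so as a proof of Theorem \ref{teo.enoughQG} the argument is circular, and its actual content --- how leafwise quasigeodesity of the intersected foliation forces the semiconjugacy to a self orbit equivalence of an Anosov flow demanded by the definition of (strong) collapsed Anosov flow --- is never addressed. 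For what it is worth, the paper does not reprove this content either: Theorem \ref{teo.enoughQG} is quoted verbatim as \cite[Theorem D]{BFP}, so the expected ``proof'' here is essentially that citation, or a summary of the argument of \cite{BFP} (leafwise quasigeodesity plus minimality identifies the center branching foliation with the orbit foliation of a topological Anosov flow, and the collapsing map intertwines $f$ with a self orbit equivalence).

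As a secondary remark, even read as a proof of Theorem \ref{teo.coroB}, your Reeb-surface exclusion diverges from the paper's and is shakier. The paper's argument is short: a Reeb surface yields an annulus in a leaf of $\cW^{cs}$ whose boundary circles are leaves of the intersection foliation admitting no transversal from one to the other, so Poincar\'e--Bendixson applied to the flow of a unit vector field tangent to $E^s$ (transverse to those boundary circles inside the $cs$-leaf) produces a closed curve tangent to $E^s$, which is incompatible with partial hyperbolicity. Your proposed route through periodicity of the finitely many Reeb surfaces and a stable-holonomy versus unstable-expansion contradiction would need substantially more justification (why an iterate of $f$ preserves each boundary circle, and what quantity is being contracted), and is not needed.
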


Accessibility and ergodicity then follow from \cite[Theorem A]{FP3}. We note that Corollary B follows since volume preserving partially hyperbolic diffeomorphisms verify that their branching foliations are what we call $f$-minimal, so they cannot have vertical leaves, and therefore satisfy the assumptions of Theorem \ref{teo.coroB}. We note in fact that $f$-minimality is studied in \cite{HPS} in many situations, including the case where $f$ is \emph{chain-recurrent} (something weaker than volume preserving or transitive) or belongs to certain isotopy classes. It is also shown in \cite[Proposition 4.8]{BFP} that being $f$-minimal is an open and closed condition on partially hyperbolic diffeomorphisms, so that if some $f$ is known to be isotopic to a chain recurrent one along partially hyperbolic diffeomorphisms, then it will be in the hypothesis of Theorem \ref{teo.coroB}.

\subsection{Partially hyperbolic foliations do not admit Reeb surfaces}

In this subsection we will explain why existence of a Reeb surface in the approximating foliations is not possible if the branching foliations come from a partially hyperbolic diffeomorphism. This will reduce the proof of Theorem \ref{teo.coroB} to showing that under its assumptions the branching foliations exist and are minimal. 

\begin{prop}
Let $f: M \to M$ be a partially hyperbolic diffeomorphism preserving branching foliations $\cW^{cs}$ and $\cW^{cu}$ which are approximated in the sense of Theorem \ref{teo.BI} by true transverse foliations $\cF^{cs}$ and $\cF^{cu}$. Then, the approximating foliations do not have Reeb surfaces. 
\end{prop}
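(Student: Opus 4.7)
The plan is to argue by contradiction, assuming a Reeb surface $R$ exists in the approximating foliation $\cF^{cs}$ (the case of $\cF^{cu}$ is symmetric under $f \mapsto f^{-1}$). Let $S \in \cF^{cs}$ be the leaf containing $R$, and let $\ell_1, \ell_2$ denote the closed leaves of $\cG = \cF^{cs} \cap \cF^{cu}$ forming the boundary of $R$. Lifting to $\widetilde M$, the Reeb surface produces a non-Hausdorff bigon $B \subset L$ in some leaf $L \in \wt{\cF^{cs}}$, with boundaries $c_1, c_2 \in \wt\cG_L$ at uniformly bounded distance inside $L$ and both invariant under a nontrivial $\gamma \in \pi_1(M)$ representing (a power of) one of the boundary loops of $R$.

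The first step is to transfer this picture from the approximating foliations to the branching foliations $\cW^{cs}, \cW^{cu}$. Using that $\cF^{cs}$ is $C^0$-close to $\cW^{cs}$ and $\cF^{cu}$ to $\cW^{cu}$ (as provided by Theorem~\ref{teo.BI}), together with the $f$-invariance of the branching pair, the closed curves $\ell_1, \ell_2$ can be identified with two closed leaves of the branching center foliation $\cW^c = \cW^{cs} \cap \cW^{cu}$, lying in a common branching $cs$-leaf $\hat S$ and in two distinct branching $cu$-leaves, bounding an annular region in $\hat S$ of uniformly bounded width. Appealing to the Burago--Ivanov-type analysis of closed center leaves (as used in \cite{BI} and developed further in \cite{BFFP2,FP2}) these closed center curves are periodic for some iterate $f^N$ of $f$, so that $f^N(\ell_i) = \ell_i$ up to relabeling.

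The contradiction now comes from partial hyperbolicity applied within the branching $cs$-leaf $\hat S$. Inside $\hat S$, the stable manifolds form a one-dimensional subfoliation which is transverse to the center direction and uniformly contracted by $f^N$. Since $\ell_1$ and $\ell_2$ sit in distinct stable manifolds inside $\hat S$ (they belong to different $cu$-leaves), their transverse separation in $\hat S$ would be multiplied by a uniform factor strictly greater than $1$ under $f^{-N}$; iterating, this distance would grow without bound. But $\ell_1, \ell_2$ are both fixed by $f^N$, so this distance is constant, producing the required contradiction and ruling out the Reeb surface.

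The main obstacle is the first step: carefully transferring the Reeb configuration between the approximating and branching pictures and identifying the boundary leaves of $R$ with genuinely $f$-periodic center curves, a task complicated by the fact that branching foliations may have leaves that merge and split, so the notion of ``distance between the two boundary components measured within the $cs$-leaf'' must be given a coherent interpretation across these merges. Once that identification is in place, the stable-contraction/expansion dichotomy closes the argument cleanly, and both of these ingredients are standard in the Burago--Ivanov framework employed in \cite{HPS,BI,BFFP2,FP2}, so they can be invoked directly rather than reproved here.
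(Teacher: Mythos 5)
Your approach is genuinely different from the paper's, but it has gaps. The paper's proof is purely topological and uses no iteration of $f$: after transferring the Reeb annulus to a branching $cs$-leaf, it observes that the strong stable flow is trapped inside the annulus --- it enters transversally through one boundary circle and cannot cross the other (the defining property of a Reeb surface is precisely that no transversal connects the two boundary components), nor re-cross the boundary it entered through (a transversal meets each leaf at most once). Poincar\'e--Bendixson then produces a closed orbit of the strong stable flow, that is, a circle tangent to $E^s$, which is immediately incompatible with partial hyperbolicity. No periodicity of the boundary circles is needed.

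Your argument instead relies on $f^N$-periodicity of $\ell_1,\ell_2$ plus expansion of their ``transverse separation'' under $f^{-N}$, and both steps are unjustified. First, periodicity is not clear: the approximating foliations $\cF^{cs},\cF^{cu}$ are not themselves $f$-invariant, and even after transferring to the branching picture, $f$ only permutes Reeb configurations --- to conclude that some iterate fixes a given one you would need, e.g., finiteness of Reeb surfaces, a substantial fact that should not be taken for granted at this point. Second, and more critically, the expansion claim does not hold as stated. The separation is measured inside a $cs$-leaf, which mixes $E^s$ and $E^c$; $f^{-N}$ expands $E^s$ but has no uniform control on $E^c$, so Riemannian distances in the $cs$-leaf are not uniformly expanded. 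If instead you mean the length of a strong stable arc joining $\ell_1$ to $\ell_2$, that quantity is not finite: by the Reeb property no transversal (in particular no strong stable arc) joins the two boundaries, so there is nothing whose length could grow under iteration. It is exactly this trapping of strong stable arcs that the paper exploits through Poincar\'e--Bendixson rather than through a length estimate. Finally, a small factual error in your setup: the two boundary curves do not lie in ``different $cu$-leaves''. By Lemma~\ref{lem.nonHsdfftwointersect} the non-separated leaves $c_1,c_2$ of the lifted bigon lie in a \emph{common} $cu$-leaf $E$, so $\ell_1,\ell_2$ lie in the same $cu$-leaf of $M$; the correct reason they lie in distinct strong stable manifolds is the absence of a transversal between them, not a difference of $cu$-leaves.
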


\begin{proof}
The Reeb surfaces of $\cF^{cs}$ are finitely covered by 
an annulus so that the boundary components are leaves
of $\cF^{cs} \cap \cF^{cu}$ and the interior
are infinite curves spiralling to the boundary.
When collapsing to $\cW^{cs}$ and $\cW^{cu}$ these annuli 
collapse to ``branched" annuli
$S$ contained in some leaf of $\cW^{cs}$ or $\cW^{cu}$ (without loss of generality we assume $\cW^{cs}$) which is an annulus, the boundaries are leaves of the one dimensional branching foliation induced by the intersection and no transversal from one side intersects the other. Using Poincare-Bendixon's theorem we deduce that every flow transverse to the boundaries of $S$ must have a periodic orbit. Applying this to the flow generated by a unit vector field tangent to $E^s$ we deduce the existence of a closed curve tangent to $E^s$ which is a contradiction with partial hyperbolicity (because $f$ would contract the curve until you find a circle tangent to $E^s$ in an arbitrarily small ball). 
\end{proof}

\subsection{Proof of Theorem \ref{teo.coroB}.} 

Using Theorem \ref{prop.dichotomy}, Theorem \ref{teo.hsdffcase},  and Theorem \ref{teo.enoughQG} it is enough to show that the approximating foliations $\cF^{cs}$ and $\cF^{cu}$ given by Theorem \ref{teo.BI} are minimal. For this, it is enough to show that this is true for the branching foliations under the assumption that there are no vertical leaves for such foliations.

%

Theorem \ref{teo.BI} shows that the collapsing map can be chosen
to be a bijection between the sets of leaves of $\cF^{cs}$ and
$\cF_{ws}$ (say), and the collapsing map preserves their
homotopic properties. Therefore $\cF^{cs}, \cF^{cu}$ do not
have vertical leaves by assumption.

Since $\cF^{cs}$ is horizontal, the leaf space of $\wcF^{cs}$ is 
homeomorphic to the reals $\mathbb{R}$, and $\cF^{cs}$ blows
down to a minimal foliation (see \cite{Fenley2}). 
By Theorem \ref{teo.matsumoto} this foliation has only planes and annuli
leaves. Hence $\cW^{cs}$ has only planes and annuli leaves (here, the topology of the leaf is, by definition, the topology of the quotient of the leaf in the universal cover by the deck transformations that fix the given leaf). 

Since every leaf of $\cW^{cs}$ is a cylinder or a plane and the foliation is $\RR$-covered we can argue exactly as in \cite[Proposition 8.3]{FP2} to get a contradiction with partial hyperbolicity (the quasigeodesic property is used in the proof of \cite[Proposition 8.3]{FP2} only to show that leaves are cylinders or planes). This completes the proof of minimality. 

\subsection{Proof of Corollary B}

To get Corollary B we first take a regular finite cover $M_1$ 
and iterate in order to have orientability of the bundles as well as their preservation. As explained in \cite[\S 7]{HPS} once we take a finite cover $M_1$, since the foliations will not have vertical leaves they need to be horizontal.
To show that there are no vertical leaves and that the branching foliations
are minimal in the cover $M_1$,  we use the volume preservation assumption as in \cite{HPS}. 
In \cite[Lemma 7.1]{HPS} and \cite[Subsection 6.4]{HPS} it is proved
that deck translations associated with the cover $M_1 \to M$ preserve the orientations of all the bundles. 
This implies that the
original bundles were horizontal and the orientability 
conditions were satisfied in $M$.

Hence the partially hyperbolic diffeomorphism $f$ 
is a collapsed Anosov flow. It follows that $f$ is accessible, and if $f$ is $C^2$ (and volume
preserving) then $f$ is ergodic (see \cite{FP3}). This proves Corollary B.

\end{document}